\documentclass{amsart}
\usepackage{amsmath,amssymb,amsthm}
\usepackage{mathtools,scalerel}
\usepackage{bbm,pifont}
\usepackage{enumerate}
\usepackage{xspace,cmap}

\newtheorem{thm}{Theorem}[section]
\newtheorem{lemma}[thm]{Lemma}
\newtheorem{prop}[thm]{Proposition}
\newtheorem{cor}{Corollary}
\newtheorem{coro}[thm]{Corollary}
\newtheorem{claim}[thm]{Claim}
\newtheorem{subclaim}{Claim}[thm]
\newtheorem{fact}[thm]{Fact}
\newtheorem*{thma}{Theorem A}
\newtheorem*{thmb}{Theorem B}
\newtheorem*{thmbp}{Theorem B'}
\newtheorem*{thmbpp}{Theorem B''}
\newtheorem*{thmc}{Theorem C}
\theoremstyle{definition}
\newtheorem{defn}[thm]{Definition}
\newtheorem{notation}[thm]{Notation}

\newcommand{\vmark}{\ding{51}}
\newcommand{\xmark}{\ding{55}}
\newcommand\s{\subseteq}
\newcommand\sq{\sqsubseteq}
\newcommand\rest{\mathbin{\mathrlap{\vstretch{0.75}{\restriction}}\restriction}}
\newcommand\fa[1]{\textup{SDFA}(\mathcal P_{#1})}
\newcommand\bd{\textup{bd}}
\newcommand\fin{\textup{fin}}
\newcommand\bb{\mathbb}
\newcommand\one{\mathbbm{1}_\mathbb{P}}
\newcommand\br{\blacktriangleright}
\newcommand\coi{c.o.i.\@\xspace}
\newcommand\cois{c.o.i.'s\xspace}
\newcommand\axiomfont[1]{\textsf{\textup{#1}}}
\newcommand\zfc{\axiomfont{ZFC}}
\newcommand\gch{\axiomfont{GCH}}
\newcommand\sh{\axiomfont{SH}}
\newcommand\tp{\axiomfont{TP}}
\newcommand\ch{\axiomfont{CH}}
\newcommand\Mid{\mathrel{\bigg|}\allowbreak}
\renewcommand\mid{\mathrel{|}\allowbreak}
\renewcommand\restriction{\mathbin\upharpoonright}
\renewcommand\eqref[1]{(\ref{#1})}

\synctex=1

\DeclareMathOperator\im{Im}
\DeclareMathOperator\dbl{double}
\DeclareMathOperator\lht{Lev}
\DeclareMathOperator\cf{cf}
\DeclareMathOperator\dom{dom}
\DeclareMathOperator\otp{otp}
\DeclareMathOperator\add{Add}
\DeclareMathOperator\nacc{nacc}
\DeclareMathOperator\acc{acc}
\DeclareMathOperator\pred{pred}
\DeclareMathOperator\height{ht}
\DeclareMathOperator\ssup{ssup}
\DeclareMathOperator\cl{cl}

\subjclass[2010]{Primary 03E05; Secondary 03E35, 03E57.}
\keywords{Souslin tree, square, diamond, sharply dense set, forcing axiom, SDFA}

\author{Chris Lambie-Hanson}
\address{Department of Mathematics, Bar-Ilan University, Ramat-Gan 5290002, Israel.}
\urladdr{http://math.biu.ac.il/\textasciitilde lambiec}

\author{Assaf Rinot}
\address{Department of Mathematics, Bar-Ilan University, Ramat-Gan 5290002, Israel.}
\urladdr{http://www.assafrinot.com}

\title{A forcing axiom deciding the generalized {S}ouslin {H}ypothesis}
\thanks{This research was partially supported by the Israel Science Foundation (grant $\#$1630/14).}

\begin{document}
\begin{abstract} We derive a forcing axiom from the conjunction of square and diamond, and present a few applications,
primary among them being the existence of super-Souslin trees.
It follows that for every uncountable cardinal $\lambda$, if
$\lambda^{++}$ is not a Mahlo cardinal in G\"odel's constructible universe,
then $2^\lambda = \lambda^+$ entails the existence of a  $\lambda^+$-complete $\lambda^{++}$-Souslin tree.
\end{abstract}
\maketitle

\section{Introduction}
\subsection{Trees}
A \emph{tree} is a partially ordered set $(T,<_T)$ with the property that for every $x\in T$,
the downward cone $x_\downarrow:=\{ y\in T\mid y<_T x\}$ is well-ordered by $<_T$. The order type of $(x_\downarrow,<_T)$ is denoted by $\height(x)$,
and the \emph{$\alpha^{th}$-level} of the tree is the set $T_\alpha:=\{x\in T\mid \height(x)=\alpha\}$.
The tree $(T,<_T)$ is said to be \emph{$\chi$-complete} if for every chain $C\s T$ of size $<\chi$, there is $x\in T$ such that $C\s x_\downarrow\cup\{x\}$.

If $\kappa$ is a regular, uncountable cardinal, then
a \emph{$\kappa$-Aronszajn tree} is a tree of size $\kappa$ having no chains or levels of size $\kappa$, and
a \emph{$\kappa$-Souslin tree} is a tree of size $\kappa$ having no chains or antichains of size $\kappa$.
As tree levels are antichains, any $\kappa$-Souslin tree is a $\kappa$-Aronszajn tree.

In 1920, Mikhail Souslin \cite{Souslin} asked whether every ccc, dense, complete linear ordering with no endpoints is isomorphic to
the real line.\footnote{Here, \emph{ccc} is a consequence of separability, asserting that every pairwise-disjoint family of open intervals is countable.}
In \cite{kurepa1935ensembles}, Kurepa showed that a negative answer to Souslin's question is equivalent to the existence of an
$\aleph_1$-Souslin tree. Attempts to settle the question by constructing an $\aleph_1$-Souslin tree proved unsuccessful but did
lead to Aronszajn's construction of an $\aleph_1$-Aronszajn tree, which is described in \cite{kurepa1935ensembles}.
The question remained open until it was proven, in \cite{MR0224456}, \cite{MR0215729}, \cite{jensen1968souslin}, and \cite{MR0294139},
that, in contrast to the existence of $\aleph_1$-Aronszajn trees, the existence of $\aleph_1$-Souslin trees is independent
of the usual axioms of set theory (\zfc).

As these objects proved incredibly useful and important, a systematic study of their consistency and interrelation was carried out.
Following standard conventions, we let $\tp_\kappa$ stand for the nonexistence of $\kappa$-Aronszajn trees (the \emph{tree property} at $\kappa$),
$\sh_\kappa$ stand for the nonexistence of $\kappa$-Souslin trees (the \emph{Souslin Hypothesis} at $\kappa$),
and $\ch_\lambda$ stand for $2^\lambda=\lambda^+$.
Two early results read as follows:
\begin{thm}[Specker, \cite{specker}] For every cardinal $\lambda$, $\ch_\lambda$ implies the failure of $\tp_{\lambda^{++}}$.\footnote{By a \emph{cardinal}, we always mean an infinite cardinal.}
\end{thm}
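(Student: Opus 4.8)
The plan is to ignore $\sh$ entirely and instead construct a $\lambda^{++}$-Aronszajn tree from $\ch_\lambda$; this immediately yields the failure of $\tp_{\lambda^{++}}$. Write $\kappa:=\lambda^+$, so the target is a $\kappa^+$-Aronszajn tree. First I would isolate the only use of $\ch_\lambda$, which is purely arithmetical: by Hausdorff's formula $\kappa^\lambda=(\lambda^+)^\lambda=\lambda^+\cdot\lambda^\lambda=\lambda^+\cdot2^\lambda$, and this equals $\lambda^+=\kappa$ exactly because $2^\lambda=\lambda^+$. Hence $\kappa$ is a successor cardinal with $\kappa^{<\kappa}=\kappa$; in particular the number of functions from an ordinal below $\kappa$ into $\kappa$ is $\kappa$, and $\lvert[\kappa]^{<\kappa}\rvert=\kappa$. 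These two counting facts are all that will be needed from the hypothesis.

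Next I would build, by recursion on $\alpha<\kappa^+$, a coherent sequence $\langle e_\alpha : \alpha<\kappa^+\rangle$ such that for every $\alpha$: (i) $e_\alpha\colon\alpha\to\kappa$ is injective with $\lvert\kappa\setminus\im(e_\alpha)\rvert=\kappa$; and (ii) $\lvert\{\xi<\beta : e_\alpha(\xi)\ne e_\beta(\xi)\}\rvert<\kappa$ for every $\beta<\alpha$. At a successor stage one sets $e_{\alpha+1}:=e_\alpha\cup\{(\alpha,\xi)\}$ for a suitable $\xi\in\kappa\setminus\im(e_\alpha)$, using co-largeness of the complement, and (ii) is inherited via the triangle inequality. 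At a limit $\alpha$ of cofinality $\mu<\kappa$ one fixes a cofinal $\langle\beta_i : i<\mu\rangle$ with $\beta_0=0$ and defines $e_\alpha$ block by block, on $[\beta_i,\beta_{i+1})$ essentially copying $e_{\beta_{i+1}}$ and then correcting on a set of size $<\kappa$ to restore injectivity and, when $\lvert\alpha\rvert=\kappa$, to keep the range co-large; here the counting facts drive the required diagonalization, and the case $\cf(\alpha)=\kappa$ is handled similarly along a continuous cofinal $\kappa$-sequence. The step I expect to be the real obstacle is precisely this limit stage when $\lvert\alpha\rvert=\kappa$ and $\cf(\alpha)<\kappa$: there $\alpha$ cannot be written as a union of fewer than $\kappa$ sets of size $<\kappa$, so the blocks are unavoidably large, and one must arrange already at the earlier stages — via a reservoir/bookkeeping device — that $\bigcup_{\beta<\alpha}\im(e_\beta)$ stays co-large, so that clause (i) can be maintained; getting this bookkeeping right (using $\kappa^{<\kappa}=\kappa$) is the technical heart of the whole construction. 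Everything else is bookkeeping of a routine kind.

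Finally I would set $T:=\{\,e_\alpha\restriction\beta : \beta\le\alpha<\kappa^+\,\}$, ordered by end-extension, and verify four things. (a) $T$ is a tree and $\height(e_\alpha\restriction\beta)=\beta$: the proper initial segments of $e_\alpha\restriction\beta$ lying in $T$ are exactly $\{e_\alpha\restriction\gamma : \gamma<\beta\}$, which is well-ordered of type $\beta$. (b) $\lvert T\rvert=\kappa^+$: the $e_\alpha$ are pairwise distinct, while there are at most $\kappa^+\cdot\kappa=\kappa^+$ restrictions in all. (c) Each level $T_\beta=\{e_\alpha\restriction\beta : \beta\le\alpha<\kappa^+\}$ has size $\le\kappa$: by (ii) every such $f$ is an injection agreeing with $e_\beta$ off a set of size $<\kappa$, and there are only $\kappa$ such functions by the counting facts. (d) $T$ has no chain of size $\kappa^+$: such a chain meets each level at most once, hence is cofinal in $\kappa^+$, so its union is an injection $\kappa^+\to\kappa$, which is impossible. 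Thus $T$ is a $\lambda^{++}$-Aronszajn tree and $\tp_{\lambda^{++}}$ fails. (With a little extra care one can attach a specializing function to $T$ and obtain a special Aronszajn tree, but that is not needed here.)
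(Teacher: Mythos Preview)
The paper does not prove this theorem; it is cited as a classical background result due to Specker, with no argument given. Your plan---derive $\kappa^{<\kappa}=\kappa$ for $\kappa=\lambda^+$ from $\ch_\lambda$, build a coherent sequence of injections $e_\alpha\colon\alpha\to\kappa$ with $\lvert\kappa\setminus\im(e_\alpha)\rvert=\kappa$, and read off the Aronszajn tree from their restrictions---is the standard one and is correct in outline, and you correctly locate the only real difficulty at limit $\alpha$ with $\lvert\alpha\rvert=\kappa$.

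One refinement is needed. The invariant you propose there, that $\bigcup_{\beta<\alpha}\im(e_\beta)$ remain co-large, is not the right one: a fixed reservoir $R$ avoided by every $e_\beta$ merely reproduces the original problem with $\kappa\setminus R$ in place of $\kappa$, and a global bound on all $\kappa$-many images below $\alpha$ cannot in general be maintained. What the usual bookkeeping actually provides is a \emph{coherent} family of reservoirs: alongside each $e_\alpha$ one carries $R_\alpha\subseteq\kappa\setminus\im(e_\alpha)$ of size $\kappa$ with $\lvert R_\alpha\mathbin{\triangle}R_\beta\rvert<\kappa$ for all $\beta<\alpha$. At a limit of cofinality $\mu<\kappa$ one then has $\lvert\bigcap_{i<\mu}R_{\beta_i}\rvert=\kappa$ along any cofinal $\mu$-sequence (since $R_{\beta_0}\setminus\bigcap_i R_{\beta_i}$ is a union of $\mu<\kappa$ sets each of size $<\kappa$), and this supplies the fresh values needed to restore injectivity and co-largeness; when $\cf(\alpha)=\kappa$ a direct diagonalization along a cofinal $\kappa$-sequence does the job instead. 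Equivalently, one can build the tree level by level under the stronger extension property that every node together with any $V\in[\kappa]^{<\kappa}$ disjoint from its image extends to every higher level while still avoiding $V$. With either formulation your outline goes through.
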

\begin{thm}[Jensen, \cite{jensen_fine_structure}] In G\"odel's constructible universe, $L$, for every regular, uncountable cardinal $\kappa$, the following are equivalent:
\begin{itemize}
\item $\tp_\kappa$;
\item $\sh_\kappa$;
\item $\kappa$ is a weakly compact cardinal.
\end{itemize}
\end{thm}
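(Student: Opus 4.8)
The plan is to establish the cycle of implications
\[\text{``}\kappa\text{ is weakly compact''}\implies\tp_\kappa\implies\sh_\kappa\implies\text{``}\kappa\text{ is weakly compact'',}\]
only the last arrow of which will invoke the hypothesis $V=L$. The first arrow is a classical \zfc\ fact: a $\kappa$-Aronszajn tree is a tree of size $\kappa$ all of whose levels have size $<\kappa$, hence, as $\kappa$ is regular, of height $\kappa$; applying the weak compactness of $\kappa$ to such a tree — say, through the characterization of weak compactness by elementary embeddings of transitive models of size $\kappa$ — produces a cofinal branch, a contradiction. The middle arrow $\tp_\kappa\implies\sh_\kappa$ is immediate, since, as already remarked, every $\kappa$-Souslin tree is $\kappa$-Aronszajn. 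So the content of the theorem, and the only place where $V=L$ enters, is the contrapositive of the third arrow: \emph{in $L$, if $\kappa$ is regular, uncountable, and not weakly compact, then there is a $\kappa$-Souslin tree}.

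For that, I would build $T=\bigcup_{\alpha<\kappa}T_\alpha$ by recursion on $\alpha<\kappa$, taking the nodes of $T_\alpha$ to be suitable elements of ${}^\alpha 2$ ordered by end-extension, and maintaining throughout that every level has size $<\kappa$, that $T$ is \emph{splitting} (every node has at least two immediate successors), and that every node of $T_{<\alpha}:=\bigcup_{\beta<\alpha}T_\beta$ has an extension on each level below $\alpha$. Successor stages are trivial. At a limit stage $\alpha$ one must select, for each $x\in T_{<\alpha}$, at least one cofinal branch of $T_{<\alpha}$ through $x$ to enter $T_\alpha$, subject to two demands: that fewer than $\kappa$ branches be selected — automatic from the regularity of $\kappa$ when $\kappa$ is inaccessible, and enforced by bookkeeping capping $|T_\alpha|\le|T_{<\alpha}|$ when $\kappa$ is a successor — and that antichains be killed. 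The classical device for the latter is a $\diamondsuit_\kappa(E)$-sequence $\langle A_\alpha\mid\alpha\in E\rangle$ for a \emph{non-reflecting} stationary set $E\s\kappa$: at a stage $\alpha\in E$ for which $A_\alpha$ happens to be a maximal antichain of $T_{<\alpha}$, one arranges that every branch entering $T_\alpha$ passes through a node comparable to some member of $A_\alpha$; this freezes $A_\alpha$ as a maximal antichain of the final tree, and since every subset of $T$ is guessed correctly on a stationary subset of $E$, $T$ can have no antichain of size $\kappa$, hence, by splitting, no chain of size $\kappa$ either. Non-reflection of $E$ is precisely what allows the commitments made at the $E$-stages to coexist: at a limit $\alpha\notin E$ of uncountable cofinality, $E\cap\alpha$ is nonstationary in $\alpha$, so one may fix a club of $\alpha$ disjoint from $E$ and thread branches along it without interference, while at cofinality $\omega$ there is no obstruction at all.

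It remains to supply, in $L$, such an $E$ together with $\diamondsuit_\kappa(E)$, for every regular uncountable non-weakly-compact $\kappa$, and here I would split into cases. If $\kappa=\mu^+$ is a successor cardinal, then $E:=\{\alpha<\kappa\mid\cf(\alpha)=\mu\}$ is trivially non-reflecting, since every ordinal below $\kappa$ has cofinality $\le\mu$, and $\diamondsuit_\kappa(E)$ holds in $L$ (indeed $\diamondsuit_\kappa(S)$ holds there for every stationary $S\s\kappa$); if $\mu$ is moreover singular, no such cofinality-based $E$ is available, and one instead runs the limit stages along a $\square_\mu$-sequence, which simultaneously yields a non-reflecting stationary set carrying $\diamondsuit_\kappa$ and furnishes the coherent choices of limit branches, so that there $\square_\mu$ and $\diamondsuit_\kappa$ — both available in $L$ — work in tandem. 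If $\kappa$ is inaccessible but not weakly compact, then — and this is the fine-structural heart of the matter — $\kappa$ fails to be $\Pi^1_1$-indescribable in $L$, and Jensen's analysis of condensation in the $L$-hierarchy converts a $\Pi^1_1$ sentence over $V_\kappa$ witnessing this failure into a stationary $E\s\kappa$ that reflects at no $\alpha<\kappa$ (equivalently: in $L$, an inaccessible $\kappa$ is weakly compact iff every stationary subset of $\kappa$ reflects); on such an $E$ the same fine structure delivers $\diamondsuit_\kappa(E)$, and level sizes remain below $\kappa$ because $\kappa$ is regular and, by \gch, $2^{<\kappa}=\kappa$.

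The step I expect to be the main obstacle is precisely this last one for inaccessible $\kappa$: extracting, from the bare failure of weak compactness in $L$, a non-reflecting stationary subset of $\kappa$ and the $\diamondsuit$-sequence living on it — genuinely fine-structural input with no \zfc\ substitute, since it is consistent for every stationary subset of an inaccessible $\kappa$ to reflect while $\kappa$ fails to be weakly compact. A secondary difficulty is keeping the limit-stage construction uniform across all cofinalities, which in the successor-of-singular case is what forces the extra appeal to $\square_\mu$. Once these combinatorial principles are in hand, verifying that the constructed tree has neither a chain nor an antichain of size $\kappa$ is routine.
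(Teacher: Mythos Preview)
The paper does not supply a proof of this theorem: it is stated in the introduction as a known result of Jensen, with a citation to \cite{jensen_fine_structure}, and serves purely as background and motivation for the paper's own results. There is thus no ``paper's own proof'' to compare your proposal against.

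That said, your outline is a faithful sketch of the classical argument. The cycle of implications is set up correctly, the identification of the nontrivial direction (in $L$, non--weakly-compact implies $\neg\sh_\kappa$) is right, and the case split --- successor of a regular, successor of a singular (via $\square_\mu$), inaccessible non--weakly-compact (via fine structure yielding a non-reflecting stationary set carrying $\diamondsuit$) --- matches the standard treatment. Your assessment of where the real work lies, namely extracting a non-reflecting stationary set with $\diamondsuit$ from the failure of $\Pi^1_1$-indescribability at an inaccessible in $L$, is accurate, and you are right that this step has no \zfc\ substitute. One small remark: the first implication, weak compactness $\Rightarrow\tp_\kappa$, is even more immediate than you suggest, since the tree property is one of the standard equivalent formulations of weak compactness for inaccessible cardinals; the embedding characterization works but is more than is needed.
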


We remind the reader that a cardinal $\kappa$ is \emph{weakly compact} iff it is uncountable
and Ramsey's theorem holds at the level of $\kappa$, i.e., every graph of size $\kappa$ contains a clique or an anticlique of size $\kappa$.

Ever since Jensen's result, the general belief has been that the consistency of $\sh_\kappa$ for $\kappa$ of the form $\lambda^{++}$ requires the consistency of a weakly compact cardinal.
This conjecture is supported by the following later results:
\begin{thm}[Mitchell and Silver, \cite{MR0313057}]\label{thm13} The existence of a regular cardinal $\lambda$ for which
    $\tp_{\lambda^{++}}$ holds is equiconsistent with the existence of a weakly compact cardinal.
In particular, the consistency of a weakly compact cardinal gives the consistency of $\neg\ch_\lambda$ together with $\sh_{\lambda^{++}}$.
\end{thm}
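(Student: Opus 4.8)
Since this is an equiconsistency, I would split the proof into the two implications relating $\mathrm{Con}(\zfc + \exists\,\text{weakly compact})$ and $\mathrm{Con}(\zfc + \exists\,\text{regular }\lambda,\ \tp_{\lambda^{++}})$. The ``in particular'' clause then follows at once from the forcing direction: the model witnessing $\tp_{\lambda^{++}}$ constructed below also satisfies $2^\lambda = \lambda^{++}$, hence $\neg\ch_\lambda$, and since every $\kappa$-Souslin tree is $\kappa$-Aronszajn, $\tp_{\lambda^{++}}$ outright implies $\sh_{\lambda^{++}}$.

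For the forcing (upper-bound) direction I would start in a model of $\gch$ carrying a weakly compact cardinal $\kappa$ (we may assume $\gch$ by passing to $L$), fix a regular $\lambda<\kappa$, and force with the Mitchell poset $\mathbb M=\mathbb M(\lambda,\kappa)$, whose conditions pair a condition of $\add(\lambda,\kappa)$ with a partial function of size ${<}\lambda^+$ supplying, on coordinates $\alpha$, names for conditions in a $\lambda^+$-closed collapse of $|\alpha|$ onto $\lambda^+$. The structural facts I would establish about $\mathbb M$ are standard: it has the $\kappa$-chain condition (a $\Delta$-system argument using the inaccessibility of $\kappa$), it preserves $\lambda^+$ and $\kappa$ while collapsing every cardinal strictly between them, it forces $2^\lambda=\kappa$, and for each $\alpha<\kappa$ it factors as $\mathbb M_\alpha\ast\dot{\mathbb M}^\alpha$ with $\mathbb M_\alpha$ of size ${<}\kappa$ and the tail $\dot{\mathbb M}^\alpha$ a projection of a $\lambda^+$-closed forcing. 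Granting these, let $G$ be generic and suppose for contradiction that $T\in V[G]$ is a $\lambda^{++}$-Aronszajn tree; since $\lambda^{++}=\kappa$, this is a $\kappa$-Aronszajn tree, and the plan is to refute it by exhibiting a cofinal branch. A $\kappa$-cc bookkeeping argument shows that $T\restriction\alpha\in V[G\cap\mathbb M_\alpha]$ for club-many $\alpha<\kappa$. Now invoke the weak compactness of $\kappa$ in $V$: choose a transitive $M\prec H_\theta$ of size $\kappa$ with ${}^{<\kappa}M\subseteq M$ containing all the relevant data, fix an elementary $j\colon M\to N$ with critical point $\kappa$, and lift it to $j\colon M[G]\to N[G][H]$ by assembling the tail generic $H$ inside $V[G]$ --- this is where the closure of the tail forcings (which also yields the ``no new cofinal branch'' lemmas needed to see that $j(T)$ agrees with $T$ below $\kappa$) and the fact that ${}^{<\kappa}M\subseteq M$ are both used. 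Then $j(T)$, having height $j(\kappa)>\kappa$, carries a node on level $\kappa$ whose set of predecessors is a cofinal branch of $T$ lying in $V[G]$ --- the desired contradiction. Hence $\mathbb M$ preserves the tree property at $\kappa=(\lambda^{++})^{V[G]}$.

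For the inner-model (lower-bound) direction, suppose $\tp_{\lambda^{++}}$ holds for some regular $\lambda$ while $\mathrm{Con}(\zfc+\exists\,\text{weakly compact})$ fails, so no inner model contains a weakly compact cardinal. Then the pertinent core model $K$ is available ($K=L$ if $0^{\#}$ does not exist, and the core model below a weakly compact in general), it satisfies $\gch$, and --- having no weakly compact cardinal --- it satisfies $\square_\mu$ for every infinite cardinal $\mu$ by Jensen's fine-structural construction. Write $\kappa:=(\lambda^{++})^V$, a cardinal of $K$. If $\kappa=(\theta^+)^K$ for some $\theta$, then $|\theta|^V\le\lambda^+$, and $\square_\theta$ in $K$ yields a special $\kappa$-Aronszajn tree $T\in K$: a $\kappa$-tree that is a union of $\theta$ antichains and has no cofinal branch. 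Being a union of $|\theta|^V\le\lambda^+$ antichains is upward absolute, and $\kappa$ is still a regular cardinal in $V$, so $T$ remains a special $\kappa$-Aronszajn tree in $V$ (a cofinal branch would have order type $\kappa$ yet meet each of $\le\lambda^+$ antichains at most once), contradicting $\tp_{\lambda^{++}}$. If instead $\kappa$ is a limit cardinal of $K$ --- hence, being regular in $V$, inaccessible and, by hypothesis, not weakly compact in $K$ --- I would appeal to Jensen's fine-structural constructions at such a $\kappa$ (a $\kappa$-Souslin tree, or a $\square(\kappa)$-sequence), together with covering-type arguments, to manufacture a $\kappa$-Aronszajn tree in $V$; this case is more delicate but again contradicts $\tp_{\lambda^{++}}$. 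Either way we reach a contradiction, so some inner model carries a weakly compact cardinal.

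The main obstacle is the forcing direction, specifically the verification that the Mitchell poset preserves the tree property at $\kappa$: one must balance the $\kappa$-chain condition (which governs preservation of $\kappa$ and the collapse of the interval below it) against the closure of the tail forcings (needed both for the ``no new cofinal branches'' lemmas and for building the lift of the weakly-compact embedding), and carry out the reflection and lifting in the absence of a full elementary embedding of $V$. On the inner-model side the only genuinely delicate point is the case analysis on $\kappa$ in $K$ --- above all the inaccessible case --- while the invocation of core-model machinery suited to the ambient anti-large-cardinal hypothesis is conceptually routine given Jensen's work.
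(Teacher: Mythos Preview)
The paper does not prove this theorem; it is stated in the introduction as a background result, attributed to Mitchell and Silver \cite{MR0313057}, and no proof is supplied. There is therefore no proof in the paper to compare your proposal against.

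That said, your outline is broadly the standard one. The forcing direction via Mitchell's poset is correctly sketched, including the factorisation and the lifting of a weakly-compact embedding. For the lower bound, the successor-in-$K$ case is fine, but the inaccessible-in-$K$ case, as you acknowledge, is left vague: a Souslin tree in $L$ need not remain Aronszajn in $V$, and ``covering-type arguments'' are doing unspecified work. A cleaner uniform argument is to observe that if $\kappa:=(\lambda^{++})^V$ is not weakly compact in $L$, then Jensen's fine structure yields a $\square(\kappa)$-sequence in $L$; since $\kappa$ remains regular in $V$, this sequence is still a $\square(\kappa)$-sequence there, and Todorcevic's walks machinery (e.g.\ the tree $T(\rho_0)$, whose cofinal branches correspond to threads) then produces a $\kappa$-Aronszajn tree in $V$, contradicting $\tp_{\lambda^{++}}$. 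This handles both cases at once and avoids any appeal to absoluteness of Souslin-ness.
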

\begin{thm}[Laver and Shelah, \cite{laver_shelah}]\label{thm14} For every cardinal $\lambda$, if there is a weakly compact
    cardinal above $\lambda$, then there is a forcing extension by a $\lambda^+$-directed closed forcing notion
    in which $\ch_\lambda$ and $\sh_{\lambda^{++}}$ both hold.
\end{thm}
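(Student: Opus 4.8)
The plan is to begin with a weakly compact cardinal $\kappa>\lambda$. Since $\kappa$ is in particular inaccessible, $(2^\mu)^V<\kappa$ for all $\mu<\kappa$, and after a $\lambda^+$-directed closed preparatory iteration adding subsets of the regular cardinals in $[\lambda^+,\kappa)$ (which preserves the weak compactness of $\kappa$) we may also assume $2^{<\kappa}=\kappa$. Over this model I would force with $\mathbb P=\mathbb P_\kappa$, the length-$\kappa$ iteration $\langle\mathbb P_\alpha,\dot{\mathbb Q}_\alpha:\alpha<\kappa\rangle$ taken with supports of size $\le\lambda$ (inverse limits at stages of cofinality $\le\lambda$, direct limits elsewhere), where $\dot{\mathbb Q}_\alpha$ is a $\mathbb P_\alpha$-name for the product of a Levy collapse $\mathrm{Col}(\lambda^+,|\gamma|)$ — with $\gamma<\kappa$ supplied by a bookkeeping function that enumerates all ordinals below $\kappa$, thereby guaranteeing that $\kappa$ becomes $\lambda^{++}$ — with a \emph{specializing forcing}: for a $\mathbb P_\alpha$-name $\dot T$ handed to us by the same bookkeeping and forced to be a Souslin tree of some regular height $\theta\in(\lambda^+,\kappa]$, let the conditions be the partial functions $f\colon\dot T\to\lambda^+$ of size $\le\lambda$ that are injective on $<_{\dot T}$-chains and that extend whatever specializing data has already been committed to $\dot T$ at earlier stages. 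A generic such $f$ exhibits $\dot T$ as a union of $\le\lambda^+$ antichains.

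Granting the rest, $\ch_\lambda$ comes cheaply: each iterand, hence $\mathbb P$, is $\lambda^+$-directed closed (the union of a $\le\lambda$-sized directed family of specializing conditions has domain of size $\le\lambda$ and is still injective on chains), so $\mathbb P$ adds no new subsets of $\lambda$ and $(\mathcal P(\lambda))^{V^{\mathbb P}}=(\mathcal P(\lambda))^V$, an object of size $<\kappa$. Since $\kappa$ is preserved and becomes $\lambda^{++}$ while every ordinal in $(\lambda^+,\kappa)$ is collapsed to have size $\le\lambda^+$, we conclude $2^\lambda=\lambda^+$ in $V^{\mathbb P}$.

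The cardinal arithmetic rests on two chain-condition facts, the first of which I expect to be the main obstacle. (a)~Each specializing forcing is $\theta$-c.c.\ \emph{provided the tree it specializes is Souslin}, not merely Aronszajn: given $\theta$ conditions, a $\Delta$-system argument using $2^{<\theta}=\theta$ yields a subfamily whose members agree on a common root, and then the only obstruction to compatibility of two of them is a comparable pair $x<_{T}y$ drawn one from each and assigned a common value; a pressing-down argument forces the relevant nodes into a single antichain of $T$, which has size $<\theta$ because $T$ is Souslin — a contradiction. (For a merely Aronszajn $T$ this argument breaks down and the forcing may collapse $\theta$; this is exactly why the target is $\sh_{\lambda^{++}}$ rather than $\tp_{\lambda^{++}}$ — the latter in fact fails in $V^{\mathbb P}$ by Specker's theorem.) (b)~A preservation argument for supports-of-size-$\le\lambda$ iterations of $\lambda^+$-directed closed, $\lambda^{++}$-c.c.\ iterands, together with $2^{<\kappa}=\kappa$ and the inaccessibility of $\kappa$, shows that $\mathbb P$ itself is $\kappa$-c.c.; hence $\kappa$ and all larger cardinals are preserved, $|\mathbb P|=\kappa$, and $\kappa=(\lambda^{++})^{V^{\mathbb P}}$.

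Finally, $\sh_{\lambda^{++}}$ in $V^{\mathbb P}$. Suppose toward a contradiction that $\dot S$ is a $\mathbb P$-name forced to be a $\kappa$-Souslin tree, with underlying set $\kappa$. Using that $\kappa$ is weakly compact — via $\Pi^1_1$-reflection over $(V_\kappa,\in,\mathbb P,\dot S)$ of the assertion that $\mathbb P$ forces $\dot S$ to have no antichain of size $\kappa$, or via an elementary embedding $j\colon V\to M$ with critical point $\kappa$ — the set of $\alpha<\kappa$ such that $\mathbb P\cap V_\alpha=\mathbb P_\alpha$, $\dot S\cap V_\alpha$ is a $\mathbb P_\alpha$-name, and $\mathbb P_\alpha$ forces ``$\dot S\cap V_\alpha$ is an $\alpha$-Souslin tree'' is stationary in $\kappa$, hence cofinal. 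Arrange the bookkeeping so that for each such $\alpha$ the name $\dot S\cap V_\alpha$ is handed to some iterand at a stage $\beta\in[\alpha,\kappa)$ and is specialized there by a generic $f_\alpha$ extending $\bigcup\{f_{\alpha'}:\alpha'<\alpha\text{ a reflection point}\}$; this is coherent, since that union is itself a chain-injective function into $\lambda^+$, available by stage $\beta$. Taking the union over the cofinally many reflection points produces, in $V^{\mathbb P}$, a function $f\colon S\to\lambda^+$ injective on chains (every chain of $S$ is bounded, since $S$ is Souslin and so has no cofinal branch, and hence lies in some $\dot S\cap V_\alpha$). Then $S=\bigcup_{i<\lambda^+}f^{-1}(i)$ with each $f^{-1}(i)$ an antichain, and since $\lambda^{++}=\kappa$ is regular and greater than $\lambda^+$, some $f^{-1}(i)$ has size $\kappa$ — contradicting that $S$ is Souslin. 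Thus $\sh_{\lambda^{++}}$ holds in $V^{\mathbb P}$. The two delicate points are the Souslin-sensitive chain condition of (a) and the reflection-plus-coherence bookkeeping of the last step; it is in the latter that the weak compactness of $\kappa$ is genuinely needed, mere inaccessibility being insufficient.
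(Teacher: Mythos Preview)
The paper does not prove this theorem; it is stated as a citation to Laver and Shelah \cite{laver_shelah} with no argument given, so there is no proof in the paper against which to compare your attempt.

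That said, your sketch does follow the genuine Laver--Shelah strategy: a $\le\lambda$-support iteration of $\lambda^+$-closed specializing forcings below a weakly compact $\kappa$, with weak compactness used to reflect any putative $\kappa$-Souslin tree in the extension down to Souslin initial segments that were caught by the bookkeeping. A few points deserve tightening. First, in the actual Laver--Shelah construction the iterands specialize trees of height strictly less than $\kappa$ (initial segments handed over by reflection), so each iterand has size $<\kappa$; the $\kappa$-c.c.\ of $\mathbb{P}$ then follows from inaccessibility and the direct-limit structure, without needing your claim~(a) at $\theta=\kappa$. Your ``pressing-down'' argument for the chain condition of the specializing poset is not clearly correct as stated for $\lambda>\omega$: with conditions of size $\lambda$, after passing to a $\Delta$-system the incompatibility between two conditions may be witnessed by different pairs of nodes, and there is no obvious single antichain of $T$ into which to press down. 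Second, the coherence of the specializing functions $f_\alpha$ across reflection points is the technically delicate part of Laver--Shelah and requires the bookkeeping to be arranged in advance so that later specializing posets literally extend the earlier generics; your phrase ``extending $\bigcup\{f_{\alpha'}:\alpha'<\alpha\}$'' gestures at this but does not explain why the partial generic is available as a condition at the later stage. These are repairable along the lines of the original paper, but as written they are gaps rather than routine verifications.
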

\begin{thm}[Rinot, \cite{paper24}]\label{thm15} For every cardinal $\lambda$, if $\ch_\lambda$, $\ch_{\lambda^+}$, and
    $\sh_{\lambda^{++}}$ all hold, then $\lambda^{++}$ is a weakly compact cardinal in $L$.
\end{thm}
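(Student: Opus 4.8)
The plan is to prove the contrapositive in constructive form. So, assume $\ch_\lambda$, $\ch_{\lambda^+}$, and that $\kappa:=\lambda^{++}$ is \emph{not} weakly compact in $L$; we aim to build a $\kappa$-Souslin tree, which refutes $\sh_{\lambda^{++}}$. Note at the outset that $\kappa$ is the double successor $(\lambda^+)^+$ and is regular in $L$ as well, so that every limit ordinal below $\kappa$ has cofinality either $\le\lambda$ or exactly $\lambda^+$, and that $\ch_{\lambda^+}$ yields $\kappa^{<\kappa}=\kappa$.

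The construction will be driven by two combinatorial principles. The first is a diamond principle: since $\ch_\lambda$ and $\ch_{\lambda^+}$ give $2^{<\lambda^+}=\lambda^+$ and $2^{\lambda^+}=\lambda^{++}$, the results of Gregory and Shelah on $\diamondsuit$ supply $\diamondsuit(\kappa)$, and more precisely $\diamondsuit(S)$ for the stationary sets $S\subseteq\kappa$ that the construction will query --- in particular for the set of $\alpha<\kappa$ with $\cf(\alpha)\le\lambda$. The second is $\square(\kappa)$, and this is exactly where the hypothesis on $L$ is consumed: by Jensen's fine-structural analysis of the constructible universe, a regular uncountable cardinal that is not weakly compact in $L$ carries a $\square(\kappa)$-sequence there. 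If $\kappa$ is not even a limit cardinal of $L$, say $\kappa=(\nu^+)^L$, then a $\square_\nu$-sequence of $L$ does the job, and its order-type bound $\otp(C_\alpha)\le\nu<\kappa$ makes it thread-free in every outer model, so $\square(\kappa)$ lifts to $V$ verbatim; if instead $\kappa$ is inaccessible but not weakly compact in $L$ (note that $0^\#$ cannot exist here, as otherwise every uncountable cardinal of $V$, in particular $\kappa$, would be weakly compact in $L$), one extracts a genuine $\square(\kappa)$-sequence and argues, invoking the Covering Lemma together with the arithmetic hypotheses, that no thread for it is present in $V$.

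Granted $\square(\kappa)$ and the needed instances of $\diamondsuit$, one builds $T\subseteq{}^{<\kappa}2$ by recursion on its levels, splitting every node at successor stages. At a limit level $\alpha$ of cofinality $\le\lambda$ one may afford to keep \emph{every} cofinal branch of $T\restriction\alpha$: as $\ch_\lambda$ gives $(\lambda^+)^\lambda=\lambda^+$, this adds only $\le\lambda^+$ new nodes. At a limit level $\alpha$ of cofinality $\lambda^+$ the naive number of branches can be as large as $2^{\lambda^+}=\kappa$, so there one uses the club $C_\alpha$ from the $\square(\kappa)$-sequence to select, for each node to be continued, a single cofinal branch threaded coherently along $C_\alpha$; coherence of the $\square(\kappa)$-sequence forces these selections to agree from level to level, so once again only $\le\lambda^+$ nodes are added, keeping $T$ of size $\kappa$. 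Whenever the $\diamondsuit$-sequence guesses, at a level $\alpha$, a maximal antichain of $T\restriction\alpha$, one continues only the nodes lying above a member of it, thereby sealing it. A routine closing-off argument then shows that every antichain of $T$ of size $\kappa$ is sealed at some level, so none exists; and any cofinal branch of $T$ would, through the $C_\alpha$-coherent selections, decode a thread of the $\square(\kappa)$-sequence, contradicting $\square(\kappa)$. Hence $T$ is $\kappa$-Souslin.

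The main obstacle is the limit-level bookkeeping: one must interleave the sealing of the guessed antichains with the coherent, size-$\lambda^+$ choice of continuations at the cofinality-$\lambda^+$ levels, and check that this never deadlocks --- a guessed antichain could a priori fail to lie below any node that the recursion has so far kept available --- while keeping the square-driven apparatus rigid enough that a cofinal branch really does read off a thread. A subsidiary technical point is the transfer of $\square(\kappa)$ from $L$ to $V$ in the case that $\kappa$ is inaccessible in $L$. No attempt is made here to arrange that $T$ be $\lambda^+$-complete; securing completeness, and dispensing with the hypothesis $\ch_{\lambda^+}$, is precisely the improvement carried out in the main body of this paper.
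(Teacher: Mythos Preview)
This theorem is merely \emph{cited} in the present paper; no proof is given here, so there is nothing to compare against directly. The result is from \cite{paper24}, and your outline does follow the broad architecture of that paper: derive $\square(\kappa)$ from the failure of weak compactness in $L$, derive diamond from the arithmetic hypotheses, and then build a $\kappa$-Souslin tree.

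That said, two points deserve comment. First, the step you label ``the main obstacle'' is not mere bookkeeping --- it is essentially the entire content of \cite{paper24}. Your recursion along $C_\alpha$ requires that for $\beta\in\acc(C_\alpha)$ of cofinality $\lambda^+$, the branch $b_\alpha(x)\restriction\beta$ coincides with $b_\beta(x)$; but $b_\beta$ was built with reference to the diamond guess $A_\beta$, while $b_\alpha$ uses $A_\alpha$, and ordinary $\diamondsuit$ gives no coherence between these. With $\square_{\lambda^+}$ (order-type bound $\le\lambda^+$) there are classical workarounds; with bare $\square(\kappa)$ the clubs $C_\alpha$ can have arbitrary order type below $\kappa$, and Rinot's paper introduces new machinery (proxy principles interleaving the square and diamond data coherently) precisely to overcome this. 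Your sketch does not indicate how you would do so. Second, the claim that ``any cofinal branch of $T$ would \ldots\ decode a thread of the $\square(\kappa)$-sequence'' is unjustified and, in any case, unnecessary: once the tree is splitting and has no $\kappa$-antichain, it has no $\kappa$-branch automatically.

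On the $L$-to-$V$ transfer of $\square(\kappa)$: this is a known theorem (if $\square(\kappa)$ fails then $\kappa$ is weakly compact in $L$; see Todor\v{c}evi\'c, or derive it from Jensen's global square), so it can be quoted as a black box rather than re-argued via covering.
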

Whether the hypotheses of Theorem~\ref{thm15} are consistent, relative to any large cardinal assumption, is a major open problem.

In this paper, we are interested in a possible converse for Theorem~\ref{thm14}.
As of now, the best result in this direction gives a lower bound
of an inaccessible cardinal.\footnote{Recall that any weakly compact cardinal admits stationarily many Mahlo cardinals below it,
and any Mahlo cardinal admits stationarily many inaccessible cardinals below it.}

\begin{thm}[Shelah and Stanley, \cite{shelah_stanley}]\label{thm16} For every cardinal $\lambda$, if $\ch_\lambda$ and $\sh_{\lambda^{++}}$ both hold, then $\lambda^{++}$ is an inaccessible cardinal in $L$.
\end{thm}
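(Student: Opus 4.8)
The plan is to prove the contrapositive: we assume $\ch_\lambda$ holds while $\lambda^{++}$ is \emph{not} inaccessible in $L$, and we produce a $\lambda^{++}$-Souslin tree, contradicting $\sh_{\lambda^{++}}$. First note that, being regular in $V$, the cardinal $\lambda^{++}$ is regular in $L$ — any witness in $L$ to its singularity would be a witness in $V$ — so, failing to be inaccessible there, it is a successor cardinal of $L$, say $\lambda^{++}=(\mu^+)^L$ with $\lambda^+\le\mu<\lambda^{++}$; in particular $|\mu|=\lambda^+$ in $V$. Since $L\models\gch$, we have at our disposal, inside $L$, a $\square_\mu$-sequence (equivalently, a coherent sequence on $\lambda^{++}$ with the order types of its clubs bounded by $\mu$), a $(\lambda^+,1)$-morass, and $\diamondsuit$ holding on every stationary subset of $\lambda^{++}$.

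The heart of the argument is to build, working \emph{entirely inside $L$}, a \emph{super-Souslin tree} of height $\lambda^{++}$: a $\lambda^{++}$-tree $T$ together with an auxiliary ideal generated by $\lambda^+$-many ``thin'' antichains, the whole package tailored so that $T$ is a genuine $\lambda^{++}$-Souslin tree in any extension of $L$ in which $\ch_\lambda$ holds. One runs a Jensen-style recursion of length $\lambda^{++}$: along a non-reflecting stationary set $E\s E^{\lambda^{++}}_{\le\lambda}$ extracted from the $\square_\mu$-sequence, at each $\delta\in E$ one seals a maximal antichain of $T\restriction\delta$ predicted by the $\diamondsuit$-sequence, and the morass (equivalently, the coherence and the order-type bound of the $\square_\mu$-sequence) is used to amalgamate the partial trees so that $T$ ends up carrying the required $\lambda^+$-generated ideal. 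The salient point is that, because the sealing is orchestrated along the morass, the objects one must guess have size $\le\lambda$, so that $\ch_\lambda$ — rather than $\ch_{\lambda^+}$, which would be needed for a plain $\diamondsuit_{\lambda^{++}}$-driven construction — is exactly the hypothesis in play.

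It then remains to verify that, in $V$ under $\ch_\lambda$, this $T\in L$ is indeed $\lambda^{++}$-Souslin. Since $\lambda^{++}$ is still a regular cardinal in $V$ and $T$ still has size and height $\lambda^{++}$ with levels of size $<\lambda^{++}$, it suffices to rule out a cofinal branch and an antichain of size $\lambda^{++}$ in $V$; both are handled by the super-Souslin apparatus. Given, say, an antichain $A\in V$ of size $\lambda^{++}$, the usual reflection argument produces club-many $\delta<\lambda^{++}$ at which $A\cap(T\restriction\delta)$ is a maximal antichain of $T\restriction\delta$; at the points of $E$ this trace is captured by one of the $\lambda^+$ generators of the ideal. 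But since $\ch_\lambda$ caps at $\lambda^+$ the number of relevant configurations below each such $\delta$, the construction in $L$ was able to arrange in advance that every captured trace is already a maximal antichain of all of $T$; hence $|A|\le\lambda^+$, a contradiction, and a symmetric argument excludes a new cofinal branch.

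I expect the main obstacle to be the middle step — the construction of the super-Souslin tree inside $L$ — and, within it, the twin tasks of (a) isolating precisely the consequence of ``$\lambda^{++}$ a successor cardinal in $L$'' (some strengthening of $\square(\lambda^{++})$ or $\diamondsuit$, packaged via the morass) that powers the amalgamation, and (b) designing the $\lambda^+$-generated ideal to be at once rich enough to certify Souslinity in every $\ch_\lambda$-model and coherent with the level-by-level recursion. A further point requiring care is that the non-reflecting stationary set $E$ and the ideal's generators must survive the passage from $L$ to $V$; this is ensured by reading them off the $\square_\mu$-sequence and the morass — structures $V$ inherits verbatim — rather than off the $V$-club filter, which $V$ may have enlarged.
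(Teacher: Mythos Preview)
The paper does not prove this theorem; it is merely cited as a result of Shelah and Stanley. The paper's own contribution is the stronger Theorem~A (for uncountable $\lambda$, with ``Mahlo'' in place of ``inaccessible''), and its approach --- extract $\square^B_{\lambda^+}$ from $L$, combine with $\diamondsuit(\lambda^+)$ (which follows from $\ch_\lambda$ for uncountable $\lambda$) to get the forcing axiom $\fa{\lambda^+}$, then build the super-Souslin tree \emph{in $V$} --- is the template against which your sketch should be measured.

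Your proposal has the right ingredients but a genuine misstep in where the tree is built. You construct the super-Souslin tree \emph{inside $L$} and then argue it remains Souslin in $V$. But the absoluteness property of a $\lambda^{++}$-super-Souslin tree (Definition~\ref{superdef} and the Fact following it) guarantees a Souslin subtree only in outer models that share the same $\mathcal{P}(\lambda)$ and the same $\lambda^{++}$. The hypothesis $\ch_\lambda$ in $V$ does \emph{not} give $\mathcal{P}^V(\lambda)=\mathcal{P}^L(\lambda)$, so the transfer argument you sketch --- with the $L$-diamond ``capturing'' traces of $V$-antichains --- cannot go through: the $\diamondsuit$-sequence lives in $L$ and has no access to the new subsets of $\lambda$ that $V$ may contain. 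Your description of a super-Souslin tree as ``a tree together with an ideal generated by $\lambda^+$-many thin antichains'' is also not the notion used here; the correct definition is a function $F:[T^\lambda]^2\to\lambda^+$ with a Ramsey-type property, and the construction is not a Jensen-style sealing-along-diamond recursion but rather a morass/forcing-axiom amalgamation.

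The fix is to carry out the construction in $V$: from ``$\lambda^{++}$ not inaccessible in $L$'' one extracts, in $V$, a $(\lambda^+,1)$-morass (this is the non-trivial step from the Shelah--Stanley paper that you gesture at but do not isolate), and then the morass together with $\ch_\lambda$ in $V$ yields a $\lambda^{++}$-super-Souslin tree in $V$ directly --- no absoluteness argument is needed, since a super-Souslin tree already contains a Souslin subtree in the model where it lives.
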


Here, we establish the following.
\begin{thma} For every uncountable cardinal $\lambda$, if $\ch_\lambda$ and $\sh_{\lambda^{++}}$ both hold, then $\lambda^{++}$ is a Mahlo cardinal in $L$.
\end{thma}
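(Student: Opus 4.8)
The plan is to establish the contrapositive, in the sharp form announced in the abstract: granting $\ch_\lambda$ together with the assumption that $\kappa := \lambda^{++}$ is not a Mahlo cardinal in $L$, we shall manufacture a $\lambda^+$-complete $\kappa$-Souslin tree, which of course refutes $\sh_\kappa$. Observe first that $\kappa$, being a successor cardinal, is regular in $V$ and hence regular in $L$; so by hypothesis $\kappa$ is a regular, non-Mahlo, and in particular non-weakly-compact cardinal of $L$, and we are entitled to run fine-structure constructions at $\kappa$ \emph{itself}, rather than at the possibly strictly smaller cardinal $(\lambda^{++})^L$ with which it need not coincide.

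The argument will pass through an intermediate notion --- a \emph{$\kappa$-super-Souslin tree} --- interpolating between the two horns: its existence is cheap enough to be extracted from the fine structure of $L$, yet, once $\ch_\lambda$ is on the table, it is robust enough to spawn a genuine $\kappa$-Souslin tree. For the first half, I would argue that $L$ satisfies the ``square $+$ diamond'' hypothesis feeding the forcing axiom $\fa{\kappa}$: the square ingredient is either Jensen's $\square_{\lambda^+}$ (a theorem of $L$ for every $\lambda$) or, when $\kappa$ is a limit cardinal of $L$, the principle $\square(\kappa)$, which holds in $L$ precisely because $\kappa$ is not weakly compact there; the diamond ingredient holds in $L$ outright. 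Non-Mahloness of $\kappa$ moreover supplies an $L$-club $E \subseteq \kappa$ disjoint from the inaccessibles of $L$, which is what lets one anchor the construction at $\kappa$ and, crucially, will later enable its transfer to $V$. Invoking inside $L$ the implications ``square and diamond $\Rightarrow \fa{\kappa}$'' and ``$\fa{\kappa} \Rightarrow$ there is a $\kappa$-super-Souslin tree'', one obtains a $\kappa$-super-Souslin tree $S$ in $L$.

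The step I expect to be the crux is showing that $S$ remains a $\kappa$-super-Souslin tree in $V$, and not merely in $L$. This is where the canonicity of the witnesses above, and the full strength of ``not Mahlo in $L$'', pay off: the underlying $\square$-sequence is the canonical one furnished by the fine structure of $L$, and such sequences admit no thread in any outer model --- a standard indestructibility feature --- so the coherence underpinning the super-Souslin property persists to $V$. It then remains to confirm that the remaining clauses in the definition of ``super-Souslin'' --- which should refer only to the tree, to its levels (each of size $<\kappa$), and to this indestructible coherence, and never to the ambient power set of $\kappa$ --- are of complexity $\Sigma_1$ over $H_\kappa$ and hence upward absolute from $L$ to $V$. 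Granting this, $S$ is a $\kappa$-super-Souslin tree of $V$.

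Finally, I would invoke in $V$ the implication ``a $\lambda^{++}$-super-Souslin tree together with $\ch_\lambda$ yields a $\lambda^+$-complete $\lambda^{++}$-Souslin tree''. Here $\ch_\lambda$ is consumed, as is --- presumably the reason the theorem asks for $\lambda$ to be uncountable, and as throughout this circle of ideas --- the passage from $2^\lambda = \lambda^+$ to an auxiliary $\diamondsuit_{\lambda^+}$ together with the counting of $\lambda$-branches that accompanies it. The tree so produced contradicts $\sh_{\lambda^{++}}$, completing the contrapositive and with it the proof. Apart from the absoluteness verification above, the only point demanding genuine care is the bookkeeping that keeps every fine-structure construction anchored at $(\lambda^{++})^V$ --- legitimate exactly because $\kappa$ is a regular cardinal of $L$.
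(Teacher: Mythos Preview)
Your approach has a genuine gap: you propose to build the super-Souslin tree inside $L$ and then transfer it to $V$ by absoluteness, but this does not work, and the paper's proof proceeds quite differently.

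The problem is twofold. First, the parameters of a ``$\lambda^{++}$-super-Souslin tree'' are the cardinals $\lambda$, $\lambda^+$, $\lambda^{++}$ of the ambient model. In $L$ these may be entirely different from their $V$-values, so an object built in $L$ via $\fa{(\lambda^+)^L}$ is a super-Souslin tree in the wrong sense. Your ad hoc split into ``$\kappa$ a successor in $L$'' versus ``$\kappa$ a limit in $L$'' does not repair this: in neither case do you get a function $F:[T^\lambda]^2 \to \lambda^+$ with the $V$-correct $\lambda$ and $\lambda^+$. Second, even granting such a tree in $L$, the absoluteness you need is exactly what fails. The defining property of a super-Souslin tree (see Definition~\ref{superdef} and the Fact following it) guarantees a Souslin subtree only in outer models preserving $\mathcal P(\lambda)$ and $\lambda^{++}$; but $V$ almost never preserves $\mathcal P(\lambda)$ over $L$. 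The indestructibility of canonical square sequences under threading is irrelevant here --- it is not the coherence that is in danger, it is the counting argument that needs $|T^\lambda_\alpha| \le \lambda^+$ computed in the right model.

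The paper's route avoids all of this by observing that the single bridge between $L$ and $V$ is already Jensen's theorem: if $\square_{\lambda^+}$ fails \emph{in $V$}, then $(\lambda^{++})^V$ is Mahlo in $L$. Contraposing, the hypothesis ``$\lambda^{++}$ not Mahlo in $L$'' gives $\square_{\lambda^+}$ --- hence $\square^B_{\lambda^+}$ --- outright in $V$. Combined with Shelah's theorem that $\ch_\lambda$ entails $\diamondsuit(\lambda^+)$ for uncountable $\lambda$ (this is where uncountability is used), one now has the hypotheses of Theorem~B in $V$; Theorems~B and~C then produce the $\lambda^{++}$-super-Souslin tree entirely in $V$, and the Souslin subtree follows. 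No transfer from $L$, and no absoluteness verification, is needed.
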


The following table provides a clear summary of all of these results.

\begin{center}
\begin{tabular}{c|c|c|c|c|c|c}
Theorem&$\lambda$&$\ch_\lambda$&$\ch_{\lambda^+}$&$\sh_{\lambda^{++}}$&lower bound&upper bound\\
\hline
\ref{thm13}&regular&\xmark&\vmark&\vmark&&weakly compact\\
\hline
\ref{thm15}&arbitrary&\vmark&\vmark&\vmark&weakly compact&\\
\hline
\ref{thm14}&arbitrary&\vmark&\xmark&\vmark&&weakly compact\\
\ref{thm16}&arbitrary&\vmark&\xmark&\vmark&inaccessible&\\
A&uncountable&\vmark&\xmark&\vmark&Mahlo&\\
\end{tabular}
\end{center}

\subsection{Combinatorial constructions}
In order to prove Theorem A, we develop a general framework for carrying out combinatorial constructions.
It turns out that, in this and other applications, it is often desirable to be able to construct an object of size $\kappa^+$,
where $\kappa$ is a regular, uncountable cardinal, using approximations to that object of size $<\kappa$.
When one attempts to carry out such a construction just using the axioms of \zfc, though, one naturally
runs into problems: the construction seems to require $\kappa^+$ steps, but the approximations may become
too large after only $\kappa$ steps.

The usual way to attempt to overcome this problem is to assume, in addition to \zfc, certain nice combinatorial
features of $\kappa$ or $\kappa^+$. One such feature, whose definition is motivated by precisely such constructions, is
the existence of a $(\kappa, 1)$-morass (see \cite[\S4]{MR0376351} or \cite[Chapter VIII]{devlin_book}).
Velleman \cite{velleman_forcing}, and Shelah and Stanley \cite{shelah_stanley},
present frameworks for carrying out constructions of objects of size $\kappa^+$ using a
$(\kappa, 1)$-morass. In both instances, these frameworks take the form of forcing axioms which turn
out to be equivalent to the existence of morasses.

Another combinatorial assumption that can be helpful in these constructions is the existence of a diamond sequence. In a series of
papers on models with second order properties, culminating in a general treatment in \cite{shelah_laflamme_hart},
Shelah et al.\ develop a technique for using $\diamondsuit(\kappa)$ to build objects of size $\kappa^+$ out of
approximations of size $<\kappa$.
Ideas from these papers were used by Foreman, Magidor, and Shelah \cite{fms_ii} to prove, assuming
the consistency of a huge cardinal, the consistency of the existence of an ultrafilter $\mathcal U$ on $\omega_1$ such that
$|\omega^{\omega_1} / \mathcal U| = \aleph_1$, and later by Foreman \cite{foreman_dense_ideal} to prove, again assuming the
consistency of a huge cardinal, the consistency of the existence of an $\aleph_1$-dense ideal on $\aleph_2$.

In this paper, we present a framework for constructions of objects of size $\kappa^+$ using $\diamondsuit(\kappa)$
and $\square^B_\kappa$, a weakening of $\square_\kappa$ that, unlike $\square_\kappa$ itself, is implied by the
existence of a $(\kappa, 1)$-morass. As in \cite{velleman_forcing} and \cite{shelah_stanley}, our framework
takes the form of a forcing axiom. Specifically, in Section~\ref{forcing_sect},
we isolate a class of forcing notions $\mathcal P_\kappa$,
introduce the notion of a \emph{sharply dense system}, and formulate a forcing axiom, $\fa{\kappa}$,
that asserts that for every $\mathbb P$ from the class $\mathcal P_\kappa$ and every sequence
$\langle \mathcal{D}_i\mid i<\kappa\rangle$ of sharply dense systems, there is a filter $G$ on $\mathbb P$
that meets each $\mathcal{D}_i$ everywhere.

The last two sections of the paper are devoted to the proof of the following:

\begin{thmb} For every regular uncountable cardinal $\kappa$,
if $\diamondsuit(\kappa)$ and $\square^B_\kappa$ both hold, then so does $\fa{\kappa}$.
\end{thmb}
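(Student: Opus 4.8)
The plan is to build the filter $G$ by a single recursion of length $\kappa$. Fix $\mathbb P\in\mathcal P_\kappa$, a sequence $\langle\mathcal D_i\mid i<\kappa\rangle$ of sharply dense systems, a $\square^B_\kappa$-sequence $\vec C=\langle C_\alpha\mid\alpha<\kappa^+\rangle$, and a $\diamondsuit(\kappa)$-sequence $\langle Z_\alpha\mid\alpha<\kappa\rangle$. A single $\kappa$-chain of conditions would generate a filter far too thin to meet a system everywhere, so instead I would maintain along the recursion a coherent, ever-more-specified system of conditions indexed by the tree of proper initial segments of the clubs $C_\alpha$, a tree of height at most $\kappa$ (by $\otp(C_\alpha)\le\kappa$) with up to $\kappa^+$ branches. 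At stage $\alpha<\kappa$ the recursion decides the conditions attached to the $\alpha$-th level of this tree; the coherence of $\vec C$ is precisely what lets the level-by-level decisions glue into one system, and hence what allows $G$ to reach size $\kappa^+$ even though every individual condition stays of size $<\kappa$.

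The role of $\diamondsuit(\kappa)$ is to compress the ostensibly $\kappa^+$-long bookkeeping into a $\kappa$-long one, in the spirit of the Shelah-style constructions mentioned in the introduction. Since $\kappa$ is regular, the state of the construction along any single branch is, at each stage $\alpha<\kappa$, an object of size $<\kappa$; fixing a suitable coding of such states by subsets of $\kappa$, one uses $\diamondsuit(\kappa)$ to anticipate them, the coherence of $\vec C$ ensuring that one sequence serves all branches at once because any two branches agree on a long initial segment. A standard argument then yields, along each branch, a club of stages $\alpha$ at which $Z_\alpha$ correctly codes the state so far together with a still-unmet density requirement $D$ from one of the $\mathcal D_i$. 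Whenever such an $\alpha$ occurs, one invokes the defining property of a \emph{sharply} dense system: it supplies, from the guessed condition, a single canonical ``omnibus'' extension that lies in $D$ simultaneously at every node where $D$ ought to be met, that is uniform enough to preserve coherence, and that is small enough to keep the system under control. It is this master-extension phenomenon, rather than $\kappa^+$-fold iteration, that makes it possible to discharge in a single step of a $\kappa$-length construction a requirement living at cofinally many, or all, nodes; at the remaining stages one simply propagates the previous system.

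At limit stages $\alpha<\kappa$ one must amalgamate the systems built below $\alpha$. At each fixed node, this is exactly what membership of $\mathbb P$ in the class $\mathcal P_\kappa$ is designed to permit for chains of length $<\kappa$; across nodes, the coherence of $\vec C$ forces the amalgamations to be mutually consistent, since whenever $\bar\alpha$ is an accumulation point of some $C_\alpha$ the fragment to be glued at $\bar\alpha$ was already glued at the earlier stage $\bar\alpha$. Together with the closure clauses packaged into $\mathcal P_\kappa$ and the bound $\otp(C_\alpha)\le\kappa$, this keeps every condition of size $<\kappa$ throughout. Finally, letting $G$ be the upward closure of the set of all conditions appearing in the completed system, one checks that $G$ is a filter and meets each $\mathcal D_i$ everywhere: given $i<\kappa$, a requirement $D\in\mathcal D_i$, and a node at which $D$ is to be met, the stages at which $Z_\alpha$ guesses the relevant fragment of the construction correctly form a club in $\kappa$, so one picks such an $\alpha$ at which the bookkeeping serves up exactly $D$ and reads off a condition of $G$ lying in $D$ at that node.

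The step I expect to be the main obstacle is the reconciliation of the branching with the single diamond sequence on $\kappa$: the guessing must succeed simultaneously along all branches of the $\square^B_\kappa$-tree, and the sharp-density omnibus extensions must genuinely commute with the coherent amalgamations dictated by $\square^B_\kappa$, for otherwise the ``everywhere'' in the conclusion is lost. In practice this comes down to checking that the defining clauses of $\mathcal P_\kappa$ and of sharp density were chosen strong enough to render the level-$\alpha$ extension operation canonical and branch-independent, yet weak enough that the forcings arising in the intended applications, in particular the one yielding super-Souslin trees, actually belong to the class. Striking that balance, and arranging the recursion so that the reflection furnishing the relevant clubs is legitimate, is where the real work lies.
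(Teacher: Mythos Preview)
Your proposal captures the broad shape---a $\kappa$-length recursion maintaining a coherent system of conditions, with $\diamondsuit(\kappa)$ guessing and $\square^B_\kappa$ supplying coherence---and you correctly flag branch-independence as the crux. But the central mechanism is missing, and your stated reason why one diamond suffices is wrong. You write that ``any two branches agree on a long initial segment''; this is not a feature of $\square^B_\kappa$ (it is closer to full $\square_\kappa$), and without a replacement mechanism the diamond on $\kappa$ has no grip on a construction indexed by $\kappa^+$. Likewise, sharp density by itself does not hand you a single ``omnibus'' extension that works at all nodes simultaneously; it only promises, for each condition separately, an extension with controlled realm.

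The paper's device is the \coi actions built into the definition of $\mathcal P_\kappa$. From $\square^B_\kappa$ one first extracts a matrix $\langle B^\beta_\eta\mid\beta<\kappa^+,\ \eta<\kappa\rangle$ of closed sets with $|B^\beta_\eta|<\kappa$, $\bigcup_\eta B^\beta_\eta=\beta+1$, and $B^\alpha_\eta=B^\beta_\eta\cap(\alpha+1)$ whenever $\alpha\in B^\beta_\eta$; the conditions sought are $p^\beta_\eta\in\bb P_{B^\beta_\eta}$ with $p^\beta_\eta\rest(\alpha+1)=p^\alpha_\eta$. Each $B^\beta_\eta$ carries a collapse $\pi^\beta_\eta:B^\beta_\eta\to\theta^\beta_\eta<\kappa$, and the diamond is used to guess not the $p^\beta_\eta$ themselves but their \emph{collapses} $\pi^\beta_\eta.p^\beta_\eta\in\bb P_\kappa\subseteq H_\kappa$, together with the ordinals $\theta^\beta_\eta$ and the transition maps $\pi^\beta_{\eta,\eta'}$. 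Distinct $\beta$ may yield identical collapsed data, so a diamond on $\kappa$ now suffices. Canonicity at a limit stage $\xi$ is secured by showing that $\theta^\beta_\xi$ and the maps $\pi^\beta_{\eta,\xi}$ are recoverable from the collapsed history alone via an ``enlarged direct limit'' operator; hence whenever the guess at $\xi$ is correct one can build a single condition $s_\xi\in\bb Q$ meeting all relevant densities and then set $p^\beta_\xi:=(\pi^\beta_\xi)^{-1}.(s_\xi\rest\theta^\beta_\xi)$ uniformly in $\beta$. Your ``omnibus extension'' is the right instinct, but it is this collapse-and-uncollapse through the \coi actions---not sharp density or an appeal to agreement of branches---that actually delivers the uniformity.
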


In Section~\ref{applications_sect}, we give a few simple applications of the forcing axiom $\fa{\kappa}$.
We open by pointing out that the Cohen forcing $\add(\kappa,\kappa^+)$
is a member of the class $\mathcal P_\kappa$.
Then, we show that $\fa\kappa$ entails $\kappa^{<\kappa}=\kappa$ and $\square^B_\kappa$.
This has three consequences. First, it shows that our square hypothesis in Theorem~B is optimal:

\begin{thmbp} Suppose that $\kappa$ is a regular, uncountable cardinal
and $\diamondsuit(\kappa)$ holds. Then the following are equivalent:
\begin{itemize}
\item $\square^B_\kappa$ holds;
\item $\fa{\kappa}$ holds.
\end{itemize}
\end{thmbp}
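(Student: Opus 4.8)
The plan is to observe that this proposition merely repackages two facts already in hand, one for each direction, so the ``proof'' is a short assembly rather than a new argument. For the implication $\square^B_\kappa \Rightarrow \fa{\kappa}$, I would invoke Theorem~B directly. Since $\diamondsuit(\kappa)$ is a standing hypothesis of the proposition and we are now additionally assuming $\square^B_\kappa$, the conjunction of $\diamondsuit(\kappa)$ and $\square^B_\kappa$ holds, so Theorem~B delivers $\fa{\kappa}$ with nothing left to verify. This is the direction carrying all of the real content, as it rests entirely on the construction underlying Theorem~B.

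For the converse $\fa{\kappa} \Rightarrow \square^B_\kappa$, I would appeal to the consequence of $\fa{\kappa}$ announced above, namely that $\fa{\kappa}$ outright entails $\square^B_\kappa$ (and, as a byproduct, $\kappa^{<\kappa}=\kappa$). Concretely, the argument there exhibits a member $\mathbb P$ of the class $\mathcal P_\kappa$ together with a sequence $\langle \mathcal D_i \mid i<\kappa\rangle$ of sharply dense systems such that any filter on $\mathbb P$ meeting every $\mathcal D_i$ everywhere reads off a $\square^B_\kappa$-sequence; $\fa{\kappa}$ then supplies such a filter. Note that this direction uses none of $\diamondsuit(\kappa)$; that hypothesis appears in the statement only to make the forward direction available via Theorem~B.

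Combining the two yields the stated equivalence under the standing assumptions that $\kappa$ is regular and uncountable and that $\diamondsuit(\kappa)$ holds. The only point needing care is purely bookkeeping: Theorem~B is formulated for regular uncountable $\kappa$ --- exactly our hypothesis --- and the implication $\fa{\kappa}\Rightarrow\square^B_\kappa$ holds for every such $\kappa$, so the hypotheses available match those required and there is no gap. Consequently there is no genuine obstacle internal to this proposition; whatever difficulty exists has already been absorbed into the proof of Theorem~B and into the verification that a suitably generic object for $\fa{\kappa}$ codes a $\square^B_\kappa$-sequence.
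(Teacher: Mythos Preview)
Your proposal is correct and matches the paper's approach exactly: the forward direction is Theorem~B, and the converse is Theorem~\ref{recovering_thm}, which derives $\square^B_\kappa$ from $\fa{\kappa}$ (without using $\diamondsuit(\kappa)$). One small inaccuracy: in the paper's proof of Theorem~\ref{recovering_thm} a single sharply dense system suffices, not a full $\kappa$-sequence, but this does not affect the argument.
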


Second, by Shelah's theorem \cite{shelah_diamonds} stating that $\ch_\lambda$ entails $\diamondsuit(\lambda^+)$
for every uncountable cardinal $\lambda$, it gives cases in which the diamond hypothesis is optimal, as well:

\begin{thmbpp} For every successor cardinal $\kappa>\aleph_1$,
the following are equivalent:
\begin{itemize}
\item $\diamondsuit(\kappa)$ and $\square^B_{\kappa}$ both hold;
\item $\fa{\kappa}$ holds.
\end{itemize}
\end{thmbpp}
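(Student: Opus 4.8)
The plan is to obtain Theorem~B'' as a bookkeeping corollary of Theorem~B, of the consequences of $\fa{\kappa}$ to be established in Section~\ref{applications_sect}, and of Shelah's theorem \cite{shelah_diamonds} that $\ch_\lambda$ implies $\diamondsuit(\lambda^+)$ for every uncountable cardinal $\lambda$. Fix a successor cardinal $\kappa>\aleph_1$ and write $\kappa=\lambda^+$; then $\lambda$ is an uncountable cardinal and $\kappa$ is itself regular and uncountable, so that Theorems~B and~B' are available for $\kappa$.

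For the implication from the first bullet to the second, suppose $\diamondsuit(\kappa)$ and $\square^B_\kappa$ both hold. Since $\kappa$ is a regular uncountable cardinal, Theorem~B (equivalently, the nontrivial direction of Theorem~B') applies verbatim and yields $\fa{\kappa}$; nothing further is required here.

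For the converse, suppose $\fa{\kappa}$ holds. By the results of Section~\ref{applications_sect}, $\fa{\kappa}$ entails both $\square^B_\kappa$ and $\kappa^{<\kappa}=\kappa$. The former is already half of the first bullet, so it remains only to recover $\diamondsuit(\kappa)$. Since $\lambda<\kappa$, the equality $\kappa^{<\kappa}=\kappa$ gives $2^\lambda\le\kappa^\lambda\le\kappa^{<\kappa}=\kappa=\lambda^+$, and as $2^\lambda\ge\lambda^+$ by Cantor's theorem we conclude $2^\lambda=\lambda^+$, i.e., $\ch_\lambda$ holds. Because $\lambda$ is uncountable, Shelah's theorem \cite{shelah_diamonds} now delivers $\diamondsuit(\lambda^+)$, that is, $\diamondsuit(\kappa)$; together with the $\square^B_\kappa$ obtained above, this establishes the first bullet.

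I do not anticipate a genuine obstacle: the argument merely assembles results proved or cited elsewhere in the paper. The only step requiring any care is the passage from $\kappa^{<\kappa}=\kappa$ to $\ch_\lambda$ and the ensuing appeal to Shelah's theorem, and it is precisely this appeal that accounts for the restriction $\kappa>\aleph_1$: for $\kappa=\aleph_1$ one would be left needing $\diamondsuit(\aleph_1)$, which is strictly stronger than the available $\ch$ and so cannot be recovered from $\fa{\aleph_1}$ by this route.
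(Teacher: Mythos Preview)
Your proposal is correct and follows essentially the same approach as the paper: the forward direction is Theorem~B, and the backward direction combines Corollary~\ref{cardinalarithmetic} ($\fa\kappa\Rightarrow\kappa^{<\kappa}=\kappa$), Theorem~\ref{recovering_thm} ($\fa\kappa\Rightarrow\square^B_\kappa$), and Shelah's theorem that $\ch_\lambda\Rightarrow\diamondsuit(\lambda^+)$ for uncountable $\lambda$. Your explanation of why the hypothesis $\kappa>\aleph_1$ is needed is also exactly the point the paper makes.
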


Third, it implies that $\fa{\kappa}$ entails the existence of a strong stationary coding set,
i.e., a stationary subset of $[\kappa^+]^{<\kappa}$
on which the map $x\mapsto\sup(x)$ is injective. This is of interest because the
existence of such a set was previously obtained by Shelah and Stanley
\cite{ShSt:167} from their forcing axiom $S_{\kappa}(\diamondsuit)$, which
is equivalent to the existence of a $(\kappa, 1)$-morass with a
built-in diamond sequence, and later (though earlier in terms of publication date) by Velleman
\cite{velleman_souslin} from the existence of a stationary simplified $(\kappa, 1)$-morass.

Section~\ref{tree_sect} is dedicated to the the study of super-Souslin trees.
For a cardinal $\lambda$, a \emph{$\lambda^{++}$-super-Souslin tree} is a $\lambda^{++}$-tree $(T,<_T)$ with a certain highly absolute
combinatorial property that ensures that $(T,<_T)$ has a $\lambda^{++}$-Souslin subtree in any
$\zfc$ extension $W$ of the universe $V$ that satisfies $\mathcal P^W(\lambda)=\mathcal P^V(\lambda)$
and $(\lambda^{++})^W = (\lambda^{++})^V$. These trees were introduced in a paper by Shelah and Stanley \cite{shelah_stanley}, where the existence of super-Souslin trees
provided the primary application of the forcing axiom isolated in that paper.
In particular, they proved that the existence of a $\lambda^{++}$-super-Souslin tree follows from the existence of a $(\lambda^+, 1)$-morass
together with $\ch_\lambda$. In \cite{velleman_forcing} and \cite{ShSt:167} the same hypotheses are shown to entail the existence of a $\lambda^{++}$-super-Souslin tree which is moreover $\lambda^+$-complete. Here, we prove the following analogous result.

\begin{thmc}   For every cardinal $\lambda$, $\fa{\lambda^+}$ entails
the existence of a $\lambda^+$-complete $\lambda^{++}$-super-Souslin tree.
\end{thmc}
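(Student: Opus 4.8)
Write $\kappa=\lambda^+$, so that $\lambda^{++}=\kappa^+$. The plan is to apply the forcing axiom $\fa\kappa$ to a poset $\mathbb P\in\mathcal P_\kappa$ whose conditions are $<\kappa$-sized approximations to a $\lambda^{++}$-super-Souslin tree, together with a sequence $\langle\mathcal D_i\mid i<\kappa\rangle$ of sharply dense systems whose being met everywhere forces those approximations to amalgamate into a single object of height $\kappa^+$. Since $\add(\kappa,\kappa^+)\in\mathcal P_\kappa$ and $\fa\kappa$ implies $\kappa^{<\kappa}=\kappa$, we may assume $\ch_\lambda$; this is exactly what will bound, at each limit level below $\lambda^{++}$, the number of branches---there are at most $2^\lambda=\lambda^+$ of them---so that all of them can be realised while levels stay of size $\le\lambda^+$. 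In analogy with the partial-function conditions of $\add(\kappa,\kappa^+)$, a condition $p\in\mathbb P$ is a structure of size $<\kappa$ coding: a partial tree $T^p$ (a set of fewer than $\kappa$ nodes, organised by levels and carrying its tree order); a fragment $F^p$ of the super-Souslin certificate, of the coherent, $\le\lambda$-generated shape stipulated in the definition of Section~\ref{tree_sect}; and the book-keeping---a ``height'' $\delta^p$ together with coherence markers---needed to fit the SDFA framework; the order is the appropriate coherent extension relation. A key structural point, and the reason the construction requires \emph{systems} of dense sets rather than mere dense sets, is that no single condition sees an entire level of the eventual tree: a level of $T$ may have size $\kappa$, pieced together from $\le\kappa$ fragments, each of size $<\kappa$, contributed by distinct conditions.

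The first task is to verify $\mathbb P\in\mathcal P_\kappa$, checking the structural requirements that define the class---modelled on $\add(\kappa,\kappa^+)$---and in particular that the relevant $<\kappa$-indexed families of conditions admit canonical lower bounds, the tree of such a bound being assembled from the unions of the constituent partial trees and certificate-fragments, with the limit level filled in minimally consistent with normality and with the branches committed to below it. The delicate point is to arrange the set-up so that this amalgamation stays of size $<\kappa$ and remains a legitimate condition, which is where $\ch_\lambda$ enters the counting; the remaining axioms of $\mathcal P_\kappa$ are read off directly from the definition of $\mathbb P$.

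Next one designs the sharply dense systems $\langle\mathcal D_i\mid i<\kappa\rangle$. Writing $T=\bigcup_{p\in G}T^p$ and $F=\bigcup_{p\in G}F^p$ for the eventual generic filter $G$, they are chosen so that meeting all of them everywhere forces: that $\bigcup_{p\in G}\delta^p$ exhausts $\kappa^+$, so $T$ has height $\lambda^{++}$; that $T$ is normal, with every node splitting and having extensions arbitrarily high, with levels of size $\le\lambda^+$ and with no cofinal branch, so $T$ is a $\lambda^{++}$-tree; that every chain of $T$ of size $\le\lambda$ is bounded, so $T$ is $\lambda^+$-complete; and that $F$ is total and of exactly the form demanded of a super-Souslin certificate. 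The leverage here is that, because a sharply dense system is met \emph{everywhere}---at each model along a suitable filtration of the construction---a single such system can discharge a task locally at every sub-approximation (for instance, realising cofinally often every branch of length $<\lambda^+$ seen so far), so that $\kappa$ systems suffice to anticipate all $\lambda^{++}$ levels, all short chains, and all the commitments the growing certificate must honour. This is the SDFA-framework substitute for the $(\kappa,1)$-morass used in \cite{shelah_stanley} and \cite{velleman_forcing}. The crux of the step is to verify that each $\mathcal D_i$ genuinely meets the closure-and-density requirements of a sharply dense system. Granting this, $\fa\kappa$ yields $G$, and $(T,F)$ is a $\lambda^+$-complete $\lambda^{++}$-tree carrying a super-Souslin certificate; hence, running the absoluteness argument of \cite{shelah_stanley}, for any \zfc\ extension $W\supseteq V$ with $\mathcal P^W(\lambda)=\mathcal P^V(\lambda)$ and $(\lambda^{++})^W=(\lambda^{++})^V$, the certificate $F$---whose values, being coded by subsets of $\lambda$, are unchanged in $W$, where $\lambda^{++}$ moreover remains regular---witnesses by a pressing-down argument that the canonically defined subtree of $T$ has no $\lambda^{++}$-antichain, i.e.\ is $\lambda^{++}$-Souslin in $W$.

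The principal obstacle is the joint calibration of $\mathbb P$ and of $\langle\mathcal D_i\mid i<\kappa\rangle$: $\mathbb P$ must be spare enough to lie in $\mathcal P_\kappa$---so that its limit/amalgamation conditions are manufacturable inside a condition of size $<\kappa$, the point at which $\ch_\lambda$ and the branch-book-keeping interact most tightly---yet rich enough that the systems enforcing height, normality, $\lambda^+$-completeness and, above all, the coherent growth of $F$ into an object of precisely the right shape for the absoluteness argument can be shown to be \emph{sharply} dense. Making the two designs mesh, so that $\kappa$ systems (each met everywhere) anticipate all $\lambda^{++}$ levels and all would-be large antichains, is the technical heart of the proof, exactly as in the morass-based constructions this one parallels.
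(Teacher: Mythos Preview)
Your outline names the right strategy---design $(\bb{P},\leq_{\bb P},\bb Q)\in\mathcal P_{\lambda^+}$ of approximations, isolate $\kappa$-many sharply dense systems, and read off the tree and certificate from the filter---but it is a plan, not a proof, and it is missing the two ideas that make the construction go through.

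First, the shape of a condition. You cannot simply take a partial tree on a realm $x\in[\lambda^{++}]^{<\lambda^+}$ with the ordinal ordering on levels. The paper's conditions carry an auxiliary partial order $<^0$ on $x$ (not the ordinal order in general) that records which levels are \emph{currently} comparable in the approximation; only members of $\bb Q$ have $<^0$ equal to the ordinal order. The whole weight of the construction falls on Sharpness (Clause~\eqref{c9}): given $p\in\bb P$, one must find $q\in\bb Q$ with $x_q=\cl(x_p)$, which means \emph{linearising} $<^0_p$ into the full ordinal order while extending $<^1_p$ and $f_p$ so that the super-Souslin coherence---``if $f(a,b)\cap f(a,c)\neq\emptyset$ and $\lht(b)\le^0\lht(c)$ then $b(i)\le^1 c(i)$ for $\lambda$-many $i$''---survives. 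This is a genuine recursive argument (Lemma~\ref{order_lemma}): at each newly comparable pair of levels one enumerates all dangerous triples $(a,b,c)$ and threads $<^1_q$-predecessors so as to manufacture $\lambda$-many agreements. Your sketch locates the ``delicate point'' at closure (Clause~\eqref{c10}), but closure is routine once Sharpness is in hand; Sharpness is where the work is, and nothing in your proposal indicates how to do it.

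Second, the certificate is not single-valued. One needs $f_p(a,b)\in[\lambda^+]^{<\lambda^+}\setminus\{\emptyset\}$ together with the downward-coherence rule $f_p(a,b)\supseteq f_p(a,c)$ whenever $a<^1 b<^1 c$; this is exactly what lets Amalgamation (Clause~\eqref{c11}) and the Sharpness argument propagate constraints without contradiction. A single-valued $F^p$ cannot be maintained through these steps.

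Two smaller points. A $\lambda^{++}$-tree need not lack cofinal branches, and the paper does not (and cannot, with $\kappa$ systems) enforce this; the generic tree has underlying set $\lambda^{++}\times\lambda^+$ with full levels. And $F$ takes values in $\lambda^+$, not in subsets of $\lambda$; the absoluteness of the Souslin subtree is the cited fact from \cite{shelah_stanley} and is not part of what you must prove here.
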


By Theorems B and C, and the fact that for any super-Souslin tree $(T,<_T)$, there exists some $x\in T$ such that $(x^\uparrow,<_T)$ is Souslin, we obtain:

\begin{cor} For every cardinal $\lambda$, if $\diamondsuit(\lambda^+)$ and $\square_{\lambda^+}^B$ both hold,
then there is a $\lambda^+$-complete $\lambda^{++}$-Souslin tree.
\end{cor}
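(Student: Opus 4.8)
The plan is to chain together the three ingredients already in place, so the proof is essentially bookkeeping. Fix a cardinal $\lambda$ and assume $\diamondsuit(\lambda^+)$ and $\square^B_{\lambda^+}$. First I would set $\kappa:=\lambda^+$ and note that, being a successor cardinal, $\kappa$ is regular, while $\lambda$ being infinite forces $\kappa\geq\aleph_1$; thus $\kappa$ is a regular uncountable cardinal, and Theorem~B applies to give $\fa{\lambda^+}$. Next, Theorem~C turns $\fa{\lambda^+}$ into a $\lambda^+$-complete $\lambda^{++}$-super-Souslin tree $(T,<_T)$. Finally, the quoted structural fact about super-Souslin trees provides some $x\in T$ for which $(x^\uparrow,<_T)$ is a $\lambda^{++}$-Souslin tree, where $x^\uparrow$ denotes the (upward closed) cone above $x$ equipped with the order induced by $<_T$.

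The one point that calls for an argument is that this Souslin subtree inherits $\lambda^+$-completeness, and I expect this to be routine rather than a real obstacle. Given a chain $C\s x^\uparrow$ with $|C|<\lambda^+$ --- the case $C=\emptyset$ being trivial since $x^\uparrow\neq\emptyset$ --- fix $c\in C$, regard $C$ as a chain in $T$, and use $\lambda^+$-completeness of $(T,<_T)$ to get $z\in T$ with $C\s z_\downarrow\cup\{z\}$. Since $c\in x^\uparrow$, $c\leq_T z$, and $x^\uparrow$ is upward closed in $T$, we have $z\in x^\uparrow$; and because the downward cone of $z$ inside the subtree $x^\uparrow$ is exactly $z_\downarrow\cap x^\uparrow$, the containment $C\s(z_\downarrow\cup\{z\})\cap x^\uparrow=(z_\downarrow\cap x^\uparrow)\cup\{z\}$ exhibits $z$ as a witness for the relevant instance of $\lambda^+$-completeness in $x^\uparrow$. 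Hence $(x^\uparrow,<_T)$ is a $\lambda^+$-complete $\lambda^{++}$-Souslin tree, as desired.

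I anticipate that all of the actual difficulty lies upstream, in Theorems~B and~C: deriving $\fa{\lambda^+}$ from $\diamondsuit(\lambda^+)$ and $\square^B_{\lambda^+}$ (building, over an arbitrary member of $\mathcal P_{\lambda^+}$, a filter meeting every coordinate of every $\lambda^+$-sequence of sharply dense systems), and engineering, via $\fa{\lambda^+}$ applied to a suitable forcing in $\mathcal P_{\lambda^+}$, a $\lambda^+$-complete tree carrying the highly absolute combinatorial property that witnesses super-Souslinity. For the Corollary itself, nothing remains beyond the observation that cones inherit $\lambda^+$-completeness.
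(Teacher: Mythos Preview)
Your proposal is correct and follows exactly the route the paper takes: combine Theorem~B (with $\kappa=\lambda^+$) and Theorem~C to obtain a $\lambda^+$-complete $\lambda^{++}$-super-Souslin tree, then pass to the cone $(x^\uparrow,<_T)$ that is Souslin. Your explicit verification that $\lambda^+$-completeness is inherited by the cone is a detail the paper leaves implicit, but otherwise the arguments coincide.
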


Recalling Jensen's theorem \cite{jensen_fine_structure} stating that if $\square_\kappa$ fails,
then $\kappa^{+}$ is a Mahlo cardinal in $L$,
and Shelah's theorem \cite{shelah_diamonds} stating that $\ch_\lambda$ entails $\diamondsuit(\lambda^+)$
for every uncountable cardinal $\lambda$,
we see that Theorem~A follows from Corollary~1.

We also obtain a corollary concerning partition relations.
Recall that, for ordinals $\alpha, \beta$, and $\gamma$, the statement $\alpha\rightarrow(\beta,\gamma)^2$
asserts that, for every coloring $c:[\alpha]^2\rightarrow\{0,1\}$,
either there exists $B\s\alpha$ of order type $\beta$ which is $0$-monochromatic,
or there exists $C\s\alpha$ of order type $\gamma$ which is $1$-monochromatic.
By a recent theorem of Raghavan and Todorcevic \cite{1602.07901},
the existence of a $\kappa^+$-Souslin tree entails $\kappa^+\nrightarrow(\kappa^+,\log_\kappa(\kappa^+)+2)^2$,
where $\log_\kappa(\kappa^+)$ denotes the least cardinal $\nu$ such that $\kappa^\nu > \kappa$. We thus obtain
the following corollary:

\begin{cor} Suppose that $\lambda$ is an uncountable cardinal. If $\ch_\lambda$ and
  $\lambda^{++}\rightarrow(\lambda^{++},\lambda^++2)^2$ both hold, then $\lambda^{++}$
  is a Mahlo cardinal in $L$.
\end{cor}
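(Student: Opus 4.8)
The plan is to reduce the corollary to Theorem~A. Since Theorem~A asserts that $\ch_\lambda$ together with $\sh_{\lambda^{++}}$ implies that $\lambda^{++}$ is Mahlo in $L$, it suffices to show that the two hypotheses of the corollary --- $\ch_\lambda$ and $\lambda^{++}\rightarrow(\lambda^{++},\lambda^++2)^2$ --- jointly imply $\sh_{\lambda^{++}}$, i.e., the nonexistence of a $\lambda^{++}$-Souslin tree. I would establish this implication by contraposition, feeding a hypothetical $\lambda^{++}$-Souslin tree into the Raghavan--Todorcevic theorem quoted above.

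Concretely, assume $\ch_\lambda$ and suppose toward a contradiction that there is a $\lambda^{++}$-Souslin tree. Set $\kappa:=\lambda^+$; this is a regular uncountable cardinal with $\kappa^+=\lambda^{++}$, and our tree is a $\kappa^+$-Souslin tree. The Raghavan--Todorcevic theorem then yields
\[
\kappa^+\nrightarrow\bigl(\kappa^+,\log_\kappa(\kappa^+)+2\bigr)^2, \quad\text{that is,}\quad \lambda^{++}\nrightarrow\bigl(\lambda^{++},\log_{\lambda^+}(\lambda^{++})+2\bigr)^2 .
\]
To collide this with the partition hypothesis, the only thing I need is the identity $\log_{\lambda^+}(\lambda^{++})=\lambda^+$.

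That identity is exactly where $\ch_\lambda$ is used. By definition $\log_{\lambda^+}(\lambda^{++})$ is the least cardinal $\nu$ with $(\lambda^+)^\nu>\lambda^+$. On the one hand, $\ch_\lambda$ gives $\lambda^+=2^\lambda$, so $(\lambda^+)^\lambda=(2^\lambda)^\lambda=2^\lambda=\lambda^+$; hence $(\lambda^+)^\nu\le\lambda^+$ for every cardinal $\nu\le\lambda$, and no such $\nu$ witnesses the logarithm. On the other hand, $(\lambda^+)^{\lambda^+}=2^{\lambda^+}\ge\lambda^{++}>\lambda^+$. Since there is no cardinal strictly between $\lambda$ and $\lambda^+$, the least witnessing $\nu$ is $\lambda^+$. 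Substituting, a $\lambda^{++}$-Souslin tree would force $\lambda^{++}\nrightarrow(\lambda^{++},\lambda^++2)^2$, contradicting the hypothesis; hence $\sh_{\lambda^{++}}$ holds, and Theorem~A delivers the conclusion.

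I do not anticipate a real obstacle: the argument is a short deduction from Theorem~A and the Raghavan--Todorcevic theorem. The one step deserving attention is the cardinal-arithmetic computation of $\log_{\lambda^+}(\lambda^{++})$, and it is precisely the use of $\ch_\lambda$ there that makes ``$\lambda^++2$'' the right second coordinate in the partition relation; without $\ch_\lambda$ the logarithm could drop below $\lambda^+$ (for instance it is at most $\lambda$ whenever $2^\lambda>\lambda^+$), so the hypothesis would no longer line up with the conclusion of the cited theorem.
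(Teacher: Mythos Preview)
Your proposal is correct and is exactly the argument the paper has in mind: the paper presents Corollary~2 immediately after citing the Raghavan--Todorcevic theorem and leaves the proof implicit, expecting the reader to combine that theorem (applied with $\kappa=\lambda^+$) with Theorem~A via the computation $\log_{\lambda^+}(\lambda^{++})=\lambda^+$ under $\ch_\lambda$. Your write-up fills in precisely these details.
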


Note that by a theorem of Erd\H{o}s and Rado, $\ch_\lambda$ entails $\lambda^{++}\rightarrow(\lambda^{++},\lambda^++1)^2$.

\subsection{Notations and conventions}
We write \coi as a shorthand for ``continuous, order-preserving injection.''
In particular, a \coi is a map $\pi$ from a set of ordinals into the ordinals
such that $\pi$ is continuous, order-preserving, and injective, and, moreover,
$\dom(\pi)$ is closed in its supremum. Thus, when we write, for example,
``$\pi:y \rightarrow \kappa^+$ is a \coi,'' it is implicit that $y$ is closed
in its supremum.
For ordinals $\theta<\mu$, let ${\mu \choose \theta}:=\{\im(\pi)\mid \pi:\theta\rightarrow\mu~\allowbreak\text{is a \coi}\}$,
i.e., ${\mu \choose \theta}$ consists of all closed copies of $\theta$ in $\mu$.

For a set of ordinals $x$, $\otp(x)$ denotes the order type of $x$ and, for all $i < \otp(x)$, $x(i)$ denotes the unique element $\alpha$ of $x$ such that $\otp(x \cap \alpha) = i$.
We write $\ssup(x) := \sup\{\alpha + 1 \mid \alpha \in x\}$,
$\acc(x) := \{\alpha \in x \mid \sup(x \cap \alpha) = \alpha>0\}$,
$\nacc(x):=x\setminus\acc(x)$,
$\acc^+(x):=\{\alpha<\ssup(x)\mid \sup(x\cap\alpha)=\alpha>0\}$,
and $\cl(x):=x\cup\acc^+(x)$.
By convention, $\ssup(\emptyset)=\sup(\emptyset)=0$.
For sets of ordinals $x$ and $y$, we write $x \sq y$ iff $y$ is an end-extension of $x$,
i.e., $y \cap \ssup(x) = x$. For cardinals  $\lambda < \mu$,
let $E^\mu_\lambda := \{\alpha < \mu \mid \cf(\alpha) = \lambda\}$,
let $E^\mu_{<\lambda} := \{\alpha < \mu \mid \cf(\alpha) < \lambda\}$,
let $[\mu]^{<\lambda}:=\{x\s\mu\mid |x|<\lambda\}$,
and let $[\mu]^2:=\{ (\alpha,\beta)\mid \alpha<\beta<\mu\}$.
Also, let $H_\mu$ denote the collection of all sets of hereditary cardinality less than $\mu$.

Throughout the paper, $\kappa$ stands for an arbitrary regular, uncountable cardinal.
For simplicity, the reader may assume that $\kappa=\aleph_1$.

\section{The forcing axiom} \label{forcing_sect}
We begin by introducing the class $\mathcal{P}_\kappa$ of forcing notions that will be of interest.

\begin{defn} \label{class_def}
  $\mathcal{P}_\kappa$ consists of all triples $(\bb{P}, \leq_{\bb{P}}, \bb{Q})$ such that
  $(\bb{P}, \leq_{\bb{P}})$ is a forcing notion, $\one\in \bb{Q} \subseteq \bb{P}$,
  and all of the following requirements hold.
  \begin{enumerate}
    \item\label{c1}\textbf{(Realms)} For all $p \in \bb{P}$, there is a unique $x_p \in [\kappa^+]^{<\kappa}$, which we
      refer to as the \emph{realm} of $p$. The map $p\mapsto x_p$
      is a projection from $(\bb{P},\le_{\bb{P}})$ to $([\kappa^+]^{<\kappa},\supseteq)$:
      \begin{enumerate}
    \item $x_{\one}=\emptyset$;
    \item for all $q \leq_\bb{P} p$, we have $x_q \supseteq x_p$;
    \item\label{c1.5} for all $p\in\bb{P}$ and $x \in [\kappa^+]^{<\kappa}$ with $x\supseteq x_p$, there is $q \leq_{\bb{P}} p$ with $x_q = x$.
      \end{enumerate}
    \item\label{c2}\label{c3}\textbf{(Scope)} For all $y \subseteq \kappa^+$, let $\bb{P}_y := \{p \in \bb{P} \mid x_p \subseteq y\}$
      and $\bb{Q}_y := \bb{Q} \cap \bb{P}_y$. Then $\bb{P}_\emptyset=\{\one\}$ and $\bb{P}_\kappa \subseteq H_\kappa$.
    \item\label{c4}\textbf{(Actions of \cois)} For every $y \subseteq \kappa^+$ and every \coi $\pi:y \rightarrow \kappa^+$,
      $\pi$ acts on $\bb{P}_y$ in such a way that, for all $p,q \in \bb{P}_y$:
      \begin{enumerate}
      \item\label{c4a}\label{c4c} $\pi.p$ is in $\bb{P}$ with $x_{\pi.p} = \pi``x_p$,
      and if $p\in\bb{Q}_y$, then $\pi.p$ is in $\bb{Q}$;
      \item\label{c4d} $\pi.q \leq_{\bb{P}} \pi.p$ iff $q \leq_{\bb{P}} p$;
      \item\label{c4g} if $\pi$ is the identity map, then $\pi.p=p$;
      \item\label{c4f} if $\pi':y' \rightarrow \kappa^+$ is a \coi with $\im(\pi)\s y'$, then  $\pi'.(\pi.p)=(\pi'\circ\pi).p$;
      \item if $\pi':y' \rightarrow \kappa^+$ is a \coi with $x_p\s y'$, then $\pi \restriction x_p = \pi' \restriction x_p$ implies that $\pi.p = \pi'.p$.
      \end{enumerate}
    \item\label{c5}\textbf{(Restrictions)} For all $p \in \bb{P}$ and $\alpha < \kappa^+$, there is a unique
      $\leq_{\bb{P}}$-least condition $r$ such that $x_r = x_p \cap \alpha$
      and $p \leq_{\bb{P}} r$. This condition $r$ is referred to as $p \rest \alpha$.
      Moreover:
      \begin{enumerate}
        \item if $p \in \bb{Q}$, then $p \rest \alpha \in \bb{Q}$;
        \item\label{c8} if $q \leq_{\bb{P}} p$, then $q \rest \alpha \leq_{\bb{P}} p \rest \alpha$.
      \end{enumerate}
    \item\label{c12}\textbf{(Vertical limits)} Suppose that $\xi < \kappa$ and $\langle p_\eta \mid \eta < \xi \rangle$ is a sequence of conditions from
      $\bb{P}$ such that, for all $\eta < \eta' < \xi$, we have $p_\eta = p_{\eta'} \rest \ssup(x_{p_\eta})$.
      Then there is a unique condition $p \in \bb{P}$ such that $x_p = \bigcup_{\eta < \xi} x_{p_\eta}$
      and, for all $\eta < \xi$, $p_\eta = p \rest \ssup(x_{p_\eta})$. Moreover, if $p_\eta \in \bb{Q}$
      for all $\eta < \xi$, then $p \in \bb{Q}$.
    \item\label{c9}\textbf{(Sharpness)} For all $q \in \bb{Q}$, $x_q$ is closed in its supremum. Moreover, for all $p \in \bb{P}$,
      there is $q \leq_{\bb{P}} p$ with $x_q = \cl(x_p)$ such that $q \in \bb{Q}$.

    \item\label{c10}\textbf{(Controlled closure)} Suppose that $\xi < \kappa$ and
      $\langle q_\eta \mid \eta < \xi \rangle$ is a decreasing sequence of conditions from $\bb{Q}$.
    Let $x := \bigcup_{\eta < \xi} x_{q_\eta}$.
    Suppose that $\alpha<\ssup(x)$ and that $r \in \bb{Q}_{\ssup(x \cap \alpha)}$ is a lower bound for
    $\langle q_\eta \rest \alpha \mid \eta < \xi \rangle$. Then there is $q \in \bb{Q}$ such that:
      \begin{enumerate}
        \item $q \rest \ssup(x \cap \alpha) = r$;
        \item $x_q = \cl(x_r \cup x)$;
        \item $q$ is a lower bound for $\langle q_\eta \mid \eta < \xi \rangle$.
      \end{enumerate}
    \item\label{c11}\textbf{(Amalgamation)} For all $p \in \bb{Q}$, $\alpha < \ssup(x_p)$, and $q \in \bb{P}_\alpha$ with $q \leq_{\bb{P}} p \rest \alpha$,
      we have that $p$ and $q$ have a unique $\leq_{\bb{P}}$-greatest lower bound
      $r$. Moreover, it is the case that $x_r = x_q \cup x_p$ and $r \rest \alpha = q$.
  \end{enumerate}
\end{defn}

We now introduce the class of families of dense sets that we will be interested in meeting.

\begin{defn}[Sharply dense set]
  Suppose that $(\bb{P}, \leq_{\bb{P}}, \bb{Q}) \in \mathcal{P}_\kappa$ and $D$ is a nonempty subset of $\bb{P}$.
   Denote $x_D:=\bigcap\{x_p\mid p\in D\}$.
We say that $D$ is  \emph{sharply dense} iff
  for every $p \in \mathbb{P}$, there is $q \in D$ with $q \leq_{\bb{P}} p$ such that $x_q = \cl(x_p \cup x_D)$.
\end{defn}

\begin{defn}[Sharply dense system]
  Suppose that $(\bb{P}, \leq_{\bb{P}}, \bb{Q}) \in \mathcal{P}_\kappa$.
  We say that  $\mathcal D\s\mathcal P(\mathbb P)$ is a \emph{sharply dense system}
  iff there exists an ordinal $\theta_{\mathcal D}<\kappa$ such that $\mathcal D$ is of the form $\{D_x \mid x \in{\kappa^+\choose\theta_{\mathcal D}}\}$,
  where for all $x \in{\kappa^+\choose\theta_{\mathcal D}}$:
    \begin{itemize}
    \item $D_x$ is sharply dense with $x_{D_x}=x$;
    \item  for every $p \in \mathbb{P}$,       and every \coi $\pi:y \rightarrow \kappa^+$ with $x \subseteq x_p \subseteq y$,
      we have $p \in D_x$ iff $\pi.p \in D_{\pi``x}$.
  \end{itemize}
\end{defn}

\begin{defn}
  Suppose that $(\bb{P}, \leq_{\bb{P}}, \bb{Q}) \in \mathcal{P}_\kappa$ and
  $\mathcal{D}$ is a
  sharply dense system. We say that a filter $G$ on $\bb{P}$
  \emph{meets $\mathcal{D}$ everywhere} iff, for all $D\in\mathcal D$,
  $G \cap D \neq \emptyset$.
\end{defn}

We are now ready to formulate our forcing axiom for sharply dense systems.

\begin{defn}
  $\fa{\kappa}$ is the assertion that, for every $(\bb{P}, \leq_\bb{P}, \bb{Q}) \in \mathcal{P}_\kappa$ and every collection
  $\{\mathcal{D}_i \mid i < \kappa\}$ of sharply dense systems, there exists a
  filter $G$ on $\mathbb{P}$ such that, for all $i<\kappa$, $G$ meets $\mathcal D_i$ everywhere.
\end{defn}

\section{Applications}\label{applications_sect}
In this section we present a few applications of $\fa\kappa$.
Just before that, let us point out two features of members of the class $\mathcal P_\kappa$.

\begin{prop}\label{prop31} Suppose that $(\bb{P}, \leq_{\bb{P}}, \bb{Q}) \in \mathcal{P}_\kappa$. Then:
\begin{enumerate}
\item $( \mathbb Q, \leq_{\bb{P}})$ is $\kappa$-closed.
\item For all $x\s\kappa^+$, denote $D_x:=\{ q\in\mathbb Q\mid x_q\supseteq x\}$.
Then, for all $\theta < \kappa$, $\{ D_x\mid x\in{\kappa^+\choose\theta}\}$ is a sharply dense system.
\end{enumerate}
\end{prop}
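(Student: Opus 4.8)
The plan is to verify the two claims directly from the defining properties of $\mathcal P_\kappa$. For part (1), I would take a decreasing sequence $\langle q_\eta \mid \eta < \xi \rangle$ of conditions from $\bb Q$ with $\xi < \kappa$ and produce a lower bound in $\bb Q$. The natural candidate comes from the \emph{Controlled closure} axiom \eqref{c10}: set $x := \bigcup_{\eta<\xi} x_{q_\eta}$, and feed in $\alpha$ small enough that $x \cap \alpha = \emptyset$ (e.g.\ $\alpha \le \min(x)$, or handle the degenerate case where $x=\emptyset$ separately, where the $q_\eta$ are all $\one$ by \eqref{c2}). Then $\ssup(x\cap\alpha)=0$, so $\bb P_{\ssup(x\cap\alpha)} = \bb P_\emptyset = \{\one\}$ by \eqref{c2}, and $r = \one$ is trivially a lower bound for $\langle q_\eta \rest \alpha \mid \eta<\xi\rangle$, lying in $\bb Q_0$ by the sharpness/restriction clauses. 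Controlled closure then hands us $q \in \bb Q$ that is a lower bound for the whole sequence. One should double-check the edge cases $\xi = 0$ (use $\one$) and $\xi = 1$ (the sequence already has a least element), and confirm $\one \in \bb Q$, which is part of the hypothesis $\one \in \bb Q$ in Definition~\ref{class_def}.

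For part (2), fix $\theta < \kappa$; I must show $\{D_x \mid x \in {\kappa^+ \choose \theta}\}$ is a sharply dense system with $\theta_{\mathcal D} = \theta$. First, each $D_x$ is nonempty: given $x \in {\kappa^+\choose\theta}$, by \eqref{c1.5} there is $q' \le_{\bb P} \one$ with $x_{q'} = x$, and then by \emph{Sharpness} \eqref{c9} there is $q \le_{\bb P} q'$ with $q \in \bb Q$ and $x_q = \cl(x_q') = \cl(x) = x$ (since $x \in {\kappa^+\choose\theta}$ is already closed in its supremum), so $q \in D_x$. Next, $x_{D_x} = \bigcap\{x_q \mid q \in D_x\} = x$: the intersection is $\supseteq x$ by definition of $D_x$, and the witness $q$ just produced has $x_q = x$, giving equality. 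To see $D_x$ is sharply dense, take any $p \in \bb P$; I want $q \in D_x$ with $q \le_{\bb P} p$ and $x_q = \cl(x_p \cup x)$. Using \eqref{c1.5} pick $p' \le_{\bb P} p$ with $x_{p'} = x_p \cup x$, then apply \emph{Sharpness} to get $q \le_{\bb P} p'$ in $\bb Q$ with $x_q = \cl(x_{p'}) = \cl(x_p \cup x)$; since $x_q \supseteq x$ we have $q \in D_x$, and $q \le_{\bb P} p' \le_{\bb P} p$.

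The remaining point is the coherence/invariance condition: for every $p \in \bb P$ and every \coi $\pi : y \to \kappa^+$ with $x \subseteq x_p \subseteq y$, we need $p \in D_x \iff \pi.p \in D_{\pi``x}$. Note $\pi``x \in {\kappa^+\choose\theta}$ since a \coi composed with a \coi enumerating $x$ is again a \coi. Now $p \in D_x$ means $p \in \bb Q$ and $x_p \supseteq x$; by \eqref{c4a}, $\pi.p \in \bb Q$ iff $p \in \bb Q$, and $x_{\pi.p} = \pi``x_p$, which contains $\pi``x$ iff $x_p \supseteq x$ (injectivity of $\pi$). So $\pi.p \in D_{\pi``x}$ iff $p \in D_x$, as required. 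I expect the main (mild) obstacle to be bookkeeping around degenerate cases: $\theta = 0$ or $\xi = 0$, the empty realm, and making sure the instance of \emph{Controlled closure} is applied with a legitimate $\alpha < \ssup(x)$ when $x \ne \emptyset$ — choosing $\alpha = \min(x)$ works since then $x \cap \alpha = \emptyset$ and $\min(x) < \ssup(x)$. Everything else is a direct unwinding of the axioms.
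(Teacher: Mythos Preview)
Your proposal is correct and follows essentially the same approach as the paper. For part~(1), the paper uses $\alpha := 0$ (rather than your $\alpha := \min(x)$) and $r := \one$ in the Controlled closure clause, after disposing of the degenerate case $x = \emptyset$; for part~(2), the paper simply cites Clauses~\eqref{c4a} and~\eqref{c9}, which is exactly the content of your more detailed verification. One small remark: your claim ``by~\eqref{c4a}, $\pi.p \in \bb Q$ iff $p \in \bb Q$'' only gets the forward direction directly from~\eqref{c4a}; the converse requires applying~\eqref{c4a} to the inverse \coi together with~\eqref{c4g} and~\eqref{c4f}, but this is routine.
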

\begin{proof}
(1) Suppose that $\xi < \kappa$ and $\vec q=\langle q_\eta \mid \eta < \xi \rangle$ is a decreasing sequence of conditions from $\bb{Q}$.
Note that if $x := \bigcup_{\eta < \xi} x_{q_\eta}$ is empty, then $\one$ is a lower bound for $\vec q$, so we may assume that $x$ is nonempty.
Since $\one\in\bb{Q}$ and $\bb{P}_0=\{\one\}$, we
infer from Clause~\eqref{c5} of Definition~\ref{class_def} that $\{q_\eta\rest 0\mid \eta<\xi\}=\bb{Q}_0=\{\one\}$.
So, by Clause~\eqref{c10} of Definition~\ref{class_def}, using $\alpha:=0$ and $r:=\one$, we infer that $\vec q$ admits a lower bound.

(2) By Clauses \eqref{c4c} and \eqref{c9} of Definition~\ref{class_def}.
\end{proof}

Next, we show that the actions of \cois behave as expected with respect to the restriction operation.

\begin{prop}
  Suppose that $(\bb{P}, \leq_{\bb{P}}, \bb{Q}) \in \mathcal{P}_\kappa$, $p \in \bb{P}$, $\alpha \in x_p$, and
  $\pi : y \rightarrow \kappa^+$ is a \coi with $x_p \s y \s \kappa^+$.
  Then $\pi.(p \rest \alpha) = (\pi.p) \rest \pi(\alpha)$.
\end{prop}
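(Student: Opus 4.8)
The plan is to exploit the uniqueness clauses in the definition of $\mathcal P_\kappa$: both $p \rest \alpha$ (Clause~\eqref{c5}) and the greatest lower bound produced by amalgamation (Clause~\eqref{c11}) are characterized by $\leq_{\bb P}$-minimality/maximality together with a prescribed realm, so to identify two conditions it suffices to check that they satisfy the same characterizing property. Concretely, writing $r := p \rest \alpha$, I want to show that $\pi.r$ is the $\leq_{\bb P}$-least condition $s$ with $x_s = \pi``(x_p \cap \alpha) = (\pi``x_p) \cap \pi(\alpha)$ and $\pi.p \leq_{\bb P} s$; by the uniqueness asserted in Clause~\eqref{c5} applied to $\pi.p$ and the ordinal $\pi(\alpha)$, this will give $\pi.r = (\pi.p)\rest\pi(\alpha)$, as desired. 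Note that $\alpha \in x_p$ is used here so that $\pi(\alpha)$ is well-defined and $\pi``(x_p\cap\alpha) = (\pi``x_p)\cap\pi(\alpha)$ holds by order-preservation and injectivity of $\pi$.

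First I would verify the realm condition: $x_{\pi.r} = \pi``x_r = \pi``(x_p \cap \alpha)$ by Clause~\eqref{c4a}, and $\pi``(x_p \cap \alpha) = (\pi``x_p) \cap \pi(\alpha)$ since $\pi$ is an order-preserving injection. Next, since $p \leq_{\bb P} r$ with $x_p, x_r \s y$, Clause~\eqref{c4d} gives $\pi.p \leq_{\bb P} \pi.r$; so $\pi.r$ is a candidate for the role of $(\pi.p)\rest\pi(\alpha)$. It remains to check $\leq_{\bb P}$-leastness: suppose $s$ is any condition with $x_s = (\pi``x_p)\cap\pi(\alpha)$ and $\pi.p \leq_{\bb P} s$. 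I would like to ``pull back'' $s$ along $\pi$. Since $x_s = \pi``(x_p\cap\alpha) \s \pi``y = \im(\pi)$, and $\im(\pi)$ is closed in its supremum (as $\dom(\pi)=y$ is closed in its supremum and $\pi$ is continuous), the inverse map $\pi^{-1}$ restricted to an appropriate closed set is itself a \coi, so $\pi^{-1}$ acts on $s$. Applying $\pi^{-1}$ to $\pi.p \leq_{\bb P} s$ and using Clauses~\eqref{c4d},~\eqref{c4g}, and~\eqref{c4f} (so that $\pi^{-1}.(\pi.p) = p$), we get $p \leq_{\bb P} \pi^{-1}.s$, where $x_{\pi^{-1}.s} = \pi^{-1}``x_s = x_p\cap\alpha$. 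By the $\leq_{\bb P}$-leastness of $r = p\rest\alpha$, we conclude $r \leq_{\bb P} \pi^{-1}.s$, and applying $\pi$ once more (Clause~\eqref{c4d}) yields $\pi.r \leq_{\bb P} \pi.(\pi^{-1}.s) = s$. Hence $\pi.r$ is $\leq_{\bb P}$-least, completing the identification.

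The main obstacle I anticipate is the bookkeeping around applying $\pi^{-1}$: the action in Clause~\eqref{c4} is only declared for \cois whose domain is a subset of $\kappa^+$ closed in its supremum, so I must be careful to choose the right closed domain for $\pi^{-1}$ (e.g.\ $\cl(x_s)$ or $\pi``(\cl(x_p\cap\alpha))$, intersected suitably with $\im(\pi)$) and to invoke Clause~\eqref{c4f} with the correct compositions to recover $p$ from $\pi^{-1}.(\pi.p)$. One also needs Clause~(4)(e) (the ``locality'' clause) to ensure that the value of $\pi^{-1}.s$ does not depend on the particular closed domain chosen, only on $\pi^{-1}\restriction x_s$. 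Everything else is a routine unwinding of the projection and action axioms, with no genuine combinatorial content.
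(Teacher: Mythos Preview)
Your proposal is correct and follows essentially the same approach as the paper: show that $\pi.(p\rest\alpha)$ has the correct realm and lies above $\pi.p$, then verify $\leq_{\bb P}$-leastness by pulling back an arbitrary competitor along the inverse of $\pi$ and invoking the defining property of $p\rest\alpha$. Two minor remarks: your mention of Clause~\eqref{c11} (amalgamation) in the opening plan is a red herring, as it plays no role here; and your anticipated bookkeeping obstacle around the domain of $\pi^{-1}$ dissolves once you take the full inverse $\pi'$ with domain $\im(\pi)$ (which is closed in its supremum since $\pi$ is a \coi on a closed set), exactly as the paper does, so there is no need to fuss with $\cl(x_s)$ or to invoke Clause~(3)(e).
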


\begin{proof}
  Let $r := \pi.(p \rest \alpha)$.
  Since $p \leq_\bb{P} p \rest \alpha$, Clause~\eqref{c4d} (of Definition~\ref{class_def}) implies that $\pi.p \leq_\bb{P} r$.
  In addition, by Clauses \eqref{c4a} and \eqref{c5}, and since $\alpha\in y$, we have:
$$x_r = \pi``x_{p \rest \alpha} = \pi``(x_p \cap \alpha)= \pi``x_p \cap \pi``\alpha= \pi``x_p \cap\pi(\alpha)= x_{\pi.p} \cap \pi(\alpha).$$
  This shows that $r$ is a candidate for being $(\pi.p)\rest\pi(\alpha)$.
  To finish the proof, fix an arbitrary $q \in \bb{P}$ such that
  $x_q = x_{\pi.p}\cap\pi(\alpha)$ and $\pi.p \leq_\bb{P} q$.
  We have to verify that $r \leq_\bb{P} q$.

  Let $\pi':=\{(\delta,\varepsilon)\mid (\varepsilon,\delta)\in \pi\}$, so that $\pi'$ is a \coi and $\pi'\circ\pi$ and $\pi\circ\pi'$ are the identity maps
  on their respective domains.
  Since $\pi.p \leq_\bb{P} q$ and $x_q\s\im(\pi)$, and by Clauses \eqref{c4g} and \eqref{c4f},
  we have $p=\pi'.(\pi.p) \leq_\bb{P} \pi'.q$.
  Moreover, $x_{\pi'.q} = x_p \cap \alpha$, so, by Clause~\eqref{c5},
  $p \rest \alpha \leq_\bb{P} \pi'.q$. Now another application of Clauses \eqref{c4d} and \eqref{c4g} yields $r \leq_{\bb{P}} q$.
\end{proof}

\subsection{A warm-up example}
Let us point out that $\mathbb P:=\add(\kappa,\kappa^+)$ belongs to the class $\mathcal P_\kappa$.
Specifically, $p\in\mathbb P$ iff $p$ is a function from a subset of $\kappa^+\times\kappa$ to $2$
and $|p|<\kappa$. Let $p \le_{\bb{P}} q$ iff $p\supseteq q$. Let $x_p:=\{\beta\in\kappa^+\mid \exists\eta[(\beta,\eta)\in\dom(p)]\}$.
Let $\mathbb Q:=\{ p\in\mathbb P\mid x_p=\cl(x_p)\}$.
Whenever $\pi$ is a \coi from a subset of $\kappa^+$ to $\kappa^+$
and $p\in\mathbb P_{\dom(\pi)}$, we let $\pi.p:=\{ ((\pi(\beta),\eta),i)\mid ((\beta,\eta),i)\in p\}$.
We also let $p\rest\alpha:=\{((\beta,\eta),i)\in p\mid \beta<\alpha\}$.
The reader is now encouraged to verify that, with this definition, $(\mathbb P,\le_{\bb {P}},\mathbb Q) \in \mathcal P_\kappa$.

\subsection{Cardinal arithmetic}
In this subsection, we identify a simple member of $\mathcal{P}_\kappa$ and use it to prove that $\fa{\kappa}$
implies $\kappa^{<\kappa} = \kappa$.

\begin{defn}
  $\mathbb{P}$ consists of all pairs $p = (x, f)$ such that:
  \begin{enumerate}
    \item $x \in [\kappa^+]^{<\kappa}$;
    \item $f$ is a function such that:
      \begin{enumerate}
        \item $|f| < \kappa$;
        \item $\dom(f) \subseteq x \times \kappa$;
        \item for all $(\beta, \eta) \in \dom(f)$, we have $f(\beta, \eta) \subseteq \beta\cap x$.
      \end{enumerate}
  \end{enumerate}
\end{defn}

The coordinates of a condition $p \in \bb{P}$ will often be identified as $x_p$ and $f_p$, respectively.

\begin{defn}
  For all $p, q \in \bb{P}$, we let $q \leq_{\bb{P}} p$ iff $x_q \supseteq x_p$ and $f_q \supseteq f_p$.
\end{defn}

\begin{defn}
  $\bb{Q}:=\{ p \in \bb{P} \mid x_p=\cl(x_p)$\}.
\end{defn}

\begin{defn}
  Suppose that $\pi$ is a \coi from a subset of $\kappa^+$ to $\kappa^+$. For each $p \in \bb{P}_{\dom(\pi)}$, we
  let $\pi.p$ be the condition $(x,f)$ such that:
  \begin{enumerate}
    \item $x = \pi``x_p$;
    \item $f = \{((\pi(\beta), \eta), \pi``z) \mid ((\beta, \eta), z) \in f_p\}$.
  \end{enumerate}
\end{defn}

\begin{defn}
  Suppose that $p \in \bb{P}$ and $\alpha < \kappa^+$. Then we define $p \rest \alpha$ to be the condition
  $(x,f)$ such that:
  \begin{enumerate}
    \item $x = x_p \cap \alpha$;
    \item $f = \{((\beta, \eta), z) \in f_p \mid \beta < \alpha\}$.
  \end{enumerate}
\end{defn}

It is readily verified that, with these definitions, $(\bb{P}, \leq_{\bb{P}}, \bb{Q})$ is a member of $\mathcal{P}_\kappa$.

\begin{thm}
  Suppose $\kappa^{<\kappa}>\kappa$. Then $(\bb{P}, \leq_{\bb{P}}, \bb{Q})$ witnesses that $\fa{\kappa}$ fails.
\end{thm}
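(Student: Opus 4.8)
The plan is to use $\fa{\kappa}$ applied to this particular $\mathbb P$ together with a family of $\kappa$ many sharply dense systems engineered so that a filter $G$ meeting them all produces, for each $\beta < \kappa^+$, an injection of $[\beta]^{<\kappa} \cap (\text{something})$ — more precisely, so that the generic object codes, for cofinally many $\beta < \kappa^+$, all bounded subsets of $\kappa$ as sets of the form $f(\beta,\eta)$. If $\kappa^{<\kappa} > \kappa$, then already $|[\kappa]^{<\kappa}| \geq \kappa^+$, so once the generic surjects $\{\eta < \kappa\}$ onto all of $[\kappa]^{<\kappa}$ at a single coordinate $\beta \geq \kappa$, we will have fewer than $\kappa^+$ ``slots'' $\eta$ trying to hit at least $\kappa^+$ distinct targets, a contradiction. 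So the real content is to design sharply dense systems forcing this surjectivity.

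Concretely, I would proceed as follows. First, fix $\beta^* := \kappa$ (any fixed ordinal in $[\kappa,\kappa^+)$ with $\otp(\beta^* \cap x) $ able to realize all of $[\kappa]^{<\kappa}$ works, but $\kappa$ itself is cleanest since $f(\beta^*,\eta) \subseteq \beta^* \cap x = \kappa \cap x$). For each $\theta < \kappa$ and each ``pattern'' — but patterns of size $<\kappa$ inside $\kappa$ are exactly parametrized by closed copies, so this is where the \coi-invariance of sharply dense systems must be exploited. The key step: for each $\theta < \kappa$, define a sharply dense system $\mathcal D^\theta = \{D^\theta_x \mid x \in \binom{\kappa^+}{\theta}\}$ where, roughly, $D^\theta_x$ consists of conditions $p$ with $x \subseteq x_p$ such that $\beta^* \in x_p$ and some coordinate $(\beta^*,\eta) \in \dom(f_p)$ has $f_p(\beta^*,\eta) = x \cap \kappa$ (when $x \cap \kappa$ is a legitimate value, i.e. $\sup(x\cap\kappa) \le \beta^*$; handle the general case by shifting). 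Density in the sharp sense is checked directly: given any $p$, by Clause~\eqref{c1.5} extend to include $\beta^* \cup x$ in the realm, then by Clause~\eqref{c9} close up, then freely add the pair $(\beta^*, \eta)$ for a fresh $\eta$ with value $x \cap \kappa$ — a condition of $\mathbb P$ since $x \cap \kappa \subseteq \beta^* \cap x_q$. The \coi-invariance clause holds because both membership in $D^\theta_x$ and the action $\pi.p$ are defined coordinate-wise in the obvious transparent way, so $p \in D^\theta_x \iff \pi.p \in D^\theta_{\pi``x}$ follows by unwinding the definition of $\pi.p$, provided $\pi$ fixes $\kappa$ pointwise on the relevant part — here one must be slightly careful, and it may be cleaner to take $\beta^*$ larger than $\kappa$, or to use copies $x$ of $\theta$ that lie in $\kappa$ and argue that for such $x$ the relevant $\pi$ restricted to $x$ is the one that matters.

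Having these systems indexed by $\theta < \kappa$, enumerate them as $\{\mathcal D_i \mid i < \kappa\}$ (a bijection $\kappa \leftrightarrow \kappa$ suffices). Apply $\fa{\kappa}$ to get a filter $G$ meeting every $\mathcal D_i$ everywhere. Set $f^* := \bigcup\{f_p \mid p \in G\}$; since $G$ is a filter and $\leq_{\bb P}$ extends the function, $f^*$ is a well-defined function with $\dom(f^*) \subseteq \kappa^+ \times \kappa$. Now for every $z \in [\kappa]^{<\kappa}$ with $z$ closed, pick $\theta := \otp(z)$ and $x$ a closed copy of $\theta$ with $x \cap \kappa = z$ (e.g. $x = z$ itself if $z$ is closed in its sup $\leq \kappa$, adjusting by Clause considerations); meeting $D^\theta_x$ gives some $q \in G \cap D^\theta_x$, hence some $\eta$ with $f^*(\beta^*,\eta) = z$. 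Thus $\eta \mapsto f^*(\beta^*,\eta)$ maps the set $\{\eta < \kappa : (\beta^*,\eta) \in \dom(f^*)\}$ onto all closed bounded subsets of $\kappa$, and since every bounded subset of $\kappa$ has its closure also bounded and determines it, and there are $\kappa^{<\kappa} > \kappa$ many of them, we get a surjection from a set of size $\leq \kappa$ onto a set of size $> \kappa$ — contradiction. Hence $\fa{\kappa}$ fails.

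The main obstacle I anticipate is verifying the \coi-invariance condition for the sharply dense systems: one needs the family $\{D^\theta_x\}$ to be genuinely indexed by closed copies $x$ of a fixed $\theta$, with $p \in D^\theta_x \iff \pi.p \in D^\theta_{\pi``x}$ for \emph{all} relevant \cois $\pi$, and this forces the ``target value'' $x \cap \kappa$ appearing in the definition to transform correctly under $\pi$. If $\pi$ moves points below $\kappa$, then $\pi``(x\cap\kappa) \ne (\pi``x)\cap\kappa$ in general, which would break the scheme. The fix is presumably to not record the literal set $x \cap \kappa$ but rather to record an \emph{abstract pattern} — the order type data — and reconstruct: define $D^\theta_x$ to consist of $p$ such that some coordinate $(\beta^*,\eta)$ of $f_p$ has $f_p(\beta^*,\eta)$ order-isomorphic in the right way to $x$ relative to $x_p$, i.e. $f_p(\beta^*,\eta) = \{ x_p(i) : i \in S \}$ where $S \subseteq \theta$ is the fixed ``shape'' of how $x$ sits, so that everything is intrinsic to $(x_p, x)$ and hence commutes with $\pi$. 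Getting this bookkeeping exactly right — so that density still holds and the union over $G$ still realizes all of $[\kappa]^{<\kappa}$ at coordinate $\beta^*$ — is the delicate part; everything else is a routine check against the axioms of $\mathcal P_\kappa$.
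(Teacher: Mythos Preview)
Your overall strategy is correct --- build a generic function from the filter and argue that at some coordinate it must surject onto a family of size $>\kappa$ from only $\kappa$ many slots. But you have correctly identified, and not resolved, the real obstacle: \coi-invariance fails for any scheme that pins down a fixed anchor $\beta^* = \kappa$, since a \coi $\pi$ with $x \subseteq x_p \subseteq \dom(\pi)$ will in general move $\beta^*$, and then $\pi.p$ records its value at $(\pi(\beta^*),\eta)$ rather than at $(\beta^*,\eta)$. Your proposed fix via ``abstract patterns relative to $x_p$'' does not repair this, because the anchor $\beta^*$ still appears explicitly in the definition of $D^\theta_x$ and is not determined by $x$; nothing you wrote makes $\beta^*$ transform to $\pi(\beta^*)$ on the $D_{\pi``x}$ side.

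The paper's resolution is clean and worth absorbing. First, one does not need to hit all of $[\kappa]^{<\kappa}$: since $\kappa$ is regular, $\kappa^{<\kappa}>\kappa$ implies there is a single cardinal $\lambda<\kappa$ with $|{\lambda \choose \lambda}|>\kappa$ (encode subsets of $\lambda$ as clubs in $\lambda$). So a \emph{single} sharply dense system, with fixed $\theta_{\mathcal D}=\lambda+1$, suffices --- no need for $\kappa$ many systems indexed by varying $\theta$. Second, and this is the key idea you were missing, let the anchor be $\max(x)$ rather than a fixed ordinal: for $x\in{\kappa^+\choose\lambda+1}$, declare $p\in D_x$ iff $x\subseteq x_p$ and there is $\eta<\kappa$ with $(\max(x),\eta)\in\dom(f_p)$ and $f_p(\max(x),\eta)=x\cap\max(x)$. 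Now every ingredient in the definition of $D_x$ is intrinsic to $x$; since $\pi(\max(x))=\max(\pi``x)$ and $\pi``(x\cap\max(x))=(\pi``x)\cap\max(\pi``x)$, the \coi-invariance $p\in D_x\iff\pi.p\in D_{\pi``x}$ is immediate from the definition of $\pi.p$. To finish, any filter $G$ meeting $\mathcal D$ everywhere yields, via $f:=\bigcup_{p\in G}f_p$, that every $C\in{\lambda\choose\lambda}$ equals $f(\lambda,\eta)$ for some $\eta<\kappa$ (apply the system at $x:=C\cup\{\lambda\}$, for which $\max(x)=\lambda$), contradicting $|{\lambda\choose\lambda}|>\kappa$.
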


\begin{proof}
  We commence with a simple observation.
  \begin{subclaim} There exists a cardinal $\lambda<\kappa$ for which  $|{\lambda\choose\lambda}|>\kappa$.
  \end{subclaim}
  \begin{proof} Since $\kappa$ is regular, we have $\kappa^{<\kappa}=\sum_{\lambda<\kappa}\lambda^\lambda$.
  So, since $\kappa^{<\kappa}\ge\kappa^+$ and $\kappa^+$ is regular, we may fix a cardinal
  $\lambda<\kappa$ such that $\lambda^\lambda\ge\kappa^+$.
  For every $A\s\lambda$, let $$C_A:=\acc(\lambda)\cup\{\alpha+1\mid \alpha\in A\}.$$
  Then $A\mapsto C_A$ is an injection from $\mathcal P(\lambda)$ to ${\lambda\choose\lambda}$, and we are done.
  \end{proof}

  Fix a cardinal $\lambda<\kappa$ such that $|{\lambda\choose\lambda}|>\kappa$. For each $x \in {\kappa^+ \choose \lambda+1}$, let $D_x$ be the set of all conditions $(x_p,f_p) \in \bb{P}$ such that:
  \begin{itemize}
    \item $x \subseteq x_p$;
    \item there is $\eta < \kappa$ with $(\max(x), \eta) \in \dom(f_p)$ such that  $$f_p(\max(x), \eta) = x \cap \max(x).$$
  \end{itemize}
  Evidently, $\mathcal{D} := \{D_x \mid x \in {\kappa^+ \choose \lambda+1}\}$
  is a sharply dense system.

  Towards a contradiction, suppose that $\fa{\kappa}$ holds. In particular, there exists a filter
  $G$ on $\bb{P}$ that meets $\mathcal{D}$ everywhere.
  Let $f := \bigcup_{p \in G} f_p$, so that $f$ is a function from a (possibly proper) subset of $\kappa^+\times\kappa$ to $\mathcal P(\kappa^+)$.
  Put $\Lambda:=\{ f(\lambda,\eta)\mid \exists\eta<\kappa[(\lambda,\eta)\in \dom(f)]\}$.
  Clearly, $|\Lambda|\le\kappa$.
  Finally, let $C\in{\lambda\choose\lambda}$ be arbitrary.
  Since $C\cup \{\lambda\} \in {\kappa^+ \choose \lambda+1}$,
  we have $G \cap D_{C\cup \{\lambda\}} \neq \emptyset$, and hence $C\in \Lambda$.
  It follows that ${\lambda\choose\lambda}\s\Lambda$, contradicting the fact that $|{\lambda\choose\lambda}|>\kappa\ge|\Lambda|$.
\end{proof}

\begin{coro}\label{cardinalarithmetic} $\fa\kappa$ entails $\kappa^{<\kappa}=\kappa$.\qed
\end{coro}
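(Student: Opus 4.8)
The plan is to deduce Corollary~\ref{cardinalarithmetic} directly from the preceding Theorem by a short case analysis. First I would note that $\fa\kappa$ always holds of the trivial two-element forcing, and more substantively, that the existence of \emph{any} member of $\mathcal P_\kappa$ witnessing the failure of $\fa\kappa$ is exactly what the Theorem supplies under the hypothesis $\kappa^{<\kappa}>\kappa$. Thus, arguing contrapositively: if $\fa\kappa$ holds, then the Theorem forbids $\kappa^{<\kappa}>\kappa$, so $\kappa^{<\kappa}\le\kappa$; and since trivially $\kappa\le\kappa^{<\kappa}$ (as $\kappa=\kappa^1\le\kappa^{<\kappa}$), we conclude $\kappa^{<\kappa}=\kappa$.

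More carefully, the key steps are as follows. Assume $\fa\kappa$. Suppose toward a contradiction that $\kappa^{<\kappa}\neq\kappa$. Since $\kappa$ is regular and uncountable, $\kappa^{<\kappa}\ge\kappa$, so the only possibility is $\kappa^{<\kappa}>\kappa$. But then the Theorem says the specific triple $(\bb P,\le_{\bb P},\bb Q)$ defined just above it — which was verified to lie in $\mathcal P_\kappa$ — witnesses that $\fa\kappa$ fails, contradicting our assumption. Hence $\kappa^{<\kappa}=\kappa$.

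There is essentially no obstacle here: the corollary is a purely logical restatement of the Theorem once one observes $\kappa\le\kappa^{<\kappa}$ always. The only ``content'' was already expended in the Theorem's proof, namely exhibiting the coding forcing $\mathbb P$ and the sharply dense system $\mathcal D=\{D_x\mid x\in\binom{\kappa^+}{\lambda+1}\}$ that reads off an injective copy of $\binom{\lambda}{\lambda}$ into a set of size $\le\kappa$. So the proof is just: $\fa\kappa$ rules out $\kappa^{<\kappa}>\kappa$ by that Theorem, and $\kappa^{<\kappa}<\kappa$ is impossible by regularity (indeed $2\le\kappa$ gives $\kappa=2^{<\kappa}\cdot\kappa\le\kappa^{<\kappa}$), leaving $\kappa^{<\kappa}=\kappa$ as the only option. \qed
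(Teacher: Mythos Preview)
Your proposal is correct and matches the paper's approach: the corollary is marked with \qed\ and no proof, indicating it is the immediate contrapositive of the preceding Theorem together with the trivial inequality $\kappa\le\kappa^{<\kappa}$. Your write-up is more verbose than necessary (the remarks about the trivial forcing and the identity $\kappa=2^{<\kappa}\cdot\kappa$ are irrelevant), but the logical content is exactly right.
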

\subsection{Baumgartner's square} \label{recovering_sect}
In unpublished work, Baumgartner introduced the principle $\square^B_\kappa$,
which is a natural weakening of Jensen's $\square_\kappa$ principle.

\begin{defn}
  A $\square^B_\kappa$-sequence is a sequence $\langle C_\beta \mid \beta \in \Gamma \rangle$ such that:
  \begin{enumerate}
    \item $E^{\kappa^+}_\kappa \subseteq \Gamma \subseteq \acc(\kappa^+)$;
    \item for all $\beta \in \Gamma$, $C_\beta$ is club in $\beta$ and $\otp(C_\beta) \leq \kappa$;
    \item for all $\beta\in \Gamma$ and all $\alpha\in \acc(C_\beta)$, we have $\alpha\in\Gamma$ and
      $C_{\alpha} = C_\beta\cap\alpha$.
  \end{enumerate}
  The principle $\square^B_\kappa$ asserts the existence of a $\square^B_\kappa$-sequence.\footnote{Note that
  $\square_\kappa^B$ is equivalent to the principle $\square_\kappa(\kappa^+,\sq_\kappa)$ from \cite[\S1]{paper29}.}
\end{defn}

Some basic facts
about $\square^B_\kappa$ can be found in \cite{velleman_forcing}, where it goes by the name
``weak $\square_\kappa$.'' In particular, it is shown in \cite{velleman_forcing} that
$\square^B_\kappa$ follows from the existence of a $(\kappa, 1)$-morass.

\begin{thm} \label{recovering_thm}
  Suppose that $\fa{\kappa}$ holds. Then so does $\square^B_\kappa$.
\end{thm}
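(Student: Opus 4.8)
The plan is to run a single $\fa{\kappa}$ application against a carefully chosen forcing notion $(\mathbb{P},\le_{\mathbb{P}},\mathbb{Q})\in\mathcal{P}_\kappa$ that generically builds a $\square^B_\kappa$-sequence. The natural choice is a poset of \emph{approximations to a $\square^B_\kappa$-sequence}: a condition should carry a realm $x_p\in[\kappa^+]^{<\kappa}$ together with a coherent assignment $\beta\mapsto C_\beta$ defined for $\beta$ in $\acc(x_p)$ (or in the appropriate subset of $\acc^+(x_p)$), where each $C_\beta$ is a subset of $x_p\cap\beta$ that is ``club in $\beta$ from the point of view of $x_p$'' (i.e.\ $C_\beta$ is cofinal in $\beta$ and closed in $x_p\cap\beta$), has order type $\le\kappa$, and satisfies the coherence requirement $C_\alpha=C_\beta\cap\alpha$ for $\alpha\in\acc(C_\beta)$. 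Extension is by end-extension of the realm together with extension of the assignment; $\mathbb{Q}$ is the set of conditions whose realm is closed, with $C_\beta$ genuinely club in $\beta$ for the relevant $\beta$; the action of a \coi $\pi$ pushes the realm forward and relabels each $C_\beta$ as $\pi``C_\beta$ attached to $\pi(\beta)$; and $p\rest\alpha$ truncates both the realm and the assignment below $\alpha$. The bulk of the routine-but-nontrivial work is to verify the ten clauses of Definition~\ref{class_def} for this $\mathbb{P}$; most are straightforward bookkeeping, but Controlled closure~\eqref{c10} and Amalgamation~\eqref{c11} require genuine care, since one must choose the clubs at the newly-added accumulation points (in particular at $\ssup(x_q)$-type points, and at the gluing point $\alpha$ in the amalgamation clause) so that coherence is preserved — here one uses the fact that the two pieces being glued agree below $\alpha$ and that $\otp$ stays $\le\kappa$ because $\kappa$ is regular and all realms have size $<\kappa$. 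By Corollary~\ref{cardinalarithmetic} we already have $\kappa^{<\kappa}=\kappa$, which keeps $\mathbb{P}_\kappa\s H_\kappa$ and keeps all the relevant index sets of size $\le\kappa$.

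Next I would define the sharply dense systems to be met. The first family ensures the domain of the generic sequence is large enough: for each $\theta<\kappa$ and each $x\in{\kappa^+\choose\theta}$, the set $D_x:=\{p\in\mathbb{Q}\mid x\s x_p\}$ from Proposition~\ref{prop31}(2) is already a sharply dense system; applying $\fa{\kappa}$ to a $\kappa$-sized collection of these for a cofinal set of $\theta$'s, together with the realm-projection clause~\eqref{c1.5}, forces the union of the realms of $G$ to be all of $\kappa^+$ and, crucially, to have the property that a club of $\beta<\kappa^+$ lie in $\acc$ of that union, so that each such $\beta$ gets assigned a $C_\beta$. The second requirement — that for every $\beta\in E^{\kappa^+}_\kappa$ the generic $C_\beta$ is actually \emph{cofinal} in $\beta$ (not merely cofinal in $x\cap\beta$ for some deficient $x$) — is handled by density: for a suitable $\theta$ and each $x\in{\kappa^+\choose\theta}$, let $D_x$ consist of conditions $p$ extending a ``minimal witness'' built from $x$ in which $\sup(x)$ has been put into the realm as an accumulation point with $C_{\sup(x)}\supseteq x$; the $\fa{\kappa}$-symmetry condition ($p\in D_x\iff \pi.p\in D_{\pi``x}$) holds because the definition refers only to $x$ and the action of \cois is by pushforward. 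Meeting a $\kappa$-sized collection of these forces every $\beta$ of cofinality $\kappa$ in $\bigcup_{p\in G}\acc(x_p)$ to have $\sup C_\beta=\beta$. Finally define $\langle C_\beta\mid \beta\in\Gamma\rangle$ from $G$ by setting $C_\beta:=\bigcup_{p\in G}C^p_\beta$ where defined; filter-directedness guarantees this is well-defined and that coherence passes to the union, $E^{\kappa^+}_\kappa\s\Gamma\s\acc(\kappa^+)$, $\otp(C_\beta)\le\kappa$, and $C_\beta$ club in $\beta$ — verifying that the three clauses of the $\square^B_\kappa$-definition hold.

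The main obstacle, I expect, is designing $\mathbb{P}$ so that Controlled closure and Amalgamation both hold while the generic object is still forced to satisfy clause (3) (full coherence along $\acc(C_\beta)$, not just along $\acc(C_\beta)\cap x_p$). The tension is this: when we glue conditions or pass to vertical/controlled limits, new accumulation points appear inside some $C_\beta$, and the assignment must already have committed to the right club there; if one is not careful, the naive limit will have a $C_\beta$ whose accumulation set is not fully inside the realm. The standard fix is to restrict $\mathbb{Q}$-conditions (and hence the limits built in clauses~\eqref{c12},~\eqref{c10}) so that $x_p$ is closed under ``accumulation points of the $C$'s already declared,'' i.e.\ to bake a closure-under-coherence requirement into the poset itself; one then checks this requirement is preserved by $\rest\alpha$, by \coi-actions, and by the amalgamation greatest-lower-bound construction. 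Getting this closure property stated so that clauses~\eqref{c10} and~\eqref{c11} — whose conclusions demand $x_q=\cl(x_r\cup x)$ and $x_r=x_q\cup x_p$ exactly — come out true is the delicate point; everything after that is the kind of diagram-chase that the earlier propositions in this section already model.
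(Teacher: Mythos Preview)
Your poset has a genuine gap at the most important place: points of cofinality $\kappa$. You propose to attach $C_\beta$ only to $\beta\in\acc(x_p)$. But if $\cf(\beta)=\kappa$ then, since every realm has size $<\kappa$, no condition can ever have $\sup(x_p\cap\beta)=\beta$; hence $\beta\notin\acc(x_p)$ for any $p$, and your generic object assigns no club at all to any $\beta\in E^{\kappa^+}_\kappa$. Your second sharply dense family does not repair this: a set $x\in{\kappa^+\choose\theta}$ with $\theta<\kappa$ cannot have $\sup(x)=\beta$ when $\cf(\beta)=\kappa$, so ``putting $\sup(x)$ into the realm as an accumulation point'' never reaches such $\beta$. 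The requirement $E^{\kappa^+}_\kappa\subseteq\Gamma$ is precisely the nontrivial clause of $\square^B_\kappa$, and your poset cannot witness it. (A smaller issue: ``end-extension of the realm'' would violate Clause~\eqref{c1.5}; you presumably meant $\supseteq$, not $\sqsubseteq$.)

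The paper's fix is exactly to decouple the club commitment from accumulation. A condition is a pair $(x,f)$ where $f$ is defined on \emph{all} of $x$: for each $\beta\in x$, $f(\beta)$ is a closed (not cofinal) subset of $\beta$, coherent along $\acc(f(\beta))$. The ordering allows $f(\beta)$ to be \emph{end-extended} whenever $\beta\in\nacc(x_p)$, and freezes it once $\beta\in\acc(x_p)$. The key $\bb{Q}$-clause is that $\max(f_p(\beta))=\max(x_p\cap\beta)$ for $\beta\in\nacc(x_p)\setminus\{\min(x_p)\}$. Then for $\beta\in E^{\kappa^+}_\kappa$ one always has $\beta\in\nacc(x_p)$, so meeting the single system $\{D_x\mid x\in{\kappa^+\choose 3}\}$ with $D_x=\{q\in\bb{Q}\mid x\subseteq x_q\}$ at $\{\alpha,\beta,\beta+1\}$ pushes $\max(f_p(\beta))\ge\alpha$ for every $\alpha<\beta$; the generic $C_\beta:=\bigcup_{p\in G}f_p(\beta)$ is thus unbounded. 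Coherence between $C_\beta$ and $C_\gamma$ along common accumulation points follows from meeting the same system at $\{\alpha,\beta,\gamma\}$. No second family is needed, and the anticipated difficulties with Clauses~\eqref{c10} and~\eqref{c11} evaporate once $f$ lives on all of $x$ and is only partially frozen.
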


The rest of this subsection is devoted to proving Theorem~\ref{recovering_thm}. We must first identify a relevant member of
$\mathcal{P}_\kappa$, which will be a slight modification of the poset used to add
$\square^B_\kappa$ in \cite[\S1.3]{velleman_forcing}.

\begin{defn} \label{square_p_def}
  $\bb{P}$ consists of all pairs $p = (x, f)$ such that:
  \begin{enumerate}
    \item $x \in [\kappa^+]^{<\kappa}$;
    \item $f$ is a function from $x$ to $\mathcal P(x)$ such that for all $\beta\in x$:
      \begin{enumerate}
        \item   $f(\beta)$ is a closed subset of $\beta$;\footnote{We say that $c$ is a \emph{closed subset of $\beta$}
iff $c\s\beta$ and for every $\alpha<\beta$, $c\cap\alpha\neq\emptyset\implies\sup(c\cap\alpha)\in c$.}
        \item for all $\alpha \in \acc(f(\beta))$, we have $f(\alpha) = f(\beta) \cap \alpha$.
      \end{enumerate}
  \end{enumerate}
\end{defn}
  The coordinates of a condition $p \in \bb{P}$ will often be identified as $x_p$ and $f_p$, respectively.
\begin{defn} \label{square_order_def}
  For all $p,q \in \bb{P}$, we let $q \leq_\bb{P} p$ iff:
  \begin{itemize}
    \item $x_p \subseteq x_q$;
    \item for all $\beta\in x_p$, we have $f_p(\beta) \sq f_q(\beta)$;
    \item for all $\beta\in x_p$, if $\sup(x_p \cap \beta) = \beta$, then $f_q(\beta) = f_p(\beta)$.
  \end{itemize}
\end{defn}

\begin{defn} \label{square_q_def}
  $\bb{Q}$ is the set of all conditions $p \in \bb{P}$ such that:
  \begin{enumerate}
    \item $x_p=\cl(x_p)$;
    \item for all $\beta \in \nacc(x_p) \setminus \{\min(x_p)\}$, we have $\max(f_p(\beta)) = \max(x_p \cap \beta)$.
  \end{enumerate}
\end{defn}

In order to show that $(\bb{P}, \leq_\bb{P}, \bb{Q}) \in \mathcal{P}_\kappa$, we must define the actions of
\cois on $\bb{P}$ and a restriction operation.

\begin{defn}
  Suppose that $\pi$ is a \coi from a subset of $\kappa^+$ to $\kappa^+$.
    For each $p \in \bb{P}_{\dom(\pi)}$, we define $\pi.p$ to be the condition $(x,f) \in \bb{P}$ such that:
  \begin{enumerate}
    \item $x = \pi``x_p$;
    \item for all $\alpha \in x_p$, we have $f(\pi(\alpha)) = \pi``f_p(\alpha)$.
  \end{enumerate}
\end{defn}

\begin{defn}
  Suppose that $p \in \bb{P}$ and $\alpha < \kappa^+$. Then $p \rest \alpha$ is the condition $(x,f) \in \bb{P}$ such that
  $x = x_p \cap \alpha$ and $f = f_p \restriction x$.
\end{defn}

Naturally, for each $p \in \bb{P}$, we let $x_p$ denote the realm of $p$.
With these definitions, it is immediate that $(\bb{P}, \leq_\bb{P}, \bb{Q})$ satisfies Clauses \eqref{c1}--\eqref{c12}
of Definition~\ref{class_def}. We now verify Clauses \eqref{c9}--\eqref{c11}, in order.

\begin{lemma}
  Suppose that $p \in \bb{P}$. Then there is $q \in \bb{Q}$ with $q \leq_\bb{P} p$ such that $x_q = \cl(x_p)$.
\end{lemma}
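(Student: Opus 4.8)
The plan is to extend $p$ to a condition $q$ whose realm is the closure $\cl(x_p)$ of $x_p$, defining the new function values at the accumulation points $\alpha \in \acc^+(x_p) \setminus x_p$ in a way that is forced by the coherence requirement, and at the members of $x_p$ in a way that preserves $f_p$ wherever $\alpha$ is an accumulation point of $x_p$ so that the order relation $q \le_\bb{P} p$ holds. First I would set $x_q := \cl(x_p)$; note $|x_q| < \kappa$ since $\kappa$ is regular and $\otp(x_q) < \kappa$, and $x_q$ is closed in its supremum by construction. Now I must define $f_q(\beta)$ for each $\beta \in x_q$. For $\beta \in x_p$, the order relation demands $f_p(\beta) \sq f_q(\beta)$, and demands equality whenever $\sup(x_p \cap \beta) = \beta$; for $\beta \in x_q \setminus x_p$, there is no constraint coming from $\le_\bb{P}$, but the coherence Clause~(2)(b) of Definition~\ref{square_p_def} and the requirement $q \in \bb{Q}$ (Definition~\ref{square_q_def}) constrain the choice.

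The natural definition: for $\beta \in x_q$, if $\sup(x_p \cap \beta) = \beta$ (equivalently $\beta \in \acc^+(x_p)$), set $f_q(\beta) := f_p(\beta)$ when $\beta \in x_p$, and when $\beta \in x_q \setminus x_p$ we are free — but to get coherence I would instead define $f_q(\beta)$ uniformly using the closure: take $f_q(\beta) := \cl(f_p(\beta'))$ for a suitable witness, or more cleanly, for $\beta \in \nacc(x_q)$ set $f_q(\beta) := f_p(\gamma) \cup \{\max(x_q \cap \beta)\}$ where $\gamma$ is the least element of $x_p$ with $\gamma \ge \beta$ (so that $f_q(\beta)$ is a closed subset of $\beta$ whose maximum is $\max(x_q \cap \beta)$, as required for membership in $\bb{Q}$), and for $\beta \in \acc(x_q)$ set $f_q(\beta) := \bigcup\{f_q(\alpha) \mid \alpha \in \acc(x_q) \cap \beta, \alpha \text{ a limit of } x_q\} $ built coherently by recursion on $\beta$. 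In practice the cleanest route is: define $f_q$ by recursion on $\beta \in x_q$, at a nonaccumulation point $\beta$ of $x_q$ with $\beta \notin x_p$ copying the tail behavior of the nearest $x_p$-point above and appending $\max(x_q \cap \beta)$, and at an accumulation point $\beta$ of $x_q$ setting $f_q(\beta) = f_q(\gamma) \cup \{\text{its sup data}\}$ for the appropriate $\gamma$, checking at each stage that $\acc(f_q(\beta))$-coherence with earlier values holds. Then one verifies $q \in \bb{P}$ (each $f_q(\beta)$ closed in $\beta$; Clause~(2)(b)), $q \in \bb{Q}$ (the $\nacc$ condition on maxima), and $q \le_\bb{P} p$ (the three bullets of Definition~\ref{square_order_def}, the third holding because at $\beta \in x_p$ with $\sup(x_p \cap \beta) = \beta$ we arranged $f_q(\beta) = f_p(\beta)$).

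The main obstacle I expect is verifying the coherence Clause~(2)(b) for the extended function — namely that for $\beta \in x_q$ and $\alpha \in \acc(f_q(\beta))$ we have $\alpha \in x_q$ and $f_q(\alpha) = f_q(\beta) \cap \alpha$ — especially across the newly added accumulation points, and simultaneously making sure the new values genuinely end-extend the old ones rather than overwriting them on a nontrivial initial segment. This forces the recursion to be set up so that the definition at $\beta$ only ever refers to values $f_q(\alpha)$ for $\alpha < \beta$ that have already been fixed compatibly, which is why the recursion should be driven by the order type of $x_q$ and why at nonaccumulation points the value should be chosen to have its accumulation points land inside $x_p$ (where $f_p$ already supplies a coherent family) plus the single new point $\max(x_q \cap \beta)$, which is itself in $x_q$. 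Once the recursion invariant "$f_q \restriction (x_q \cap \beta)$ is a coherent family extending $f_p \restriction (x_p \cap \beta)$, and every $f_q(\alpha)$ for $\alpha < \beta$ has $\acc(f_q(\alpha)) \subseteq x_p$" is maintained, all remaining checks are routine.
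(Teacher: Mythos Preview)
Your proposal is not wrong in spirit, but it is far more complicated than necessary, and the complications stem from a structural observation you are missing: since $x_q = \cl(x_p)$, one has $\nacc(x_q) = \nacc(x_p) \subseteq x_p$. Every point of $x_q \setminus x_p$ is an accumulation point of $x_q$. In particular, the case you plan for, ``a nonaccumulation point $\beta$ of $x_q$ with $\beta \notin x_p$,'' is vacuous, and your device of reaching up to ``the least $\gamma \in x_p$ with $\gamma \ge \beta$'' is never needed.

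Once you see this, the recursion evaporates. The paper defines $f_q$ directly, in one line, by cases:
\[
f_q(\alpha) := \begin{cases}
f_p(\alpha) \cup \{\max(x_q \cap \alpha)\} & \text{if } \alpha \in \nacc(x_q) \setminus \{\min(x_q)\}; \\
\emptyset & \text{if } \alpha \in x_q \setminus x_p; \\
f_p(\alpha) & \text{otherwise.}
\end{cases}
\]
The point is that at the \emph{new} points $\alpha \in x_q \setminus x_p$ there are no constraints: the order relation $q \le_\bb{P} p$ says nothing about $\alpha \notin x_p$, and Clause~(2) of Definition~\ref{square_q_def} concerns only $\nacc(x_q)$, which contains no new points. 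So $f_q(\alpha) = \emptyset$ is perfectly legal, and with that choice the coherence Clause~(2)(b) at $\alpha$ is vacuous. Your anticipated ``main obstacle'' --- verifying coherence across newly added accumulation points --- disappears entirely. For the remaining $\beta$, the key check is that $\acc(f_q(\beta)) = \acc(f_p(\beta))$ (adding the single top point $\max(x_q \cap \beta)$ creates no new accumulation point, since $f_p(\beta)$, being closed below $\beta$ and bounded there, already contains its sup), and every $\alpha \in \acc(f_p(\beta))$ lies in $\acc(x_p)$, where $f_q(\alpha) = f_p(\alpha)$ by the third case.

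Your recursive union formula at accumulation points, by contrast, is both unnecessary and potentially in conflict with the requirement $f_q(\beta) = f_p(\beta)$ for $\beta \in \acc(x_p)$: nothing guarantees that $\bigcup\{f_q(\alpha) \mid \alpha \in \acc(x_q) \cap \beta\}$ equals $f_p(\beta)$ (indeed $f_p(\beta)$ may well be empty while earlier $f_q(\alpha)$ are not).
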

\begin{proof}
    Set $x_q := \cl(x_p)$, so that $\nacc(x_q)=\nacc(x_p)$ and $\acc(x_q)\supseteq\acc(x_p)$.
    Next, define $f_q:x_q\rightarrow\mathcal P(x_q)$ by stipulating:
    $$f_q(\alpha):=\begin{cases}
    f_p(\alpha)\cup\{\max(x_q\cap\alpha)\} &\text{if }\alpha\in\nacc(x_q)\setminus\{\min(x_q)\};\\
    \emptyset &\text{if }\alpha\in x_q\setminus x_p;\\
    f_p(\alpha) &\text{otherwise}.
    \end{cases}$$

    It is clear that $q := (x_q, f_q)$ is as desired.
\end{proof}

\begin{lemma}
  Suppose that $\xi < \kappa$ and $\langle q_\eta \mid \eta < \xi \rangle$ is a decreasing sequence
  of conditions from $\bb{Q}$. Let $x := \bigcup_{\eta < \xi} x_{q_\eta}$, and suppose that $\alpha
  < \ssup(x)$ and $r \in \bb{Q}_{\ssup(x \cap \alpha)}$ is a lower bound for $\langle q_\eta \rest \alpha \mid \eta < \xi \rangle$.
  Then there is $q \in \bb{Q}$ such that:
  \begin{enumerate}
    \item $q \rest \ssup(x \cap \alpha) = r$;
    \item $x_q = \cl(x_r \cup x)$;
    \item $q$ is a lower bound for $\langle q_\eta \mid \eta < \xi \rangle$.
  \end{enumerate}
\end{lemma}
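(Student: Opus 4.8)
Throughout, write $\delta:=\ssup(x\cap\alpha)$; then $x\cap\alpha=x\cap\delta\subseteq x_r\subseteq\delta$ with $\ssup(x_r)=\delta$, and since $r\in\bb{Q}$ (so $x_r=\cl(x_r)$) we also have $\cl(x)\cap\delta=\cl(x\cap\alpha)\subseteq x_r$. The plan is to first construct the ``vertical'' limit $\bar q\in\bb{Q}$ of the sequence $\langle q_\eta\mid\eta<\xi\rangle$, with realm $x_{\bar q}=\cl(x)$, and then to amalgamate $\bar q$ with $r$ below $\delta$ via Clause~\eqref{c11}; the greatest lower bound this produces will be the desired $q$. (Informally, $\bar q$ is what Clause~\eqref{c10} should output in the degenerate instance $\alpha=0$, $r=\one$, and the general instance is obtained from it by amalgamation.)

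\emph{Constructing $\bar q$.} For $\beta\in x$ set $c_\beta:=\bigcup\{f_{q_\eta}(\beta)\mid \eta<\xi,\ \beta\in x_{q_\eta}\}$; since the $q_\eta$ are $\leq_{\bb{P}}$-decreasing, this is an $\sq$-increasing union of closed subsets of $\beta$ that stabilizes once $\beta$ becomes an accumulation point of some $x_{q_\eta}$, hence $c_\beta$ is closed below its own supremum. Put $f_{\bar q}(\beta):=c_\beta$, unless $s:=\sup(c_\beta)$ is a limit ordinal with $s<\beta$ and $s\notin c_\beta$, in which case put $f_{\bar q}(\beta):=c_\beta\cup\{s\}$. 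For $\beta\in\cl(x)\setminus x$ (necessarily $\beta\in\acc^+(x)$), let $\beta^{+}:=\min(x\setminus\beta)$ and put $f_{\bar q}(\beta):=c_{\beta^{+}}$; one checks $\sup(x\cap\beta^{+})=\sup(x\cap\beta)=\beta$, so $c_{\beta^{+}}$ is a closed subset of $\beta$ cofinal in $\beta$, and it is exactly the set that coherence forces on $f_{\bar q}(\beta)$. One then verifies $\bar q=(\cl(x),f_{\bar q})\in\bb{Q}$: membership in $\bb{P}$ is clear apart from the coherence requirement of Definition~\ref{square_p_def}, which, using the coherence of each $q_\eta$, comes down to the identity $c_\gamma=c_\beta\cap\gamma$ for $\gamma\in\acc(c_\beta)$ together with the explicit description of the adjoined suprema; and the requirement $\max f_{\bar q}(\beta)=\max(\cl(x)\cap\beta)$ of Definition~\ref{square_q_def}, for $\beta\in\nacc(\cl(x))\setminus\{\min(\cl(x))\}=\nacc(x)\setminus\{\min(x)\}$, holds because for all large enough $\eta$ one has $\beta\in\nacc(x_{q_\eta})\setminus\{\min(x_{q_\eta})\}$, whence $\sup c_\beta=\sup_\eta\max(x_{q_\eta}\cap\beta)=\sup(x\cap\beta)=\max(\cl(x)\cap\beta)$. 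Finally, from $f_{q_\eta}(\beta)\sq c_\beta\sq f_{\bar q}(\beta)$ and the stabilization property, the definition of $\leq_{\bb{P}}$ (Definition~\ref{square_order_def}) gives $\bar q\leq_{\bb{P}} q_\eta$ for every $\eta<\xi$.

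\emph{Amalgamating with $r$.} From the hypothesis that $r$ is a lower bound for $\langle q_\eta\rest\alpha\mid\eta<\xi\rangle$ --- equivalently, $f_{q_\eta}(\beta)\sq f_r(\beta)$ for $\beta\in x_{q_\eta}\cap\alpha$, with equality when $\beta\in\acc(x_{q_\eta})$ --- one checks $r\leq_{\bb{P}}\bar q\rest\delta$: indeed $x_{\bar q\rest\delta}=\cl(x)\cap\delta=\cl(x\cap\alpha)\subseteq x_r$, and for $\beta$ in this set $f_{\bar q}(\beta)\sq f_r(\beta)$, with equality exactly when $\beta\in\acc^+(x)$ (for $\beta$ new, one applies coherence of $r$ at $\beta^{+}$, whose $f_r$-value end-extends $c_{\beta^{+}}$ and so has $\beta$ as an accumulation point). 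Apply Clause~\eqref{c11} to $p:=\bar q\in\bb{Q}$, to $\delta<\ssup(x_{\bar q})=\ssup(x)$, and to $r\in\bb{P}_\delta$ with $r\leq_{\bb{P}}\bar q\rest\delta$: this yields the greatest lower bound $q$ of $\bar q$ and $r$, with $x_q=x_r\cup x_{\bar q}=x_r\cup\cl(x)$ and $q\rest\delta=r$. Then $q$ is as required: $q\rest\ssup(x\cap\alpha)=q\rest\delta=r$ gives (1); a short computation using $x_r\subseteq\delta$, $\ssup(x_r)=\delta$ and the closedness of $\cl(x)$ shows $x_r\cup\cl(x)=\cl(x_r\cup x)$, giving (2); and $q\leq_{\bb{P}}\bar q\leq_{\bb{P}} q_\eta$ for all $\eta$ gives (3). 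It remains to see $q\in\bb{Q}$: its realm $\cl(x_r\cup x)$ is closed, $f_q\restriction x_r=f_r$ (from $q\rest\delta=r$), and $f_q(\beta)=f_{\bar q}(\beta)$ for $\beta\in x_q\setminus x_r$ (forced, since $f_{\bar q}(\beta)\sq f_q(\beta)\subseteq x_q\cap\beta$ while $\max(x_q\cap\beta)=\sup(x\cap\beta)=\max f_{\bar q}(\beta)$ for every such $\beta$); so the requirement of Definition~\ref{square_q_def} for $q$ follows from the corresponding requirements for $r$ and $\bar q$ upon splitting into the cases $\beta<\delta$ and $\beta\geq\delta$ (noting there is no $\beta\in x_q$ with $\delta<\beta<\alpha$).

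The step I expect to be the main obstacle is the bookkeeping around accumulation points freshly introduced by $\cl$: the suprema $s=\sup(c_\beta)\in\cl(x)\setminus x$ adjoined in the construction of $\bar q$ must be threaded consistently through the coherence requirement, and the constructions in the two stages must be shown to agree on the points of $\cl(x\cap\alpha)\setminus(x\cap\alpha)$, which lie in $x_r$; this works because every such point belongs to $\acc^+(x)$, which forces $f_r$ and $f_{\bar q}$ to coincide there. Everything else is a routine, if somewhat lengthy, case analysis.
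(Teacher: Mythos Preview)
Your approach is correct but genuinely different from the paper's. The paper constructs $q$ directly: it sets $x_q:=\cl(x_r\cup x)$, forces $f_q\restriction x_r=f_r$, and then defines $f_q(\beta)$ by recursion on $\beta\in x_q\setminus\delta$ via a four-way case split (according to whether $\beta\in\acc(x)$, $\beta\in\acc^+(x)\setminus x$, or $\beta\in\nacc(x)$ with $\beta^-$ in or out of $x$), verifying along the way that each initial segment is a condition in $\bb{Q}$ and a lower bound for $\langle q_\eta\rest(\beta+1)\mid\eta<\xi\rangle$. You instead factor the construction into two steps: first build the degenerate limit $\bar q$ (which is exactly the paper's construction in the special case $\alpha=0$, $r=\one$), and then invoke amalgamation to glue $r$ below $\bar q$. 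This is more modular and arguably cleaner, and it exposes why the general case follows from the degenerate one.

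Two caveats. First, you appeal to Clause~\eqref{c11} (amalgamation) for this particular poset, but in the paper's presentation that clause is verified in the lemma \emph{after} this one; since the amalgamation proof does not use controlled closure, this is only an ordering issue, easily repaired by swapping the two lemmas or by inlining the short amalgamation argument. Second, amalgamation only guarantees the output lies in $\bb{P}$, not in $\bb{Q}$, so you need the extra argument (which you give) that the resulting $q$ satisfies Definition~\ref{square_q_def}. Your verification of this is correct: for $\beta\in\nacc(x_q)\setminus\{\min(x_q)\}$ with $\beta\geq\delta$ one has $\beta>\delta$, $\beta\in\nacc(\cl(x))$, and $\max(x_q\cap\beta)=\max(\cl(x)\cap\beta)=\sup(x\cap\beta)=\max f_{\bar q}(\beta)$; for $\beta<\delta$ one is inside $x_r$ and inherits the property from $r\in\bb{Q}$. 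The point $\beta=\delta$, if it lies in $x_q$ at all, is always an accumulation point and hence imposes no constraint.
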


\begin{proof}
  We will construct a condition $q = (x_q, f_q)$ as desired. We are required to let $x_q := \cl(x_r \cup x)$
  and to ensure that $f_q \restriction x_r := f_r$. As $x_r \sq x_q$, it remains to determine $f_q \restriction (x_q \setminus \ssup(x \cap \alpha))$.
  We will define $f_q(\beta)$ by recursion on $\beta \in (x_q \setminus \ssup(x \cap \alpha))$, maintaining the hypothesis
  that $(x_q \cap (\beta + 1), f_q \restriction (\beta + 1))$ is an element of $\bb{Q}$ and a lower bound for
  $\langle q_\eta \rest (\beta + 1) \mid \eta < \xi \rangle$. For notational ease, if
  $\beta \in \nacc(x_q) \setminus \{\min(x_q)\}$, then let $\beta^- := \max(x_q \cap \beta)$.

  $\br$ If $\beta \in \acc(x)$, then fix $\eta_\beta < \xi$
  such that $\beta \in x_{q_{\eta_\beta}}$, and let $f_q(\beta) := \bigcup_{\eta \in [\eta_\beta, \xi)} f_{q_\eta}(\beta)$.
  There are two possibilities to consider here. If there is $\eta^* \in [\eta_\beta, \xi)$
  such that $\sup(x_{q_{\eta^*}} \cap \beta) = \beta$, then it follows from Definition~\ref{square_order_def}
  that $f_q(\beta) = f_{q_{\eta^*}}(\beta)$.

  If, on the other hand, there is no such $\eta^*$, then the fact that
  each $x_{q_\eta}$ is closed in its supremum implies that, for all $\eta \in [\eta_\beta, \xi)$,
  we have $\beta \in \nacc(x_{q_\eta})$ and hence $\max(f_{q_\eta}(\beta)) = \max(x_{q_\eta} \cap \beta)$.
  Since $\beta\in\acc(x)$, it then follows that $f_q(\beta)$ is club in $\beta$.

  $\br$ If $\beta \in \acc^+(x) \setminus x$, then let
  $\gamma := \min(x \setminus (\beta + 1))$. There is $\eta_\beta < \xi$ such that,
  for all $\eta \in [\eta_\beta, \xi)$, we have $\gamma \in x_{q_\eta}$ and $x_{q_\eta} \cap \beta \neq \emptyset$.
  For all such $\eta$, let $\delta_\eta := \max(x_{q_\eta} \cap \beta)$. It follows that
  $\sup\{\delta_\eta \mid \eta \in [\eta_\beta, \xi)\} = \beta$ and, for all $\eta \in [\eta_\beta, \xi)$, we have
  $\max(f_{q_\eta}(\gamma)) = \delta_\eta$. We can therefore let $f_q(\beta) := \bigcup_{\eta \in [\eta_\beta, \xi)} f_{q_\eta}(\gamma)$.

  $\br$ If $\beta \in \nacc(x)$ and $\beta^- \notin x$, then, by the construction in
  the previous case, we have $f_q(\beta^-) = \bigcup_{\eta < \xi} f_{q_\eta}(\beta)$.
  We can therefore let $f_q(\beta) := f_q(\beta^-) \cup \{\beta^-\}$.

  $\br$ If $\beta \in \nacc(x)$ and $\beta^- \in x$, then there is
  $\eta_\beta < \xi$ such that $\{\beta, \beta^-\} \subseteq x_{q_{\eta_\beta}}$. But then,
  for all $\eta \in [\eta_\beta, \xi)$, we have $f_{q_\eta}(\beta) = f_{q_{\eta_\beta}}(\beta)$
  and $\max(f_{q_\eta}(\beta)) = \beta^-$. We can therefore let $f_q(\beta) := f_{q_{\eta_\beta}}(\beta)$.

  It is easily verified that $q$, constructed in this manner, is as desired.
\end{proof}

\begin{lemma}
  Suppose that $p \in \bb{Q}$, $\alpha < \ssup(x_p)$, $q \in \bb{P}_\alpha$, and $q \leq_\bb{P} p\rest\alpha$.
  Then $p$ and $q$ have a $\leq_\bb{P}$-greatest lower bound, $r$. Moreover, we have $x_r = x_p \cup x_q$
  and $r \rest \alpha = q$.
\end{lemma}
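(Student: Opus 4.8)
The plan is to define $r = (x_r, f_r)$ explicitly by setting $x_r := x_p \cup x_q$ and specifying $f_r$ piecewise, then to verify in turn that $r \in \bb{P}$, that $r \leq_\bb{P} p$ and $r \leq_\bb{P} q$, that $r$ is $\leq_\bb{P}$-greatest among common lower bounds, and finally that $r \rest \alpha = q$. First I would observe that, since $q \leq_\bb{P} p \rest \alpha$, Definition~\ref{square_order_def} gives $x_{p \rest \alpha} = x_p \cap \alpha \subseteq x_q$; combined with $x_q \subseteq \alpha$ this yields $x_q \cap \alpha = x_q$ and $x_p \cap \alpha = x_q \cap \alpha$, so $x_r \cap \alpha = x_q$ and $x_r \setminus \alpha = x_p \setminus \alpha$. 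The natural definition is then
$$f_r(\beta) := \begin{cases} f_q(\beta) & \text{if }\beta \in x_q;\\ f_p(\beta) & \text{if }\beta \in x_p \setminus \alpha.\end{cases}$$
For this to be well-defined one must check the two clauses agree on $x_q \cap (x_p \setminus \alpha)$, but that intersection is empty since $x_q \subseteq \alpha$, so there is no conflict; however, one does need to confirm that $f_r(\beta)$ defined via $f_p$ for $\beta \in x_p \setminus \alpha$ is a subset of $x_r$, which holds because $f_p(\beta) \subseteq \beta \cap x_p \subseteq x_r$.

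The main obstacle, and the step deserving the most care, is verifying that $r \in \bb{P}$ — specifically the coherence clause (2)(b): for all $\beta \in x_r$ and all $\gamma \in \acc(f_r(\beta))$, we need $f_r(\gamma) = f_r(\beta) \cap \gamma$. When $\beta \in x_q$ this is inherited from $q \in \bb{P}$, provided $\gamma \in x_q$ as well; and $\gamma \in \acc(f_q(\beta)) \subseteq f_q(\beta) \subseteq \beta \cap x_q$, so indeed $\gamma \in x_q$, and $f_r(\gamma) = f_q(\gamma) = f_q(\beta) \cap \gamma = f_r(\beta) \cap \gamma$. The delicate case is $\beta \in x_p \setminus \alpha$: here $f_r(\beta) = f_p(\beta)$, and $\gamma \in \acc(f_p(\beta)) \subseteq \beta \cap x_p$. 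If $\gamma \geq \alpha$ then $\gamma \in x_p \setminus \alpha$ and coherence follows from $p \in \bb{P}$. If $\gamma < \alpha$ — this is the subtle subcase — then one argues as follows: $\gamma \in \acc(f_p(\beta))$ and $\gamma < \alpha$ means $\sup(f_p(\beta) \cap \alpha) \geq \gamma > 0$, and actually I claim $\gamma \in \acc(f_{p \rest \alpha}(\beta'))$ for the relevant $\beta' \in x_p \cap \alpha$; more directly, since $p \in \bb{Q}$ and $\gamma \in \acc(f_p(\beta))$, coherence in $p$ gives $f_p(\gamma) = f_p(\beta) \cap \gamma$, and $f_p(\gamma) = f_{p \rest \alpha}(\gamma)$ since $\gamma \in x_p \cap \alpha$; now using $q \leq_\bb{P} p \rest \alpha$ together with the fact that $\gamma \in \acc(f_{p\rest\alpha}(\gamma))$ would be needed to transfer to $f_q$ — instead one should use that $q \leq_\bb{P} p \rest \alpha$ forces $f_{p \rest \alpha}(\gamma) \sq f_q(\gamma)$, and since $\gamma \in \acc(f_p(\beta))$ and $f_p(\beta) \cap \gamma = f_p(\gamma)$ is unbounded in $\gamma$, and one checks $f_q(\gamma)$ cannot properly end-extend $f_p(\gamma)$ below $\gamma$ because $f_p(\gamma)$ is already closed and cofinal in $\gamma$ — wait: $\gamma \in \acc(f_p(\beta))$ means $f_p(\beta)\cap\gamma$ is cofinal in $\gamma$, hence $f_p(\gamma)$ is cofinal in $\gamma$, so $f_{p\rest\alpha}(\gamma)$ is cofinal in $\gamma$, hence $\sup(x_{p\rest\alpha}\cap\gamma)=\gamma$ fails in general; the clean route is: $f_q(\gamma)$ end-extends $f_p(\gamma)$ and $f_p(\gamma)=f_p(\beta)\cap\gamma$ is already closed cofinal in $\gamma$, and $f_q(\gamma)\subseteq\gamma$, so $f_q(\gamma)=f_p(\gamma)=f_p(\beta)\cap\gamma=f_r(\beta)\cap\gamma=f_r(\gamma)$, as required. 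So $r \in \bb{P}$.

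Granting $r \in \bb{P}$, the remaining verifications are routine. For $r \leq_\bb{P} p$: $x_p \subseteq x_r$ is clear; for $\beta \in x_p$, if $\beta \geq \alpha$ then $f_r(\beta) = f_p(\beta)$ (so certainly $f_p(\beta)\sq f_r(\beta)$, and the continuity clause of Definition~\ref{square_order_def} is automatic); if $\beta < \alpha$ then $\beta \in x_p \cap \alpha \subseteq x_q$, $f_r(\beta) = f_q(\beta)$, and $q \leq_\bb{P} p \rest \alpha$ gives $f_{p\rest\alpha}(\beta) = f_p(\beta) \sq f_q(\beta) = f_r(\beta)$, with the continuity clause likewise transferred from $q \leq_\bb{P} p\rest\alpha$ (noting $\sup(x_p\cap\beta)=\beta$ implies $\sup(x_{p\rest\alpha}\cap\beta)=\beta$). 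For $r \leq_\bb{P} q$: $x_q \subseteq x_r$ is clear and $f_r \restriction x_q = f_q$ exactly, so all three conditions of Definition~\ref{square_order_def} hold trivially. For maximality, let $s$ be any common lower bound of $p$ and $q$; then $s \leq_\bb{P} q$ gives $x_q \subseteq x_s$ and $f_s \restriction x_q$ end-extends $f_q$, while $s \leq_\bb{P} p$ gives, for $\beta \in x_p \setminus \alpha$, that $f_s(\beta)$ end-extends $f_p(\beta) = f_r(\beta)$; hence $f_s$ end-extends $f_r$ coordinatewise and the continuity clauses match, so $s \leq_\bb{P} r$. Finally, $r \rest \alpha = q$: by definition $x_{r\rest\alpha} = x_r \cap \alpha = x_q$ and $f_{r\rest\alpha} = f_r \restriction x_q = f_q$, so $r\rest\alpha = q$ on the nose (in particular the unique $\leq_\bb{P}$-least $r'$ with $x_{r'} = x_r\cap\alpha$ and $r \leq_\bb{P} r'$ is $q$ itself, using Clause~\eqref{c5}), completing the proof.
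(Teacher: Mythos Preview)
Your construction of $r$ and the overall verification strategy are exactly those of the paper: set $x_r := x_p \cup x_q$, define $f_r$ piecewise (by $f_q$ below $\alpha$, by $f_p$ at $\alpha$ and above), and check coherence, with the only nontrivial case being $\beta \geq \alpha > \gamma$ where $\gamma \in \acc(f_p(\beta))$. Your final ``clean route'' for that case is valid and arrives at $f_q(\gamma) = f_p(\gamma)$, so the proof goes through.

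One remark on that subcase. In your stream of reasoning you write that ``$\sup(x_{p\rest\alpha}\cap\gamma)=\gamma$ fails in general'' and therefore bypass the continuity clause of Definition~\ref{square_order_def}. In fact it does hold here: since $f_p(\beta)\cap\gamma$ is cofinal in $\gamma$ and $f_p(\beta)\subseteq x_p$, we get $\sup(x_p\cap\gamma)=\gamma$; and as $\gamma<\alpha$, this is the same as $\sup(x_{p\rest\alpha}\cap\gamma)=\gamma$. This is precisely the route the paper takes: from $\sup(x_p\cap\gamma)=\gamma$ and $q\leq_{\bb P} p\rest\alpha$, the third bullet of Definition~\ref{square_order_def} immediately yields $f_q(\gamma)=f_p(\gamma)$. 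Your end-extension-plus-cofinality argument reaches the same conclusion and is equally valid, but the paper's one-line invocation of the continuity clause is cleaner, and your dismissal of it was mistaken. The remaining checks ($r\leq_{\bb P} p,q$, maximality, $r\rest\alpha=q$) are handled by the paper with ``readily checked''; your more explicit treatment of these is fine and matches the intended verification.
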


\begin{proof}
  Let $x_r := x_p \cup x_q$, so that $x_r\cap\alpha=x_q$. Define $f_r:x_r\rightarrow\mathcal P(x_r)$ by stipulating:
  $$f_r(\beta):=\begin{cases}
  f_q(\beta)&\text{if }\beta<\alpha;\\
  f_p(\beta)&\text{otherwise}.\end{cases}$$

  To see that $r := (x_r, f_r)$ is a condition, we fix arbitrary $\beta\in x_r$ and $\gamma\in\acc(f_r(\beta))$,
  and verify that $f_r(\gamma)=f_r(\beta)\cap\gamma$.
  To avoid trivialities, suppose that $\beta\ge\alpha>\gamma$.
  Since $f_r(\beta)=f_p(\beta)\s x_p$, we have $\sup(x_p\cap\gamma)=\gamma$, so, since $q \leq_\bb{P} p\rest\alpha$,
  we infer that $f_q(\gamma)=f_p(\gamma)=f_p(\beta)\cap\gamma$, i.e., $f_r(\gamma)=f_r(\beta)\cap\gamma$.

  It is now readily checked that $r$ has the desired properties.
\end{proof}

It follows that $(\bb{P}, \leq_\bb{P}, \bb{Q}) \in \mathcal{P}_\kappa$.
For each $x\in{\kappa^+\choose3}$, let $D_x := \{p \in \bb{Q} \mid x_p \supseteq x\}$.
By Proposition~\ref{prop31}(2), $\mathcal{D} :=
\{D_x \mid x \in {\kappa^+\choose 3}\}$ is a sharply dense system, so we can apply $\fa{\kappa}$
to obtain a filter $G$ on $\bb{P}$ that meets $\mathcal{D}$ everywhere. For all
$\beta \in E^{\kappa^+}_\kappa$, let $C_\beta := \bigcup\{f_p(\beta) \mid p \in G, ~ \beta \in x_p\}$.
Note that for all $p\in G$ and $\beta\in x_p$, we have $|f_p(\beta)|\le|x_p|<\kappa$.

\begin{claim}  Suppose that $\beta,\gamma \in E^{\kappa^+}_\kappa$. Then:
\begin{enumerate}
\item    $C_\beta$ is club in $\beta$ and $\otp(C_\beta) = \kappa$;
\item For all $\alpha \in \acc(C_\beta) \cap \acc(C_\gamma)$, we have
  $C_\beta \cap \alpha = C_\gamma \cap \alpha$.
\end{enumerate}
\end{claim}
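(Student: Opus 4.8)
The plan is to extract everything from two properties of $G$: since $G$ is a filter, any two of its conditions have a common lower bound in $G$; and since $G$ meets $\mathcal D$ everywhere, for every $x\in{\kappa^+\choose3}$ there is $p\in G\cap D_x$, i.e., some $p\in G\cap\bb Q$ with $x\s x_p$. The first step is a coherence observation: for every $\beta<\kappa^+$, the family $\{f_p(\beta)\mid p\in G,\ \beta\in x_p\}$ is linearly ordered by $\sq$. Indeed, given two of its members $f_p(\beta),f_q(\beta)$, pick $r\in G$ below both $p$ and $q$; by Definition~\ref{square_order_def}, $f_p(\beta)\sq f_r(\beta)$ and $f_q(\beta)\sq f_r(\beta)$, and any two $\sq$-initial segments of a common set of ordinals are $\sq$-comparable. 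Hence $f_p(\beta)\sq C_\beta$ for each such $p$, so $C_\beta\cap\delta=f_p(\beta)\cap\delta$ whenever $\delta\leq\ssup(f_p(\beta))$. A second basic point, used throughout: if $\beta\in E^{\kappa^+}_\kappa$ and $p\in G$ has $\beta\in x_p$, then $x_p\cap\beta$ is bounded in $\beta$ (as $|x_p|<\kappa=\cf(\beta)$), hence so is $f_p(\beta)\s x_p$, so $f_p(\beta)$---being a closed subset of $\beta$---has a maximum whenever nonempty, and $\beta\in\nacc(x_p)$; consequently, if additionally $p\in\bb Q$ and $\beta\neq\min(x_p)$, then $\max(f_p(\beta))=\max(x_p\cap\beta)$ by Definition~\ref{square_q_def}.

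Turning to part~(1), fix $\beta\in E^{\kappa^+}_\kappa$. For cofinality: given $\alpha<\beta$, choose $p\in G\cap\bb Q$ with $\{\alpha+1,\alpha+2,\beta\}\s x_p$; then $\beta\neq\min(x_p)$ (as $\alpha+1\in x_p$), so $\max(f_p(\beta))=\max(x_p\cap\beta)\geq\alpha+2>\alpha$, and thus $\sup(C_\beta)>\alpha$. For closure: suppose $0<\alpha<\beta$ with $\sup(C_\beta\cap\alpha)=\alpha$, and choose $p\in G\cap\bb Q$ with $\{\alpha,\alpha+1,\beta\}\s x_p$; then $\beta\neq\min(x_p)$, so $\ssup(f_p(\beta))=\max(f_p(\beta))+1>\alpha$, whence $C_\beta\cap\alpha=f_p(\beta)\cap\alpha$ and so $\sup(f_p(\beta)\cap\alpha)=\alpha<\beta$. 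Since $f_p(\beta)$ is a closed subset of $\beta$, this gives $\alpha\in f_p(\beta)\s C_\beta$. So $C_\beta$ is club in $\beta$, whence $\otp(C_\beta)\geq\cf(\beta)=\kappa$. Conversely, every $\delta\in C_\beta$ lies in some $f_p(\beta)$ with $f_p(\beta)\sq C_\beta$, so $C_\beta\cap(\delta+1)=f_p(\beta)\cap(\delta+1)$ has order type at most $\otp(f_p(\beta))<\kappa$ (as $|f_p(\beta)|\leq|x_p|<\kappa$); hence no element of $C_\beta$ has $\kappa$ many predecessors in $C_\beta$, so $\otp(C_\beta)\leq\kappa$.

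For part~(2), fix $\alpha\in\acc(C_\beta)\cap\acc(C_\gamma)$. As $\alpha\in C_\beta$, pick $p\in G$ with $\beta\in x_p$ and $\alpha\in f_p(\beta)$. Since $f_p(\beta)\sq C_\beta$ and $\alpha<\ssup(f_p(\beta))$, we get $f_p(\beta)\cap\alpha=C_\beta\cap\alpha$, which has supremum $\alpha>0$; thus $\alpha\in\acc(f_p(\beta))$, and Definition~\ref{square_p_def} gives $f_p(\alpha)=f_p(\beta)\cap\alpha=C_\beta\cap\alpha$. Symmetrically, pick $p'\in G$ with $\gamma\in x_{p'}$ and $f_{p'}(\alpha)=C_\gamma\cap\alpha$. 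Now take $r\in G$ below both $p$ and $p'$. Since $f_p(\beta)\cap\alpha\s x_p\cap\alpha$ is cofinal in $\alpha$, we have $\sup(x_p\cap\alpha)=\alpha$, so Definition~\ref{square_order_def} yields $f_r(\alpha)=f_p(\alpha)$; likewise $f_r(\alpha)=f_{p'}(\alpha)$. Therefore $C_\beta\cap\alpha=f_p(\alpha)=f_{p'}(\alpha)=C_\gamma\cap\alpha$.

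I expect the main obstacle to be pinning down the coherence observation of the first paragraph and its consequence $f_p(\beta)\sq C_\beta$, together with the repeated use of $\cf(\beta)=\kappa$ to keep $x_p\cap\beta$, and hence $f_p(\beta)$, bounded in $\beta$ for all $p\in G$. Granted these, parts~(1) and~(2) reduce to the short density arguments above and to direct invocations of Definitions~\ref{square_p_def}, \ref{square_order_def}, and~\ref{square_q_def}.
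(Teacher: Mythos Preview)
Your proof is correct and follows essentially the same approach as the paper: use the density of the $D_x$'s to land in $\bb Q$, exploit $\cf(\beta)=\kappa$ to force $\beta\in\nacc(x_p)$, and then read off $\max(f_p(\beta))=\max(x_p\cap\beta)$ from Definition~\ref{square_q_def}. You are more explicit than the paper in spelling out the coherence observation $f_p(\beta)\sq C_\beta$ and the closure and order-type arguments, which the paper compresses into a single sentence.

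The one place where you take a slightly different route is part~(2): the paper fixes a single $p\in G\cap D_{\{\alpha,\beta,\gamma\}}$, so that $\max(f_p(\beta))\geq\alpha$ and $\max(f_p(\gamma))\geq\alpha$ simultaneously, and then Definition~\ref{square_p_def}(2)(b) gives $C_\beta\cap\alpha=f_p(\beta)\cap\alpha=f_p(\alpha)=f_p(\gamma)\cap\alpha=C_\gamma\cap\alpha$ in one stroke. Your approach---two witnesses $p,p'$ and a common extension $r$, together with the ``$\sup(x_p\cap\alpha)=\alpha$ freezes $f(\alpha)$'' clause of Definition~\ref{square_order_def}---is correct but a step longer; the paper's choice of a single condition avoids the need for that freezing clause altogether.
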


\begin{proof}
(1) By the definition of $\bb{P}$ and the fact that $G$ is a filter,
  it follows that $C_\beta$ is a subset of $\beta$, closed in its supremum, such that every proper initial
  segment of $C_\beta$ has size $< \kappa$. It thus suffices to verify that $C_\beta$ is unbounded in $\beta$.
  To this end, fix $\alpha < \beta$. Since $G$ meets $\mathcal{D}$ everywhere, we can find $p \in G \cap D_{\{\alpha, \beta, \beta + 1\}}$.
  Since $\cf(\beta) = \kappa$ and $|x_p| < \kappa$, we have $\beta \in \nacc(x_p)$. Therefore, since $p \in \bb{Q}$,
  we have $\max(f_p(\beta)) = \max(x_p \cap \beta) \geq \alpha$, so $C_\beta \cap [\alpha, \beta) \not= \emptyset$.

(2) Given $\alpha\in\acc(C_\beta)\cap\acc(C_\gamma)$, we fix $p \in G \cap D_{\{\alpha, \beta, \gamma\}}$. As in the previous case, we have $\max(f_p(\beta))\ge\alpha$ and
  $\max(f_p(\gamma)) \geq \alpha$. Consequently, $C_\beta \cap \alpha = f_p(\beta) \cap \alpha$ and $C_\gamma
  \cap \alpha = f_p(\gamma) \cap \alpha$. By the definition of $\bb{P}$, it then follows that $C_\beta \cap \alpha =
  f_p(\alpha) = C_\gamma \cap \alpha$.
\end{proof}

Let $\Gamma := E^{\kappa^+}_\kappa \cup\bigcup\{\acc(C_\beta)\mid \beta\in E^{\kappa^+}_\kappa\}$.
For each $\alpha \in \Gamma \setminus E^{\kappa^+}_\kappa$, find $\beta \in E^{\kappa^+}_\kappa$ such that
$\alpha \in \acc(C_\beta)$, and let $C_\alpha := C_\beta \cap \alpha$. By the preceding Claim,
this is independent of the choice of $\beta$. It follows that $\langle C_\alpha \mid \alpha \in \Gamma \rangle$
is a $\square^B_\kappa$-sequence, thus completing the proof of Theorem~\ref{recovering_thm}.

\subsection{Strong stationary coding sets} In \cite{ShSt:167}, Shelah and Stanley derive a stationary coding set from the existence of a $(\kappa,1)$-morass with built-in $\diamondsuit$.
Specifically, they obtain a stationary subset $\mathcal S$ of $[\kappa^+]^{<\kappa}$ on which the map $x\mapsto\sup(x)$ is one-to-one.
By Theorem~\ref{recovering_thm} and the next proposition, this also follows from the forcing axiom $\fa{\kappa}$.

\begin{prop}[folklore] If $\square^B_\kappa$ holds, then there exists a stationary subset of $[\kappa^+]^{<\kappa}$
on which the map $x\mapsto\sup(x)$ is one-to-one.
\end{prop}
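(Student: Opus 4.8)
Fix a $\square^B_\kappa$-sequence $\langle C_\beta\mid\beta\in\Gamma\rangle$. The idea is to use the $C_\beta$'s to manufacture, for each $\beta\in\acc(\kappa^+)$ of cofinality $<\kappa$, a canonical small subset $e_\beta\s\beta$ cofinal in $\beta$, in such a coherent way that from any $e_\beta$ one can read off $\beta$ and the whole family is ``almost disjoint at the top.'' Concretely, I would define a sequence $\langle e_\beta\mid\beta<\kappa^+\rangle$ of subsets of $\kappa^+$, each of size $<\kappa$, by recursion on $\beta$: set $e_0:=\emptyset$, $e_{\beta+1}:=e_\beta\cup\{\beta\}$, and for limit $\beta$, fix some cofinal $c_\beta\s\beta$ of order type $\cf(\beta)$ (taking $c_\beta:=C_\beta$ when $\beta\in\Gamma$, so that the coherence of the $\square^B_\kappa$-sequence transfers to the $c_\beta$'s along accumulation points), and put $e_\beta:=\{\beta\}\cup\bigcup\{e_\gamma\cup\{\gamma\}\mid\gamma\in c_\beta\}$ if this has size $<\kappa$ — which it does when $\cf(\beta)<\kappa$, since $\kappa^{<\kappa}=\kappa$ is available via $\square^B_\kappa$ and regularity. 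Then set $\mathcal S:=\{e_\beta\mid \beta\in E^{\kappa^+}_{<\kappa}\}$, and the claim is that $\mathcal S$ is the desired stationary coding set, with $\sup(e_\beta)=\beta$ recovering $\beta$ from $e_\beta$.

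**Injectivity of $x\mapsto\sup(x)$.** Since $\beta\in c_\beta$ is impossible but every element of $c_\beta$ is $<\beta$, and $\beta\in e_\beta$ by construction, we have $\sup(e_\beta)=\beta$ and in fact $\max(e_\beta)=\beta$ whenever $\cf(\beta)<\kappa$ (the sup of the small set $e_\beta$ is attained); thus $e_\beta\mapsto\sup(e_\beta)=\beta$ is a left inverse to $\beta\mapsto e_\beta$ on the relevant domain, so the map is one-to-one on $\mathcal S$ as soon as $\beta\mapsto e_\beta$ is one-to-one on $E^{\kappa^+}_{<\kappa}$, which again follows from $\max(e_\beta)=\beta$. (One should double-check the degenerate cases $\cf(\beta)=\omega$ versus larger $<\kappa$, but nothing changes.)

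**Stationarity.** This is the substantive point. Let $F:[\kappa^+]^{<\omega}\to\kappa^+$ be an arbitrary function; I must find $x\in\mathcal S$ closed under $F$. Build a continuous increasing chain $\langle\beta_i\mid i<\omega\rangle$ (or of length $\cf=\omega_1$, etc., as convenient — any fixed cofinality $<\kappa$ will do) of ordinals below $\kappa^+$ such that each $\beta_{i+1}$ is closed under $F$ and above $\sup(e_{\beta_i})$, and moreover the $c_{\beta_i}$'s are chosen to cohere along this chain; let $\beta:=\sup_i\beta_i$. The coherence of the $\square^B_\kappa$-sequence is exactly what lets one arrange that $c_\beta$, and hence $e_\beta$, end-extends the approximations built so far, so that $e_\beta$ is closed under $F$. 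To get $e_\beta$ itself into the elementary substructure / closure one argues that $e_\beta=\bigcup_i e_{\beta_i}'$ for suitable initial pieces, each of which lies in the $F$-closure, and that the extra ordinals $\{\gamma\in c_\beta\}\cup\{\beta\}$ appearing in $e_\beta$ are limits of points already present. The main obstacle is precisely this stationarity step: ensuring that the recursively defined $e_\beta$ is both $F$-closed \emph{and} of the syntactic form guaranteeing $\sup$-injectivity requires threading the coherence of $\langle C_\beta\rangle$ through the construction and checking that no ``bad'' new points are introduced at the top limit. The details here are the folklore content — the use of $\square^B_\kappa$ for reflection/coherence combined with $\kappa^{<\kappa}=\kappa$ for the cardinality bounds — and I would carry them out by a standard $\Delta$-system-free elementary-submodel chase against $H_{\kappa^{++}}$.
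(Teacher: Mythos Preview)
Your stationarity argument has a genuine gap. You build $\beta=\sup_i\beta_i$ from below, but you then have no control whatsoever over $C_\beta$ (and hence over $c_\beta$ and $e_\beta$): the $\square^B_\kappa$-sequence is fixed in advance, and there is no reason the ladder $C_\beta$ should bear any relation to your approximating sequence $\langle\beta_i\rangle$. The phrase ``the coherence of the $\square^B_\kappa$-sequence is exactly what lets one arrange that $c_\beta$ end-extends the approximations'' is precisely the step that fails --- coherence tells you $C_\alpha=C_\gamma\cap\alpha$ for $\alpha\in\acc(C_\gamma)$, but it does not let you prescribe $C_\beta$ at a limit $\beta$ you have built from outside the sequence. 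Even if you instead fix $\gamma\in E^{\kappa^+}_\kappa$ closed under $F$ and try to find $\alpha\in\acc(C_\gamma)$ with $e_\alpha$ closed under $F$, the sets $E_\alpha:=\bigcup_{\delta\in C_\gamma\cap\alpha}(e_\delta\cup\{\delta\})$ do form a continuous increasing chain, but their union $E$ is typically a proper subset of $\gamma$ and need not be closed under $F$, so the usual reflection argument does not go through. (There are also two smaller issues: $\square^B_\kappa$ does \emph{not} give $\kappa^{<\kappa}=\kappa$ --- though regularity of $\kappa$ alone already bounds $|e_\beta|$ --- and your recursion leaves $e_\beta$ undefined at $\cf(\beta)=\kappa$, yet such $\beta$ can appear in later $c_{\beta'}$.)

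The paper's proof takes a different, sharper route. It extends the $\square^B_\kappa$-sequence to a $C$-sequence on all of $\kappa^+$ and uses Todorcevic's maximal-weight function $\rho_1^{\vec C}$: for each $\beta$ the fiber $\rho_{1\beta}:\beta\to\kappa$ is $({<}\kappa)$-to-$1$, and one sets $x_\beta:=\rho_{1\beta}^{-1}[\otp(C_\beta)]$. The key coherence fact is that $\rho_{1\alpha}\subseteq\rho_{1\beta}$ whenever $\alpha\in\acc(C_\beta)$ with $\beta\in\Gamma$. For stationarity, one first takes $\gamma\in E^{\kappa^+}_\kappa$ closed under $f$; since $\rho_{1\gamma}:\gamma\to\kappa$ is $({<}\kappa)$-to-$1$, the preimages $\rho_{1\gamma}^{-1}[\varepsilon]$ filter \emph{all} of $\gamma$, so one can transfer the closure problem to $\kappa$, find $\epsilon\in\acc(\kappa)$ closed under the induced map, and set $\beta:=C_\gamma(\epsilon)$. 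The point is that $\rho_{1\gamma}$ gives a filtration of the whole ordinal $\gamma$, not just of some uncontrolled subset $E\subseteq\gamma$ --- this is the missing ingredient in your approach.
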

\begin{proof} Let $\langle C_\beta\mid \beta\in\Gamma\rangle$ be a $\square^B_\kappa$-sequence.
Enlarge it to a sequence $\vec{C} = \langle C_\beta \mid \beta <\kappa \rangle$
by letting, for all limit $\beta\in\kappa\setminus\Gamma$, $C_\beta$ be an arbitrary club in $\beta$ of order type $\cf(\beta)$,
and letting $C_{\beta+1}:=\{\beta\}$ for all $\beta<\kappa$.

Let $\rho_1^{\vec C}:[\kappa^+]^2\rightarrow\kappa$ denote the associated \emph{maximal weight} function from \cite[\S6.2]{MR2355670}.
For each $\beta<\kappa^+$, let $\rho_{1\beta}:\beta\rightarrow\kappa$ denote the fiber map $\rho_1^{\vec C}(\cdot,\beta)$.
Note that:
\begin{itemize}
\item for all $\beta<\kappa^+$, $\rho_{1\beta}[C_\beta]=\otp(C_\beta)$;
\item for all $\beta<\kappa^+$, $\rho_{1\beta}$ is $(<\kappa)$-to-$1$;
\item for all $\beta\in\Gamma$ and $\alpha\in\acc(C_\beta)$, we have $\rho_{1\alpha}\s\rho_{1\beta}$.
\end{itemize}

In particular, for every $\beta\in E^{\kappa^+}_{<\kappa}$, we have that $$x_\beta:=(\rho_{1\beta})^{-1}[\otp(C_\beta)]$$
is a cofinal subset of $\beta$ of size $<\kappa$.
Thus, we are left with proving the following.
\begin{claim} $\{ x_\beta\mid \beta\in  E^{\kappa^+}_{<\kappa}\}$ is stationary in $[\kappa^+]^{<\kappa}$.
\end{claim}
\begin{proof} Given a function $f:[\kappa^+]^{<\omega}\rightarrow\kappa^+$,
let us fix some $\gamma\in E^{\kappa^+}_\kappa$ such that $f``[\gamma]^{<\omega}\s\gamma$.
Define $g:\kappa\rightarrow\kappa$ by letting, for all $\varepsilon<\kappa$,
$$g(\varepsilon):=\sup(\rho_{1\gamma}``f``[\rho_{1\gamma}^{-1}[\varepsilon]]^{<\omega}).$$

Fix $\epsilon\in\acc(\kappa)$ such that $g[\epsilon]\s\epsilon$. Put $\beta:=C_\gamma(\epsilon)$, so that $\otp(C_\beta)=\epsilon$ and $\rho_{1\beta}\s\rho_{1\gamma}$.
To see that $f``[x_\beta]^{<\omega}\s x_\beta$,
let  $\{\alpha_i\mid i<n\}\in[x_\beta]^{<\omega}$ be arbitrary.
Since $x_\beta=(\rho_{1\beta})^{-1}[\epsilon]$ and $\rho_{1\beta}\s\rho_{1\gamma}$, we have $$\{\rho_{1\gamma}(\alpha_i)\mid i<n\}=\{ \rho_{1\beta}(\alpha_i)\mid i<n\}\in [\epsilon]^{<\omega}.$$
Fix a large enough $\varepsilon<\epsilon$ such that $\{\alpha_i\mid i<n\}\in[\rho_{1\gamma}^{-1}[\varepsilon]]^{<\omega}$.
Since $g(\varepsilon)<\epsilon$, we then have $f(\{\alpha_i\mid i<n\})\in x_\beta$.
\end{proof}
This completes the proof.
\end{proof}

Note that, by \cite[\S3]{paper08},
strong stationary coding sets can be seen as a $\gch$-free version of $\diamondsuit$.
For more information on stationary coding sets, see \cite{MR763904}.

\section{Super-Souslin trees} \label{tree_sect}

Throughout this section, $\lambda$ denotes an arbitrary cardinal.

\smallskip

The notion of a $\lambda^{++}$-super-Souslin tree was isolated by Shelah in response to work by Laver
on trees with ascent paths.
Ascent paths provide obstacles to a tree being special;
super-Souslin trees are designed to present a similar obstacle that entails the existence
not only of a non-special tree but of a Souslin one.
In Subsection~\ref{super-intro}, we provide, as a means of helping to motivate and provide context for the definition of super-Souslin trees, some remarks on the connection between these notions.
In Subsection~\ref{prfthmc}, we provide a proof of Theorem~C:
\begin{thmc} \label{tree_thm}
  Suppose that  $\fa{\lambda^+}$ holds. Then there exists a $\lambda^+$-complete
  $\lambda^{++}$-super-Souslin tree.
\end{thmc}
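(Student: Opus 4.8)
Set $\kappa:=\lambda^{+}$, so that $\kappa^{+}=\lambda^{++}$. The plan is to exhibit a forcing notion $(\bb P,\le_{\bb P},\bb Q)\in\mathcal P_{\kappa}$ whose generic object — read off from a filter $G$ meeting $\kappa$-many appropriately chosen sharply dense systems — is a $\lambda^{+}$-complete $\lambda^{++}$-super-Souslin tree, the super-Souslin witness being assembled from $G$ as well. A condition will be a pair $p=(T_{p},g_{p})$, where $T_{p}$ is a normalized approximation to a tree whose set of levels is $\cl(x_{p})$ for $x_{p}\in[\kappa^{+}]^{<\kappa}$ the realm of $p$ — ``normalized'' in the sense that each level is a set of size $<\kappa$ of ordinals $<\kappa^{+}$ coding branches, with a node at level $\alpha\in\acc^{+}(x_{p})$ identified with the cofinal branch it determines through $T_{p}\restriction\alpha$ — and $g_{p}$ records, on the pairs of distinct same-level nodes relevant to it, the partial witness to the super-Souslin property as isolated in Subsection~\ref{super-intro}. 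The ordering is end-extension adding no new node on any level lying in $\acc(x_{p})$; $\bb Q$ consists of the $p$ with $x_{p}=\cl(x_{p})$ obeying one extra normalization at $\nacc(x_{p})$ (each node at a non-accumulation level is an immediate successor, its $g$-values pointing into the preceding realm level); a \coi acts by relabelling levels; and $p\rest\alpha$ is truncation below $\alpha$. Pruning to the nodes with cofinally many extensions produces the tree, and $\kappa^{<\kappa}=\kappa$, available by Corollary~\ref{cardinalarithmetic}, is what bounds its levels by $\kappa$.

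I would then check Clauses \eqref{c1}--\eqref{c11} of Definition~\ref{class_def}. The bookkeeping clauses \eqref{c1}--\eqref{c12} and Sharpness \eqref{c9} are routine once the normalization is fixed — for \eqref{c2} the point is that a condition whose realm is contained in $\kappa$ is coded by a bounded subset of $\kappa$, hence lies in $H_{\kappa}$, and Sharpness is the usual ``close off the realm, make the new nodes immediate successors'' move. The work is in Controlled closure \eqref{c10} and Amalgamation \eqref{c11}. For \eqref{c10}: given a decreasing $\xi$-sequence $\langle q_{\eta}\mid\eta<\xi\rangle$ from $\bb Q$ with $\xi<\kappa$, realm-union $x$, a level $\alpha<\ssup(x)$, and a lower bound $r\in\bb Q_{\ssup(x\cap\alpha)}$ for $\langle q_{\eta}\rest\alpha\mid\eta<\xi\rangle$, build $q$ by retaining $r$ below $\ssup(x\cap\alpha)$; letting the nodes at each $\beta\in\acc^{+}(x)$ above $\alpha$ be exactly the cofinal branches of $\bigcup_{\eta}T_{q_{\eta}}$ below $\beta$ that are realized in some $T_{q_{\eta}}$, plus — when $\cf(\beta)\le\lambda$ — one node on every branch already determined by a chain of size $<\kappa$, which is what forces $\lambda^{+}$-completeness; and defining $g_{q}$ on the new pairs by continuity from the $g_{q_{\eta}}$. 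One verifies $q\in\bb Q$ and that $q$ is a lower bound. For \eqref{c11}: given $p\in\bb Q$, $\alpha<\ssup(x_{p})$, and $q\le_{\bb P}p\rest\alpha$ with $x_{q}\s\alpha$, stack $T_{p}$ above $T_{q}$ along their common part $T_{p\rest\alpha}$; since $q$ adds nodes and $g$-values only at levels $<\alpha$, while $p$'s data at levels $\ge\alpha$ refers only to levels in $x_{p}$, nothing conflicts, $r\rest\alpha=q$, and $r$ is the $\le_{\bb P}$-greatest lower bound of $p$ and $q$.

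Next the sharply dense systems. We need: (i) a system forcing each ordinal $<\kappa^{+}$ into the realm; (ii) a system forcing every node (at a level below a given $\alpha$, in the realm) to acquire an extension at level $\alpha$, whence the generic tree has height $\kappa^{+}$ and, after pruning, size $\kappa^{+}$ with levels of size $\le\kappa$; and (iii) a system forcing, for each target level $\delta\in E^{\kappa^{+}}_{\kappa}$ and each bounded amount of branching data that a pair of level-$\delta$ nodes can carry, that $g$ already record the correct witness value for that data. Each of these is indexed over ${\kappa^{+}\choose\theta}$ for a fixed $\theta<\kappa$ (e.g.\ $\theta=1$ for (i), $\theta=2$ for a pair of levels, a larger $\theta$ coding a level plus branching data for (iii)), and each is a genuine sharply dense system precisely because the \coi-invariance of $T_{p}$ and $g_{p}$ was built into their definitions; so a $<\kappa$-sized collection of systems suffices, and $\fa{\kappa}$ delivers a filter $G$ meeting all of them everywhere. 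Setting $T:=\bigcup_{p\in G}T_{p}$ and $g:=\bigcup_{p\in G}g_{p}$ and pruning gives the tree together with its witness.

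The main obstacle, I expect, is twofold. First, the second coordinate $g_{p}$ and the $\bb Q$-normalization must be chosen so that \eqref{c10} and \eqref{c11} both go through at once: the super-Souslin witness has to survive continuous limits — including at the cofinality-$\le\lambda$ levels where $\lambda^{+}$-completeness forces in fresh nodes — yet be local enough that extensions made below a level $\alpha$ never need to be ``reflected'' at or above $\alpha$, which is exactly what makes amalgamation with $p$ possible. Second, one must argue that the generic $(T,g)$ genuinely satisfies the super-Souslin property, not merely a close approximation of it — i.e.\ that the requirements of type (iii) preclude an antichain of size $\lambda^{++}$ in $T$ even in an outer model that preserves $\mathcal P(\lambda)$ and $\lambda^{++}$; by the characterization in Subsection~\ref{super-intro} this amounts to a reflection argument for antichains, resting on the coherence of $g$ guaranteed by $p\in\bb Q$ together with $G$ meeting the systems of type (iii).
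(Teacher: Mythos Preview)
Your high-level plan --- define a poset in $\mathcal P_{\lambda^+}$ whose conditions are tree approximations paired with a partial super-Souslin witness, then read off the tree and witness from a suitable filter --- matches the paper's. But several of the concrete choices you sketch are either wrong or miss the key technical device.

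First, you describe $g_p$ as living on ``pairs of distinct same-level nodes''. This is not what the super-Souslin witness is: by Definition~\ref{superdef}, $F$ takes as input a pair $(a,b)$ of \emph{$\lambda$-level-sequences} with $a<_T b$, i.e., sequences at \emph{different} heights with one sitting below the other. The paper's $f_p$ is accordingly a partial function on such pairs.

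Second, and more seriously, you call Sharpness ``the usual `close off the realm, make the new nodes immediate successors' move''. In the paper this is the heart of the argument (Lemma~\ref{order_lemma}) and is decidedly non-routine. The reason is that conditions in $\bb P$ carry, besides the tree order $<^1$, a \emph{second} partial order $<^0$ on the realm that need not be the full ordinal order; only in $\bb Q$ is $<^0$ required to agree with $\in$. Passing from $p\in\bb P$ to $q\in\bb Q$ therefore means extending $<^0_p$ to the full ordinal ordering on $\cl(x_p)$, and every new $<^0$-comparability between levels $\alpha,\beta$ forces you to decide, for every node $v$ at level $\beta$, a predecessor at level $\alpha$ --- \emph{while preserving the super-Souslin constraint}. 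The paper does this by a $\lambda$-length recursion over triples $(a,b,c)$ that must witness Clause~\eqref{p_defn6} of Definition~\ref{p_defn}. Your setup, in which the tree already lives on all of $\cl(x_p)$, eliminates the $<^0$ slack and leaves no room for this maneuver; Sharpness then becomes trivial, but the difficulty reappears at Amalgamation, where new levels from $q$ must be interpolated into $p$'s part of the tree and the witness constraint re-verified. You flag Amalgamation as delicate, but without the $<^0$/$<^1$ decoupling there is no evident way to do it.

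Third, the paper's witness $f_p$ is \emph{set-valued}: $f_p(a,b)\in[\lambda^+]^{<\lambda^+}\setminus\{\emptyset\}$, with the coherence rule $f_p(a,b)\supseteq f_p(a,c)$ whenever $a<^1 b<^1 c$ (Clause~\eqref{p_defn7}). This, together with the weakened hypothesis $f_p(a,b)\cap f_p(a,c)\neq\emptyset$ in Clause~\eqref{p_defn6}, is precisely what lets Sharpness and Amalgamation coexist: when new intermediate level-sequences appear, their $f$-values are unions of the $f$-values above them, and the constraint only demands intersection, not equality. A single-valued $g_p$ would not survive this.

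Finally, $\lambda^+$-completeness in the paper is obtained via dedicated sharply dense systems $\mathcal D^{\mathrm{com}}_g$ indexed by functions $g:\mu\to\lambda^+$ for regular $\mu\le\lambda$, not by inserting limit nodes during Controlled closure as you propose; and the verification that $F$ works at the end is a direct appeal to Clause~\eqref{p_defn6} for a single $q\in G\cap\bb Q$, not a reflection argument.
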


\subsection{Introduction to super-Souslin trees}\label{super-intro}

A tree $(T,<_T)$ is said to be a \emph{$\kappa$-tree} if for every $\alpha<\kappa$,
$T_\alpha$ is a nonempty set of size $<\kappa$ and $T_{\kappa}=\emptyset$.
The tree is said to be \emph{splitting} if every node in the tree admits at least two immediate successors.
It is said to be \emph{normal} if, for all $\alpha < \beta < \kappa$ and all $u \in T_\alpha$,
there is $v \in T_\beta$ such that $u <_T v$.
It is said to be \emph{Hausdorff} if for all limit $\alpha<\kappa$ and all $u,v\in T_\alpha$,
the equality $u_\downarrow=v_\downarrow$ implies $u=v$.
For convenience, we will \emph{not} require that a tree be Hausdorff.
Note, however, that any splitting (resp. normal) tree $(T,<_T)$ can easily be turned into a splitting (resp. normal) Hausdorff tree $(T',<_{T'})$ by shifting all
levels $T_\alpha$ to be $T'_{\alpha+1}$ and, for limit $\alpha < \kappa$, letting $T'_\alpha$ consist of unique limits
of all branches through $\bigcup_{\beta < \alpha} T_\beta$ that are continued in $T_\alpha$.

\begin{defn} Let $\theta$ be an arbitrary cardinal. For each $\alpha<\kappa$,
let $T^\theta_\alpha$ denote the collection of all injections $a:\theta\rightarrow T_\alpha$.
Let $T^\theta$ denote $\bigcup_{\alpha < \kappa} T^\theta_\alpha$.
\end{defn}
An element of $T^\theta$ will be referred to as a \emph{$\theta$-level sequence} from $T$ (or, simply, a \emph{level sequence} from $T$).
For $a,b \in T^\theta$, we abuse notation and write $a <_T b$ iff, for all $i < \theta$, $a(i) <_T b(i)$.
Likewise, $a \le_T b$ iff, for all $i < \theta$, $a(i) \le_T b(i)$.

\begin{defn} $[T^\theta]^2 := \{(a,b)\in T^\theta\times T^\theta\mid a <_T b\}$.
\end{defn}

\begin{defn}[Shelah, \cite{shelah_stanley}]\label{superdef}
  A $\lambda^{++}$-super-Souslin tree is a normal, splitting $\lambda^{++}$-tree
  $(T,<_T)$ for which there exists a function $F:[T^\lambda]^2 \rightarrow \lambda^+$ satisfying the following condition:
  for all $a,b,c \in T^\lambda$ with $a <_T b,c$, if $F(a,b) = F(a,c)$, then there is $i < \lambda$
  such that $b(i)$ and $c(i)$ are $<_T$-comparable.
\end{defn}

\begin{fact}[Shelah, \cite{shelah_stanley}] Suppose $(T,<_T)$ is a $\lambda^{++}$-super-Souslin tree.
    If $W$ is an outer model of $V$ with the same $\mathcal{P}(\lambda)$ and $\lambda^{++}$,
    then, in $W$, there exists some $x\in T$ such that $(x^\uparrow,<_T)$ is a $\lambda^{++}$-Souslin    tree.
\end{fact}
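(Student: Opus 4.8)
The plan is to use absoluteness to reduce to a statement inside $W$, and then to show that every $\lambda^{++}$-super-Souslin tree has a cone that is $\lambda^{++}$-Souslin. Since $\mathcal P^W(\lambda)=\mathcal P^V(\lambda)$, a collapse of $\lambda^+$ would add a subset of $\lambda$ coding a surjection of $\lambda$ onto $(\lambda^+)^V$; hence $(\lambda^+)^W=(\lambda^+)^V$, and together with $(\lambda^{++})^W=(\lambda^{++})^V$ this means that, in $W$, $(T,<_T)$ is still a normal, splitting tree of height $\lambda^{++}$ whose levels have size $\le\lambda^+$. Fixing in $V$ an injection $\iota_\alpha\colon T_\alpha\to\lambda^+$ for each $\alpha<\lambda^{++}$, any injection $a\colon\lambda\to T_\alpha$ belonging to $W$ gives $\iota_\alpha\circ a\colon\lambda\to\lambda^+$, whose range is bounded below $\lambda^+$ (as $\lambda^+$ is regular in $W$) and therefore, via a ground-model bijection, recodable as a subset of $\lambda$; so $a\in V$. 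Thus $(T^\lambda)^W=(T^\lambda)^V$ and $([T^\lambda]^2)^W=([T^\lambda]^2)^V$, and since the defining property of $F$ quantifies only over $T^\lambda$, $<_T$, and $F$, it continues to hold in $W$. It therefore suffices to prove: if $(T,<_T)$ is a $\lambda^{++}$-super-Souslin tree witnessed by $F$, then some $x\in T$ has $(x^\uparrow,<_T)$ $\lambda^{++}$-Souslin.

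For $x\in T$, the tree $(x^\uparrow,<_T)$ has size $\lambda^{++}$, by normality. If it has no antichain of size $\lambda^{++}$, then it also has no chain of size $\lambda^{++}$: such a chain would be a cofinal branch $\langle b_\alpha\rangle$ of $x^\uparrow$, and choosing, for each relevant $\alpha$, an immediate successor $s_\alpha$ of $b_\alpha$ other than $b_{\alpha+1}$ (possible since $T$ is splitting) produces a $\lambda^{++}$-sized antichain, because for $\alpha<\alpha'$ the predecessor of $s_{\alpha'}$ at the level of $s_\alpha$ is $b_{\alpha+1}\ne s_\alpha$. Hence $(x^\uparrow,<_T)$ is $\lambda^{++}$-Souslin provided it has no $\lambda^{++}$-sized antichain, so I would argue by contradiction, assuming that $x^\uparrow$ has a $\lambda^{++}$-sized antichain for every $x\in T$ and aiming to refute the defining property of $F$. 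The target is a single $a\in T^\lambda$ together with level sequences $\langle b_j\mid j<\lambda^{++}\rangle$ in $T^\lambda$ satisfying $a<_T b_j$ for all $j$ and, for all $j\ne j'$ and all $\xi<\lambda$, $b_j(\xi)$ and $b_{j'}(\xi)$ are $<_T$-incomparable. Granting this: since $F$ takes at most $\lambda^+<\lambda^{++}=\cf(\lambda^{++})$ values, there are $j\ne j'$ with $F(a,b_j)=F(a,b_{j'})$, and then $(a,b_j,b_{j'})$ contradicts the choice of $F$.

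Producing this configuration is the crux, and the step I expect to be the main obstacle. One fixes $a$ as a $\lambda$-length level sequence at a level of size $\ge\lambda$ (by normality and splitting, every level $\alpha\ge\lambda$ qualifies), so that a level sequence $b$ with $a<_T b$ is exactly a choice of one node above each $a(\xi)$ at a common level, automatically injective since the $a(\xi)$ are distinct. One fixes, for each $\xi<\lambda$, a $\lambda^{++}$-sized antichain $E_\xi$ in $a(\xi)^\uparrow$, and threads the $b_j$ through truncations of the $E_\xi$ by recursion on $j<\lambda^{++}$: one maintains, in each coordinate $\xi$, a reserve $\mathcal E^j_\xi\subseteq E_\xi$ of size $\lambda^{++}$ consisting of elements of height above all previously used levels and disjoint from all previously chosen cones $b_i(\xi)^\uparrow$ ($i<j$); at stage $j$ one selects a common level $\gamma_j$ above all earlier levels and, in each coordinate, an element of $\mathcal E^j_\xi$ whose truncation $b_j(\xi)$ to level $\gamma_j$ lies above fewer than $\lambda^{++}$ members of $\mathcal E^j_\xi$, so that removing $b_j(\xi)^\uparrow$ costs the reserve only $\le\lambda^+$ points; then $b_j(\xi)\notin b_i(\xi)^\uparrow$ for $i<j$ follows from disjointness, and — since at most $\lambda^+$ points are removed from each $E_\xi$ per stage — the reserves survive all $\lambda^{++}$ stages, limits included. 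The delicate point is the simultaneous existence, over all $\lambda$ coordinates and at a single level $\gamma_j$, of such ``light'' truncations; securing it (by choosing $\gamma_j$ suitably and, if necessary, shrinking the reserves) is where the real work of the proof concentrates. Note that $T$ is assumed only normal and splitting — no Hausdorffness or completeness — but this costs nothing here, since the recursion never needs to form a limit of a branch. Once the construction is carried out, the pigeonhole step of the previous paragraph completes the argument.
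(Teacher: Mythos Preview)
The paper does not prove this statement; it is stated as a Fact with a citation to Shelah--Stanley and no argument is given. So there is nothing to compare against, and the question is simply whether your proposal is correct.

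Your absoluteness reduction is fine: the preservation of $\lambda^+$, the coding of level sequences as subsets of $\lambda$, and the persistence of the defining property of $F$ are all argued correctly. The reduction from ``some cone is Souslin'' to ``some cone has no $\lambda^{++}$-antichain'' via the splitting argument is also correct.

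The gap is exactly where you flag it. You set up reserves $\mathcal E^j_\xi$ and ask, at each stage, for a common level $\gamma_j$ at which every coordinate admits a \emph{truncation} of a reserve element lying below fewer than $\lambda^{++}$ members of the reserve. You then say ``securing it \ldots\ is where the real work of the proof concentrates'' and stop. As written, this is a genuine hole: for a fixed level $\gamma_j$ there is no reason such light truncations exist in every coordinate, and you give no mechanism for producing them.

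The good news is that the gap closes if you reverse direction: take $b_j(\xi)$ to be an \emph{extension} of a reserve element rather than a truncation. At stage $j$, for each $\xi<\lambda$ pick any $e_\xi\in\mathcal E^j_\xi$ at a level $\delta_\xi$ above all earlier $\gamma_i$; set $\gamma_j:=\sup_{\xi<\lambda}\delta_\xi<\lambda^{++}$ and, by normality, let $b_j(\xi)\in T_{\gamma_j}$ satisfy $e_\xi\le_T b_j(\xi)$. Since $\mathcal E^j_\xi$ is an antichain and $b_j(\xi)\ge_T e_\xi$, we have $b_j(\xi)^\uparrow\cap\mathcal E^j_\xi\subseteq\{e_\xi\}$, so discarding that cone costs at most one point; the reserve therefore survives all $\lambda^{++}$ stages, and incomparability with earlier $b_i(\xi)$ follows exactly as in your argument (anything above $e_\xi$ is incomparable with $b_i(\xi)$ because $e_\xi$ is). With this change the construction becomes routine and the pigeonhole finish you describe goes through.
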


The next lemma shows that the two-dimensional function $F$ witnessing that a tree $(T,<_T)$ is $\lambda^{++}$-super-Souslin
cannot be replaced by a one-dimensional function.

\begin{lemma} Suppose that $(T,<_T)$ is a normal, splitting $\kappa$-tree, and $\theta,\mu$ are cardinals $<\kappa$ (e.g., $\kappa=\lambda^{++}$, $\mu=\lambda^+$, and $\theta=\lambda$.)
  There exists no function $F:T^\theta\rightarrow\mu$ such that,
  for every $a,b \in T^\theta$, if $F(a) = F(b)$, then there is $i < \theta$ such that $a(i)$ and $b(i)$ are
  $<_T$-comparable.
\end{lemma}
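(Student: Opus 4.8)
The plan is to argue by contradiction: fix a function $F:T^\theta\to\mu$ satisfying the displayed property and produce $a,b\in T^\theta$ with $F(a)=F(b)$ such that $a(i)$ and $b(i)$ are $<_T$-incomparable for every $i<\theta$; call such a pair \emph{totally incomparable}. We may assume $T^\theta\neq\emptyset$, and fix the least $\alpha_0<\kappa$ with $|T_{\alpha_0}|\geq\theta$; since $T$ is normal the projection maps $T_\gamma\to T_\beta$ for $\beta<\gamma<\kappa$ are onto, so $|T_\gamma|\geq\theta$ for all $\gamma\geq\alpha_0$. The argument splits according to whether $T$ carries an antichain of size $\mu^+$.

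Suppose first that it does; note this includes the case where $T$ has a chain of size $\kappa$, since then the standard \emph{off-branch} construction (using that $T$ is splitting) produces an antichain of size $\kappa\geq\mu^+$. Fix an antichain $\{c_\xi\mid\xi<\mu^+\}$. Partition $\mu^+$ into $\mu^+$ blocks $B$, each of size $\theta$ (possible since $\theta\leq\mu<\mu^+$), and for each block $B$ pick a level $\gamma_B<\kappa$ above all heights of the $c_\xi$ with $\xi\in B$; using normality, extend each such $c_\xi$ to a node at level $\gamma_B$. For distinct $\xi\in B$ these extensions have distinct predecessors, so they assemble into a level sequence $a_B\in T^\theta_{\gamma_B}$ whose $i$-th entry lies above some $c_\xi$. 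If $B\neq B'$, then for every $i<\theta$ the $i$-th entries of $a_B$ and $a_{B'}$ lie above incomparable nodes of the antichain and are therefore incomparable, so $\{a_B\}$ is a family of $\mu^+$ pairwise totally incomparable level sequences; as $F$ takes fewer than $\mu^+$ values, two of them receive the same value, a contradiction.

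Now suppose $T$ has no antichain of size $\mu^+$; by the previous remark this forces $T$ to be $\kappa$-Aronszajn, so every chain of $T$ has size $<\kappa$. For $p\in T$ put $\sigma(p):=\{F(b)\mid b\in T^\theta,\ b(i)\geq_T p\text{ for all }i<\theta\}$. The key observation is that $\sigma$ maps $<_T$-incomparable nodes to disjoint sets: if $p,q$ are incomparable and $j\in\sigma(p)\cap\sigma(q)$ as witnessed by $b,c$, then for each $i$ the nodes $b(i)\geq_T p$ and $c(i)\geq_T q$ are incomparable (a comparability would place $p$ and $q$ inside a single well-ordered downward cone), so $b$ and $c$ are totally incomparable with $F(b)=F(c)=j$, a contradiction. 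Let $P:=\{p\in T\mid\sigma(p)\neq\emptyset\}$; it is downward closed, and any choice function $\varepsilon$ with $\varepsilon(p)\in\sigma(p)$ has each fibre $\varepsilon^{-1}(j)$ ($j<\mu$) a chain, so $P$ is a union of at most $\mu$ chains. As chains have size $<\kappa$ and $\kappa$ is regular, this gives $|P|<\kappa$.

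It remains to contradict this, i.e.\ to show $|P|=\kappa$; since $P$ is downward closed, it suffices to prove that $P$ meets every level of $T$, i.e.\ that for each $\beta<\kappa$ there is $p\in T_\beta$ whose upward cone meets some level in $\geq\theta$ points. When $|T_\beta|<\theta$ this follows by pigeonhole from $|T_{\max(\beta,\alpha_0)}|\geq\theta$; the genuine case is $|T_\beta|\geq\theta$, where one must exploit the splitting of $T$, and I expect the main difficulty of the whole proof to lie exactly here. The delicate situation is a Souslin‑like $T$ whose level sizes do not grow with the height: then there is no large antichain to feed into the first argument, and $|P|=\kappa$ has to be wrung out of a careful accounting of how the $\geq\theta$‑wide levels are distributed among the upward cones — which is precisely the point at which normality, splitting, and the $\kappa$‑tree assumption are all used together. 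I would concentrate the effort there, possibly by first localizing the spectrum argument to a suitably chosen downward‑closed subtree on which the cone‑widths can be controlled.
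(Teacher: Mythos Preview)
Your Case 1 is correct. In Case 2 you leave a gap---showing that $P$ meets every level---and call it ``the main difficulty of the whole proof.'' It is not: the missing fact is that \emph{every} $u\in T$ has some $a_u\in T^\theta$ with $u<_T a_u(i)$ for all $i<\theta$, and its proof is just the off-branch construction you already invoked at the start of Case 1. Normality gives a node $v>_T u$ at level $\height(u)+\theta$; splitting off the chain from $u$ to $v$ yields $\theta$ pairwise incomparable nodes above $u$, which normality then lifts to a single level. With this in hand $P=T$, so $|P|=\kappa$, and your Case 2 finishes at once. (The paper cites exactly this fact as Claim~A.7.1 of \cite{paper20}.)

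The paper argues quite differently. It uses the same fact about $a_u$ to show, by pigeonhole on $F$-values of the $a_{u_\alpha}$, that $(T,<_T)$ has no antichain of size $\kappa$---indeed, remains $\kappa$-Souslin in any outer model preserving $\kappa$, since the argument is absolute. Then it forces with $(T,>_T)$: this is $\kappa$-c.c., hence preserves $\kappa$, so $T$ stays Souslin in the extension; but the generic is a cofinal branch through $T$, a contradiction. Your route is more elementary (no forcing, no absoluteness) at the cost of a case split on antichain size; the paper's is shorter, avoids the split via a self-referential trick, and its intermediate conclusion (absolute Souslinness of $T$) is of independent interest.
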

\begin{proof}
  Suppose for sake of contradiction that there is such a function $F$. We first argue that $(T,<_T)$ is a $\kappa$-Souslin tree. Furthermore:
  \begin{subclaim}
    Suppose $W$ is an outer model of $V$ in which $\kappa$ is not collapsed.
    Then $(T, <_T)$ is a $\kappa$-Souslin    tree in $W$.
  \end{subclaim}
  \begin{proof} Work in $V$. As the proof of Claim~A.7.1 of \cite{paper20} makes clear,
  the fact that $(T,<_T)$ is normal and splitting implies that for every $u\in T$, we may find some $a_u\in T^\theta$ such that $u<_T a_u(i)$ for all $i<\theta$.
  Next, let us work in $W$, where $W$ is an outer model of $V$ in which $\kappa$ is not collapsed.
  Since $(T, <_T)$ is a splitting $\kappa$-tree, to show that $(T, <_T)$ is $\kappa$-Souslin, it suffices to show that it has no antichains of size $\kappa$.
    Towards a contradiction, suppose that $U:=\{ u_\alpha\mid\alpha<\kappa\}$ is an antichain.
    While it is possible that $U\in W\setminus V$, we nevertheless have $\{ a_{u_\alpha}\mid \alpha<\kappa\}\s V$.
    Since $\kappa$ is not collapsed, we may find ordinals $\alpha<\beta<\kappa$ such that $F(a_{u_\alpha})=F(a_{u_\beta})$.
    Pick $i<\theta$ such that $a_{u_{\alpha}}(i)$ and $a_{u_{\beta}}(i)$ are $<_T$-comparable. Then $u_\alpha$ and $u_\beta$ are $<_T$-comparable.
    This is a contradiction.
  \end{proof}

  Now force over $V$ with the forcing notion $\mathbb{P} := (T, >_T)$ (i.e., the order of $\mathbb{P}$ is the reverse of the tree order).
  As $(T,<_T)$ is a $\kappa$-Souslin tree in $V$, we have that $\mathbb{P}$ has the $\kappa$-c.c.\ and does not collapse $\kappa$.
  Therefore, the preceding claim implies that $(T, <_T)$ is a $\kappa$-Souslin tree in $V^{\mathbb{P}}$,
  contradicting the fact that $\mathbb{P}$ adds a cofinal branch through $(T, <_T)$.
\end{proof}

The next lemma shows that the range of the function $F$ witnessing that a tree $(T,<_T)$ is $\lambda^{++}$-super-Souslin
cannot be smaller than $\lambda^{+}$. In particular, there is no straightforward generalization of the notion of
super-Souslin tree to inaccessible cardinals.

\begin{lemma} Suppose that $(T,<_T)$ is a normal, splitting $\kappa$-tree, and $\theta,\mu$ are cardinals $<\kappa$.
  If $\mu^+<\kappa$, then there exists no function $F:[T^\theta]^2\rightarrow\mu$ such that,
    for all $a,b,c \in T^\theta$ with $a <_T b,c$, if $F(a,b) = F(a,c)$, then there is $i < \theta$
  such that $b(i)$ and $c(i)$ are $<_T$-comparable.
\end{lemma}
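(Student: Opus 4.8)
The plan is to derive a contradiction by producing a single level sequence that is the root of a large ``monochromatic star.'' The one structural input I would establish first is the following fact about normal, splitting trees: \emph{if $S$ is normal and splitting and tall enough that level $\delta$ is occupied, then $|S_\delta|\ge|\delta|$}. To see this, fix $u^*\in S_\delta$, and for each $\gamma<\delta$ let $u^*_\gamma$ be the predecessor of $u^*$ on level $\gamma$; since $S$ is splitting, choose an immediate successor $w_\gamma$ of $u^*_\gamma$ with $w_\gamma\ne u^*_{\gamma+1}$, and, using normality, extend $w_\gamma$ to some $\bar w_\gamma\in S_\delta$. For $\gamma<\gamma'<\delta$, the predecessor of $\bar w_\gamma$ on level $\gamma+1$ is $w_\gamma$, while the predecessor of $\bar w_{\gamma'}$ there is $u^*_{\gamma+1}\ne w_\gamma$ (because $\bar w_{\gamma'}\ge_T w_{\gamma'}>_T u^*_{\gamma'}\ge_T u^*_{\gamma+1}$); hence $\bar w_\gamma$ and $\bar w_{\gamma'}$ are $<_T$-incomparable, so $\{\bar w_\gamma\mid\gamma<\delta\}$ is an antichain of size $|\delta|$ on level $\delta$. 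Applying this inside $T^{(v)}:=\{x\in T\mid v\le_T x\}$, which is again normal and splitting of height $\kappa$, one gets: for every $v\in T$ and every $\delta<\kappa$ with $\height(v)+\delta<\kappa$, there are at least $|\delta|$ pairwise $<_T$-incomparable nodes lying above $v$ on level $\height(v)+\delta$.

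With this in hand I would assemble the star. By the displayed fact (with $\delta=\theta$), $|T_{\theta}|\ge\theta$, so there is a level sequence $a\in T^\theta_{\gamma_0}$ for some $\gamma_0\le\theta<\kappa$; fix one. Since $\mu^+<\kappa$ and $\kappa$ is a cardinal, $\beta:=\gamma_0+\mu^+<\kappa$ and $|[\gamma_0,\beta)|=\mu^+$. For each $i<\theta$ the fact above supplies an antichain $A_i\subseteq T_\beta$ of size $\mu^+$ all of whose elements lie $<_T$-above $a(i)$; enumerate $A_i=\langle e^i_\xi\mid\xi<\mu^+\rangle$ and set $b_\xi:=\langle e^i_\xi\mid i<\theta\rangle$ for $\xi<\mu^+$. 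Each $b_\xi$ is an injection from $\theta$ into $T_\beta$: for $i\ne j$, the level-$\gamma_0$ predecessor of $e^i_\xi$ is $a(i)\ne a(j)$, the level-$\gamma_0$ predecessor of $e^j_\xi$, so $e^i_\xi\ne e^j_\xi$. Moreover $a<_T b_\xi$ for all $\xi$, and for $\xi\ne\eta$ we have $b_\xi(i)=e^i_\xi$ and $b_\eta(i)=e^i_\eta$ both in the antichain $A_i$, so no coordinate of $b_\xi$ is $<_T$-comparable with the corresponding coordinate of $b_\eta$.

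Finally, suppose for contradiction that $F\colon[T^\theta]^2\to\mu$ is as in the statement. The map $\xi\mapsto F(a,b_\xi)$ sends $\mu^+$ ordinals into $\mu$, so, $\mu^+$ being regular, there are $\xi\ne\eta$ with $F(a,b_\xi)=F(a,b_\eta)$ while $a<_T b_\xi,b_\eta$ and no coordinate of $b_\xi$ is comparable with the matching coordinate of $b_\eta$ — contradicting the defining property of $F$. The only genuinely technical part is the structural fact together with the bookkeeping that places all the antichains $A_i$ on one common level $\beta$ and keeps $\beta<\kappa$; that last requirement is exactly where $\mu^+<\kappa$ is used, consistently with the preceding lemma's remark that the case $\mu=\lambda^+$ (for which $\mu^+=\kappa$) cannot be ruled out.
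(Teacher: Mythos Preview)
Your argument is correct and follows essentially the same route as the paper: fix a level sequence $a$, produce $\mu^+$ many level sequences above $a$ on a common level whose corresponding coordinates are pairwise incomparable, and apply pigeonhole on the values $F(a,b_\xi)$. The paper compresses your structural fact into a single appeal to the normality and splitting of $(T,<_T)$ (citing an external claim) and packages the $b_\xi$'s as a single injection $b:\mu^+\times\theta\to T_\beta$, but the content is the same; your explicit construction of the antichains $A_i$ and the verification that each $b_\xi$ is injective simply unpack what the paper leaves implicit.
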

\begin{proof} Suppose that $F$ is a counterexample.
Fix an arbitrary $a\in T^\theta$.
As the proof of Claim~A.7.1 of \cite{paper20} makes clear,
  the fact that $(T,<_T)$ is normal and splitting implies that there exists some large enough $\beta<\kappa$
  and an injection $b:\mu^+\times \theta\rightarrow T_\beta$ such that for all $\eta<\mu^+$ and all $i<\theta$, $a(i)<_T b(\eta,i)$.
  For each $\eta<\mu^+$, define $b_\eta:\theta\rightarrow T_\beta$ by stipulating $b_\eta(i):=b(\eta,i)$.
  Now, find $\eta<\zeta<\mu^+$ such that $F(a,b_\eta)=F(a,b_\zeta)$.
  Then there must exist some $i<\theta$ such that $b_\eta(i)$ and $b_\zeta(i)$ are $<_T$-comparable, contradicting the fact that
  $b_\eta(i)$ and $b_\zeta(i)$ are two distinct elements of $T_\beta$.
\end{proof}

Now, we move on to deal with the notion of an ascent path.

\begin{defn}[Laver] Suppose that $\theta$ is a cardinal $<\kappa$ and $\mathcal F$ is a family satisfying $\theta\in\mathcal F\s\mathcal P(\theta)$.
An \emph{$\mathcal F$-ascent path} through a $\kappa$-tree $(T,<_T)$ is
a sequence $\vec f =\langle f_\alpha\mid \alpha<\kappa\rangle$ such that for all $\alpha<\beta<\kappa$:
\begin{enumerate}
\item $f_\alpha$ is a function from $\theta$ to $T_\alpha$;
\item $\{ i<\theta\mid f_\alpha(i) <_T f_\beta(i) \}\in \mathcal F$.
\end{enumerate}
\end{defn}

\begin{defn} For every cardinal $\theta$, write
$\mathcal F^{\fin}_\theta:=\{ Z\s\theta\mid |\theta\setminus Z|<\omega\}$,
$\mathcal F^{\bd}_\theta:=\{ Z\s\theta\mid \sup(\theta\setminus Z)<\theta\}$,
and $\mathcal F_\theta:=\mathcal P(\theta)\setminus\{\emptyset\}$.
\end{defn}

By \cite{MR964870}, if $(T,<_T)$ is a special $\lambda^+$-tree
that admits an $\mathcal F^{\bd}_\theta$-ascent path, then $\cf(\theta)=\cf(\lambda)$.
By \cite{MR2965421}, if $\lambda$ is regular and $(T,<_T)$ is a special $\lambda^+$-tree
that admits an $\mathcal F_\theta$-ascent path, then $\theta=\lambda$.
A construction of a special $\lambda^+$-tree with an $\mathcal F^{\bd}_{\cf(\lambda)}$-ascent path may be found in \cite{lh_trees}.
Constructions of $\kappa$-Souslin trees with $\mathcal F^{\fin}_\theta$-ascent paths may be found in \cite{paper20}.

\begin{prop}[folklore]
Any $\lambda^{++}$-super-Souslin tree $(T,<_T)$ admits an $\mathcal F_\lambda$-ascent path.
\end{prop}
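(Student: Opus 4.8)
The plan is to exploit the definitional data that witnesses super-Souslin-ness to produce, from each node, a canonical $\lambda$-level sequence, and then to follow these sequences up the tree, using the splitting-and-normality structure to fill in limit levels. Concretely, fix a $\lambda^{++}$-super-Souslin tree $(T,<_T)$ together with a witnessing function $F:[T^\lambda]^2\to\lambda^+$. First I would invoke the fact (established in the proof of Claim~A.7.1 of \cite{paper20}, and already used above in the proofs of the two obstruction lemmas) that normality and splitting let us choose, for every $u\in T$, some $a_u\in T^\lambda$ with $u<_T a_u(i)$ for all $i<\lambda$; in fact one can arrange $a_u\restriction\{0\}$ to refine itself compatibly along chains, but the cleanest route is to build the ascent path level by level rather than from a single choice of $a_u$'s.

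The key step is the recursive construction of the sequence $\vec f=\langle f_\alpha\mid \alpha<\lambda^{++}\rangle$ with $f_\alpha:\lambda\to T_\alpha$ an injection, maintaining the inductive hypothesis that for $\alpha<\beta$, the set $\{i<\lambda\mid f_\alpha(i)<_T f_\beta(i)\}$ is nonempty — indeed, cofinal or all of $\lambda$ would be even better, but nonempty is what $\mathcal F_\lambda=\mathcal P(\lambda)\setminus\{\emptyset\}$ requires. At successor stages, use that the tree is splitting: each $f_\alpha(i)$ has $\geq 2$ immediate successors, so pick $f_{\alpha+1}(i)$ above $f_\alpha(i)$, choosing carefully to keep injectivity (there are $<\lambda^{++}$ many nodes on the next level but we only need $\lambda$ many distinct choices, and splitting at each of the $\lambda$ nodes $f_\alpha(i)$ gives room to diagonalize). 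At a limit ordinal $\alpha$, the real work happens: for each $i<\lambda$ the chain $\{f_\beta(i)\mid \beta<\alpha\}$ need not have a bound in $T_\alpha$, because $(T,<_T)$ is not assumed Hausdorff or even to have all branches filled. Here I would use normality plus the function $F$: by normality every node of $T_{<\alpha}$ has extensions arbitrarily high, so in particular there is a node $v\in T_\alpha$ above, say, $f_0(i)$; but we need $v$ above $f_\beta(i)$ for all $\beta<\alpha$ simultaneously, which is a genuine branch-existence question. The resolution is to appeal to the super-Souslin property in the contrapositive: if no $\mathcal F_\lambda$-ascent path existed, then in particular no partial ascent path could be continued, and one can run the argument of Lemma~4.\_ (the "no one-dimensional $F$" lemma, whose proof forces with $(T,>_T)$) to derive a contradiction from the assumed $F$. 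So the cleaner organization is: \emph{suppose toward a contradiction that $(T,<_T)$ admits no $\mathcal F_\lambda$-ascent path}, and derive that $(T,<_T)$ is $\lambda^{++}$-Souslin in a way compatible with being killed by forcing a branch, exactly mirroring the proof above that the witnessing function $F$ cannot be one-dimensional.

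The main obstacle I anticipate is precisely the limit-level branch problem: a super-Souslin tree need not be $\lambda^+$-complete (Theorem~C produces one that happens to be, but the folklore proposition is stated for arbitrary super-Souslin trees), so the chains $\langle f_\beta(i)\mid\beta<\alpha\rangle$ may simply fail to extend. The way around this is to not insist on coherence of the $f_\beta(i)$ across all $i$ at once: at a limit level $\alpha$, choose any node $w\in T_\alpha$ (normality guarantees $T_\alpha\neq\emptyset$ since $\alpha<\lambda^{++}$ and the tree is a $\lambda^{++}$-tree), and, since the tree is splitting, pass above $w$ to a $\lambda$-level sequence $a_w\in T_{\geq\alpha}^\lambda$; but then $f_\alpha$ lives at level $\geq\alpha$, not $\alpha$, so the level-indexing breaks. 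The genuinely correct fix — and the step to get right — is to observe that we do not need $f_\beta(i)<_T f_\alpha(i)$ for \emph{every} earlier $\beta$, only that the set of such $i$ be nonempty for every pair $\beta<\alpha$; so it suffices to thin to an unbounded subset $S\subseteq\lambda^{++}$ of levels along which the construction succeeds (using that $\lambda^{++}$ is regular and that the obstruction, if it recurred cofinally, would again contradict super-Souslin-ness via the $(T,>_T)$ forcing argument), and then reindex $S$ by $\lambda^{++}$. Once the ascent path along all of $\lambda^{++}$ is in hand, verifying clauses (1)--(2) of the definition of $\mathcal F_\lambda$-ascent path is immediate from the inductive hypothesis.
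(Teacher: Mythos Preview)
Your proposal has a genuine gap: the recursive level-by-level construction cannot be salvaged, and the fallback ``contradiction via forcing'' argument is not an argument at all. The lemma you allude to (that no one-dimensional witnessing function can exist) works because a one-dimensional $F$ would make $(T,<_T)$ Souslin in every $\kappa$-preserving extension, which is contradicted by forcing a branch. But here you have only the two-dimensional $F$, and the non-existence of an ascent path does not yield a one-dimensional function; so the forcing argument has no purchase. Your final paragraph gestures at thinning to a cofinal $S$, but you never say how to produce $S$ or what property the $f_\beta$ for $\beta\in S$ should have; the invocation of ``the obstruction, if it recurred cofinally, would again contradict super-Souslin-ness'' is circular.

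The missing idea is that the super-Souslin function $F$ should be used \emph{directly}, not in the contrapositive, and the key step is a pigeonhole argument you never mention. The paper's proof runs as follows: fix a single anchor $a\in T^\lambda$ at some level $\epsilon$; by normality (and splitting, to get injectivity), for each $\beta\ge\epsilon$ choose $a_\beta\in T_\beta^\lambda$ with $a\le_T a_\beta$. Now the map $\beta\mapsto F(a,a_\beta)$ takes values in $\lambda^+$, so by regularity of $\lambda^{++}$ there is a cofinal $B\subseteq\lambda^{++}$ on which it is constant. For any $\beta,\gamma\in B$ the defining property of $F$ gives an $i<\lambda$ with $a_\beta(i),a_\gamma(i)$ comparable. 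Finally, for arbitrary $\alpha<\lambda^{++}$, let $\beta(\alpha):=\min(B\setminus\alpha)$ and define $f_\alpha(i)$ to be the element of $T_\alpha$ below $a_{\beta(\alpha)}(i)$. This sidesteps all limit-level issues because $f_\alpha$ is defined by restriction from above, not by extension from below. You should abandon the recursion entirely and anchor the whole construction at a single level sequence; the pigeonhole on $\im(F)$ is the heart of the proof.
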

\begin{proof} Suppose $(T,<_T)$ is a $\lambda^{++}$-super-Souslin tree with a witnessing map $F:[T^\lambda]^2\rightarrow\lambda^+$.
Fix an arbitrary $a\in T^\lambda$. Let $\epsilon$ be such that $a\in T^\lambda_\epsilon$.
By normality of $(T,<_T)$, for each $\beta\in \lambda^{++}\setminus\epsilon$, we may fix $a_\beta\in T_\beta^\lambda$ with $a\le_T a_\beta$.
Pick a cofinal subset $B\s\lambda^{++}\setminus\epsilon$ on which the map $\beta\mapsto F(a,a_\beta)$ is constant.
Then $\langle a_\beta\mid \beta\in B\rangle$ induces an $\mathcal F_\theta$-ascent path $\vec f=\langle f_\alpha\mid\alpha<\kappa\rangle$, as follows.
For every $\alpha<\lambda^{++}$, let $\beta(\alpha):=\min(B\setminus\alpha)$, and define $f_\alpha:\lambda\rightarrow T_\alpha$
by letting $f_\alpha(i)$ be the unique element of $T_\alpha$ which is $\le_T a_{\beta(\alpha)}(i)$.
\end{proof}

Aiming for an $\mathcal F_\lambda^{\bd}$-ascent path,
one may want to strengthen Definition~\ref{superdef}
to assert that   for all $a,b,c \in T^\lambda$ with $a <_T b,c$, if $F(a,b) = F(a,c)$, then
$I(b,c):=\{i < \lambda \mid b(i)\text{ and }c(i)\text{ are }{<_T}\text{-comparable}\}$ is in $\mathcal F_\lambda^{\bd}$.
However, this is impossible:

\begin{lemma}
  Suppose $(T,<_T)$ is a normal, splitting $\lambda^{++}$-tree, $F:[T^\lambda]^2 \rightarrow \lambda^+$, and $\mathcal F$ is a proper filter
  on $\lambda$. Then there are $(a,b),(a,c) \in [T^\lambda]^2$ with $F(a,b) = F(a,c)$ such that $I(b,c)\notin\mathcal F$.
\end{lemma}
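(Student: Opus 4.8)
The plan is to exploit the fact that $\mathcal F$, being a proper filter on $\lambda$, contains no singletons and is closed under finite intersections, so its dual ideal $\mathcal I$ contains all singletons; hence any set of fewer than $\aleph_0$-many, and in fact (since $\mathcal F$ is merely a filter, not an ultrafilter or $\sigma$-complete) we must be careful — what we really use is just that $\emptyset \notin \mathcal F$, so $\lambda \setminus Z \in \mathcal I$ whenever $Z \in \mathcal F$, and in particular $\mathcal F$ cannot contain two sets with empty intersection. The strategy is to build, above a fixed level sequence $a$, a large family of level sequences $b_\eta$ on a common level $\beta$ that are pairwise ``disjoint'' in the comparability sense — i.e. for $\eta \neq \eta'$, $I(b_\eta,b_{\eta'}) = \emptyset$ — and then use the pigeonhole principle on $F$ to collapse two of them to the same color, forcing $I(b_\eta,b_{\eta'}) \in \mathcal F$, contradicting $\emptyset \notin \mathcal F$.

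First I would fix an arbitrary $a \in T^\lambda$, say $a \in T^\lambda_\epsilon$. Using normality and splitting of $(T,<_T)$ — exactly as in the proof of Claim~A.7.1 of \cite{paper20} invoked in the earlier lemmas — I would find a single level $\beta < \lambda^{++}$ with $\beta > \epsilon$ and an injection $b : (\lambda^+ + 1) \times \lambda \rightarrow T_\beta$ such that $a(i) <_T b(\eta,i)$ for all $\eta \le \lambda^+$ and all $i < \lambda$; here the point is that splitting lets us separate the $\lambda^+$-many ``threads'' above each $a(i)$ while staying on one level $\beta$, so the values $b(\eta,i)$ for distinct $\eta$ are pairwise $<_T$-incomparable elements of $T_\beta$ (they lie on the same level and are distinct). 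Setting $b_\eta(i) := b(\eta,i)$ gives $b_\eta \in T^\lambda_\beta$ with $a <_T b_\eta$ and, crucially, $I(b_\eta,b_{\eta'}) = \emptyset$ for all $\eta \neq \eta'$ (same level, all coordinates distinct, hence incomparable). Then by pigeonhole on $F : [T^\lambda]^2 \rightarrow \lambda^+$ applied to the $(\lambda^+ + 1)$-many pairs $(a,b_\eta)$, there exist $\eta \neq \eta'$ with $F(a,b_\eta) = F(a,b_{\eta'})$. Taking $(a,b) := (a,b_\eta)$ and $(a,c) := (a,b_{\eta'})$, we get $F(a,b) = F(a,c)$ while $I(b,c) = \emptyset \notin \mathcal F$ since $\mathcal F$ is proper, which is exactly the conclusion.

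The main obstacle is the construction of the single level $\beta$ carrying $\lambda^+ + 1$ many pairwise-incomparable threads above $a$: one must check that a normal, splitting $\lambda^{++}$-tree really does supply, above the $\lambda$-many nodes $a(i)$, a common level $\beta$ on which $\lambda^+$-many extensions of each $a(i)$ simultaneously appear and are pairwise distinct. This is where the ``proof of Claim~A.7.1 of \cite{paper20}'' is doing the work — the argument is: a splitting tree doubles the number of extensions at each successor level, so after $\omega$ (or more) steps one has arbitrarily many, and normality pushes them all up to any fixed higher level; since $T$ has height $\lambda^{++}$ and each level has size $<\lambda^{++}$, there is room to realize $\lambda^+ + 1$ many. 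Once that level sequence family is in hand, the rest is a one-line pigeonhole argument, so essentially all the content is in reusing this now-standard extension lemma.
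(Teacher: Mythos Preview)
Your pigeonhole step is the genuine gap, not the construction of the injection $b$. You map $\lambda^+ + 1$ many pairs $(a,b_\eta)$ into $\lambda^+$ via $F$, but $|\lambda^+ + 1| = \lambda^+$, so there is no cardinality obstruction to $\eta \mapsto F(a,b_\eta)$ being injective. (Concretely, with $\lambda = \omega$: send $\eta \mapsto \eta+1$ for $\eta<\omega_1$ and $\omega_1 \mapsto 0$.) To force a collision you would need strictly more than $\lambda^+$ many $b_\eta$'s, i.e., at least $\lambda^{++}$ many; but all your $b_\eta$ live on a single level $T_\beta$, which has size $<\lambda^{++}$, so this is impossible. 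This is exactly why the argument of the preceding lemma (where the range is $\mu$ with $\mu^+<\kappa$) does \emph{not} transfer: here the range is $\lambda^+$ and $(\lambda^+)^+ = \lambda^{++}$ equals the height of the tree, so the single-level trick runs out of room.

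The paper's proof takes a genuinely different route precisely to circumvent this. It argues by contradiction, assuming that $I(b,c)\in\mathcal F$ whenever $F(a,b)=F(a,c)$, and then, for a fixed $a$, studies the downward closures $U^\eta$ of the color classes $\{b : F(a,b)=\eta\}$. Two claims do the work: first, any $b,c$ in the same $U^\eta$ satisfy $I(b,c)\in\mathcal F$; second, if two colors $\eta,\zeta$ ever ``diverge'' (witnessed by $b\in U^\eta$, $c\in U^\zeta$ on a common level with $I(b,c)=\emptyset$), then $U^\eta\cap U^\zeta$ is bounded below that level. One then picks a level $\beta$ above all such divergence levels, takes $d$ above $a$ at level $\beta$, splits each $d(i)$ into two incomparable successors $e_0(i),e_1(i)$, and observes that $I(e_0,e_1)=\emptyset$ forces the colors $\eta=F(a,e_0)$, $\zeta=F(a,e_1)$ to have divergence level $\le\beta+1$, while $d\in U^\eta\cap U^\zeta$ sits at level $\beta$ --- a contradiction. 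The key difference is that this argument only ever needs to produce \emph{two} incomparable level sequences (namely $e_0,e_1$), not $\lambda^{++}$ many, and uses the filter hypothesis itself (via the intersection property of $\mathcal F$) to control how the color classes interact.
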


\begin{proof}
  Towards a contradiction, suppose that for all $(a,b),(a,c) \in [T^\lambda]^2$ with $F(a,b) = F(a,c)$, we have  $I(b,c)\in \mathcal F$.
  For all $a\in T^\lambda$ and $\eta<\lambda^+$, let
  $U_a:=\{ b\in T^\lambda\mid a\le _T b\}$ and $U^\eta_a:=\{ b\in U_a\mid F(a,b)=\eta\}$.
  Now, fix some $a\in T^\lambda$ arbitrarily,
  and, for every $\eta < \lambda^+$, let $U^\eta:=\{ b\in U_a\mid U_b\cap U_a^\eta\neq\emptyset\}$
  be the downward closure of $U^\eta_a$ within $U_a$.

  \begin{subclaim} \label{comparable_claim}
    Suppose that $\eta<\lambda^+$ and $b,c \in U^\eta$. Then $I(b,c)\in\mathcal F$.
  \end{subclaim}
  \begin{proof}
    Pick $b'\in U_b\cap U^\eta_a$ and $c'\in U_c\cap U^\eta_a$.
    Since $F(a,b') = \eta = F(a,c')$, and by assumption, we have that $I(b',c')\in\mathcal F$.

    Let $\beta, \beta', \gamma, \gamma'$ be such that
    $b \in T_\beta^\lambda$, $b' \in T_{\beta'}^\lambda$, $c \in T_\gamma^\lambda$, and
    $c' \in T_{\gamma'}^\lambda$.  Without loss of generality,  $\beta' \leq \gamma'$.
    Now, there are two relevant configurations of the other ordinals to consider.

    \textbf{Case 1: $\beta \leq \beta' < \gamma$.} In this case, for all $i \in I(b',c')$, we have
    $b(i) \leq_T b'(i)$ and $b'(i), c(i) \leq_T c'(i)$, so
    $b(i)$ and $c(i)$ are $<_T$-comparable.

    \textbf{Case 2: $\beta, \gamma \leq \beta'$.} In this case, for all $i \in I(b',c')$, we have
    $b(i), c(i) \leq_T b'(i)$ and again, $b(i)$ and $c(i)$ are $<_T$-comparable.
  \end{proof}

  For any two distinct ordinals $\eta,\zeta$ below $\lambda^+$, let $\delta_{\eta, \zeta}$ denote the least ordinal $\delta$ below $\lambda^{++}$
  such that there are $b \in U^\eta \cap T_\delta^\lambda$ and $c \in U^\zeta \cap T_\delta^\lambda$
  for which $I(b,c)=\emptyset$, if such an ordinal exists; otherwise, leave $\delta_{\eta, \zeta}$
  undefined.

  \begin{subclaim} \label{divergence_claim}
    Suppose $\delta_{\eta, \zeta}$ is defined.
    Then $U^\eta \cap U^\zeta\s\bigcup\{T_\beta^\lambda\mid \beta <\delta_{\eta, \zeta}\}$.
  \end{subclaim}

  \begin{proof} Towards a contradiction, suppose that $d\in U^\eta\cap U^\zeta\cap T^\lambda_\beta$ for some $\beta\ge\delta_{\eta, \zeta}$.
    Since $U^\eta$ and $U^\zeta$ are downward closed, we may simply assume that $\beta=\delta_{\eta, \zeta}$.

    Using the fact that $\beta=\delta_{\eta, \zeta}$, fix $b \in U^\eta \cap T_\beta^\lambda$ and $c \in U^\zeta \cap T_\beta^\lambda$
    such that $I(b,c)=\emptyset$. By Claim~\ref{comparable_claim},
    since $b,d \in U^\eta$ and $c,d\in U^\zeta$, we have that $I(b,d)$ and $I(c,d)$ are in $\mathcal F$.
    In particular, $I(b,d)\cap I(c,d)\neq\emptyset$, contradicting the fact that $I(b,c)=\emptyset$.
  \end{proof}

  As $\lambda^+<\lambda^{++}$, let $\beta < \lambda^{++}$ be large enough so that, if $\eta,\zeta$ are two distinct ordinals below $\lambda^+$ and $\delta_{\eta, \zeta}$ is
  defined, then $\delta_{\eta, \zeta} < \beta$. By increasing $\beta$ if necessary, we may assume that $U_a\cap T_\beta^\lambda\neq\emptyset$.
  Fix $d \in U_a\cap T_\beta^\lambda$. By the fact that $(T,<_T)$ is splitting, for each
  $i < \lambda$ we may fix $e_0(i) \neq e_1(i)$, both in $T_{\beta+1}$, with $d(i) <_T e_0(i), e_1(i)$.
  Let $\eta := F(a,e_0)$ and $\zeta := F(a,e_1)$. Clearly, $I(e_0,e_1)=\emptyset$,
  so that  $\delta_{\eta, \zeta}$ is defined. So, by our choice of
  $\beta$, we have $\delta_{\eta, \zeta} < \beta$. However, since
  $d <_T e_0, e_1$, we have $d \in U^\eta \cap U^\zeta\cap T^\lambda_\beta$, contradicting Claim \ref{divergence_claim}.
\end{proof}

\subsection{Proof of Theorem~C}\label{prfthmc}

The rest of this section is devoted to proving Theorem~C.
We will define a poset $(\bb{P}, \leq_{\bb{P}}, \bb{Q})
\in \mathcal{P}_{\lambda^+}$ and a collection $\{ \mathcal D_i\mid i<\lambda^{+}\}$ of sharply dense systems such that
any filter that meets each $\mathcal D_i$ everywhere gives rise to a $\lambda^+$-complete $\lambda^{++}$-super-Souslin tree.
We intend to construct a tree $(T, <_T)$ with underlying set $\lambda^{++} \times \lambda^+$, such that, furthermore,
 $T_\alpha = \{\alpha\} \times \lambda^+$ for all $\alpha<\lambda^{++}$. We start by defining $\bb{P}$.

\begin{defn} \label{p_defn}
  $\bb{P}$ consists of all quintuples $p = (x, <^0, t, <^1, f)$ satisfying
  the following requirements.
  \begin{enumerate}
    \item $x \in [\lambda^{++}]^{<\lambda^+}$.
    \item $<^0$ is a partial ordering on $x$ such that for all $\beta \in x$, we have that
      $$\pred^0_p(\beta) := \{\alpha \in x \mid \alpha <^0 \beta\}$$
          is a closed subset of $\beta$ which is well-ordered by $<^0$.
    \item\label{p_defn3} $t \in [x \times \lambda^+]^{<\lambda^+}$. In a slight abuse of notation, and anticipating the generic object, for all
      $\alpha \in x$, we let $t_\alpha$ denote $t \cap (\{\alpha\} \times \lambda^+)$,
      and we let $t_\alpha^\lambda$ denote the set of injective
      functions from $\lambda$ to $t_\alpha$.
      For each $a$ in $t^\lambda := \bigcup_{\alpha \in x} t_\alpha^\lambda$,
      we let $\lht(a)$ denote the unique ordinal $\alpha$ such that $a\in t_\alpha^\lambda$.

    \item\label{p_defn4} $<^1$ is a tree order on $t$ such that, for all $\beta \in x$ and all $v \in t_\beta$, letting
      $\pred^1_p(v) := \{u \in t \mid u <^1 v\}$, we have that
      \[
        \{\alpha \in x \mid \pred^1_p(v) \cap t_\alpha \neq \emptyset\}
        = \pred^0_p(\beta).
      \]
      Let $[t^\lambda]^2 := \{(a,b) \mid a,b \in t^\lambda, a <^1 b\}$,
      where for $a,b \in t^\lambda$, we write $a <^1 b$ iff  $a(i) <^1 b(i)$ for all $i<\lambda$.
    \item $f$ is a partial function from $[t^\lambda]^2$ to $[\lambda^+]^{<\lambda^+} \setminus \{\emptyset\}$, and $|f|\leq \lambda$.
    \item\label{p_defn6} Suppose that $(a,b), (a,c) \in \dom(f)$.
   If  $f(a,b) \cap f(a,c) \neq \emptyset$ and $\lht(b) \leq^0 \lht(c)$, then
$|\{i < \lambda \mid b(i) \leq^1 c(i)\}| = \lambda$.
    \item\label{p_defn7} For all $(a,c) \in \dom(f)$ and all $b \in t^\lambda$ such that $a <^1 b <^1 c$, we have
      $(a,b) \in \dom(f)$ and $f(a,b) \supseteq f(a,c)$.
  \end{enumerate}
\end{defn}
  The coordinates of a condition $p \in \bb{P}$ will often be identified as $x_p, <^0_p, t_p,
  <^1_p$, and $f_p$, respectively.
\begin{defn} For all $p,q \in \bb{P}$, we let $q \leq_{\bb{P}} p$ iff:
  \begin{itemize}
    \item $x_q \supseteq x_p$;
    \item $<^0_q \supseteq <^0_p$;
    \item $t_q \supseteq t_p$;
    \item $<^1_q \supseteq <^1_p$;
    \item $\dom(f_q) \supseteq \dom(f_p)$;
    \item for all $(a,b) \in \dom(f_p)$, we have $f_q(a,b) \supseteq f_p(a,b)$.
  \end{itemize}
\end{defn}

\begin{defn} \label{q_defn}
  $\bb{Q}$ is the set of all conditions $p\in \bb{P}$   such that:
\begin{enumerate}
\item    $x_p=\cl(x_p)$;
\item $<^0_p$ is the usual  ordinal ordering on $x_p$.
\end{enumerate}
\end{defn}

We now show that $(\bb{P}, \leq_{\bb{P}}, \bb{Q})$ is in $\mathcal{P}_{\lambda^+}$.
For $p \in \bb{P}$, $x_p$ is the realm of $p$. We next describe how \cois act on $\bb{P}$.
In order to make it easier to refer to and manipulate level sequences in our conditions, we introduce
the following notation.

\begin{notation}
  By $\fa{\lambda^+}$ and Corollary~\ref{cardinalarithmetic}, $\ch_\lambda$ holds, and we can let $\langle \sigma_\delta \mid \delta < \lambda^+ \rangle$ injectively enumerate ${^\lambda} \lambda^+$. For all
  $\alpha < \lambda^{++}$ and $\delta < \lambda^+$, let $a_{\alpha, \delta}:\lambda \rightarrow \{\alpha\}\times \lambda^+$ be defined by stipulating $a_{\alpha, \delta}(i) := (\alpha, \sigma_\delta(i))$.
  Note that  every level sequence in our desired tree $(T,<_T)$ will be of the form $a_{\alpha, \delta}$ for a unique pair $(\alpha,\delta)\in \lambda^{++}\times\lambda^+$.
\end{notation}

\begin{defn}
  Suppose that $\pi$ is a \coi from a subset of $\lambda^{++}$ to
  $\lambda^{++}$. For each  $p \in \bb{P}_{\dom(\pi)}$,  we define $\pi.p$ to be the condition
  $(x, <^0, t, <^1, f) \in \bb{P}$ such that:
  \begin{enumerate}
    \item $x = \pi``x_p$;
    \item $<^0=\{(\pi(\alpha),\pi(\beta))\mid (\alpha,\beta)\in <^0_p\}$;
    \item $t=\{(\pi(\alpha),\eta)\mid (\alpha,\eta)\in t_p\}$;
    \item $<^1=\{((\pi(\alpha),\eta),(\pi(\beta),\zeta))\mid ((\alpha,\eta),(\beta,\zeta))\in <^1_p\}$;
\item $f=\{ ((a_{\pi(\alpha),\delta},a_{\pi(\beta),\epsilon}),z)\mid ((a_{\alpha,\delta},a_{\beta,\epsilon}),z)\in f_p\}$.
  \end{enumerate}
\end{defn}

Finally, we describe the restriction operation.

\begin{defn}
  Suppose that $p \in \bb{P}$ and $\alpha < \lambda^{++}$. Then
  $p \rest \alpha$ is the condition $(x, <^0, t, <^1, f)$ such that:
  \begin{itemize}
    \item $x = x_p \cap \alpha$;
    \item $<^0 = <^0_p \cap x^2$;
    \item $t = t_p \cap (\alpha \times \lambda^+)$;
    \item $<^1 = <^1_p \cap t^2$;
    \item $f = \{((a,b),z)\in f_p\mid (a,b)\in  [t^\lambda]^2\}$.
  \end{itemize}
\end{defn}

With these definitions, it follows easily that $(\bb{P}, \leq_{\bb{P}}, \bb{Q})$ satisfies
Clauses \eqref{c1}--\eqref{c12} of Definition~\ref{class_def}. We now verify Clauses \eqref{c9}--\eqref{c11}, in order.

\begin{lemma} \label{order_lemma}
  Suppose $p \in \bb{P}$. Then there is $q \in \bb{Q}$ with $q \leq_{\bb{P}} p$  such that $x_q = \cl(x_p)$.
\end{lemma}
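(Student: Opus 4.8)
The plan is to define $q$ by completing $p$ ``downward and sideways'': set $x_q := \cl(x_p)$ and let $<^0_q$ be the ordinal ordering on $x_q$. Since clause (2) of Definition~\ref{p_defn} forces $\pred^0_p(\beta)\subseteq\beta$ for every $\beta$, the relation $<^0_p$ refines the ordinal ordering, so $<^0_q\supseteq <^0_p$ is legitimate and $q$ will meet the two requirements of Definition~\ref{q_defn}. The substantive task is to enlarge the triple $(t_p,<^1_p,f_p)$ to $(t_q,<^1_q,f_q)$ so that, with the now-total $<^0_q$, clause (4) of Definition~\ref{p_defn} holds --- i.e.\ every node of $t_q$ at level $\beta$ has a $<^1_q$-predecessor at every level of $x_q$ below $\beta$ --- while keeping $<^1_q$ a tree order and $|f_q|\le\lambda$.

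First I would build $t_q\supseteq t_p$ and $<^1_q\supseteq <^1_p$ by recursion on the levels $\alpha\in x_q$ in increasing order, at stage $\alpha$ deciding the level-$\alpha$ predecessor of every node that currently lacks one. At the levels in $\pred^0_p(\beta)$ nothing is done; at the remaining levels we must either reuse an existing node of $(t_q)_\alpha$ --- adding a new $<^1_q$-relation among old nodes, thereby ``merging'' two branches --- or introduce a fresh new node $(\alpha,\xi)$ with $\xi$ not used by $t_p$ at level $\alpha$. Which to do is dictated by clause~(6) of Definition~\ref{p_defn}: for every $f$-linked pair $(a,b),(a,c)\in\dom(f_p)$ (meaning $f_p(a,b)\cap f_p(a,c)\neq\emptyset$) with $\lht(b)\le\lht(c)$, $q$ must satisfy $|\{i<\lambda\mid b(i)\le^1_q c(i)\}|=\lambda$. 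When $\lht(b)=\lht(c)$ this is already delivered by clause~(6) for $p$; but when $\lht(b)<\lht(c)$ while $\lht(b),\lht(c)$ are $<^0_p$-incomparable we are forced to \emph{create} the relations $b(i)<^1_q c(i)$ for $\lambda$-many $i$, and these creations must be mutually consistent.

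The hard part is exactly this consistency. For $b(i)<^1_q c(i)$ to be compatible with a tree order, the $<^1_p$-branches below $b(i)$ and below $c(i)$ must already agree at every common old level below $\lht(b)$. The key point is that, because every $\pred^0_p$-set is closed in its supremum, the set of common levels below $\lht(b)$ has a maximum $\gamma^*$ (incomparability of $\lht(b),\lht(c)$ prevents it from being cofinal in $\lht(b)$); applying clause~(7) for $p$ to push the link down to level $\gamma^*$, and then clause~(6) for $p$ there, yields a set $S$ of $\lambda$-many coordinates $i$ on which $b(i)$ and $c(i)$ have the same level-$\gamma^*$ predecessor, hence agree up to $\gamma^*$ and therefore at every common level. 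For $i\in S$ the merge $b(i)<^1_q c(i)$ is legitimate. When several links compete to be the level-$\alpha$ predecessor of a single node $c(i)$, I would resolve the clash by partitioning $\lambda$ into $\lambda$-many $\lambda$-sized blocks and honoring one link on each block; the ``$\lambda$-many'', rather than ``all'', quantifier in clause~(6) is precisely the slack that makes this work. Outside all such demands, fresh new nodes are used, which carry no $f$-links and so cannot endanger clause~(6).

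Finally I would extend $f_p$ to $f_q$: for each $(a,c)\in\dom(f_p)$ and each level $\alpha\in x_q$ strictly between $\lht(a)$ and $\lht(c)$ there is now a unique level sequence $b^*$ with $a<^1_q b^*<^1_q c$ (its $i$-th entry being the level-$\alpha$ predecessor of $c(i)$), and we put $(a,b^*)\in\dom(f_q)$ with value $\bigcup\{f_p(a,c')\mid (a,c')\in\dom(f_p),\ b^*\le^1_q c'\}$; this satisfies clause~(7), respects $q\le_{\bb P}p$, keeps $|f_q|\le\lambda$ (at most $\lambda$ old pairs, each spawning at most $|x_q|\le\lambda$ new ones), and does not create a fresh violation of clause~(6). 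The remaining checks --- that $<^1_q$ is a tree order, that clause~(4) now holds against the total $<^0_q$, and that $q\in\bb Q$ with $q\le_{\bb P}p$ and $x_q=\cl(x_p)$ --- are then routine, so the real work is confined to the consistency argument of the previous paragraph.
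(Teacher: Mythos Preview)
Your overall strategy and the key technical insight---using clauses~(6) and~(7) of $p$ at the maximum common $<^0_p$-level $\gamma^*$ to find $\lambda$-many coordinates $i$ on which a merge $b(i)<^1_q c(i)$ is consistent with the existing predecessor structure---match the paper's proof, as does your definition of $f_q$ as the minimal extension of $f_p$ closed under clause~(7).

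Where you diverge is in organizing the recursion. The paper recurses not on the predecessor level $\alpha$ but on the \emph{target} level $\beta\in x_p$: at stage $\beta$ it assigns to every $v\in(t_p)_\beta$ its full branch $\pred^1_q(v)$ through the already-completed tree $t_{<\beta}$ in one stroke. Only triples $(a,b,c)$ with $\lht(c)=\beta$ and $\lht(b)$ strictly above $\beta':=\sup(\pred^0_p(\beta))$ need active handling; these are enumerated with $\lambda$-fold repetition, and at each occurrence a \emph{fresh} compatible coordinate is chosen and $c_\ell(i)$ routed through $b_\ell(i)$. Triples with $\lht(b)\le\beta'$ are taken care of automatically by the inductive hypothesis applied at $\beta'$ (descending via clause~(7) to $c'$ at level $\beta'$ and using that clause~$(6')$ already holds for the pair $(c',b)$ in the $<^1_q$ built so far).

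Your bottom-up scheme has a bookkeeping hazard you do not address: a merge $b(i)<^1_q c(i)$, decided at stage $\lht(b)$, retroactively constrains $c(i)$'s level-$\alpha$ predecessor for every $\alpha<\lht(b)$, but in your scheme those were committed at earlier stages. The paper's top-down order sidesteps this entirely, since everything below $\beta$ is frozen by the time $\beta$ is processed. Similarly, ``partition $\lambda$ into $\lambda$-sized blocks, one per link'' is loose: the set $S$ of compatible coordinates depends on the triple, and a pre-fixed block need not meet $S$ in size $\lambda$. The paper's enumerate-with-repetition-and-pick-fresh device is what actually delivers the construction.
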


\begin{proof} We need to define $q=(x_q,<^0_q,t_q,<^1_q,f_p)$.
  Of course, we let $x_q := \cl(x_p)$ and let $<^0_q$ be the usual ordinal ordering
  on $x_q$.  Thus, the main task is in finding suitable $t_q$, $<^1_q$ and $f_q$.
  Our strategy is to define the first two and then derive $f_q$ by minimally extending $f_p$ so as
  to satisfy Clause~\eqref{p_defn7} of Definition \ref{p_defn}. To be precise,
  once $t_q$ and $<^1_q$ are determined, we will let
  $$\dom(f_q) := \dom(f_p) \cup
  \{(a,b) \in [t_q^\lambda]^2 \mid \exists c \in t_p^\lambda \left( a <^1_q b <^1_q c \text{ and }
  (a,c) \in \dom(f_p) \right) \}$$
  and, for all $(a,b) \in \dom(f_q)$, we will let
$$f_q(a,b) := f_p(a,b) \cup \bigcup \{f_p(a,c) \mid  (a,c) \in \dom(f_p)\text{ and }a <^1_q b <^1_q c\}.$$

   We now turn to defining $t_q$ and $<^1_q$ to ensure that Clauses \eqref{p_defn4} and \eqref{p_defn6} of Definition~\ref{p_defn} hold.
By our intended definition of $f_q$ and $<^0_q$,
 these clauses dictate that, for all $\beta\in x_q$ and $\alpha\in x_q\cap(\beta+1)$:
\begin{itemize}
\item[($\ref{p_defn4}'$)] for all $v\in(t_q)_\beta$, there is some $u\in(t_q)_\alpha$ with $u\le^1_q v$;
\item[($\ref{p_defn6}'$)] for all $(a,b), (a,c) \in \dom(f_p)$ with $f_p(a,b) \cap f_p(a,c) \neq \emptyset$,
if $b \in (t_p)_\alpha^\lambda$ and $c \in (t_p)_\beta^\lambda$, then  $|\{i < \lambda \mid b(i) \leq^1_q c(i)\}| = \lambda$.
\end{itemize}

  In order to satisfy Clause~($\ref{p_defn4}'$), it is possible that we will have to add new nodes to $t_q$, i.e., that $t_q \setminus t_p \neq \emptyset$.
  However, we will do so in such a way that each element of $t_q \setminus t_p$ will be a $<^1_q$-predecessor of an
  element of $t_p$. Consequently, to define $t_q$ and $<^1_q$, it suffices to specify $\pred^1_q(v)$ for all  $v \in t_p$.

  Now, by recursion on $\beta\in x_p$, we define
$\pred^1_q(v)$ for all $v\in(t_p)_\beta$ in a way that ensures that Clauses
($\ref{p_defn4}'$) and ($\ref{p_defn6}'$) hold for all $\alpha\in x_q\cap(\beta+1)$.
  Suppose that $\beta \in x_p$ and, for all $\alpha \in x_p \cap \beta$, we have
  specified $\pred^1_q(u)$ for every $u \in (t_p)_\alpha$.
  Let $t_{<\beta}$ denote the underlying set of the tree we have
  defined thus far, i.e., $\bigcup_{\alpha\in x_p\cap\beta}\bigcup_{u \in (t_p)_{\alpha}}
  (\pred^1_q(u)\cup\{u\})$.

  If $\pred^0_p(\beta) = \emptyset$ and $v \in (t_p)_\beta$, then let $B$ be a maximal branch through $t_{<\beta}$.
  It might be the case that $B$ is bounded below $\beta$, i.e., there is $\gamma \in x_q \cap \beta$ with
  $B \cap (\{\gamma\} \times \lambda^+) = \emptyset$.  If this is the case, then, for each
  such $\gamma$, add a new element from $\{\gamma\} \times \lambda^+$ to $t_q$
  and require that these new elements, together with $B$, form a branch whose levels are unbounded in $x_q \cap \beta$. Let this
  unbounded branch be denoted by $B^*$, and set $\pred^1_q(v) := B^*$.

  If $\pred^0_p(\beta)$ is unbounded in $\beta$,  then, for all $v \in (t_p)_\beta$, we are obliged to let $\pred^1_q(v)$
  be precisely $\bigcup_{u \in \pred^1_p(v)} (\pred^1_q(u)\cup\{u\})$.

  It remains to consider the case in which $\pred^0_p(\beta)$ is nonempty and bounded in $\beta$.
  Put $\beta' := \sup(\pred^0_p(\beta))$.
  Since $\pred^0_p(\beta)$ is a closed, nonempty subset of $\beta$, we have $\beta'\in x_p$.
  If there is no $\gamma \in x_p$ with $\beta' < \gamma < \beta$,
  then, for all $v \in (t_p)_\beta$, we are again obliged to let $\pred^1_q(v):=\bigcup_{u \in \pred^1_p(v)} (\pred^1_q(u)\cup\{u\})$.
  Thus, from now on, suppose that $x_p \cap (\beta', \beta) \neq \emptyset$.

  Let $\langle (a_\ell, b_\ell, c_\ell) \mid \ell < \lambda \rangle$ enumerate all triples
  $(a,b,c)$ such that:
  \begin{itemize}
    \item $(a,b), (a,c) \in \dom(f_p)$;
    \item $f_p(a,b) \cap f_p(a,c) \neq \emptyset$;
    \item $c \in (t_p)_\beta^\lambda$ and there is $\alpha \in x_p \cap (\beta', \beta)$
      such that $b \in (t_p)_\alpha^\lambda$.
  \end{itemize}
  Moreover, assume that each such triple is enumerated as $(a_\ell, b_\ell, c_\ell)$ for
  $\lambda$-many $\ell < \lambda$. (If there are no such triples, then simply define
  $\pred^1_q(v)$ arbitrarily for each $v \in (t_p)_\beta$ subject to the constraint
  $\pred^1_q(v) \supseteq \pred^1_p(v)$.)

  Now, by recursion on $\ell < \lambda$, we choose nodes $v_\ell \in (t_p)_\beta$ and
  specify $\pred^1_q(v_\ell)$. Suppose that $\ell < \lambda$ and we have chosen
  $v_{\ell'}$ and $\pred^1_q(v_{\ell'})$ for all $\ell' < \ell$. Consider the triple
  $(a_\ell, b_\ell, c_\ell)$.

  Suppose first that $a_\ell \in (t_p)_{\beta'}$. We have that, for all $i < \lambda$,
  $a_\ell(i) <^1_p b_\ell(i), c_\ell(i)$. In particular, since $\beta' = \max(\pred^0_p(\beta))$,
  we have, for all $i < \lambda$, $\pred^1_q(b_\ell(i)) \supseteq \pred^1_p(b_\ell(i)) \supseteq \pred^1_p(c_\ell(i))$.
  Choose $i < \lambda$ such that $c_\ell(i) \notin \{v_{\ell'} \mid \ell' < \ell\}$, set $v_\ell := c_\ell(i)$, and let $B$ be a
  maximal branch through $t_{<\beta}$ with $b_\ell(i) \in B$. As in the case in which $\pred^0_p(\beta) = \emptyset$,
  extend $B$, by adding nodes if necessary, to a branch $B^*$ whose levels are unbounded in $x_q \cap \beta$,
  and set $\pred^1_q(v_\ell) := B^*$.

  Suppose next that $a_\ell \in (t_p)_{<\beta'}$. Let $c' \in (t_p)_{\beta'}^\lambda$ be the unique level sequence
  such that $a_\ell <^1_p c' <^1_p c_\ell$. Since $p \in \bb{P}$, we have $(a_\ell, c') \in \dom(f_p)$
  and $f_p(a_\ell, c') \supseteq f_p(a_\ell, c_\ell)$. In particular, $f_p(a_\ell, c') \cap f_p(a_\ell, b_\ell) \neq
  \emptyset$, so, by our inductive hypothesis, we know that, for $\lambda$-many $i < \lambda$, we have
  $c'(i) <^1_q b_\ell(i)$. Choose such an $i$ with $c_\ell(i) \not\in \{v_{\ell'} \mid \ell' < \ell\}$ and let
  $v_\ell := c_\ell(i)$. As in the previous case, by adding nodes if necessary, fix a branch $B^*$ whose levels are unbounded in
  $x_q \cap \beta$ with $b_\ell(i) \in B^*$, and set $\pred^1_q(v_\ell) := B^*$.

  At the end of this process, if there are nodes in $(t_p)_\beta \setminus \{v_\ell \mid \ell < \lambda\}$, then
  assign their $<^1_q$-predecessors arbitrarily. We must verify that we have maintained the inductive hypothesis.
  To this end, fix $(a,b,c)$ such that:
  \begin{itemize}
    \item $(a,b), (a,c) \in \dom(f_p)$;
    \item $f_p(a,b) \cap f_p(a,c) \neq \emptyset$;
    \item $c \in (t_p)_\beta^\lambda$ and there is $\alpha \in x_p \cap \beta$ such that
      $b \in (t_p)_\alpha^\lambda$.
  \end{itemize}

  Suppose first that $\alpha \leq \beta'$. This implies that $a \in (t_p)_{<\beta'}^\lambda$. Therefore, we
  can let $c' \in (t_p)_{\beta'}^\lambda$ be the unique level sequence such that $a <^1_p c' <^1_p c$.
  Then $f_p(a,c') \supseteq f_p(a,c)$, so, by the inductive hypothesis applied at $\beta'$, we have that,
  for $\lambda$-many $i < \lambda$, $b(i) \leq^1_q c'(i) \leq^1_q c(i)$, so we are done.

  Next, suppose $\beta' < \alpha < \beta$. In this case, for $\lambda$-many $\ell < \lambda$, we have $(a,b,c) = (a_\ell,
  b_\ell, c_\ell)$. For each such $\ell$, at stage $\ell$ of the construction, we chose a distinct $i < \lambda$
  and ensured that $b_\ell(i) <^1_q c_\ell(i)$, so, for $\lambda$-many $i < \lambda$, we have $b(i) <^1_q c(i)$,
  as desired.
\end{proof}

\begin{lemma}
  Suppose that $\xi < \lambda^+$ and $\langle q_\eta \mid \eta < \xi \rangle$ is a decreasing sequence from
  $\bb{Q}$. Let $x := \bigcup_{\eta < \xi} x_{q_\eta}$. Suppose that $\alpha < \ssup(x)$ and that
  $r \in \bb{Q}_{\ssup(x \cap \alpha)}$ is a lower bound for $\langle q_\eta \rest \alpha \mid \eta < \xi \rangle$.
  Then there is $q \in \bb{Q}$ such that:
  \begin{itemize}
    \item $q$ is a lower bound for $\langle q_\eta \mid \eta < \xi \rangle$;
    \item $q \rest \ssup(x \cap \alpha) = r$;
    \item $x_q = \cl(x_r \cup x)$.
  \end{itemize}
\end{lemma}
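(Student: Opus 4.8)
The plan is to construct the desired $q=(x_q,<^0_q,t_q,<^1_q,f_q)$ by recursion on the elements of $x_q$, with $r$ serving as the base case. Of course $x_q:=\cl(x_r\cup x)$, and $<^0_q$ is forced to be the usual ordinal ordering on $x_q$. Write $\alpha':=\ssup(x\cap\alpha)$. Since $r$ is a lower bound for $\langle q_\eta\rest\alpha\mid\eta<\xi\rangle$, no element of $x_{q_\eta}$ lies in $[\alpha',\alpha)$, so $x_{q_\eta}\cap\alpha=x_{q_\eta}\cap\alpha'$ for every $\eta$, $x\cap\alpha=x\cap\alpha'\subseteq x_r$, and hence $\ssup(x_r)=\alpha'$ and $\cl(x_r\cup x)\cap\alpha'=x_r$; in particular every element of $x_q$ at or above $\alpha'$ lies in $x$ or is an accumulation point of $x$, and decreeing that $t_q$, $<^1_q$ and $f_q$ agree below $\alpha'$ with the corresponding coordinates of $r$ is exactly what $q\rest\alpha'=r$ demands. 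This pins down $q$ below $\alpha'$ and is where the lower-bound hypothesis on $r$ is used.

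Recursing up through the levels $\beta\in x_q\setminus\alpha'$: at an old level $\beta\in x$ I keep every old node, $(t_q)_\beta:=\bigcup\{(t_{q_\eta})_\beta\mid\eta<\xi,\ \beta\in x_{q_\eta}\}$, with $<^1_q$ the union of the orders $<^1_{q_\eta}$; at a new accumulation level $\beta\in(\acc^+(x)\setminus x)\cap[\alpha',\lambda^{++})$ I adjoin, for each node $v$ sitting at a strictly larger level of $t_q$ whose predecessor-chain is cofinal in $\beta$, a fresh node $\hat v\in\{\beta\}\times\lambda^+$ (the same $\hat v$ for nodes with coinciding predecessor-chains below $\beta$; there is room since $|t_q|\le\lambda$), and set $\hat v<^1_q v$. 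The delicate step is the seam at $\alpha'$: because $x_r$ may properly extend $x\cap\alpha'$, every old node $v$ at a level $\ge\alpha'$ must acquire, in $q$, predecessors at the extra levels of $x_r$, so that $\pred^0_q(\lht(v))=x_q\cap\lht(v)$ holds with $<^0_q$ the ordinal order. I would do this by \emph{interpolating through $r$}: beneath each node $u\in t_r$ lying on $v$'s $(\bigcup_{\eta<\xi}q_\eta)$-predecessor-chain below $\alpha'$, splice in $\pred^1_r(u)$, making the choices coherently so that the chain of a node agrees with that of every node above it. As that chain is cofinal in $\alpha'$ and each such $u$ already has, in the condition $r$, predecessors at all levels of $x_r\cap\lht(u)$, the resulting chain meets every level of $x_r$ and lies wholly inside $t_r$, so $q\rest\alpha'=r$ survives; nodes created at new accumulation levels inherit the below-$\alpha'$ chain of the node they limit. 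Finally, let $f_q$ be the amalgamation of $\langle f_{q_\eta}\mid\eta<\xi\rangle$ and $f_r$ obtained by taking unions of values — which leaves the values of pairs below $\alpha'$ equal to those of $f_r$, since $r\le_{\bb{P}}q_\eta\rest\alpha$ for all $\eta$ — closed off under Clause~\eqref{p_defn7} of Definition~\ref{p_defn}, i.e., for $(a,b)\in[t_q^\lambda]^2$ admitting $c$ with $a<^1_q b<^1_q c$ and $(a,c)$ in the amalgamated domain, put $f_q(a,b)$ equal to the union of the amalgamated values of all such pairs $(a,c)$ together with that of $(a,b)$ itself if already defined. As such a $b$ is determined by $c$ and its own level, $|f_q|\le\lambda$.

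Granting this $q$, membership in $\bb{Q}$, the identity $q\rest\alpha'=r$, the lower-bound property (each coordinate of $q$ contains the matching coordinate of every $q_\eta$ by construction), and Clauses (1)--(5) of Definition~\ref{p_defn} hold by construction — Clause~\eqref{p_defn4} being precisely what the seam step secures. The crux, which I expect to absorb essentially all the work, is Clause~\eqref{p_defn6}, the super-Souslin demand: given $(a,b),(a,c)\in\dom(f_q)$ with $f_q(a,b)\cap f_q(a,c)\ne\emptyset$ and $\lht(b)\le^0_q\lht(c)$, one must exhibit $\lambda$-many $i<\lambda$ with $b(i)\le^1_q c(i)$. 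I would first pick witnesses $b^*\ge^1_q b$ and $c^*\ge^1_q c$ in the amalgamated domain sharing some value $w$; since $b(i)$ and $c(i)$ are the unique $<^1_q$-predecessors of $b^*(i)$ and $c^*(i)$ at their own levels and $\lht(b)\le\lht(c)$, it suffices to make $b^*(i)$ and $c^*(i)$ $<^1_q$-comparable for $\lambda$-many $i$. If $b^*$ and $c^*$ both issue from a single $q_{\eta''}$, this is immediate from Clause~\eqref{p_defn6} for $q_{\eta''}$; if both come from $f_r$, it is immediate from Clause~\eqref{p_defn6} for $r$. In the remaining case one of them, say $b^*$, comes from $f_r$ — so $\lht(b^*)<\alpha'$ — and the other, $c^*$, from some $f_{q_{\eta_2}}$ at a level $\ge\alpha'$; here I would run $w$ down the interpolated predecessor-chain of $c^*$: using $r\le_{\bb{P}}q_{\eta_2}\rest\alpha$ together with Clause~\eqref{p_defn7} for $q_{\eta_2}$ and for $r$, the predecessor of $c^*$ at level $\lht(b^*)$ along that chain is a level sequence of $t_r^\lambda$ lying in $\dom(f_r)$ with $w$ in its value, so Clause~\eqref{p_defn6} for $r$, applied at the common level $\lht(b^*)$, forces it to agree with $b^*$ on $\lambda$-many coordinates, giving $b^*(i)<^1_q c^*(i)$ there. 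It is exactly this descent that dictates the ``interpolate through $\pred^1_r$'' recipe at the seam, so the real content of the proof is to set up the seam interpolation so that all these transfers of $f$-values down into $\dom(f_r)$ actually go through.
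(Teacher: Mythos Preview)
Your approach is essentially the same as the paper's, which likewise sets $q\rest\ssup(x\cap\alpha)=r$, takes unions at old levels above $\alpha'$, inserts limit nodes at the new accumulation levels, and defines $f_q$ by amalgamating $f_r$ with $\bigcup_\eta f_{q_\eta}$ and extending to the new levels. The only minor presentational differences are that the paper labels the new-level nodes by \emph{copying} the next old level (for $\gamma:=\min(x\setminus\beta)$ it sets $(t_q)_\beta:=\{(\beta,\zeta)\mid(\gamma,\zeta)\in\bigcup_\eta(t_{q_\eta})_\gamma\}$ with $(\beta,\zeta)<^1_q(\gamma,\zeta)$), which lets it define $f_q(a,b):=f_q(a,b')$ at new levels directly via this bijection rather than by closure under Clause~\eqref{p_defn7}, and the paper entirely omits your careful Clause~\eqref{p_defn6} case analysis, writing only ``it is easily verified that $q$ is as desired.''
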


\begin{proof}
  $x_q$ and $<^0_q$ are determined by the requirements of the Lemma. We now specify $t_q, <^1_q$, and $f_q$.
  We must let $q \rest \ssup(x \cap \alpha) = r$, so we only deal with the parts of $t_q$, $<^1_q$, and $f_q$ related to levels at $\ssup(x \cap \alpha)$ or higher.

  Fix $\beta \in x_q \setminus \ssup(x \cap \alpha)$. If $\beta \in x$, then let $(t_q)_\beta :=
  \bigcup_{\eta < \xi} (t_{q_\eta})_\beta$. If $\beta \not\in x$, then let
  $\gamma := \min(x \setminus \beta)$, and let
  $(t_q)_\beta := \{(\beta, \zeta) \mid (\gamma, \zeta) \in \bigcup_{\eta < \xi} (t_{q_\eta})_\gamma\}$.

  We define $<^1_q$ by specifying $\pred^1_q(v)$ for all $v \in t_q$. This is already done for all
  $v \in (t_q)_{<\ssup(x \cap \alpha)}$. We take care of the $v \in (t_q)_{\geq \ssup(x \cap \alpha)}$ by recursion on the $\beta \in x_q$
  such that $v \in (t_q)_\beta$. Thus, suppose $\beta \in x_q \setminus \ssup(x \cap \alpha)$ and we have defined
  $\pred^1_q(u)$ for all $u \in (t_q)_{<\beta}$.

  Suppose first that $\beta \not\in x$, and let $\gamma := \min(x \setminus \beta)$.
  If $v = (\beta, \zeta) \in (t_q)_\beta$, then let $v' := (\gamma, \zeta) \in (t_q)_\gamma$, and
  let $\pred^1_q(v)$ be the $<^1_q$-downward closure of $\bigcup_{\eta < \xi} \pred^1_{q_\eta}(v')$.

  Suppose next that $\beta \in x$ and $\beta' := \sup(x_q \cap \beta) \not\in x$.
  If $v = (\beta, \zeta) \in (t_q)_\beta$, then let $v' := (\beta', \zeta) \in (t_q)_{\beta'}$, and
  let $\pred^1_q(v) := \{v'\} \cup \pred^1_q(v')$.

  Finally, suppose that $\beta \in x$ and $\sup(x_q \cap \beta) \in x$.
  Then, for all $v \in (t_q)_\beta$, let $\pred^1_q(v)$ be the $<^1_q$-downward closure of $\bigcup_{\eta < \xi} \pred^1_{q_\eta}(v)$.

  To finish, we define $f_q$. Suppose that $\beta \in x_q \setminus (\ssup(x \cap \alpha) \cup x)$ and
  $b \in (t_q)_\beta^\lambda$. Let $\gamma_\beta := \min(x \setminus \beta)$, and let
  $b' \in (t_q)_{\gamma_\beta}^\lambda$ be given by letting $b'(i)$ be the unique $(\gamma_\beta, \zeta)$ such that $b(i) = (\beta, \zeta)$.
  Note that $b <^1_q b'$. We set
  \begin{multline*}
    \dom(f_q) := \dom(f_r) \cup \bigcup_{\eta < \xi} \dom(f_{q_\eta}) \cup \\
    \left\{(a,b) \Mid \exists \beta \in x_q \setminus (\ssup(x \cap \alpha)
    \cup x)\left(b \in (t_q)_\beta^\lambda \text{ and } (a,b') \in \bigcup_{\eta < \xi}\dom(f_{q_\eta})\right)\right\}.
  \end{multline*}

  If $(a,b) \in \dom(f_r)$, then we set $f_q(a,b) := f_r(a,b)$. If $(a,b) \in \bigcup_{\eta < \xi} \dom(f_{q_\eta}) \setminus \dom(f_r)$,
  then we let $f_q(a,b) := \bigcup_{\eta < \xi} f_{q_\eta}(a,b)$. If $(a,b)$ is such that $b \in (t_p)_\beta^\lambda$
  for some $\beta \in x_p \setminus (\ssup( x \cap \alpha) \cup x)$ and $(a,b') \in \bigcup_{\eta < \xi} \dom(f_{q_\eta})$,
  then let $f_q(a,b) = \bigcup_{\eta < \xi} f_{q_\eta}(a,b') = f_q(a,b')$. It is easily verified that $q$ is as desired.
\end{proof}

\begin{lemma}
  Suppose $p \in \bb{Q}$, $\alpha < \ssup(x_p)$, and $q \leq p \rest \alpha$ with $q \in \bb{P}_\alpha$.
  Then there is $r \in \bb{P}$ that is a greatest lower bound for $p$ and $q$. Moreover, we have
  $x_r = x_p \cup x_q$ and $r \rest \alpha = q$.
\end{lemma}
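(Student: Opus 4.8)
The plan is to obtain $r$ by amalgamating $p$ and $q$ coordinate by coordinate, keeping $q$'s data strictly below level $\alpha$ and $p$'s data from level $\alpha$ upward. Set $x_r := x_p \cup x_q$ and $t_r := t_p \cup t_q$; since $x_q \subseteq \alpha$ and $q \leq_{\bb P} p \rest \alpha$ forces $x_p \cap \alpha \subseteq x_q$, we have $x_r \cap \alpha = x_q$ and $t_r \cap (\alpha \times \lambda^+) = t_q$. The union ${<^0_p} \cup {<^0_q}$ need not be transitive, since one can have $\gamma <^0_q \delta <^0_p \beta$ with $\gamma \in x_q \setminus x_p$, $\delta \in x_p \cap \alpha$, and $\beta \in x_p \setminus \alpha$, so I would let $<^0_r$ be its transitive closure; one checks this equals ${<^0_p} \cup {<^0_q}$ together with all pairs $(\gamma, \beta)$ for which $\gamma \in x_q \setminus x_p$, $\beta \in x_p \setminus \alpha$, and $\gamma <^0_q \delta$ for some $\delta \in x_p \cap \alpha$. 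Define $<^1_r$ from ${<^1_p} \cup {<^1_q}$ the same way. In particular $<^0_r$ restricts to $<^0_p$ on $x_p$ and to $<^0_q$ on $x_q$, and likewise for $<^1_r$. Finally, since $\dom(f_{p\rest\alpha}) \subseteq \dom(f_q)$ with $f_q$ extending $f_{p\rest\alpha}$, every pair $(a,b) \in \dom(f_p)$ with $\lht(b) < \alpha$ already lies in $\dom(f_q)$; hence the pairs in $\dom(f_p) \cup \dom(f_q)$ split into the ``low'' pairs --- those $(a,b)$ with $\lht(b) < \alpha$, which are precisely the members of $\dom(f_q)$ --- and the ``high'' pairs $\dom(f_p) \setminus \dom(f_{p\rest\alpha})$, and I would let $f_r$ agree with $f_q$ on the former and with $f_p$ on the latter. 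From these definitions, $r \rest \alpha = q$ and $x_r = x_p \cup x_q$ are immediate, as are $r \leq_{\bb P} p$ and $r \leq_{\bb P} q$; moreover any common lower bound $s$ of $p$ and $q$ must contain $x_r$, $t_r$, and --- since $<^0_s$ and $<^1_s$ are transitive --- all of $<^0_r$ and $<^1_r$, and must have $f_s$ extending $f_r$ (as in the definition of $\leq_{\bb P}$), whence $s \leq_{\bb P} r$ and $r$ is the greatest lower bound. (If $x_p \cap \alpha = \emptyset$ the two conditions have disjoint data and everything degenerates; assume henceforth $x_p \cap \alpha \neq \emptyset$.)

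The real content is verifying $r \in \bb P$. Clauses (1), (3), and (5) of Definition~\ref{p_defn}, together with the cardinality bounds, hold at once. For Clause (2), one checks that $\pred^0_r(\beta) = \pred^0_q(\beta)$ for $\beta \in x_q$, while for $\beta \in x_p \setminus \alpha$ the set $\pred^0_r(\beta)$ is the $<^0_q$-downward closure $Q_0$ of the chain $x_p \cap \alpha$ inside $x_q$, followed above $\alpha$ by $x_p \cap [\alpha, \beta)$: here $Q_0$ is a nested union of the well-ordered, closed sets $\pred^0_q(\delta) \cup \{\delta\}$ (over $\delta \in x_p \cap \alpha$), the tail $x_p \cap [\alpha, \beta)$ is well-ordered and closed in $\beta$ because $x_p$ is, and concatenating the two pieces preserves well-orderedness and closure. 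The same bookkeeping shows that for $v \in t_p$ with $\lht(v) \geq \alpha$, $\pred^1_r(v)$ arises from $\pred^1_p(v)$ by inserting, below each $<^1_p$-predecessor $u$ of $v$ with $\lht(u) < \alpha$, the whole chain $\pred^1_q(u)$; a level-by-level count then yields the level-matching equation of Clause (4) and confirms that $<^1_r$ is a tree order.

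The crux --- and the step I expect to be the main obstacle --- is Clauses (6) and (7), i.e.\ that $f_r$ still witnesses super-Souslinity. When the two relevant pairs are both ``low'' or both ``high,'' the required conclusion is verbatim that of $q$ or of $p$, using that the orderings restrict correctly. The interesting case is the ``straddling'' one: for Clause (6), suppose $(a,b) \in \dom(f_q)$ and $(a,c) \in \dom(f_p)$ with $f_q(a,b) \cap f_p(a,c) \neq \emptyset$, $\lht(b) < \alpha \leq \lht(c)$, and $\lht(b) \leq^0_r \lht(c)$; note that $a \in t_q^\lambda$, so $\lht(a) < \alpha$. The idea is to pull $c$ down below level $\alpha$: using the explicit description of $<^0_r$, choose $\gamma \in x_p \cap \alpha$ with $\lht(a) < \gamma$ and $\lht(b) \leq^0_q \gamma$ (take $\gamma := \lht(b)$ if $\lht(b) \in x_p \cap \alpha$, and $\gamma := \delta$ if $\lht(b) <^0_q \delta$ for some $\delta \in x_p \cap \alpha$), and let $c'$ be the pointwise $<^1_p$-predecessor of $c$ at level $\gamma$. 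Since $a$ is injective with $\lht(a) \leq \gamma$ and $a <^1_p c$, the sequence $c'$ is a genuine level sequence and $a <^1_p c' <^1_p c$, so Clause (7) for $p$ gives $(a,c') \in \dom(f_p)$ with $f_p(a,c') \supseteq f_p(a,c)$; as $\lht(c') = \gamma < \alpha$, this places $(a,c')$ in $\dom(f_{p\rest\alpha}) \subseteq \dom(f_q)$ with $f_q(a,c') \supseteq f_p(a,c)$. Now $(a,b), (a,c') \in \dom(f_q)$, $f_q(a,b) \cap f_q(a,c') \neq \emptyset$, and $\lht(b) \leq^0_q \lht(c')$, so Clause (6) for $q$ yields $b(i) \leq^1_q c'(i)$ for $\lambda$-many $i$; since $c' <^1_p c$, transitivity of $<^1_r$ gives $b(i) <^1_r c(i)$ for each such $i$, as required (and if instead $\lht(c) <^0_r \lht(b)$, one relabels and argues identically). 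Clause (7) in the straddling case is handled by the same ``pull $c$ down'' move --- invoke Clause (7) for $p$ and then Clause (7) for $q$ --- noting that when $\lht(b) \in x_p \cap \alpha$ one has $b \in t_p^\lambda$ and may appeal to Clause (7) for $p$ directly. Assembling these checks shows $r \in \bb P$ with the stated properties, completing the proof.
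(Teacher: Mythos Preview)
Your proof is correct and follows essentially the same route as the paper's. You define the amalgam $r$ coordinate-by-coordinate exactly as the paper does---your ``transitive closure of ${<^0_p}\cup{<^0_q}$'' is precisely the paper's explicit formula $\pred^0_r(\beta)=\pred^0_p(\beta)\cup\bigcup_{\gamma\in\pred^0_p(\beta)\cap\alpha}\pred^0_q(\gamma)$ (using that $p\in\bb{Q}$ makes $<^0_p$ the ordinal order), and your ``pull $c$ down to level $\gamma\in x_p\cap\alpha$'' argument for the straddling cases of Clauses~(6) and~(7) is exactly the paper's verification. If anything, you supply a bit more detail than the paper does (the explicit check that $c'$ is injective, and the greatest-lower-bound verification), while the paper simply declares Clauses~(1)--(5) ``trivial.''
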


\begin{proof}
  We construct such an $r$ by doing as little as possible while still satisfying Definition \ref{p_defn}
  and extending both $p$ and $q$.
  Let $x_r := x_p \cup x_q$, and require that $r \rest \alpha = q$.
  Suppose that $\beta \in x_p \setminus \alpha$. Let $\pred^0_r(\beta) :=
  \pred^0_p(\beta) \cup \bigcup_{\gamma \in \pred^0_p(\beta) \cap \alpha} \pred^0_q(\gamma)$.
  Let $t_r := t_p \cup t_q$. If $v \in t_p \setminus t_q$, then let
  $\pred^1_r(v) := \pred^1_p(v) \cup \bigcup_{u \in \pred^1_p(v) \cap t_q} \pred^1_q(u)$.
  Finally, let $\dom(f_r) := \dom(f_p) \cup \dom(f_q)$. If $(a,b) \in \dom(f_q)$, then let
  $f_r(a,b) := f_q(a,b)$. If $(a,b) \in \dom(f_p) \setminus \dom(f_q)$, then let
  $f_r(a,b) := f_p(a,b)$.

  The only clauses of Definition \ref{p_defn} that are non-trivial to check are \eqref{p_defn6} and \eqref{p_defn7}.
  Let us first deal with Clause~\eqref{p_defn6}. To this end, fix $a,b,c \in t_r^\lambda$
  such that $(a,b), (a,c) \in \dom(f_r)$ and $f_r(a,b) \cap f_r(a,c) \neq \emptyset$.
  If we have either $(a,b), (a,c) \in \dom(f_q)$ or $(a,b), (a,c) \in \dom(f_p) \setminus \dom(f_q)$,
  then the conclusion of Clause~\eqref{p_defn6} follows from the fact that $p,q \in \bb{P}$. Thus,
  we may assume without loss of generality that $(a,b) \in \dom(f_q)$ and $(a,c) \in \dom(f_p)
  \setminus \dom(f_q)$. Let $\beta, \gamma \in x_r$ be such that $b \in (t_r)_\beta^\lambda$
  and $c \in (t_r)_\gamma^\lambda$. By assumption, we have $\beta < \alpha \leq \gamma$.

  If $\beta \not\leq^0_r \gamma$, then there is nothing to check. Thus, assume that
  $\beta \leq^0_r \gamma$. By the definition of $\leq^0_r$, it follows that there is
  $\beta' \in (x_p \cap \alpha)$ such that $\beta \leq^0_q \beta'$ and
  $\beta' \leq^0_p \gamma$. Let $c' \in (t_p)_{\beta'}^\lambda$ be the unique level sequence
  such that $a <^1_p c' <^1_p c$. Since $p \in \bb{P}$, it follows that
  $(a,c') \in \dom(f_p)$ and $f_p(a,c') \supseteq f_p(a,c)$. Since $q \leq p \rest \alpha$,
  we must have $(a,c') \in \dom(f_q)$ and $f_q(a,c') \supseteq f_p(a,c)$. Thus, we have
  $f_q(a,c') \cap f_q(a,b) \neq \emptyset$. Since $q \in \bb{P}$ and
  $\beta \leq^0_q \beta'$, we have that, for $\lambda$-many $i < \lambda$,
  $b(i) \leq^1_q c'(i)$. But then, for all such $i < \lambda$, we also have
  $b(i) \leq^1_r c(i)$, as required.

  Finally, we check Clause~\eqref{p_defn7}. Suppose that $(a,c) \in \dom(f_r)$ and $b \in t_r^\lambda$
  is such that $a <^1_r b <^1_r c$. If $(a,c) \in \dom(f_q)$, then the conclusion follows from
  the fact that $q \in \bb{P}$. Thus suppose that $(a,c) \in \dom(f_p) \setminus \dom(f_q)$.
  Let $\beta \in x_r$ be such that $b \in (t_r)_\beta^\lambda$, and let
  $\gamma \in x_p$ be such that $c \in (t_p)_\gamma^\lambda$. If $\beta \in x_p$, then
  we have $a <^1_p b <^1_p c$, and the conclusion follows from the fact that $p \in \bb{P}$.
  Thus, assume that $\beta \in x_q \setminus x_p$. Then there is $\beta' \in x_p \cap \alpha$
  such that $\beta \leq^0_q \beta'$ and $\beta' \leq^0_p \gamma$. Let $c' \in (t_p)_{\beta'}^\lambda$
  be the unique level sequence such that $a <^1_p c' <^1_p c$. Since $p \in \bb{P}$,
  we have $(a,c') \in \dom(f_p)$ and $f_p(a,c') \supseteq f_p(a,c)$. Since $q \leq_{\bb{P}} p \rest \alpha$,
  we have $f_q(a,c') \supseteq f_p(a,c)$. Finally, since $q \in \bb{P}$ and $a <^1_q b <^1_q c'$,
  we have $(a,b) \in \dom(f_q)$ and $f_q(a,b) \supseteq f_q(a,c')$. Thus, $(a,b) \in \dom(f_r)$ and
  $f_r(a,b) \supseteq f_r(a,c)$, as required.
\end{proof}

It now follows that $(\bb{P}, \leq_\bb{P}, \bb{Q})$ is in $\mathcal{P}_{\lambda^+}$.
We are thus left with isolating the relevant sharply dense systems. The following are all straightforward.

\begin{lemma}[Normal and splitting]
  Suppose $\eta < \lambda^+$.
  For every $\alpha<\beta<\lambda^{++}$, let $D^{ns}_{\eta, \{\alpha, \beta\}}$ be the set of all conditions $p \in \bb{Q}$ such that:
  \begin{itemize}
    \item $\{\alpha, \beta\} \subseteq x_p$;
    \item $(\alpha, \eta), (\beta, \eta) \in t_p$;
    \item $(\alpha, \eta)$ has at least two $<^1_p$-successors in $(t_p)_\beta$.
  \end{itemize}

   Then $\mathcal{D}^{ns}_\eta:= \{D^{ns}_{\eta, x} \mid x \in {\lambda^{++}\choose 2}\}$ is a sharply dense system.\qed
\end{lemma}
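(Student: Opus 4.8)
The plan is to verify, one at a time, the three requirements in the definition of a sharply dense system. The first is immediate: every two-element set of ordinals is closed in its supremum, so ${\lambda^{++}\choose 2}$ is precisely the collection of pairs $\{\alpha,\beta\}$ with $\alpha<\beta<\lambda^{++}$, and $\mathcal{D}^{ns}_\eta$ is already displayed in the required form, with $\theta_{\mathcal{D}^{ns}_\eta}=2$.

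Next, fix $x=\{\alpha,\beta\}\in{\lambda^{++}\choose 2}$; I must show that $D^{ns}_{\eta,x}$ is sharply dense with $x_{D^{ns}_{\eta,x}}=x$. The inclusion $x\s x_{D^{ns}_{\eta,x}}$ is trivial since $x\s x_p$ for every $p\in D^{ns}_{\eta,x}$, and once sharp density is known the reverse inclusion follows by applying it to $\one$, which yields a member of $D^{ns}_{\eta,x}$ with realm $\cl(\emptyset\cup x)=x$. So the real content is: given $p\in\bb{P}$, to produce $q\in D^{ns}_{\eta,x}$ with $q\le_\bb{P}p$ and $x_q=\cl(x_p\cup x)$. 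First I would use Clause~\eqref{c1.5} of Definition~\ref{class_def} to get $q_0\le_\bb{P}p$ with $x_{q_0}=x_p\cup x$, and then Clause~\eqref{c9} to get $q_1\le_\bb{P}q_0$ in $\bb{Q}$ with $x_{q_1}=\cl(x_p\cup x)$. Then I would obtain $q$ by enlarging only the $t$- and $<^1$-coordinates of $q_1$, keeping $x_q:=x_{q_1}$ and $<^0_q:=<^0_{q_1}$ (so $q\in\bb{Q}$ is automatic) and keeping $f_q:=f_{q_1}$ (so Clauses~\eqref{p_defn6} and~\eqref{p_defn7} of Definition~\ref{p_defn} are inherited: the new nodes never appear among the $<^1_q$-predecessors of old nodes, so both clauses reduce to the corresponding facts about $q_1$). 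Concretely, I would add to $t_{q_1}$ those of $(\alpha,\eta)$ and $(\beta,\eta)$ not already present, one fresh node at each level of $x_{q_1}\cap(\alpha,\beta)$, and two further fresh level-$\beta$ nodes $(\beta,\zeta_0)$ and $(\beta,\zeta_1)$ with $\zeta_0,\zeta_1,\eta$ pairwise distinct, and extend $<^1_{q_1}$ so that $(\beta,\zeta_0)$, $(\beta,\zeta_1)$ --- and $(\beta,\eta)$ if new --- all sit atop one chain that passes through $(\alpha,\eta)$ and meets precisely the levels in $x_{q_1}\cap\beta$: this chain consists of the $<^1_{q_1}$-predecessors of $(\alpha,\eta)$ (or, if $(\alpha,\eta)$ is itself new, of a chain meeting the levels of $x_{q_1}\cap\alpha$, built the same way), extended upward through the new nodes, exactly as branches are extended in the proof of Lemma~\ref{order_lemma}. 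One then checks that $<^1_q$ is a tree order extending $<^1_{q_1}$ whose level-$\beta$ nodes all have predecessor set meeting exactly $x_{q_1}\cap\beta=\pred^0_q(\beta)$, i.e.\ Clause~\eqref{p_defn4}; the remaining clauses of Definition~\ref{p_defn} are routine. Hence $q\le_\bb{P}p$, $q\in\bb{Q}$, $x_q=\cl(x_p\cup x)$, and $(\alpha,\eta)$ has the two distinct $<^1_q$-successors $(\beta,\zeta_0),(\beta,\zeta_1)$ in $(t_q)_\beta$, so $q\in D^{ns}_{\eta,x}$.

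For the third requirement I would fix $x=\{\alpha,\beta\}$, a condition $p\in\bb{P}$, and a \coi $\pi:y\rightarrow\lambda^{++}$ with $x\s x_p\s y$, and just unwind the definition of $\pi.p$. Since $x_{\pi.p}=\pi``x_p$, $t_{\pi.p}=\{(\pi(\gamma),\zeta)\mid(\gamma,\zeta)\in t_p\}$, and $(\gamma,\zeta)\mapsto(\pi(\gamma),\zeta)$ is an order isomorphism of $(t_p,<^1_p)$ onto $(t_{\pi.p},<^1_{\pi.p})$ sending level $\gamma$ to level $\pi(\gamma)$, and since $\pi$ is injective on $x_p\supseteq\{\alpha,\beta\}$, we get at once that $\{\alpha,\beta\}\s x_p$ iff $\{\pi(\alpha),\pi(\beta)\}\s x_{\pi.p}$, that $(\alpha,\eta),(\beta,\eta)\in t_p$ iff $(\pi(\alpha),\eta),(\pi(\beta),\eta)\in t_{\pi.p}$, and that $(\alpha,\eta)$ has at least two $<^1_p$-successors in $(t_p)_\beta$ iff $(\pi(\alpha),\eta)$ has at least two $<^1_{\pi.p}$-successors in $(t_{\pi.p})_{\pi(\beta)}$. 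Finally, $p\in\bb{Q}$ iff $\pi.p\in\bb{Q}$: one direction is Clause~\eqref{c4a}, and the other is Clause~\eqref{c4a} applied to the inverse \coi $\pi^{-1}$ together with Clauses~\eqref{c4g} and~\eqref{c4f}, which give $\pi^{-1}.(\pi.p)=p$. Combining these equivalences yields $p\in D^{ns}_{\eta,x}$ iff $\pi.p\in D^{ns}_{\eta,\pi``x}$.

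The only step I expect to require genuine care is the branch-extension inside the second paragraph: one must give the freshly added level-$\beta$ nodes predecessor sets meeting exactly the levels of the realm below $\beta$ without disturbing the branches already present in $q_1$. This is the same bookkeeping carried out in the proof of Lemma~\ref{order_lemma}, so citing that argument should keep it routine; the computation of $x_{D^{ns}_{\eta,x}}$ and the \coi-equivariance are purely formal.
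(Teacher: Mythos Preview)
Your proof is correct and fills in precisely the routine verification that the paper omits (the paper closes the lemma with a bare \qed). Your three-step check of the sharply dense system axioms, with the branch-extension for sharp density modeled on Lemma~\ref{order_lemma} and the straightforward unwinding of the \coi\ action for equivariance, is exactly what is intended.
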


\begin{lemma}[Complete]
  Suppose that $\mu < \lambda^+$ is a regular cardinal and $g:\mu \rightarrow \lambda^+$.
  For every $x \in {\lambda^{++}\choose \mu + 1}$, let
 $D^{\text{com}}_{g, x}$ be the set of all conditions $p \in \bb{Q}$ such that:
  \begin{itemize}
    \item $x \subseteq x_p$;
    \item for all $i < \mu$, we have $(x(i), g(i)) \in t_p$;
    \item if $\{(x(i), g(i)) \mid i < \mu\}$ forms a $<^1_p$-chain,
      then it has a $<^1_p$-upper bound in $(t_p)_{x(\mu)}$.
  \end{itemize}

    Then $\mathcal{D}^{\text{com}}_g := \{D^{\text{com}}_{g, x} \mid x \in {\lambda^{++}\choose \mu + 1}\}$ is a sharply dense system.\qed
\end{lemma}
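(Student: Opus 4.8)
The plan is to verify the two clauses in the definition of a sharply dense system with $\theta_{\mathcal{D}^{\text{com}}_g}:=\mu+1$ (legitimate since $\mu+1<\lambda^+$). Exactly as in the proof of Proposition~\ref{prop31}(2), everything reduces to the concrete claim: \emph{for every $p\in\bb P$ there is $q\in D^{\text{com}}_{g,x}$ with $q\le_{\bb P}p$ and $x_q=\cl(x_p\cup x)$.} Indeed, feeding $p=\one$ into this claim yields $q\in D^{\text{com}}_{g,x}$ with $x_q=\cl(x)=x$ --- here using that $x\in{\lambda^{++}\choose\mu+1}$ is closed in its supremum --- so that $x_{D^{\text{com}}_{g,x}}=\bigcap\{x_{p'}\mid p'\in D^{\text{com}}_{g,x}\}=x$, and then the concrete claim is precisely the assertion that $D^{\text{com}}_{g,x}$ is sharply dense with realm $x$.

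To establish the concrete claim, given $p$ I would first apply Clause~\eqref{c1.5} of Definition~\ref{class_def} to obtain $p_0\le_{\bb P}p$ with $x_{p_0}=x_p\cup x$, and then Lemma~\ref{order_lemma} to obtain $p_1\in\bb Q$ with $p_1\le_{\bb P}p_0$ and $x_{p_1}=\cl(x_p\cup x)$. Since $p_1\in\bb Q$, the ordering $<^0_{p_1}$ is the ordinal ordering of $x_{p_1}$, so $\pred^0_{p_1}(\beta)=x_{p_1}\cap\beta$ for every $\beta\in x_{p_1}$, and by Clause~\eqref{p_defn4} each node of $t_{p_1}$ already has $<^1_{p_1}$-predecessors at every level of $x_{p_1}$ below its own. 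I would then enlarge $p_1$ inside $\bb Q$, keeping the realm $\cl(x_p\cup x)$ and the function $f_{p_1}$ unchanged, in two stages. First, for each $i<\mu$ with $(x(i),g(i))\notin t_{p_1}$, adjoin the node $(x(i),g(i))$ together with an entirely fresh branch below it --- built by recursion on the levels in $x_{p_1}\cap x(i)$, at each such level $\delta$ adjoining a fresh node of $\{\delta\}\times\lambda^+$ lying above the part already built --- so that its set of $<^1$-predecessor levels is exactly $x_{p_1}\cap x(i)=\pred^0(x(i))$, as Clause~\eqref{p_defn4} requires; call the result $q_1$. Second, if $\{(x(i),g(i))\mid i<\mu\}$ happens to be a $<^1_{q_1}$-chain, adjoin one further fresh node $w$ at level $x(\mu)$ and put $\pred^1(w):=\bigcup_{i<\mu}\big(\pred^1_{q_1}(x(i),g(i))\cup\{(x(i),g(i))\}\big)$; since $(x(j),g(j))<^1_{q_1}(x(i),g(i))$ for $j<i$ this union is a $<^1$-chain, and its set of levels is $\bigcup_{i<\mu}\big((x_{p_1}\cap x(i))\cup\{x(i)\}\big)=x_{p_1}\cap x(\mu)=\pred^0(x(\mu))$, again as demanded by Clause~\eqref{p_defn4}. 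Otherwise set $q:=q_1$.

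It remains to check that $q$ works. In all cases $q\in\bb Q$, $x_q=\cl(x_p\cup x)$, and $q\le_{\bb P}q_1\le_{\bb P}p_1\le_{\bb P}p$. Clauses~\eqref{p_defn6} and \eqref{p_defn7} of Definition~\ref{p_defn} survive the enlargement: $f$ was untouched, and every new $<^1$-relation lies strictly below one of the newly adjoined nodes, so no old level sequence gains a node strictly beneath it, while $b(i)\le^1_{p_1}c(i)$ still implies $b(i)\le^1_q c(i)$. Finally $(x(i),g(i))\in t_q$ for all $i<\mu$, and if these form a $<^1_q$-chain then --- since $q_1$ was built using \emph{fresh} branches --- all of them already lay in $t_{p_1}$, so $q$ carries the upper bound $w\in(t_q)_{x(\mu)}$; hence $q\in D^{\text{com}}_{g,x}$. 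For the \coi-invariance requirement, fix $p\in\bb P$ and a \coi $\pi:y\to\lambda^{++}$ with $x\s x_p\s y$. Then $\pi``x\in{\lambda^{++}\choose\mu+1}$ with $(\pi``x)(i)=\pi(x(i))$ for $i\le\mu$, and $(\alpha,\eta)\mapsto(\pi(\alpha),\eta)$ is a level-preserving, $<^1$-preserving bijection of $t_p$ onto $t_{\pi.p}$ carrying each $a_{\alpha,\delta}$ to $a_{\pi(\alpha),\delta}$ and $f_p$ onto $f_{\pi.p}$. Under this correspondence the three clauses defining membership in $D^{\text{com}}_{g,x}$ transcribe verbatim into those defining membership in $D^{\text{com}}_{g,\pi``x}$ --- in particular $<^1_p$-chains of the $(x(i),g(i))$ match $<^1_{\pi.p}$-chains of the $(\pi(x(i)),g(i))$, and $<^1$-upper bounds at level $x(\mu)$ match those at level $\pi(x(\mu))$ --- giving ``$p\in D^{\text{com}}_{g,x}\Rightarrow\pi.p\in D^{\text{com}}_{g,\pi``x}$''; the converse follows by applying this to $\pi.p$ and the inverse \coi $\pi^{-1}$, using Clauses~\eqref{c4g} and \eqref{c4f} to identify $\pi^{-1}.(\pi.p)$ with $p$.

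The one step that calls for genuine care is the first stage of the enlargement: the recursive construction of the fresh branches below the new level-sequence nodes $(x(i),g(i))$ --- notably for limit $i$, where $x(i)\in\acc(x_{p_1})$ and the branch must be cofinal in $x(i)$ while meeting precisely the levels in $x_{p_1}\cap x(i)$ --- carried out compatibly with $<^1_{p_1}$ and with Clause~\eqref{p_defn4}. This is a direct, and strictly simpler, analogue of the branch-building already performed in the proof of Lemma~\ref{order_lemma} and of the two lemmas that follow it, the simplification being that here $f$ is never modified, so that Clauses~\eqref{p_defn6} and \eqref{p_defn7} only have to be seen to persist rather than to be re-established.
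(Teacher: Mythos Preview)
The paper does not supply a proof of this lemma, marking it (together with the two neighbouring lemmas) as ``straightforward'' and closing with a bare \qed. Your argument is correct and carries out precisely the routine verification the authors elide: extend the realm, pass to $\bb{Q}$ via Lemma~\ref{order_lemma}, adjoin any missing nodes $(x(i),g(i))$ on fresh branches, and then add a fresh upper bound $w$ at level $x(\mu)$ if the chain condition obtains, keeping $f$ untouched throughout so that Clauses~\eqref{p_defn6} and \eqref{p_defn7} persist automatically.
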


\begin{lemma}[Super-Souslin]
  Suppose $\delta, \epsilon < \lambda^+$. For all  $\alpha<\beta<\lambda^{++}$,
  let $E_{\delta, \epsilon, \{\alpha, \beta\}}$ be the set of all conditions $p \in \bb{Q}$ such that:
  \begin{itemize}
    \item $\{\alpha, \beta\} \subseteq x_p$;
    \item $a_{\alpha, \delta}, a_{\beta, \epsilon} \in t_p^\lambda$;
    \item if $a_{\alpha, \delta} <^1_p a_{\beta, \epsilon}$, then $(a_{\alpha, \delta}, a_{\beta, \epsilon}) \in \dom(f_p)$.
  \end{itemize}

  Then $\mathcal{E}_{\delta, \epsilon} := \{E_{\delta, \epsilon, x} \mid x \in {\lambda^{++}\choose 2}\}$ is a sharply dense system.\qed
\end{lemma}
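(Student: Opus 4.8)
The plan is to verify that $\mathcal{E}_{\delta,\epsilon}$ meets the two requirements in the definition of a sharply dense system, with $\theta_{\mathcal{E}_{\delta,\epsilon}}:=2$. Since a \coi $\pi:2\to\lambda^{++}$ is simply a strictly increasing function on $\{0,1\}$, we have ${\lambda^{++}\choose 2}=\{\{\alpha,\beta\}\mid \alpha<\beta<\lambda^{++}\}$, so $\mathcal{E}_{\delta,\epsilon}$ is of the required form $\{E_{\delta,\epsilon,x}\mid x\in{\lambda^{++}\choose 2}\}$. It then remains to show (i) that for each pair $\{\alpha,\beta\}$, $E_{\delta,\epsilon,\{\alpha,\beta\}}$ is sharply dense with $x_{E_{\delta,\epsilon,\{\alpha,\beta\}}}=\{\alpha,\beta\}$, and (ii) that for every $p\in\bb{P}$ and every \coi $\pi:y\to\lambda^{++}$ with $\{\alpha,\beta\}\s x_p\s y$, we have $p\in E_{\delta,\epsilon,\{\alpha,\beta\}}$ iff $\pi.p\in E_{\delta,\epsilon,\{\pi(\alpha),\pi(\beta)\}}$. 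Requirement (ii) is immediate from the description of the action of $\pi$ on $\bb{P}$: since $x_{\pi.p}=\pi``x_p$, $t_{\pi.p}=\{(\pi(\gamma),\eta)\mid(\gamma,\eta)\in t_p\}$, the level sequence $a_{\gamma,\delta}$ is carried to $a_{\pi(\gamma),\delta}$, the order $<^1_{\pi.p}$ is the $\pi$-image of $<^1_p$, and $f_{\pi.p}$ is the $\pi$-image of $f_p$, each of the three conditions defining membership in $E_{\delta,\epsilon,\{\alpha,\beta\}}$ — that $\{\alpha,\beta\}\s x_p$; that $a_{\alpha,\delta},a_{\beta,\epsilon}\in t_p^\lambda$; and that $a_{\alpha,\delta}<^1_p a_{\beta,\epsilon}$ implies $(a_{\alpha,\delta},a_{\beta,\epsilon})\in\dom(f_p)$ — is preserved, in both directions, upon applying $\pi$; here we use that $\{\alpha,\beta\}\s x_p\s y=\dom(\pi)$, so $\pi$ is defined at $\alpha$ and $\beta$, and that $\pi$ does not alter the second (``$\delta$'') index in the notation $a_{\gamma,\delta}$.

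For the realm computation in (i), the inclusion $\{\alpha,\beta\}\s x_{E_{\delta,\epsilon,\{\alpha,\beta\}}}$ is built into the definition, and for the reverse inclusion it suffices to produce one member $q$ of $E_{\delta,\epsilon,\{\alpha,\beta\}}$ whose realm is exactly $\{\alpha,\beta\}$. Let $x_q:=\{\alpha,\beta\}$ (closed in its supremum, being finite), let $<^0_q$ be the ordinal ordering, let $t_q:=(\{\alpha\}\times\im(\sigma_\delta))\cup(\{\beta\}\times\im(\sigma_\epsilon))$, and define $<^1_q$ by attaching each node of $(t_q)_\beta$ immediately above some node of $(t_q)_\alpha$, arranging the attachment so that $a_{\beta,\epsilon}(i_0)$ sits above a node distinct from $a_{\alpha,\delta}(i_0)$ for some $i_0<\lambda$; this is possible because $\im(\sigma_\delta)$ is infinite, and it guarantees $a_{\alpha,\delta}\not<^1_q a_{\beta,\epsilon}$. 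With $f_q:=\emptyset$, the quintuple $q=(x_q,<^0_q,t_q,<^1_q,f_q)$ is readily seen to lie in $\bb{Q}$ (Definition~\ref{p_defn} holds, with Clauses \eqref{p_defn6} and \eqref{p_defn7} vacuous, as does Definition~\ref{q_defn}) and in $E_{\delta,\epsilon,\{\alpha,\beta\}}$ (its third defining condition is vacuous). Hence no ordinal outside $\{\alpha,\beta\}$ belongs to the realm of every member of $E_{\delta,\epsilon,\{\alpha,\beta\}}$.

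It remains to establish sharp density: given $p\in\bb{P}$, I must exhibit $q\leq_{\bb{P}}p$ with $q\in E_{\delta,\epsilon,\{\alpha,\beta\}}$ and $x_q=\cl(x_p\cup\{\alpha,\beta\})$. By a routine elaboration of the proof of Lemma~\ref{order_lemma} — first adjoining the missing nodes of $\{\alpha\}\times\im(\sigma_\delta)$ and $\{\beta\}\times\im(\sigma_\epsilon)$ to the tree, assigning the $<^1$-predecessors of all new nodes along branches that are $<^0$-consistent and unbounded below the relevant level exactly as in the bounded/empty cases appearing in the verification of Clause~\eqref{c10} of Definition~\ref{class_def}, extending $f$ minimally to satisfy Clause~\eqref{p_defn7}, and then closing up via Clause~\eqref{c9} — one obtains $q_0\in\bb{Q}$ with $q_0\leq_{\bb{P}}p$, $x_{q_0}=\cl(x_p\cup\{\alpha,\beta\})$, and $a_{\alpha,\delta},a_{\beta,\epsilon}\in t_{q_0}^\lambda$; note that $|f_{q_0}|\leq\lambda$ is maintained since $\lambda$ is infinite. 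If $a_{\alpha,\delta}\not<^1_{q_0}a_{\beta,\epsilon}$, then $q:=q_0$ is as required. Otherwise, obtain $q$ from $q_0$ by adjoining to $\dom(f_{q_0})$ the pair $(a_{\alpha,\delta},a_{\beta,\epsilon})$ together with the at most $\lambda$-many pairs $(a_{\alpha,\delta},b)$ with $a_{\alpha,\delta}<^1_{q_0}b<^1_{q_0}a_{\beta,\epsilon}$ that Clause~\eqref{p_defn7} now demands, and assigning each of these the singleton value $\{\zeta^*\}$ for a fixed $\zeta^*<\lambda^+$ outside the range of $f_{q_0}$. Clause~\eqref{p_defn6} survives because $\zeta^*$ is fresh and the new pairs all have first coordinate $a_{\alpha,\delta}$ and pairwise $\leq^1_q$-comparable (indeed level-ordered) second coordinates, so the required $\lambda$ coordinatewise comparabilities hold automatically; and $|f_q|\leq\lambda$ still holds as $\lambda$ is infinite.

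The only point requiring genuine care is the first step of the density argument, where one enlarges the tree to make room for the prescribed level sequences while respecting Clauses \eqref{p_defn4}, \eqref{p_defn6}, and \eqref{p_defn7}; but this is precisely the same minimal, $<^0$-consistent extension of the tree-plus-colouring already carried out in the proofs of Lemma~\ref{order_lemma} and of the controlled-closure and amalgamation lemmas, so I anticipate no real obstacle beyond the bookkeeping of which clauses of Definition~\ref{p_defn} each addition must respect.
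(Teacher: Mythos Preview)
Your approach is the natural one and is essentially what the paper intends by marking this lemma with \qed; the verification of invariance under \cois and the realm computation are fine. There is, however, a small bookkeeping gap in your $f$-extension step. When you adjoin $(a_{\alpha,\delta}, a_{\beta,\epsilon})$ with value $\{\zeta^*\}$, Clause~\eqref{p_defn7} requires $f_q(a_{\alpha,\delta}, b) \supseteq \{\zeta^*\}$ for \emph{every} intermediate level sequence $b$, including those pairs $(a_{\alpha,\delta}, b)$ that may already lie in $\dom(f_{q_0})$ (this can certainly happen if $\alpha\in x_p$ and $a_{\alpha,\delta}\in t_p^\lambda$ to begin with). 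Your description---``assigning each of these the singleton value $\{\zeta^*\}$''---either overlooks such existing pairs or overwrites their values; the former violates Clause~\eqref{p_defn7}, and the latter violates $q \leq_{\bb{P}} q_0$. The fix is immediate: for intermediate $b$ with $(a_{\alpha,\delta}, b) \in \dom(f_{q_0})$, set $f_q(a_{\alpha,\delta}, b) := f_{q_0}(a_{\alpha,\delta}, b) \cup \{\zeta^*\}$, and for new intermediate $b$ set $f_q(a_{\alpha,\delta}, b) := \{\zeta^*\}$. Clause~\eqref{p_defn6} still survives, since every pair whose value now contains the fresh $\zeta^*$ has second coordinate on the unique $<^1_q$-path from $a_{\alpha,\delta}$ to $a_{\beta,\epsilon}$, so any two such second coordinates are coordinatewise $\leq^1_q$-comparable.
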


By $\fa{\lambda^+}$, we can find a filter $G$ on $\bb{P}$ such that:
\begin{itemize}
  \item for every $\eta < \lambda^+$, $G$ meets $\mathcal{D}^{ns}_\eta$ everywhere;
  \item for every regular cardinal $\mu <\lambda^+$ and every function $g:\mu \rightarrow \lambda^+$,
    $G$ meets $\mathcal{D}^{\text{com}}_g$ everywhere;\footnote{Recall that by Corollary~\ref{cardinalarithmetic}, $\fa{\lambda^+}$ implies  $|{}^{<\lambda^+}\lambda^+|=\lambda^+$.}
  \item for all $\delta, \epsilon < \lambda^+$, $G$ meets $\mathcal{E}_{\delta, \epsilon}$ everywhere.
\end{itemize}
Now define a tree $(T, <_T)$ as follows. Let $T := \lambda^{++} \times \lambda^+$.
Let $(\alpha, \eta) <_T (\beta, \xi)$
iff there is $p \in G$ such that $(\alpha, \eta), (\beta, \xi) \in t_p$ and
$(\alpha, \eta) <^1_p (\beta, \xi)$.
The fact that $G$ meets $\mathcal{D}^{ns}_\eta$ everywhere for all $\eta < \lambda^+$
ensures that $(T, <_T)$ is a normal, splitting tree and $T_\alpha = \{\alpha\} \times \lambda^+$ for all $\alpha<\lambda^{++}$.
The fact that $G$ meets $\mathcal{D}^{\text{com}}_g$ everywhere for all regular $\mu \leq \lambda$ and $g:\mu \rightarrow \lambda^+$
ensures that $(T, <_T)$ is $\lambda^+$-complete.

Finally, we define a function $F:[T^\lambda]^2 \rightarrow \lambda^+$
witnessing that $(T,<_T)$ is a super-Souslin tree. Fix $\alpha < \beta < \lambda^{++}$ and
$\delta, \epsilon < \lambda^+$ such that $a_{\alpha, \delta} <_T a_{\beta, \epsilon}$. Find
$p \in G \cap E_{\delta, \epsilon, \{\alpha, \beta\}}$. Since
$p \in \bb{Q}$ and $a_{\alpha, \delta} <_T a_{\beta, \epsilon}$, it follows that
$a_{\alpha, \delta} <^1_p a_{\beta, \epsilon}$. Therefore, $(a_{\alpha, \delta}, a_{\beta, \epsilon})
\in \dom(f_p)$. Let $F(a_{\alpha, \delta}, a_{\beta, \epsilon})$ be an arbitrary element of
$f_p(a_{\alpha, \delta}, a_{\beta, \epsilon})$.

To verify that $F$ is as sought, fix $a,b,c \in T^\lambda$ such that
$(a,b), (a,c) \in [T^\lambda]^2$ and $F(a,b) = F(a,c)$. Without loss of generality, suppose
there are $\beta \leq \gamma < \lambda^{++}$ such that $b \in T_\beta^\lambda$, and
$c \in T_\gamma^\lambda$. Find $p_b \in G$ such that $(a,b) \in \dom(f_{p_b})$
and $F(a,b) \in f_{p_b}(a,b)$. Similarly, find $p_c \in G$ such that $(a,c) \in \dom(f_{p_c})$
and $F(a,c) \in f_{p_c}(a,c)$. Find $q \in G \cap \bb{Q}$ with $q \leq_{\bb{P}} p_b, p_c$.
Then $(a,b), (a,c) \in \dom(f_q)$, $F(a,b) \in f_q(a,b)$, and $F(a,c) \in f_q(a,c)$. In
particular, $f_q(a,b) \cap f_q(a,c) \neq \emptyset$. Since $q \in \bb{Q}$ it follows
that there are $\lambda$-many $i < \lambda$ such that $b(i) \leq^1_q c(i)$. But then,
for all such $i < \lambda$, we have $b(i) \leq_T c(i)$. Thus, $F$ witnesses that
$(T, <_T)$ is a $\lambda^{++}$-super-Souslin tree, so our proof of Theorem~C is now complete.

\section{Square and diamond} \label{square_sect}

In this section, we use $\square^B_\kappa$ and $\diamondsuit(\kappa)$ to construct combinatorial objects that will help us
prove Theorem~B in Section~\ref{main_thm_sect}.

\subsection{Enlarged direct limit}
In this short subsection, we introduce an ``enlarged direct limit'' operator.
This operator motivates our application of $\square^B_\kappa$ that will be carried out in the next subsection.

\begin{defn} For a linearly ordered set $(Y,\lhd)$ and a subset $Z\s Y$,
we define $\dbl_Z(Y,\lhd)$ as a linearly ordered set whose underlying set is $(Z\times\{0\})\uplus(Y\times\{1\})$,
ordered lexicographically by letting $(y,i)\lhd_l(y',i')$ iff one of the following holds:
\begin{itemize}
\item $y\lhd y'$;
\item $y=y'$ and $(i,i')=(0,1)$.
\end{itemize}
\end{defn}

The linearly ordered set $(Y,\lhd)$ we have in mind is a direct limit of a system of well-ordered sets,
and the choice of the subset $Z\s Y$ (to be doubled) will be defined momentarily.
The following is obvious.
\begin{lemma}\label{wellordered}
For all $Z\s Y$,
if $Y$ is well-ordered by $\lhd$, then,
$\dbl_Z(Y,\lhd)$  is well-ordered by $\lhd_l$.\qed
\end{lemma}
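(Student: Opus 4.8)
The plan is to verify directly that $\lhd_l$ meets the two standard requirements for a well-order on the underlying set $U := (Z\times\{0\})\uplus(Y\times\{1\})$: that it is a linear order, and that every nonempty subset of $U$ has a $\lhd_l$-least element. Linearity is a routine check: given two distinct points $(y,i),(y',i')\in U$, if $y\neq y'$ they are comparable via the first clause in the definition of $\lhd_l$ (as $\lhd$ is total on $Y$), while if $y=y'$ then necessarily $\{i,i'\}=\{0,1\}$ and they are comparable via the second clause; antisymmetry and transitivity follow from the corresponding properties of $\lhd$ on $Y$, together with the observation that the $\{0,1\}$-coordinate only distinguishes points lying within a single $\lhd$-class.

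For well-foundedness I would argue via least elements rather than descending sequences, so as to stay choice-free. Fix a nonempty $S\s U$, and let $\pi:U\to Y$ be the projection $(y,i)\mapsto y$. Then $\pi``S$ is a nonempty subset of $Y$, so, since $\lhd$ well-orders $Y$, it has a $\lhd$-least element $y_0$. Among the (at most two) members of $S$ projecting to $y_0$, let $s_0$ be $(y_0,0)$ if that pair lies in $S$, and $(y_0,1)$ otherwise; this is well-defined since $y_0\in\pi``S$. The claim is that $s_0=\min_{\lhd_l}S$: given any $(y,i)\in S$ distinct from $s_0$, we have $y\in\pi``S$, so either $y_0\lhd y$, whence $s_0\lhd_l(y,i)$ by the first clause of the definition, or $y=y_0$, in which case the only remaining possibility is that $s_0=(y_0,0)$ and $(y,i)=(y_0,1)$, whence $s_0\lhd_l(y,i)$ by the second clause.

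Since the lemma is flagged as obvious, there is no genuine obstacle here; the only point calling for a sliver of care is the asymmetry that the ``$0$''-copy is present only over $Z$ rather than over all of $Y$, which is exactly what forces the choice of $s_0$ to be a two-case definition, plus the minor discipline of phrasing the argument so that it avoids the axiom of choice. Everything else is a direct unravelling of the definition of $\lhd_l$.
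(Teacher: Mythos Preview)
Your proof is correct. The paper offers no proof at all---the lemma is prefaced with ``The following is obvious'' and closed immediately with \qedsymbol---so there is nothing to compare beyond noting that your argument is exactly the routine verification the authors chose to omit.
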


We start with a system of well-ordered sets.
Specifically, suppose that $\vec{\theta} = \langle \theta_\eta \mid \eta < \xi \rangle$ and
  $\vec{\pi} = \langle \pi_{\eta, \eta'} \mid \eta < \eta' < \xi \rangle$ are such that:
  \begin{itemize}
    \item   $\xi$ is a limit ordinal;
    \item $\vec\theta$ is a non-decreasing sequence of ordinals;
    \item for all $\eta < \eta' < \xi$, $\pi_{\eta, \eta'}:\theta_\eta \rightarrow \theta_{\eta'}$
      is a \coi;
    \item for all $\eta < \eta' < \eta'' < \xi$, we have $\pi_{\eta, \eta''} = \pi_{\eta', \eta''} \circ
      \pi_{\eta, \eta'}$.
    \end{itemize}

As is well-known, the direct limit of the  system $(\vec{\theta}, \vec{\pi})$ is defined as follows:
\begin{itemize}
\item Put $X:=\{(\eta,\gamma)\mid \eta<\xi, \gamma<\theta_\eta\}$.
\item For $(\eta,\gamma),(\eta',\gamma')\in X $ with $\eta<\eta'$,
let  $(\eta,\gamma)\sim(\eta',\gamma')$ iff $\pi_{\eta,\eta'}(\gamma)=\gamma'$.
\item Let  $Y$ consists of all equivalence classes $[(\eta,\gamma)]$ for $(\eta,\gamma)\in X$.
\item Order $Y$ by letting $[(\eta_0,\gamma_0)]\lhd [(\eta_1,\gamma_1)]$ iff there exists some
  $\eta\ge\max\{\eta_0,\eta_1\}$ and $\gamma_0'<\gamma_1'$ such that $(\eta_0,\gamma_0)\sim(\eta,\gamma_0')$
  and $(\eta_1,\gamma_1)\sim(\eta,\gamma_1')$.
\item  For each $\eta < \xi$, define a  map
  $\pi_\eta:\theta_\eta \rightarrow Y$ by stipulating $\pi_\eta(\gamma):=[(\eta,\gamma)]$.
\end{itemize}

\begin{defn}[Direct limit]\label{defdirectlimit}
$\lim(\vec{\theta}, \vec{\pi})$ stands for $(Y,\lhd,\langle \pi_\eta\mid \eta<\xi\rangle)$.
\end{defn}

  Next, we let $Z$ be the set of equivalence classes in $Y$ such that, for every representative
  $(\eta, \gamma)$ from the equivalence class, $\pi_\eta \restriction \gamma$ is bounded below
  $\pi_\eta(\gamma)$, i.e.,
$$Z:=\{z\in Y\mid\forall (\eta, \gamma)\in z\exists y \in Y\forall\beta<\gamma[
\pi_\eta(\beta) \lhd y \lhd \pi_\eta(\gamma)]\}.$$

Let $W := \dbl_Z(Y,\lhd)$,
and let $\varpi$ denote the  map from
 $Y$ to its canonical copy inside
 $W$, i.e., $\varpi(y)=(y,1)$.

\begin{defn}[Enlarged direct limit]
$\lim^*(\vec{\theta}, \vec{\pi})$ stands for $(W,\lhd_l,\langle \pi_\eta^*\mid \eta<\xi\rangle)$,
where   $\pi^*_\eta:=\varpi\circ\pi_\eta$ for each $\eta<\xi$.
\end{defn}

Finally, by Lemma~\ref{wellordered},
in the special case that $(Y,\lhd)$ is well-ordered,
we know that $(W,\lhd_l)$ is well-ordered.
In this case, we put $\theta:=\otp(W,\lhd_l)$, and let $\pi^*:W\rightarrow\theta$ be the collapse map.
Then, we define:
\begin{defn}[Ordinal enlarged direct limit]
$\lim^+(\vec\theta,\vec\pi)$ stands for $(\theta,\in,\langle\pi_\eta^+\mid \eta<\xi\rangle)$,
where  $\pi^+_\eta:=\pi^*\circ\pi_\eta^*$ for all $\eta<\xi$.
\end{defn}

\subsection{Square} \label{square_subsection}
Fix a $\square^B_\kappa$-sequence, $\langle C_\beta \mid \beta \in \Gamma \rangle$.
Enlarge the preceding to a sequence $\vec{C} = \langle C_\beta \mid \beta <\kappa \rangle$
by letting, for all limit $\beta\in\kappa\setminus\Gamma$, $C_\beta$ be an arbitrary club in $\beta$ of order type $\cf(\beta)$,
and letting $C_{\beta+1}:=\{0,\beta\}$ for all $\beta<\kappa$. In particular, for every $\beta\in E^\kappa_\omega\setminus\Gamma$,
we have $\acc(C_\beta)=\emptyset$. Thus, without loss of generality, we may assume that $E^\kappa_\omega\s\Gamma$.
For convenience, assume also that $0 \in C_\beta$ for all nonzero $\beta<\kappa$.

\medskip

We now turn to constructing a matrix $\vec{B} = \langle B^\beta_\eta \mid \beta < \kappa^+, ~ \eta < \kappa \rangle$
such that $\bigcup_{\eta<\kappa}B^\beta_\eta=\beta+1$ for all $\beta<\kappa^+$.
From this matrix, for each $\beta<\kappa^+$, we shall derive the following additional objects:
\begin{itemize}
\item[$\circ$] we shall let $\eta_\beta$ denote the least $\eta<\kappa$ such that $B^\beta_\eta\neq\emptyset$;
\item[$\circ$] for each $\xi\in\acc(\kappa\setminus\eta_\beta)$, we write $B^\beta_{<\xi}:=\bigcup_{\eta < \xi} B^\beta_\eta$;
\item[$\circ$] for each $\eta < \kappa$, we shall set $\theta^\beta_\eta := \otp(B^\beta_\eta)$ and let
$\pi^\beta_\eta : B^\beta_\eta \rightarrow \theta^\beta_\eta$ denote the unique order-preserving bijection;
\item[$\circ$] for each $\eta < \xi < \kappa$, $\pi^\beta_{\eta, \xi}: \theta^\beta_\eta \rightarrow \theta^\beta_\xi$ will denote the order-preserving injection indicating how $B^\beta_\eta$
``sits inside'' $B^\beta_\xi$, i.e., $\pi^\beta_{\eta, \xi} := \pi^\beta_\xi \circ (\pi^\beta_\eta)^{-1}$.
\end{itemize}

We shall also derive a ``distance function'' $d:[\kappa^+]^2\rightarrow\kappa$ by letting for all $\alpha<\beta<\kappa^+$:
$$d(\alpha,\beta):=\min\{\eta<\kappa\mid \alpha\in B^\beta_\eta\}.$$

\begin{lemma} \label{matrix_lemma}
There exists a matrix $\vec{B} = \langle B^\beta_\eta \mid
\beta < \kappa^+, ~ \eta < \kappa \rangle$ such that, for each $\beta < \kappa^+$, the following hold:
  \begin{enumerate}
    \item\label{cb1} $\langle B^\beta_\eta \mid \eta < \kappa \rangle$ is a $\subseteq$-increasing sequence of closed sets, each of size $<\kappa$,
      that converges to $\beta + 1$, and $\beta\in B^\beta_{\eta_\beta}$;
    \item\label{cb3} for all $\eta<\kappa$ and  $\alpha \in B^\beta_\eta$, we have $B^\alpha_\eta=B^\beta_\eta \cap (\alpha + 1)$ and $\pi^\alpha_\eta=\pi^\beta_\eta\restriction(\alpha+1)$;
    \item\label{cbinfty} for all $\eta<\kappa$, if $\cf(\beta)=\kappa$, then $\max(B^\beta_\eta\cap\beta)=C_\beta(\omega \eta)$;
    \item\label{cb25} if $\beta\in\Gamma\cap E^{\kappa^+}_{<\kappa}$, then $\eta_\beta=\otp(\acc(C_\beta))$ and $\acc(C_\beta)\s B^\beta_{\eta_\beta}$;
    \item\label{cb4} for all $\xi\in\acc(\kappa\setminus\eta_\beta)$, all of the following hold:
    \begin{enumerate}
    \item\label{cb5} $B^\beta_\xi$ is the ordinal closure of $B^\beta_{<\xi}$;
    \item\label{cb6} for every $\alpha \in B^\beta_\xi \setminus B^\beta_{<\xi}$,
      letting $\gamma := \min(B^\beta_\xi \setminus (\alpha + 1))$, we have $\cf(\gamma) = \kappa$ and $\alpha = C_\gamma(\omega \xi)$;
      \item\label{cofinality_lemma} $\cf(\beta) = \kappa$ iff $\ssup(\pi^\beta_{\eta, \xi}``\pi^\beta_\eta(\beta)) < \pi^\beta_\xi(\beta)$ for all $\eta \in [\eta_\beta, \xi)$.
  \end{enumerate}
\end{enumerate}
\end{lemma}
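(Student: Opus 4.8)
The plan is to construct $\vec B$ by recursion on $\beta<\kappa^+$, producing at stage $\beta$ the entire column $\langle B^\beta_\eta\mid\eta<\kappa\rangle$ from the columns built at earlier stages together with the enlarged sequence $\vec C$, and maintaining Clauses \eqref{cb1}--\eqref{cb4} as the induction hypothesis. The recursion step splits into four cases. For $\beta=0$ one takes $B^0_\eta:=\{0\}$. For $\beta=\gamma+1$ one takes $B^\beta_\eta:=\emptyset$ for $\eta<\eta_\gamma$ and $B^\beta_\eta:=B^\gamma_\eta\cup\{\beta\}$ otherwise; since $\beta$ sits as a successor ordinal on top of each $B^\gamma_\eta$, the column stays $\subseteq$-increasing and consists of closed size-$<\kappa$ sets converging to $\beta+1$, no new element ever enters at a limit level, and every remaining clause is inherited from the hypothesis for $\gamma$ (in particular the ``gap-free level'' that Clause \eqref{cofinality_lemma} demands via its contrapositive persists, since $\gamma\in B^\gamma_\eta$ for all $\eta\ge\eta_\gamma$).

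The cofinality-$\kappa$ case is where Clauses \eqref{cbinfty} and \eqref{cb6} get engineered. Here $\beta\in E^{\kappa^+}_\kappa\s\Gamma$, $\otp(C_\beta)=\kappa$, and $\square^B_\kappa$ furnishes $C_{C_\beta(\nu)}=C_\beta\cap C_\beta(\nu)$ for limit $\nu<\kappa$. I would set $\eta_\beta:=0$, $B^\beta_0:=\{0,\beta\}$, and, for $0<\eta<\kappa$, $B^\beta_\eta:=B^{C_\beta(\omega\eta)}_\eta\cup\{\beta\}$. Monotonicity in $\eta$ comes from the hypothesis: for $\eta<\eta'$ we have $B^{C_\beta(\omega\eta)}_\eta\s B^{C_\beta(\omega\eta)}_{\eta'}$, and the latter is an initial segment of $B^{C_\beta(\omega\eta')}_{\eta'}$ by Clause \eqref{cb3} applied below $\beta$ (using that $C_\beta(\omega\eta)\in\acc(C_{C_\beta(\omega\eta')})$ lies in $B^{C_\beta(\omega\eta')}_{\eta'}$, by Clause \eqref{cb25}). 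The column exhausts $\beta+1$, Clause \eqref{cb3} for the new column is immediate, continuity of $C_\beta$ gives $\max(B^\beta_\eta\cap\beta)=C_\beta(\omega\eta)$ as Clause \eqref{cbinfty} requires, and $\sup_{\eta<\xi}C_\beta(\omega\eta)=C_\beta(\omega\xi)$ is the single new element appearing at a limit level $\xi$, which is how Clause \eqref{cb6} is arranged; Clause \eqref{cb5} then holds because this forces $B^\beta_\xi=\cl(B^\beta_{<\xi})$.

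The cofinality-$<\kappa$ case is the crux. For $\beta\in\Gamma$ I would put $\eta_\beta:=\otp(\acc(C_\beta))$ — giving Clause \eqref{cb25} — and build the column by a secondary recursion on $\eta$: at a limit level $\xi$ one is forced to set $B^\beta_\xi:=\cl(B^\beta_{<\xi})$ (Clause \eqref{cb5}), while at the base and successor levels one ``closes up'' the set $\{\beta\}\cup\bigcup\{B^\alpha_\eta\mid\alpha\in E\}$ under the already-built columns, where $E\s\acc(C_\beta)$ is an initial segment growing with $\eta$ (all of $\acc(C_\beta)$ at the base level, supplemented when $\cf(\beta)=\omega$ by enough of $C_\beta$ itself to reach convergence to $\beta+1$). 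Closing up means iterating $S\mapsto S\cup\bigcup\{B^\alpha_\eta\mid\alpha\in S\cap\beta\}$ to a fixed point, which — since one starts from $<\kappa$ generators and $\kappa$ is regular — has size $<\kappa$ and is closed. This is exactly where $\square^B_\kappa$ enters: its coherence makes the generators mutually coherent, in that $\alpha<\alpha'$ in $\acc(C_\beta)$ forces $C_{\alpha'}=C_\beta\cap\alpha'$, hence $\alpha\in\acc(C_{\alpha'})\s B^{\alpha'}_{\eta_{\alpha'}}$, hence $B^\alpha_\eta=B^{\alpha'}_\eta\cap(\alpha+1)$ by the induction hypothesis — and this ``coherent tree'' shape of the generators is what lets the fixed point satisfy Clause \eqref{cb3}. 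For $\beta\notin\Gamma$ (so $\cf(\beta)\ge\omega_1$, since $E^{\kappa^+}_\omega\s\Gamma$) no such coherence is handed to us, so there I would first extract from the columns already built a cofinal subset of $\beta$ of order type $\cf(\beta)$ along which the columns line up into a single $\sq$-chain, and merge along it by the same closing-up procedure.

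What remains is to verify the clauses, of which \eqref{cb1}, \eqref{cbinfty}, \eqref{cb25} are built in. For Clause \eqref{cb6} one checks that a limit point of $B^\beta_{<\xi}$ not already in it can only be the point $C_\gamma(\omega\xi)$ lying just below a cofinality-$\kappa$ ordinal $\gamma$ that was brought into the column by the cofinality-$\kappa$ case. Clause \eqref{cofinality_lemma} then drops out: if $\cf(\beta)<\kappa$ then $B^\beta_\eta$ is eventually cofinal in $\beta$, so $\ssup(\pi^\beta_{\eta,\xi}``\pi^\beta_\eta(\beta))=\pi^\beta_\xi(\beta)$ for those $\eta$ and no gap remains, whereas if $\cf(\beta)=\kappa$ then $\max(B^\beta_\eta\cap\beta)=C_\beta(\omega\eta)<C_\beta(\omega(\eta+1))\le C_\beta(\omega\xi)<\beta$, and $C_\beta(\omega(\eta+1))\in B^\beta_\xi$ witnesses the required gap for every $\eta$. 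I expect the main obstacle to be exactly the cofinality-$<\kappa$ case: one must check all at once that closing up under columns keeps the column $\subseteq$-increasing, closed, of size $<\kappa$, and convergent to $\beta+1$; that Clause \eqref{cb3} survives (this is where the $\square^B_\kappa$-coherence $C_\alpha=C_\beta\cap\alpha$ along $\acc(C_\beta)$ is indispensable, and where one must confirm that the extracted $\sq$-chain in the $\beta\notin\Gamma$ sub-case really exists); and that interleaving the recursion on $\eta$, which Clause \eqref{cb5} pins down at limit levels, with the recursion on $\beta$ does not collide with Clause \eqref{cb6}.
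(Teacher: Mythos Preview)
Your overall structure is correct and matches the paper's: recursion on $\beta$, with the zero, successor, and cofinality-$\kappa$ cases handled essentially as you describe. The divergence is in the $\cf(\beta)<\kappa$ case, where the paper takes a simpler route than your closing-up and secondary-recursion scheme.

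The paper's key device, which you do not mention, is the \emph{distance function} $d(\alpha,\beta):=\min\{\eta<\kappa\mid\alpha\in B^\beta_\eta\}$, derived from the columns already built. Moreover, the case split is not $\beta\in\Gamma$ versus $\beta\notin\Gamma$ but rather whether $\sup(\acc(C_\beta))=\beta$. When $\sup(\acc(C_\beta))=\beta$, the paper simply sets $\eta_\beta:=\sup(d``[\acc(C_\beta)]^2)$ and, for $\eta\ge\eta_\beta$, takes $B^\beta_\eta:=\{\beta\}\cup\bigcup_{\alpha\in\acc(C_\beta)}B^\alpha_\eta$; by the choice of $\eta_\beta$ and Clause~\eqref{cb3} for earlier columns, the sets $\langle B^\alpha_\eta\mid\alpha\in\acc(C_\beta)\rangle$ already form an $\sq$-chain, so no closing-up is needed and Clause~\eqref{cb3} for $\beta$ is immediate. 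When $\beta\in\Gamma$ one checks a~posteriori, using Clause~\eqref{cb25} for the $\alpha\in\acc(C_\beta)$, that this $\eta_\beta$ equals $\otp(\acc(C_\beta))$; when $\beta\notin\Gamma$ it may be larger, but Clause~\eqref{cb25} imposes no constraint there. When $\sup(\acc(C_\beta))<\beta$ (forcing $\cf(\beta)=\omega$), the paper uses the $\omega$-tail $a:=C_\beta\setminus\sup(\acc(C_\beta))$ with an analogous threshold $\eta^*:=\max\{\eta_\beta,\sup(d``[a]^2)\}$: for $\eta\in[\eta_\beta,\eta^*]$ it uses only $B^{\min(a)}_\eta$, and for $\eta>\eta^*$ the union over all of $a$.

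What the distance function buys is exactly what your closing-up iteration and ``extract a $\sq$-chain'' manoeuvre are reaching for, but it does so by raising the threshold $\eta_\beta$ (or $\eta^*$) rather than by iterating an operator; the entire column is then given by a direct formula with no secondary recursion on $\eta$, and Clauses~\eqref{cb5}--\eqref{cofinality_lemma} are \emph{verified} rather than built in. Your approach is not wrong in spirit, but the closing-up operator does not obviously preserve Clause~\eqref{cb3}: if $\alpha,\alpha'$ both land in $B^\beta_\eta$ via the iteration with neither appearing in the other's column at level $\eta$, their columns need not agree below $\min\{\alpha,\alpha'\}$. Your $\beta\notin\Gamma$ case also remains only sketched. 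The paper's use of $d$ dissolves both difficulties at once.
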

\begin{proof} The construction is by recursion on $\beta<\kappa^+$.

\begin{description}
  \item[Case 0] \textbf{$\beta = 0$.} Set $B^\beta_\eta := \{0\}$ for all $\eta < \kappa$. It is trivial to see that Clauses \eqref{cb1}--\eqref{cb4} all hold.
  \item[Case 1] \textbf{$\beta = \alpha + 1$.} For all $\eta<\eta_\alpha$, let $B^\beta_\eta:=\emptyset$,
  and for all $\eta\in[\eta_\alpha,\kappa)$, let $B^\beta_\eta := \{\beta\}\cup B^\alpha_\eta$.
  It is trivial to see that Clauses \eqref{cb1}--\eqref{cb4} all hold.
  \item[Case 2] \textbf{$\beta \in\acc(\kappa)$ and $\sup(\acc(C_\beta))<\beta$.}
    In particular, $a:=C_\beta\setminus\sup(\acc(C_\beta))$ is a cofinal subset of $\beta$ of order type $\omega$.
    Note that, since $\cf(\beta)=\omega$, we have $\beta\in\Gamma$.
    Put $\eta_\beta:=\otp(\acc(C_\beta))$ and $\eta^*:=\max\{\eta_\beta,\sup(d``[a]^2)\}$.
    Now, for all $\eta<\kappa$, define $B^\beta_\eta$ as follows:
      \begin{itemize}
      \item[$\br$] If $\eta<\eta_\beta$, then let $B^\beta_\eta:=\emptyset$. Clauses \eqref{cb3}--\eqref{cb4} are trivially satisfied.
        \item[$\br$] If $\eta_\beta\le\eta\le\eta^*$, then let $\alpha^*:=\min(a)$ and put $B^\beta_\eta := \{\beta\} \cup B^{\alpha^*}_\eta$.
        Since $\alpha^*\in\acc(C_\beta)\cup\{0\}$ and $\beta\in\Gamma$, we have $C_{\alpha^*}\sq C_\beta$,
        which ensures Clause~\eqref{cb25}. As for Clause~\eqref{cofinality_lemma}, for all $\eta < \xi$ in $[\eta_\beta, \eta^*]$, we have
        $$\ssup(\pi^\beta_{\eta, \xi}``\pi^\beta_\eta(\beta)) = \pi^\beta_{\eta, \xi}(\pi^\beta_\eta(\alpha^*) + 1) = \pi^\beta_\xi(\alpha^*) + 1 = \pi^\beta_\xi(\beta).$$
       The other clauses are easily seen to be satisfied.
        \item[$\br$] Otherwise, let $B^\beta_\eta := \{\beta\} \cup \bigcup_{\alpha\in a} B^{\alpha}_\eta$.
        Since $\eta>\eta^*$, for every pair of ordinals $\alpha<\alpha'$ from $a$, we have $\alpha\in B^{\alpha'}_\eta$, so that $B^{\alpha}_\eta=B^{\alpha'}_\eta\cap(\alpha+1)$.
        It follows that $\langle B_\eta^{\alpha}\mid \alpha\in a\rangle$ is an $\sq$-increasing sequence of closed sets.
        In particular,  $B_\eta^\beta\cap\beta$ is a club in $\beta$, which takes care of Clause~\eqref{cofinality_lemma}. So, all clauses are satisfied.
      \end{itemize}
  \item[Case 3] \textbf{$\cf(\beta)<\kappa$ and $\sup(\acc(C_\beta))=\beta$.}
        Put $\eta_\beta:=\sup(d``[\acc(C_\beta)]^2)$, and, for all $\eta<\kappa$, define $B^\beta_\eta$ as follows:
      \begin{itemize}
      \item[$\br$] If $\eta<\eta_\beta$, then let $B^\beta_\eta:=\emptyset$. Clauses \eqref{cb3}--\eqref{cb4} are trivially satisfied.
      \item[$\br$] If $\eta\ge\eta_\beta$, then let $B^\beta_\eta := \{\beta\} \cup \bigcup_{\alpha \in \acc(C_\beta)} B^\alpha_\eta$.
Since $\eta\ge\sup(d``[\acc(C_\beta)]^2)$, we have that $\langle B_\eta^{\alpha}\mid \alpha\in\acc(C_\beta)\rangle$ is an $\sq$-increasing sequence of closed sets.
So $B_\eta^\beta\cap\beta$ is a club in $\beta$, and all clauses except Clause~\eqref{cb25} are easily seen to be satisfied.
Now, if $\beta\in\Gamma$, then, since Clause~\eqref{cb25} holds for all $\alpha\in\acc(C_\beta)$, we have $\eta_\beta=\otp(\acc(C_\beta))$, so that Clause~\eqref{cb25} holds for $\beta$, as well.
      \end{itemize}
  \item[Case 4] \textbf{$\beta \in E^{\kappa^+}_\kappa$.}
    For all $\eta < \kappa$, let $\alpha_\eta := C_\beta(\omega\eta)$  and $B^\beta_\eta := \{\beta\} \cup B^{\alpha_\eta}_\eta$, so that Clause~\eqref{cbinfty} is satisfied.

    Since $\cf(\beta)=\kappa$, we have $\beta\in\Gamma$. Hence, for all $\eta<\xi<\kappa$, we have $\alpha_\xi\in\Gamma$,
    so that $\alpha_\eta\in B^{\alpha_\xi}_\xi$ by Clause~\eqref{cb25}.
    It follows that $\langle B^\beta_\eta\cap\beta\mid \eta<\kappa\rangle$ is $\subseteq$-increasing.
    In particular, Clauses \eqref{cb1} and \eqref{cb3} are satisfied.
    It also follows that, for all $\xi\in\acc(\kappa)$
    and $\alpha\in B^\beta_\xi\cap\beta$, we have $B^\beta_{<\xi}\cap(\alpha+1)=B^\alpha_{<\xi}$,
    so that Clauses \eqref{cb5} and \eqref{cb6} are satisfied.

    Finally, to verify Clause~\eqref{cofinality_lemma}, fix an arbitrary $\xi\in\acc(\kappa)$ and $\eta<\xi$.
  By Clauses~\eqref{cb3} and \eqref{cbinfty}, we have $B^\beta_\eta = \{\beta\} \cup B^{\alpha_\eta}_\eta$ and  $B^\beta_\xi = \{\beta\} \cup B^{\alpha_\xi}_\xi$,
  so that  $\pi^\beta_\eta(\beta) = \pi^\beta_\eta(\alpha_\eta) + 1$ and $\pi^\beta_\xi(\beta) = \pi^\beta_\xi(\alpha_\xi) + 1$.
  Therefore, we have
  \[
    \pi^\beta_{\eta, \xi}``\pi^\beta_\eta(\beta) \subseteq \pi^\beta_{\eta, \xi}(\pi^\beta_\eta(\alpha_\eta)) + 1
    = \pi^\beta_\xi(\alpha_\eta) + 1 < \pi^\beta_\xi(\alpha_\xi) < \pi^\beta_\xi(\beta).
    \qedhere
  \]

\end{description}
\end{proof}

The next lemma assumes familiarity with the previous subsection.

\begin{lemma} \label{direct_limit_lemma}
  Suppose $\beta < \kappa^+$ and $\xi \in \acc(\kappa \setminus \eta_\beta)$. Write $\vec{\theta} :=
  \langle \theta^\beta_\eta \mid \eta < \xi \rangle$ and $\vec{\pi} := \langle \pi^\beta_{\eta, \eta'}
  \mid \eta < \eta' < \xi \rangle$. Then $\lim^+(\vec{\theta}, \vec{\beta})$ is defined and,
  letting $(\theta, \in, \langle \pi^+_\eta \mid \eta < \xi \rangle):=\lim^+(\vec\theta,\vec\pi)$, we have
  $\theta = \theta^\beta_\xi$ and $\langle \pi^+_\eta \mid \eta < \xi \rangle = \langle \pi^\beta_{\eta, \xi}
  \mid \eta < \xi \rangle$.
\end{lemma}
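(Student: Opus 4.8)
The plan is to compute $\lim^{+}(\vec{\theta},\vec{\pi})$ by tracing its construction one stage at a time and matching each stage with a concrete object attached to the matrix $\vec{B}$: the direct limit $(Y,\lhd)$ with the set of ordinals $(B^\beta_{<\xi},<)$, the ``to‑be‑doubled'' set $Z$ with those members of $B^\beta_{<\xi}$ whose sup of predecessors is not itself in $B^\beta_{<\xi}$, and the enlarged limit $W=\dbl_Z(Y,\lhd)$ with the closure $B^\beta_\xi=\cl(B^\beta_{<\xi})$. Concretely, I would first define $e\colon Y\to B^\beta_{<\xi}$ by $e([(\eta,\gamma)]):=(\pi^\beta_\eta)^{-1}(\gamma)$. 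This is well defined: if $(\eta,\gamma)\sim(\eta',\gamma')$ with $\eta<\eta'$, then $\pi^\beta_{\eta,\eta'}(\gamma)=\gamma'$, i.e.\ $(\pi^\beta_\eta)^{-1}(\gamma)=(\pi^\beta_{\eta'})^{-1}(\gamma')$ (using $B^\beta_\eta\subseteq B^\beta_{\eta'}$, which holds by Clause~\eqref{cb1}). Reducing any two classes to a common index shows $e$ is order preserving, and $e$ is onto since any $\alpha\in B^\beta_{<\xi}$ lies in some $B^\beta_\eta$ and then $\alpha=e([(\eta,\pi^\beta_\eta(\alpha))])$. Thus $e$ is an isomorphism of $(Y,\lhd)$ onto a set of ordinals, so $(Y,\lhd)$ is a well‑order and $\lim^{*}(\vec{\theta},\vec{\pi})$ and $\lim^{+}(\vec{\theta},\vec{\pi})$ are defined; moreover $e\circ\pi_\eta=(\pi^\beta_\eta)^{-1}$ for all $\eta<\xi$.

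The heart of the argument is identifying $e``Z$. Pushed through $e$, the condition ``$z\in Z$'' becomes: for $\gamma:=e(z)$, for every $\eta<\xi$ with $\gamma\in B^\beta_\eta$ there is $\alpha\in B^\beta_{<\xi}$ with $\alpha<\gamma$ lying strictly above every element of $B^\beta_\eta\cap\gamma$. I would prove this is equivalent to $s_\gamma\notin B^\beta_{<\xi}$, where $s_\gamma:=\sup(B^\beta_{<\xi}\cap\gamma)$. For the easy direction, if $s_\gamma\notin B^\beta_{<\xi}$ then $s_\gamma<\gamma$ and $s_\gamma\in\acc^{+}(B^\beta_{<\xi})$, so for each relevant $\eta$ one has $\max(B^\beta_\eta\cap\gamma)<s_\gamma$ and may take $\alpha\in B^\beta_{<\xi}$ in the interval $(\max(B^\beta_\eta\cap\gamma),s_\gamma)$; here one uses $0\in B^\beta_{<\xi}$, which is immediate from the construction in Lemma~\ref{matrix_lemma}, so that $s_\gamma>0$. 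For the converse, the bounding condition first forces $\gamma\in\nacc(B^\beta_\eta)$ for every relevant $\eta$ (else $\sup(B^\beta_\eta\cap\gamma)=\gamma$ and no $\alpha$ can exist); the delicate point is to exclude $s_\gamma=\gamma$, i.e.\ $\gamma\in\acc(B^\beta_{<\xi})$. In that case $\cf(\gamma)<\kappa$ (since $|B^\beta_{<\xi}|<\kappa$), and using Clause~\eqref{cb3} to identify $B^\gamma_\eta$ with $B^\beta_\eta\cap(\gamma+1)$ together with Clauses~\eqref{cb5} and \eqref{cofinality_lemma} applied at $\gamma$ (legitimate, since $\eta_\gamma<\xi$), one obtains $\gamma\in\acc(B^\beta_\eta)$ for some $\eta<\xi$, contradicting $\gamma\in\nacc(B^\beta_\eta)$. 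So $s_\gamma<\gamma$; and if $s_\gamma\in B^\beta_{<\xi}$, then choosing $\eta$ with $\{\gamma,s_\gamma\}\subseteq B^\beta_\eta$ gives $\max(B^\beta_\eta\cap\gamma)=s_\gamma$, leaving no room for $\alpha$ — a contradiction. Hence $e``Z=Z^{\star}:=\{\gamma\in B^\beta_{<\xi}\mid s_\gamma\notin B^\beta_{<\xi}\}$. \textbf{This is the step I expect to be the main obstacle:} distilling the ``for every representative'' quantifier in the definition of $Z$ down to the single clean clause $s_\gamma\notin B^\beta_{<\xi}$ is exactly what forces the appeal to Clause~\eqref{cofinality_lemma}.

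It then remains to assemble the pieces. By Clause~\eqref{cb5}, $B^\beta_\xi=\cl(B^\beta_{<\xi})=B^\beta_{<\xi}\sqcup N$ with $N:=\acc^{+}(B^\beta_{<\xi})\setminus B^\beta_{<\xi}$. The map $\gamma\mapsto s_\gamma$ is a strictly increasing bijection $Z^{\star}\to N$: it is injective since $\gamma<\gamma'$ in $Z^{\star}$ gives $\gamma\in B^\beta_{<\xi}\cap\gamma'$, hence $s_{\gamma'}\ge\gamma>s_\gamma$; it is onto since for $\nu\in N$ the ordinal $\gamma_\nu:=\min(B^\beta_{<\xi}\setminus(\nu+1))$ — well defined as $\beta\in B^\beta_{<\xi}$ — satisfies $s_{\gamma_\nu}=\nu$. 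Now set $\phi\colon W\to B^\beta_\xi$ by $\phi(y,1):=e(y)$ and $\phi(z,0):=s_{e(z)}$. Since $s_{e(z)}<e(z)$ and $s_{e(z)}\notin B^\beta_{<\xi}$ while $e(y)\in B^\beta_{<\xi}$, a routine case check against the definition of $\lhd_{l}$ shows $\phi$ is an order isomorphism of $(W,\lhd_{l})$ onto $(B^\beta_\xi,<)$. Consequently $\theta:=\otp(W,\lhd_{l})=\otp(B^\beta_\xi)=\theta^\beta_\xi$, and since both $\pi^\beta_\xi\circ\phi$ and the transitive collapse $\pi^{*}\colon W\to\theta$ are order isomorphisms with domain $W$, they coincide. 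As $\phi\circ\varpi=e$, this gives, for every $\eta<\xi$,
\[
\pi^{+}_\eta=\pi^{*}\circ\varpi\circ\pi_\eta=\pi^\beta_\xi\circ\phi\circ\varpi\circ\pi_\eta=\pi^\beta_\xi\circ e\circ\pi_\eta=\pi^\beta_\xi\circ(\pi^\beta_\eta)^{-1}=\pi^\beta_{\eta,\xi},
\]
which is the desired conclusion.
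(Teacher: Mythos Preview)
Your proof is correct and follows the same overall architecture as the paper's: identify $(Y,\lhd)$ with $(B^\beta_{<\xi},<)$ via the map you call $e$ (the paper writes $\alpha_y$ for $e(y)$), determine which points get doubled, and check that the doubling matches the passage from $B^\beta_{<\xi}$ to $B^\beta_\xi$. The one substantive difference is in how you characterize $e``Z$. The paper invokes Clause~\eqref{cofinality_lemma} \emph{directly} to obtain $e``Z=\{\gamma\in B^\beta_{<\xi}\mid\cf(\gamma)=\kappa\}$ in a single line, and then uses Clauses~\eqref{cbinfty} and \eqref{cb6} to see that the new points of $B^\beta_\xi$ are exactly $\{C_\gamma(\omega\xi)\mid\gamma\in B^\beta_{<\xi}\cap E^{\kappa^+}_\kappa\}$, each sitting immediately below the corresponding $\gamma$. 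You instead prove the purely order-theoretic description $e``Z=\{\gamma\mid s_\gamma\notin B^\beta_{<\xi}\}$ and match it with $\acc^+(B^\beta_{<\xi})\setminus B^\beta_{<\xi}$ via $\gamma\mapsto s_\gamma$, appealing only to Clause~\eqref{cb5} for $B^\beta_\xi=\cl(B^\beta_{<\xi})$; Clause~\eqref{cofinality_lemma} enters only indirectly, to rule out the case $\gamma\in\acc(B^\beta_{<\xi})$ in your ``hard direction''. Your route avoids Clause~\eqref{cb6} entirely at the price of a longer argument for the identification of $Z$ and an explicit construction of the isomorphism $\phi$; the paper's route is terser but leans on more of the bespoke structure of $\vec{B}$.
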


\begin{proof}
  Let $\langle Y,\lhd,\langle \pi_\eta\mid\eta<\xi\rangle) := \lim((\vec{\theta}, \vec{\pi}))$.
  For every class $y\in Y$  and representatives $(\eta,\gamma),(\eta',\gamma')\in y$
  with $\eta<\eta'$, we have $\pi_{\eta,\eta'}^\beta(\gamma)=\gamma'$,
  i.e., $(\pi^\beta_{\eta'})^{-1}(\gamma')=(\pi^\beta_\eta)^{-1}(\gamma)$.
  Therefore, for each $y\in Y$, we may let $\alpha_y:=(\pi_\eta^\beta)^{-1}(\gamma)$ for an arbitrary choice of $(\eta,\gamma)\in y$.
  Note that, for all  $y,y'$ in $Y$, we have
  $y\lhd y'$ iff $\alpha_y < \alpha_{y'}$. Therefore,
  the order type of $(Y,\lhd)$ is precisely $\otp(B^\beta_{<\xi})$.
  In particular, $\lim(\vec{\theta}, \vec{\pi})$ is well-ordered, so $\lim^+(\vec{\theta}, \vec{\pi})$
  is defined. Write $(\theta,\in, \langle \pi^+_\eta \mid \eta < \xi \rangle)$ for $\lim^+(\vec{\theta}, \vec{\pi})$.

  Let $Z$ be the set of equivalence classes in $Y$ such that,
  for every representative $(\eta, \gamma)$ from the class, we have that $\pi_\eta \restriction \gamma$
  is bounded below $\pi_\eta(\gamma)$. By Clause~\eqref{cofinality_lemma} of Lemma~\ref{matrix_lemma},
  we know that $Z=\{  z \in Y\mid \cf(\alpha_z) = \kappa\}$.

  For all $\alpha \in B^\beta_{<\xi} \cap E^{\kappa^+}_\kappa$, we have $C_\alpha(\omega\xi) \in B^\beta_\xi \setminus B^\beta_{<\xi}$
  and, moreover, $\alpha = \min(B^\beta_\xi \setminus (C_\alpha(\omega\xi) + 1))$.
  Also, by Clauses \eqref{cb5} and \eqref{cb6} of Lemma~\ref{matrix_lemma}, we know that $B^\beta_\xi =
  B^\beta_{<\xi} \cup \{C_\alpha(\omega\xi) \mid \alpha \in B^\beta_{<\xi} \cap E^{\kappa^+}_\kappa\}$.
  Now, for all $z \in Z$, the addition of $(z,0)$ when passing from $Y$
  to $W:=\dbl_Z(Y,\lhd)$ corresponds precisely to the addition of $C_{\alpha_z}(\omega\xi)$
  when passing from $B^\beta_{<\xi}$ to $B^\beta_\xi$. It follows that $\otp(W,\lhd_l)
  = \otp(B^\beta_\xi) = \theta^\beta_\xi$. That is, $\theta = \theta^\beta_\xi$. Letting $\pi^*:W\rightarrow\theta$ be the collapse map, we have that, for all $z \in Y$,
  $\pi^*(z,1) = \pi^\beta_\xi(\alpha_z)$, so, for all $\eta < \xi$ and $\gamma < \theta^\beta_\eta$,
  we have
  \[
    \pi^+_\eta(\gamma) = \pi^*([(\eta, \gamma)],1) = \pi^\beta_\xi((\pi^\beta_\eta)^{-1}(\gamma)) = \pi^\beta_{\eta, \xi}(\gamma),
  \]
  so $\langle \pi^+_\eta \mid \eta < \xi \rangle = \langle \pi^\beta_{\eta, \xi} \mid \eta < \xi \rangle$.
\end{proof}

\subsection{Diamond}
Our next goal is to prove the following.
\begin{lemma}\label{lemma21}
  Suppose that $\diamondsuit(\kappa)$ holds
  and that $(\bb{P}, \leq_{\bb{P}}, \bb{Q}) \in \mathcal{P}_\kappa$.
  Then there are arrays $\langle \vartheta^\xi_\eta \mid \eta \leq \xi<\kappa\rangle$,
  $\langle \varpi^\xi_{\eta, \eta'} \mid \eta < \eta' \leq \xi < \kappa \rangle$,
  and $\langle q^\xi_\eta \mid \eta < \xi < \kappa \rangle$ satisfying the following:
  For every $\beta<\kappa^+$ and every decreasing sequence
  $\langle p_\eta \mid \eta < \kappa \rangle\in\prod_{\eta<\kappa}\bb{P}_{B^\beta_\eta}$,
  there are stationarily many $\xi<\kappa$ such that:
  \begin{itemize}
    \item $\langle \vartheta^\xi_\eta \mid \eta \leq \xi \rangle = \langle \theta^\beta_\eta \mid \eta \leq \xi \rangle$;
    \item $\langle \varpi^\xi_{\eta, \eta'} \mid \eta < \eta' \leq \xi \rangle = \langle \pi^\beta_{\eta, \eta'} \mid \eta < \eta' \leq \xi \rangle$;
    \item $\langle q^\xi_\eta \mid \eta < \xi \rangle = \langle \pi^\beta_\eta.p_\eta \mid \eta < \xi \rangle$.
  \end{itemize}
\end{lemma}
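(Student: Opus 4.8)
The plan is to compress, for a single candidate pair $(\beta,\vec p)$, all the data to be guessed into one function from $\kappa$ to $H_\kappa$, feed it to a $\diamondsuit(\kappa)$-sequence, and decode. The one subtle point is recovering the ``level-$\xi$'' entries $\vartheta^\xi_\xi$ and $\varpi^\xi_{\eta,\xi}$, which do not literally appear in the guessed initial segment and must instead be produced by the enlarged–direct–limit operator $\lim^+$ of this section.

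Since $\diamondsuit(\kappa)$ implies $\kappa^{<\kappa}=\kappa$, we have $|H_\kappa|=\kappa$ and $|{}^{<\kappa}H_\kappa|=\kappa$, so $\diamondsuit(\kappa)$ provides a sequence $\langle g_\xi\mid\xi<\kappa\rangle$ with each $g_\xi\colon\xi\to H_\kappa$ such that $\{\xi<\kappa\mid g\restriction\xi=g_\xi\}$ is stationary for every $g\colon\kappa\to H_\kappa$. Fix one. I would then define the three arrays by decoding $g_\xi$. Call $g_\xi$ \emph{admissible} if $\xi$ is a limit ordinal, $g_\xi$ is a function with domain $\xi$, and, writing $g_\xi(\eta)=\langle a^\xi_\eta,\langle c^\xi_{\eta',\eta}\mid\eta'<\eta\rangle,q^\xi_\eta\rangle$ for $\eta<\xi$: each $a^\xi_\eta$ is an ordinal $<\kappa$; $\langle a^\xi_\eta\mid\eta<\xi\rangle$ is non-decreasing; each $c^\xi_{\eta',\eta}$ (for $\eta'<\eta<\xi$) is a \coi from $a^\xi_{\eta'}$ to $a^\xi_\eta$, with $c^\xi_{\eta,\eta''}=c^\xi_{\eta',\eta''}\circ c^\xi_{\eta,\eta'}$ whenever $\eta<\eta'<\eta''<\xi$; each $q^\xi_\eta\in\bb P$ with $x_{q^\xi_\eta}\subseteq a^\xi_\eta$; and $\lim^+(\langle a^\xi_\eta\mid\eta<\xi\rangle,\langle c^\xi_{\eta,\eta'}\mid\eta<\eta'<\xi\rangle)$ is defined, say equal to $(\theta,\in,\langle\pi^+_\eta\mid\eta<\xi\rangle)$. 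When $g_\xi$ is admissible, set $\vartheta^\xi_\eta:=a^\xi_\eta$ for $\eta<\xi$, $\vartheta^\xi_\xi:=\theta$, $\varpi^\xi_{\eta',\eta}:=c^\xi_{\eta',\eta}$ for $\eta'<\eta<\xi$, $\varpi^\xi_{\eta,\xi}:=\pi^+_\eta$ for $\eta<\xi$, and keep the $q^\xi_\eta$ as decoded; when $g_\xi$ is not admissible, set all relevant entries to trivial values (e.g.\ $0$, $\emptyset$, and $\one$). Each entry depends only on $g_\xi$, hence only on $\xi$.

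For the verification, fix $\beta<\kappa^+$ and a decreasing $\vec p=\langle p_\eta\mid\eta<\kappa\rangle\in\prod_{\eta<\kappa}\bb P_{B^\beta_\eta}$. Each $B^\beta_\eta$ is closed (Clause~\eqref{cb1} of Lemma~\ref{matrix_lemma}), so the collapsing map $\pi^\beta_\eta\colon B^\beta_\eta\to\theta^\beta_\eta$ is a \coi; by Clauses~\eqref{c4a} and \eqref{c2} of Definition~\ref{class_def}, $\pi^\beta_\eta.p_\eta\in\bb P$ with $x_{\pi^\beta_\eta.p_\eta}=\pi^\beta_\eta``x_{p_\eta}\subseteq\theta^\beta_\eta<\kappa$, hence $\pi^\beta_\eta.p_\eta\in\bb P_\kappa\subseteq H_\kappa$; also $\theta^\beta_\eta<\kappa$ and each $\pi^\beta_{\eta',\eta}$, being a function between ordinals $<\kappa$, lies in $H_\kappa$. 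So we may let $g\colon\kappa\to H_\kappa$ be $g(\eta):=\langle\theta^\beta_\eta,\langle\pi^\beta_{\eta',\eta}\mid\eta'<\eta\rangle,\pi^\beta_\eta.p_\eta\rangle$. Then $S:=\{\xi<\kappa\mid g\restriction\xi=g_\xi\}$ is stationary, and so is $S\cap C$ for the club $C:=\{\xi\in\acc(\kappa)\mid\xi>\eta_\beta\}$. Fix $\xi\in S\cap C$. As $\xi$ is a limit ordinal above $\eta_\beta$, we have $\xi\in\acc(\kappa\setminus\eta_\beta)$; and $g_\xi=g\restriction\xi$ decodes to $\langle\theta^\beta_\eta\mid\eta<\xi\rangle$, $\langle\pi^\beta_{\eta,\eta'}\mid\eta<\eta'<\xi\rangle$, and $\langle\pi^\beta_\eta.p_\eta\mid\eta<\xi\rangle$. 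By Clause~\eqref{cb1} of Lemma~\ref{matrix_lemma} these $\theta^\beta_\eta$ are non-decreasing; the $\pi^\beta_{\eta,\eta'}$ form a coherent system of \cois (since $\pi^\beta_{\eta,\eta'}=\pi^\beta_{\eta'}\circ(\pi^\beta_\eta)^{-1}$); each $\pi^\beta_\eta.p_\eta\in\bb P$ with realm inside $\theta^\beta_\eta$; and by Lemma~\ref{direct_limit_lemma} the corresponding $\lim^+$ is defined and equals $(\theta^\beta_\xi,\in,\langle\pi^\beta_{\eta,\xi}\mid\eta<\xi\rangle)$. Hence $g_\xi$ is admissible, and the decoding yields precisely $\langle\vartheta^\xi_\eta\mid\eta\le\xi\rangle=\langle\theta^\beta_\eta\mid\eta\le\xi\rangle$, $\langle\varpi^\xi_{\eta,\eta'}\mid\eta<\eta'\le\xi\rangle=\langle\pi^\beta_{\eta,\eta'}\mid\eta<\eta'\le\xi\rangle$, and $\langle q^\xi_\eta\mid\eta<\xi\rangle=\langle\pi^\beta_\eta.p_\eta\mid\eta<\xi\rangle$, as required.

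The only genuinely delicate step is obtaining the $\eta=\xi$ coordinates $\vartheta^\xi_\xi$ and $\varpi^\xi_{\eta,\xi}$: since the array at $\xi$ must be a function of $g_\xi=g\restriction\xi$ only, no direct information about $\theta^\beta_\xi$ or $\pi^\beta_{\eta,\xi}$ is available. This is exactly the purpose of $\lim^+$ and Lemma~\ref{direct_limit_lemma} — at a limit $\xi$ above $\eta_\beta$, the level-$\xi$ data is the ordinal enlarged direct limit of the data strictly below $\xi$ — so confining attention to limit ordinals $\xi>\eta_\beta$, which costs nothing since that set is a club, lets us reconstruct them; the rest is routine bookkeeping, and the ``decreasing'' hypothesis on $\vec p$ is in fact not used.
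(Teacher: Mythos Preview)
Your proof is correct and follows essentially the same approach as the paper: encode the data $(\theta^\beta_\eta,\pi^\beta_{\eta,\eta'},\pi^\beta_\eta.p_\eta)$ as an $H_\kappa$-valued object, guess it with a $\diamondsuit(\kappa)$-equivalent, and recover the level-$\xi$ entries via $\lim^+$ and Lemma~\ref{direct_limit_lemma}. The only cosmetic difference is that the paper uses the $\diamondsuit^-(H_\kappa)$ formulation (Definition~\ref{def_Diamond_H_kappa_minus}) with elementary submodels in place of your function-form diamond $\langle g_\xi\mid\xi<\kappa\rangle$, but the substance---in particular the key observation that $\vartheta^\xi_\xi$ and $\varpi^\xi_{\eta,\xi}$ must be computed from the data below $\xi$ via the enlarged direct limit---is identical.
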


The rest of this subsection will be devoted to proving Lemma~\ref{lemma21}.
To avoid the use of codings, we shall make use of the following equivalent version of $\diamondsuit(\kappa)$ (see~\cite{paper22}).

\begin{defn}\label{def_Diamond_H_kappa_minus} $\diamondsuit^-(H_\kappa)$ asserts the existence of a sequence
$\langle A_\xi \mid \xi < \kappa \rangle$ such that, for every $A\subseteq H_\kappa$ and $p\in H_{\kappa^{+}}$,
there exists an elementary submodel $\mathcal M\prec H_{\kappa^{+}}$, with $p\in\mathcal M$, such
that $\kappa^{\mathcal M}:=\mathcal M\cap\kappa$ is an ordinal $<\kappa$ and $A\cap \mathcal M=A_{\kappa^{\mathcal M}}$.
\end{defn}

Fix a $\diamondsuit^-(H_\kappa)$-sequence, $\langle A_\xi\mid\xi<\kappa \rangle$.
\begin{defn} \label{good_defn}
  We say that $\xi<\kappa$ is \emph{good} if  $\xi \in \acc(\kappa)$ and
  $$A_\xi = \{(\vartheta^\xi_\eta,q^\xi_\eta,\varpi^\xi_{\eta, \eta'},\eta,\eta') \mid \eta < \eta' < \xi\},$$
  where, for all $\eta < \eta' < \eta''< \xi$, we have
  \begin{itemize}
    \item $\vartheta^\xi_\eta \leq \vartheta^\xi_{\eta'} < \kappa$;
    \item $q^\xi_\eta \in \bb{P}_{\vartheta^\xi_\eta}$;
    \item $\varpi^\xi_{\eta, \eta'}: \vartheta^\xi_\eta \rightarrow \vartheta^\xi_{\eta'}$ is a \coi,
      and $q^\xi_{\eta'} \leq_{\bb{P}} \varpi^\xi_{\eta, \eta'}.q^\xi_\eta$;
    \item $\varpi^\xi_{\eta, \eta''} = \varpi^\xi_{\eta', \eta''} \circ \varpi^\xi_{\eta, \eta'}$;
    \item $\lim(\langle \vartheta^\xi_\eta \mid \eta < \xi \rangle, \langle \varpi^\xi_{\eta, \eta'} \mid \eta < \eta' < \xi \rangle)$
      is well-ordered.
  \end{itemize}
\end{defn}

$\br$ If $\xi<\kappa$ is good, then
$\langle \vartheta^\xi_\eta \mid \eta <\xi\rangle$,
  $\langle \varpi^\xi_{\eta, \eta'} \mid \eta < \eta' <\xi\rangle$,
  and $\langle q^\xi_\eta \mid \eta < \xi \rangle$ are already defined, and we let:
\[
(\vartheta^\xi_\xi, \in, \langle \varpi^\xi_{\eta, \xi} \mid \eta < \xi \rangle)
:=\lim {^+}(\langle \vartheta^\xi_\eta \mid \eta < \xi \rangle, \langle \varpi^\xi_{\eta, \eta'}
\mid \eta < \eta' < \xi \rangle)
.\]

$\br$ If $\xi < \kappa$ is not good, then let $\langle \vartheta^\xi_\eta \mid \eta \leq \xi \rangle$, $\langle \varpi^\xi_{\eta, \eta'}
\mid \eta < \eta' \leq \xi \rangle$, and $\langle q^\xi_\eta \mid \eta < \xi \rangle$ be arbitrary.

We claim that the arrays thus defined satisfy the conclusion of Lemma~\ref{lemma21}. To verify this,
fix $\beta < \kappa^+$, a decreasing sequence $\langle p_\eta \mid \eta < \kappa \rangle\in\prod_{\eta<\kappa}\bb{P}_{B^\beta_\eta}$,
and a club $D$ in $\kappa$. Put
$$A:=\{(\theta^\beta_\eta,\pi^\beta_\eta.p_\eta,\pi^\beta_{\eta, \eta'},\eta,\eta') \mid \eta <   \eta' < \kappa \}.$$
Since $A\s H_\kappa$ and $D\in H_{\kappa^+}$, we can let $p:=\{A,D\}$ and fix an elementary submodel $\mathcal M\prec H_{\kappa^+}$ with $p\in\mathcal M$
such that $\xi:=\mathcal M\cap\kappa$ is in $\kappa$ and $A\cap\mathcal M=A_\xi$.
By the fact that $D\in\mathcal M$ and the elementarity of $\mathcal M$, we have $\xi\in D$.
Since $\mathcal M\cap\kappa=\xi$ and $A\in\mathcal M$, and by the elementarity of $\mathcal M$, we have
$$A_\xi=\{(\theta^\beta_\eta,\pi^\beta_\eta.p_\eta,\pi^\beta_{\eta, \eta'},\eta,\eta') \mid \eta <   \eta' < \xi \}.$$
In particular, $\xi$ is good. By Lemma~\ref{direct_limit_lemma}, we have $\vartheta^\xi_\xi = \theta^\beta_\xi$ and,
for all $\eta < \xi$, $\varpi^\xi_{\eta, \xi} = \pi^\beta_{\eta, \xi}$. Therefore, $\xi \in D$
satisfies the three bullet points in the statement of Lemma~\ref{lemma21}. Since $D$ was arbitrary,
this completes the proof of the lemma.

\section{Proof of Theorem~B} \label{main_thm_sect}

This section is devoted to the proof of Theorem~B, which forms the main result of this paper.

\begin{thmb} \label{main_thm}
  Suppose that $\square^B_\kappa$ and $\diamondsuit(\kappa)$ both hold.
  Then so does $\fa{\kappa}$.
\end{thmb}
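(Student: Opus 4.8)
The plan is to fix an arbitrary $(\bb{P},\leq_{\bb{P}},\bb{Q})\in\mathcal P_\kappa$ together with a collection $\{\mathcal D_i\mid i<\kappa\}$ of sharply dense systems, write $\theta_i:=\theta_{\mathcal D_i}$ and $\mathcal D_i=\{D^i_x\mid x\in{\kappa^+\choose\theta_i}\}$, and produce a single filter $G$ on $\bb{P}$ meeting every $\mathcal D_i$ everywhere. First I would install the infrastructure of Section~\ref{square_sect}: fix a $\square^B_\kappa$-sequence, the derived matrix $\vec B=\langle B^\beta_\eta\mid\beta<\kappa^+,\ \eta<\kappa\rangle$ of Lemma~\ref{matrix_lemma} with its distance function $d$, the collapsing maps $\pi^\beta_\eta:B^\beta_\eta\to\theta^\beta_\eta$ and transitions $\pi^\beta_{\eta,\eta'}$, and the guessing arrays $\langle\vartheta^\xi_\eta\rangle$, $\langle\varpi^\xi_{\eta,\eta'}\rangle$, $\langle q^\xi_\eta\rangle$ of Lemma~\ref{lemma21}. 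Since $\diamondsuit(\kappa)$ entails $\kappa^{<\kappa}=\kappa$, each ${\kappa^+\choose\theta_i}$ has size $\kappa^+$, so the set of \emph{tasks} $\{(i,x)\mid i<\kappa,\ x\in{\kappa^+\choose\theta_i}\}$ has size $\kappa^+$; I would fix a bookkeeping that assigns the task $(i,x)$ to stage $\beta:=\max(x)$, noting that each stage then receives at most $\kappa$ tasks (the copies $x$ with $\max(x)\leq\beta$ number at most $|\beta|^{<\kappa}=\kappa$, since $|\beta|\leq\kappa$, times the $\kappa$ choices of $i$).

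The heart of the proof is a recursion on $\beta<\kappa^+$ building a coherent matrix of conditions $\langle p^\beta_\eta\mid\eta_\beta\leq\eta<\kappa\rangle$ with each $p^\beta_\eta\in\bb{Q}$ and $x_{p^\beta_\eta}=B^\beta_\eta$, subject to: (i) \emph{vertical coherence}: $\langle p^\beta_\eta\mid\eta_\beta\leq\eta<\kappa\rangle$ is $\leq_{\bb{P}}$-decreasing; (ii) \emph{horizontal coherence}: $p^\alpha_\eta=p^\beta_\eta\rest(\alpha+1)$ for every $\alpha\in B^\beta_\eta$; and (iii) \emph{genericity}: whenever $(i,x)$ is assigned to $\beta$ there is $\eta<\kappa$ with $p^\beta_\eta\in D^i_x$. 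Granting this, set $G$ to be the $\leq_{\bb{P}}$-upward closure of $\{p^\beta_\eta\mid\beta<\kappa^+,\ \eta_\beta\leq\eta<\kappa\}$. This generating set is directed: given $p^\beta_\eta$, $p^{\beta'}_{\eta'}$ with $\beta\leq\beta'$, pick $\eta''\geq\eta_{\beta'},\eta,\eta'$ large enough that $\beta\in B^{\beta'}_{\eta''}$ (possible since $\bigcup_{\zeta<\kappa}B^{\beta'}_\zeta=\beta'+1$ and $\langle B^{\beta'}_\zeta\rangle$ is $\subseteq$-increasing); then by (ii), (i), and Clause~\eqref{c5} of Definition~\ref{class_def}, $p^{\beta'}_{\eta''}\leq_{\bb{P}}p^{\beta'}_{\eta''}\rest(\beta+1)=p^\beta_{\eta''}\leq_{\bb{P}}p^\beta_\eta$ and $p^{\beta'}_{\eta''}\leq_{\bb{P}}p^{\beta'}_{\eta'}$. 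Hence $G$ is a filter, and by (iii) it meets every $D^i_x$, so $G$ witnesses $\fa{\kappa}$.

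The recursion on $\beta$ follows the case division of the proof of Lemma~\ref{matrix_lemma}. The case $\beta=0$ is trivial. In the successor case and in the cases $\cf(\beta)<\kappa$, each $B^\beta_\eta$ consists of $\beta$ together with a union of earlier rows $B^\alpha_\eta$, and I would build $\langle p^\beta_\eta\mid\eta<\kappa\rangle$ by a secondary recursion on $\eta$: at a successor step $\eta=\eta'+1$ one adjoins $\beta$ using Clause~\eqref{c1.5} and Sharpness and then merges with $p^\beta_{\eta'}$ (and, where needed, with the previously built rows) via Amalgamation (Clause~\eqref{c11}) over the common restrictions forced by the $\sq$-coherence properties \eqref{cb1}--\eqref{cb4} of $\vec B$; at a limit step $\eta$ one applies Controlled closure (Clause~\eqref{c10}) with the restriction prescribed by coherence, obtaining a lower bound of realm exactly $\cl(B^\beta_{<\eta})=B^\beta_\eta$. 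A task $(i,x)$ assigned to such a $\beta$ is handled at a limit step $\eta$ chosen large enough that $x\subseteq B^\beta_\eta$: form the controlled-closure limit, then extend it once more using the sharp density of $D^i_x$, landing in $D^i_x$ with realm $\cl(x_p\cup x)=B^\beta_\eta$.

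The case $\cf(\beta)=\kappa$ is the crux and is where $\diamondsuit(\kappa)$ is essential. Here $\langle B^\beta_\eta\cap\beta\mid\eta<\kappa\rangle$ climbs to $\beta$ only in the limit, and at limit levels $\xi$ the collapse data $(\theta^\beta_\xi,\langle\pi^\beta_{\eta,\xi}\rangle_{\eta<\xi})$ is precisely the enlarged direct limit of the earlier data (Lemma~\ref{direct_limit_lemma}) — the doubling accounting exactly for the points $C_\gamma(\omega\xi)$ that $B^\beta_\xi$ adds on top of $B^\beta_{<\xi}$. I would run the secondary recursion producing $\langle p^\beta_\eta\mid\eta<\kappa\rangle\in\prod_{\eta<\kappa}\bb{P}_{B^\beta_\eta}$ with realms the prescribed $B^\beta_\eta$; at each limit $\xi$ I check whether $\xi$ is \emph{good} in the sense of Definition~\ref{good_defn} (a property of the fixed $\diamondsuit^-(H_\kappa)$-sequence alone). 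If $\xi$ is not good, take a controlled-closure limit as before. If $\xi$ is good and the bookkeeping asks to handle $(i,x)$ at $\xi$, perform a \emph{canonical} density action inside the collapsed picture $\theta^\beta_\xi=\vartheta^\xi_\xi$ — whose objects lie in $H_\kappa$ and, by Lemma~\ref{lemma21}, are exactly the pre-computed $q^\xi_\eta,\varpi^\xi_{\eta,\eta'}$ — namely, meet the copy of $D^i_x$ there via its sharp density, and transport the result back up along the \coi $(\pi^\beta_\xi)^{-1}$; the homogeneity clause of a sharply dense system ($p\in D_x\iff\pi.p\in D_{\pi``x}$) returns a condition in $D^i_x$, and, being defined symmetrically from pre-computed data, it is compatible with every portion of the matrix built below level $\xi$, so both coherences survive, with Controlled closure (Clause~\eqref{c10}) again supplying $p^\beta_\xi$ of realm $B^\beta_\xi$ and prescribed restriction $p^{C_\beta(\omega\xi)}_\xi$. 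Since, by Lemma~\ref{lemma21}, the good levels relevant to our own sequence form a stationary — in particular unbounded — subset of $\kappa$, every task assigned to $\beta$ is eventually handled. I expect the hard part to be exactly this case: simultaneously respecting that the realms $x_{p^\beta_\eta}$ are \emph{predetermined} to equal the matrix entries $B^\beta_\eta$, that sharp density forces a $D^i_x$-witness to have realm $\cl(x_p\cup x)$, and that both the vertical coherence within row $\beta$ and the horizontal coherence with \emph{all} lower rows survive every limit level — making the bookkeeping, the good levels, and the enlarged-direct-limit collapse line up so that $\cl(x_p\cup x)$ is always the prescribed $B^\beta_\xi$, and checking that transported density moves never disturb the already-committed part of the matrix, is the delicate point, and Controlled closure, Amalgamation and Sharpness of Definition~\ref{class_def}, the coherence of Lemma~\ref{matrix_lemma}, Lemma~\ref{direct_limit_lemma}, and the homogeneity of sharply dense systems are precisely the tools designed to push it through.
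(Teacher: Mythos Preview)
Your overall architecture is right: build a coherent matrix $\langle p^\beta_\eta\rangle$ with realms $B^\beta_\eta$, use the diamond-guessed collapsed pictures to act canonically, and read the filter off the matrix. But there is a genuine gap, and it sits exactly where you flag ``the delicate point''.

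You propose outer recursion on $\beta$, with task $(i,x)$ handled at stage $\beta=\max(x)$: for $\cf(\beta)<\kappa$ by a one-off sharp-density extension at some level $\eta$, and for $\cf(\beta)=\kappa$ by a canonical action in the collapsed picture at good $\xi$. The problem is horizontal coherence at the latter step. When you set $p^\beta_\xi$ canonically from the pre-computed data at a good $\xi$, you need $p^\beta_\xi\rest(\alpha+1)=p^\alpha_\xi$ for every $\alpha\in B^\beta_\xi$. But those $p^\alpha_\xi$ were already fixed at earlier outer stages, and under your scheme they were produced by a \emph{different} mechanism (controlled closure plus ad hoc sharp-density moves for whatever tasks happened to be assigned to $\alpha$). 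There is no reason they agree with the restriction of your canonical $s_\xi$-based object. Your sentence ``being defined symmetrically from pre-computed data, it is compatible with every portion of the matrix built below level $\xi$'' is precisely the assertion that needs a mechanism, and the mechanism you describe for small-cofinality $\alpha$ destroys it. A related symptom: the ``new'' points $\alpha\in B^\beta_\xi\setminus B^\beta_{<\xi}$ satisfy $\xi=\eta_\alpha$, so at outer stage $\alpha$ the very first nontrivial level is $\xi$; whether $p^\alpha_\xi$ should be taken from $s_\xi$ depends on whether some \emph{future} $\gamma>\alpha$ of cofinality $\kappa$ will match the diamond at $\xi$, information unavailable in your recursion order. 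Also, ``good'' (Definition~\ref{good_defn}) only says $A_\xi$ has the right shape; it does not say the guessed data equals the actual $\theta^\beta_\eta,\pi^\beta_{\eta,\eta'},\pi^\beta_\eta.p^\beta_\eta$, so ``$\theta^\beta_\xi=\vartheta^\xi_\xi$'' need not hold at a merely good $\xi$.

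The paper resolves this by reversing the recursion: outer on $\eta$, inner on $\beta$. All density-meeting is baked once and for all into a single sequence $\langle s_\xi\mid\xi\in X\rangle$ (with $X$ those good $\xi$ at which the guessed sequence even has a lower bound), via a bookkeeping that lives entirely in $H_\kappa$ (triples $(i,j,z)$ with $z\subseteq\vartheta^\xi_j$) rather than per-$\beta$. One then introduces an \emph{active} predicate on pairs $(\beta,\xi)$---roughly, $\xi\in X$ and $s_\xi$ actually lies below the collapsed tower $\langle\pi^\beta_\xi.p^\beta_\eta\mid\eta<\xi\rangle$---and proves that activeness propagates down: if $(\beta,\xi)$ is active then $(\alpha,\xi)$ is active for every $\alpha\in B^\beta_\xi$. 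At every active pair one sets $p^\alpha_\xi:=(\pi^\alpha_\xi)^{-1}.(s_\xi\rest\theta^\alpha_\xi)$, regardless of $\cf(\alpha)$; this is what forces horizontal coherence to hold for free. A further uniformity requirement (for $\beta\in E^{\kappa^+}_\kappa$, the collapsed tower $\langle\pi^\beta_\xi.p^\beta_\eta\mid\eta\le\xi\rangle$ depends only on $C_\beta(\omega\xi)$) is carried along to make the ``new point'' case and the non-active limit cases canonical as well. Your proposal is missing the active notion, its downward propagation, and the $\eta$-first order that makes activeness well-defined when it references larger $\gamma$; without these, the coherence claim at $\cf(\beta)=\kappa$ does not go through.
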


\subsection{Setup}
Fix an arbitrary $(\bb{P}, \leq_{\bb{P}}, \bb{Q}) \in \mathcal{P}_\kappa$
along with a collection $\{\mathcal{D}_i \mid i < \kappa\}$ of sharply dense systems.
For each $i < \kappa$, write $\mathcal{D}_i = \{D_{i, x} \mid x \in {\kappa^+ \choose \theta_{\mathcal{D}_i}}\}$.

Let $\vec B$ be given by Lemma~\ref{matrix_lemma},
and let $\langle \vartheta^\xi_\eta \mid \eta \leq \xi < \kappa \rangle$,
$\langle \varpi^\xi_{\eta, \eta'} \mid \eta < \eta' \leq \xi < \kappa \rangle$,
and $\langle q^\xi_\eta \mid \eta < \xi < \kappa \rangle$
be given by Lemma \ref{lemma21} applied to $(\bb{P}, \leq_{\bb{P}}, \bb{Q})$.
\begin{defn}Let $X$ denote the set of $\xi \in \acc(\kappa)$ such that:
\begin{itemize}
  \item $\xi$ is good, in the sense of Definition~\ref{good_defn};
  \item $\langle \varpi^\xi_{\eta, \xi}.q^\xi_\eta \mid \eta < \xi \rangle$
    admits a lower bound in $\bb{P}_{\vartheta^\xi_\xi}$.
\end{itemize}
\end{defn}

Let $\lhd_\kappa$ be some well-ordering of $H_\kappa$.
Using $\kappa^{<\kappa}=\kappa$ (which follows from $\diamondsuit(\kappa)$), enumerate all elements of
$\bigcup_{i<\kappa}\{i\}\times\kappa\times {\kappa\choose \theta_{\mathcal{D}_i}}$ as a sequence $\langle (i_\eta, j_\eta, z_\eta) \mid \eta < \kappa \rangle$.

\begin{lemma}\label{s_xi_proposition}
  There is a sequence of conditions $\langle s_\xi\mid \xi \in X \rangle\in\prod_{\xi \in X}\bb{Q}_{\vartheta^\xi_\xi}$,
  such that, for all $\xi\in X$:
  \begin{itemize}
    \item $s_\xi$ is a lower bound for $\langle \varpi^\xi_{\eta, \xi}.q^\xi_\eta \mid \eta < \xi \rangle$;
    \item for all $\eta < \xi$, if $j_\eta < \xi$ and $z_\eta \subseteq \vartheta^\xi_{j_\eta}$,
    then there is $q \in D_{i_\eta, \varpi^\xi_{j_\eta, \xi}``z_\eta}$ such that $s_\xi \leq_{\bb{P}} q$.
  \end{itemize}
\end{lemma}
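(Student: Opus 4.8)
The plan is to build each $s_\xi$ separately, for $\xi\in X$, by a single $\leq_{\bb{P}}$-decreasing recursion of length $\xi+1$ that stays inside $\bb{Q}$: the ``task'' corresponding to the triple $(i_\eta,j_\eta,z_\eta)$ will be handled at step $\eta+1$, and limit stages will be dealt with by the Controlled closure clause. All choices made en route will be resolved by the fixed well-ordering $\lhd_\kappa$ of $H_\kappa$ (every condition we produce has realm contained in the ordinal $\vartheta^\xi_\xi<\kappa$, hence lies in $\bb{P}_\kappa\subseteq H_\kappa$ by Clause~\eqref{c2} of Definition~\ref{class_def}), so the resulting assignment $\xi\mapsto s_\xi$ is a genuine function and $\langle s_\xi\mid\xi\in X\rangle$ is well-defined.

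Fix $\xi\in X$. Since $\xi\in X$, fix a lower bound $p^*$ for $\langle\varpi^\xi_{\eta,\xi}.q^\xi_\eta\mid\eta<\xi\rangle$ lying in $\bb{P}_{\vartheta^\xi_\xi}$, and, using Clause~\eqref{c9} (Sharpness), fix $r_0\in\bb{Q}$ with $r_0\leq_{\bb{P}}p^*$ and $x_{r_0}=\cl(x_{p^*})$; since $x_{p^*}$ is a subset of the ordinal $\vartheta^\xi_\xi$, so is $\cl(x_{p^*})$, so $r_0\in\bb{Q}_{\vartheta^\xi_\xi}$. Now recurse. At a limit ordinal $\eta\le\xi$, argue exactly as in the proof of Proposition~\ref{prop31}(1) — invoking Clause~\eqref{c10} (Controlled closure) with $\alpha:=0$ and $r:=\one$ — to obtain $r_\eta\in\bb{Q}$, a lower bound for $\langle r_{\eta'}\mid\eta'<\eta\rangle$, with $x_{r_\eta}=\cl\bigl(\bigcup_{\eta'<\eta}x_{r_{\eta'}}\bigr)$. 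At a successor step $\eta+1$ (where $\eta<\xi$, so $\eta+1<\xi$ as $\xi$ is a limit): if $j_\eta\ge\xi$ or $z_\eta\not\subseteq\vartheta^\xi_{j_\eta}$, set $r_{\eta+1}:=r_\eta$; otherwise, note that $y_\eta:=\varpi^\xi_{j_\eta,\xi}``z_\eta$ is a closed copy of $\theta_{\mathcal{D}_{i_\eta}}$ (it is the image under the \coi $\varpi^\xi_{j_\eta,\xi}$ of the closed copy $z_\eta\in{\kappa\choose\theta_{\mathcal{D}_{i_\eta}}}$), so $D_{i_\eta,y_\eta}$ is a sharply dense member of $\mathcal{D}_{i_\eta}$ with $x_{D_{i_\eta,y_\eta}}=y_\eta$; apply sharp density to $r_\eta$ to obtain $r'_\eta\in D_{i_\eta,y_\eta}$ with $r'_\eta\leq_{\bb{P}}r_\eta$ and $x_{r'_\eta}=\cl(x_{r_\eta}\cup y_\eta)$, and then apply Clause~\eqref{c9} to $r'_\eta$ to obtain $r_{\eta+1}\in\bb{Q}$ with $r_{\eta+1}\leq_{\bb{P}}r'_\eta$ and $x_{r_{\eta+1}}=\cl(x_{r'_\eta})$. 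Finally set $s_\xi:=r_\xi$.

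To verify this works, a straightforward induction shows that $\langle r_\eta\mid\eta\le\xi\rangle$ is $\leq_{\bb{P}}$-decreasing and that each $r_\eta\in\bb{Q}$ with $x_{r_\eta}\subseteq\vartheta^\xi_\xi$; the point at the successor step is that $y_\eta=\varpi^\xi_{j_\eta,\xi}``z_\eta\subseteq\vartheta^\xi_\xi$ (as $\varpi^\xi_{j_\eta,\xi}$ maps into $\vartheta^\xi_\xi$) and that the ordinal closure of a subset of the ordinal $\vartheta^\xi_\xi$ remains inside $\vartheta^\xi_\xi$, and the limit step is analogous. Hence $s_\xi=r_\xi\in\bb{Q}_{\vartheta^\xi_\xi}$, and $s_\xi\leq_{\bb{P}}r_0\leq_{\bb{P}}p^*\leq_{\bb{P}}\varpi^\xi_{\eta,\xi}.q^\xi_\eta$ for every $\eta<\xi$, which is the first bullet. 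For the second bullet, suppose $\eta<\xi$ with $j_\eta<\xi$ and $z_\eta\subseteq\vartheta^\xi_{j_\eta}$; then at step $\eta+1$ we selected $r'_\eta\in D_{i_\eta,\varpi^\xi_{j_\eta,\xi}``z_\eta}$ with $s_\xi\leq_{\bb{P}}r_{\eta+1}\leq_{\bb{P}}r'_\eta$, so $q:=r'_\eta$ witnesses the conclusion. I expect the only mildly delicate point to be that meeting the sets $D_{i_\eta,\cdot}$ produces conditions only in $\bb{P}$, which is precisely why each such step is immediately followed by an application of Sharpness to return to $\bb{Q}$, and why the limit steps appeal to Controlled closure rather than merely to the $\kappa$-closure of $\bb{Q}$ — both in order to keep the realms of the $r_\eta$ bounded by $\vartheta^\xi_\xi$.
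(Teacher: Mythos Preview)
Your proof is correct and follows essentially the same recursion as the paper's: a $\leq_{\bb{P}}$-decreasing sequence of length $\xi+1$ in $\bb{Q}$, meeting the relevant sharply dense set at each successor stage and invoking Controlled closure at limits. The only difference is that the paper first pads the realm of the initial lower bound to all of $\vartheta^\xi_\xi$ via Clause~\eqref{c1.5}, so that $x_{s_\xi}=\vartheta^\xi_\xi$ exactly throughout (a fact used tacitly later in Subcases~1a and~2a of the main construction), whereas you only secure $x_{s_\xi}\subseteq\vartheta^\xi_\xi$, which is all the lemma as stated requires.
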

\begin{proof} Let $\xi \in X$ be arbitrary.
We first define a sequence $\langle s^\eta \mid \eta \le \xi \rangle\in\prod_{\eta\le \xi}\bb{Q}_{\vartheta_\xi^\xi}$ by recursion on $\eta$:

$\br$ For $\eta=0$, use Clauses \eqref{c1.5} and \eqref{c9} of Definition~\ref{class_def} and
the fact that $\xi \in X$ to find
$s^0 \in \bb{Q}$ such that $x_{s^0} = \vartheta^\xi_\xi$ and $s^0$ is a lower bound
for $\langle \varpi^\xi_{\eta, \xi}.q^\xi_\eta \mid \eta < \xi \rangle$.

$\br$ For $\eta < \xi$, with $j_\eta < \xi$ and $z_\eta \subseteq \vartheta^\xi_{j_\eta}$,
use the fact that $\mathcal{D}_{i_\eta}$ is a sharply dense system and that $\varpi^\xi_{j_\eta, \xi}``z_\eta
\subseteq \vartheta^\xi_\xi$ to find $s^{\eta, *} \in D_{i_\eta, \varpi^\xi_{j_\eta, \xi}``z_\eta}$
such that $s^{\eta, *} \leq_\bb{P} s^\eta$ and $x_{s^{\eta, *}} = \vartheta^\xi_\xi$.
Then, use Clause~\eqref{c9} of Definition~\ref{class_def} to find $s^{\eta + 1} \in \bb{Q}$ such that
$s^{\eta + 1} \leq_{\bb{P}} s^{\eta,*}$ and $x_{s^{\eta + 1}} = \vartheta_\xi^\xi$.

$\br$ For $\eta<\xi$ with $j_\eta \geq \xi$ or $z_\eta \not\subseteq \vartheta^\xi_{j_\eta}$, simply let
$s^{\eta + 1} := s^\eta$.

$\br$ For $\eta\in\acc(\xi+1)$, assuming that $\langle s^{\zeta}\mid \zeta<\eta\rangle$ has already been defined,
use Clause~\eqref{c10} of Definition~\ref{class_def} to let $s^\eta$ be a lower bound for $\langle s^\zeta \mid \zeta < \eta \rangle$
in $\bb{Q}$ with $x_{s^\eta} = \vartheta_\xi^\xi$.

Having constructed $\langle s^\eta\mid\eta\le \xi\rangle$, it is clear that $s_\xi:=s^\xi$ is as sought.
\end{proof}

Fix a sequence $\langle s_\xi \mid \xi \in X\rangle$ as in the preceding lemma.
We will construct a matrix of conditions $\langle p^\beta_\eta \mid \beta < \kappa^+, ~ \eta < \kappa \rangle$ satisfying:
\begin{enumerate}[(i)]
  \item\label{cc1} for all $\beta < \kappa^+$ and $\eta < \kappa$, we have $p^\beta_\eta \in \bb{P}_{B^\beta_\eta}$;
  \item\label{cc2} for all $\beta < \kappa^+$, $\langle p^\beta_\eta \mid \eta<\kappa\rangle$ is $\leq_{\bb{P}}$-decreasing;
  \item\label{cc3} for all $\beta < \kappa^+$, $\eta < \kappa$, and $\alpha \in B^\beta_\eta$, we have $p^\beta_\eta \rest (\alpha + 1) = p^\alpha_\eta$;
  \item\label{cc4} for all $\beta < \kappa^+$, all $i < \kappa$, and all  $x \in {\beta + 1 \choose \theta_{\mathcal{D}_i}}$,
    there is $\xi < \kappa$ and $q \in D_{i, x}$ such that $p^\beta_\xi \leq_\bb{P} q$;
  \item\label{cc5} for all $\beta \in E^{\kappa^+}_\kappa$ and all $\xi\in\acc(\kappa)$, the sequence $\langle \pi^\beta_\xi.p^\beta_\eta \mid \eta \leq \xi \rangle$
depends only on the value of $C_\beta(\omega\xi)$.
\end{enumerate}

Note that if we are successful, then, letting $G$ be the upward closure of $\{ p^\beta_\eta \mid \beta < \kappa^+,\allowbreak ~ \eta < \kappa \}$,
it follows from Clauses \eqref{cc1}--\eqref{cc4} that $G$ is a filter on $\bb{P}$ that, for each $\eta < \kappa$, meets $\mathcal{D}_\eta$ everywhere.
Of course, the sequence $\langle s_\xi\mid \xi\in X\rangle$, which was derived from $\diamondsuit$, will be a key to ensuring Clause~\eqref{cc4}.

\subsection{Hypotheses}

The construction of $\langle p^\beta_\eta \mid \beta < \kappa^+, ~ \eta < \kappa \rangle$
will be by recursion on $\eta<\kappa$ and, for fixed $\eta$, by recursion on $\beta<\kappa^+$.
We will maintain requirements \eqref{cc1}--\eqref{cc3} and \eqref{cc5} as recursion hypotheses. In order to ensure that the
construction will be successful, we need to carry along some further hypotheses.
Suppose that $\beta < \kappa^+$, $\xi \in \acc(\kappa)$,
and $\langle p^\alpha_\eta \mid \alpha < \kappa^+, ~ \eta < \xi \rangle$ has been constructed.

\begin{defn}\label{active_defn}
  We say that the pair $(\beta, \xi)$ is \emph{active} if $\xi \in X$, $\theta^\beta_\xi \leq \vartheta_\xi^\xi$,
  and one of the following holds:
  \begin{itemize}
      \item $\xi > \eta_\beta$ and, for all $\eta < \xi$, $s_\xi \leq_{\bb{P}} \pi^\beta_\xi.p^\beta_\eta$; or,
      \item $\xi = \eta_\beta$ and there is $\gamma \in E^{\kappa^+}_\kappa$ such that
        $\beta \in \acc(C_\gamma)$ and $(\gamma, \xi)$ is active.
  \end{itemize}
\end{defn}

In our construction, we will require that, for all active $(\beta, \xi)$, we have $\pi^\beta_\xi.p^\beta_\xi
= s_\xi \rest \theta^\beta_\xi$. In particular, if $(\beta, \xi)$ is active, then
$p^\beta_\xi \in \bb{Q}$ and $x_{p^\beta_\xi} = B^\beta_\xi$. Moreover, for all $\beta < \kappa^+$, we will arrange that,
if $\xi < \kappa$ is least such that $\beta \in x_{p^\beta_\xi}$, then either $(\beta, \xi)$ is active
or ($\xi = \eta_\beta$ and $p^\beta_\xi \in \bb{Q}$).

\begin{lemma} \label{active_lemma}
  Suppose that $\beta < \kappa^+$, $\xi \in X$, and $(\beta, \xi)$ is active.
  Then $(\alpha, \xi)$ is active for all $\alpha \in B^\beta_\xi$.
\end{lemma}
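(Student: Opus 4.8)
The plan is to fix $\beta < \kappa^+$, $\xi \in X$ with $(\beta,\xi)$ active, and an arbitrary $\alpha \in B^\beta_\xi$, and to verify the definition of ``active'' for the pair $(\alpha,\xi)$ directly. First I would dispose of the trivial case $\alpha = \beta$. So assume $\alpha < \beta$, hence $\alpha \in B^\beta_\xi \setminus \{\beta\}$; by Clause~\eqref{cb3} of Lemma~\ref{matrix_lemma} we have $B^\alpha_\xi = B^\beta_\xi \cap (\alpha+1)$ and $\pi^\alpha_\xi = \pi^\beta_\xi \restriction (\alpha+1)$, and in particular $\theta^\alpha_\xi = \otp(B^\alpha_\xi) \leq \otp(B^\beta_\xi) = \theta^\beta_\xi \leq \vartheta^\xi_\xi$, so the cardinality requirement in the definition of active is inherited. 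It also follows that $\eta_\alpha \leq \xi$, since $\alpha \in B^\alpha_\xi$ forces $B^\alpha_\xi \neq \emptyset$.

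Next I would split on whether $\xi > \eta_\alpha$ or $\xi = \eta_\alpha$, matching the two clauses of Definition~\ref{active_defn}. In the first case, $\xi > \eta_\alpha$, I need to show $s_\xi \leq_{\bb{P}} \pi^\alpha_\xi.p^\alpha_\eta$ for all $\eta < \xi$. Here I would use the recursion hypotheses \eqref{cc3}: $p^\beta_\eta \rest (\alpha+1) = p^\alpha_\eta$, together with the Proposition relating \cois\ and restrictions, which gives $\pi^\beta_\xi.(p^\beta_\eta \rest (\alpha+1)) = (\pi^\beta_\xi.p^\beta_\eta)\rest \pi^\beta_\xi(\alpha+1)$ (being slightly careful about whether $\alpha+1 \in x_{p^\beta_\eta}$; if not, the restriction is to $\ssup(x_{p^\beta_\eta} \cap (\alpha+1))$ and one argues analogously, or one first passes to a suitable $\alpha' \in x_{p^\beta_\eta}$). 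Since $(\beta,\xi)$ is active and $\xi > \eta_\beta$ (note $\eta_\beta \leq \eta_\alpha < \xi$ because $B^\alpha_\xi \sq B^\beta_\xi$ and $\alpha \in B^\beta_{\eta_\beta}$ would force $\eta_\beta \geq \eta_\alpha$... actually one must check $\xi > \eta_\beta$ holds; if $\xi = \eta_\beta$ then $(\beta,\xi)$ is active via the second clause, and I handle that variant below), we have $s_\xi \leq_{\bb{P}} \pi^\beta_\xi.p^\beta_\eta$, and applying Clause~\eqref{c8} of Definition~\ref{class_def} (restrictions preserve $\leq_{\bb{P}}$) together with the stipulated equation $\pi^\beta_\xi.p^\beta_\xi = s_\xi \rest \theta^\beta_\xi$ and the fact that $\pi^\beta_\xi.p^\alpha_\eta$ has realm $\pi^\beta_\xi``B^\alpha_\eta \subseteq \theta^\alpha_\xi = \pi^\beta_\xi(\alpha+1)$, I can compute $s_\xi \rest \pi^\beta_\xi(\alpha+1)$ and conclude $s_\xi \leq_{\bb{P}} \pi^\alpha_\xi.p^\alpha_\eta$ after identifying $\pi^\alpha_\xi.p^\alpha_\eta$ with $\pi^\beta_\xi.p^\alpha_\eta$ via the compatibility clause \eqref{c4f} or the last clause of \eqref{c4}.

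In the second case, $\xi = \eta_\alpha$, I must produce $\gamma \in E^{\kappa^+}_\kappa$ with $\alpha \in \acc(C_\gamma)$ and $(\gamma,\xi)$ active. The key structural input is Clause~\eqref{cb6} of Lemma~\ref{matrix_lemma}: since $\alpha \in B^\beta_\xi$ and $\eta_\alpha = \xi$ with $\xi$ a limit, $\alpha$ is (essentially) a point that entered $B^\beta_\xi$ ``at level $\xi$'', so $\alpha \in B^\beta_\xi \setminus B^\beta_{<\xi}$ and setting $\gamma := \min(B^\beta_\xi \setminus (\alpha+1))$ we get $\cf(\gamma) = \kappa$ and $\alpha = C_\gamma(\omega\xi)$, hence $\alpha \in \acc(C_\gamma)$; moreover $\gamma \in B^\beta_\xi$ with $\gamma > \alpha$, so by induction / minimality I can arrange that $(\gamma,\xi)$ is active — indeed if $\gamma = \beta$ we are done immediately, and otherwise $\gamma \in B^\beta_\xi \setminus \{\beta\}$ with $\gamma < \beta$, and I would apply the already-established first case of this very lemma to the pair $(\beta,\xi) \leadsto (\gamma,\xi)$ to conclude $(\gamma,\xi)$ is active (noting $\cf(\gamma)=\kappa$ means $\eta_\gamma < \xi$ by Clause~\eqref{cbinfty}, so $\gamma$ falls under the first bullet). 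Then the second bullet of Definition~\ref{active_defn} is witnessed by $\gamma$.

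The main obstacle I expect is bookkeeping around the first case: correctly tracking realms under the action of $\pi^\beta_\xi$ and the interaction between $\rest$ and the \coi\ action when $\alpha+1 \notin x_{p^\beta_\eta}$, and making sure the appeal to ``$(\beta,\xi)$ active'' uses the right bullet ($\xi > \eta_\beta$ versus $\xi = \eta_\beta$) — in the latter sub-situation one first unwinds the definition to get an active $(\gamma',\xi)$ with $\beta \in \acc(C_{\gamma'})$, $\cf(\gamma') = \kappa$, and then works with $\gamma'$ in place of $\beta$, using $B^\alpha_\xi \sq B^\beta_\xi \sq B^{\gamma'}_\xi$ throughout. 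None of the individual steps is deep; the care is entirely in the coherence of the matrix $\vec B$ and the recursion hypotheses \eqref{cc1}--\eqref{cc3}, \eqref{cc5}, plus the axioms of $\mathcal P_\kappa$ governing $\rest$ and \coi-actions.
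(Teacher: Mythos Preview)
Your proposal is correct and follows essentially the same approach as the paper's proof. The only organizational difference is that you split first on whether $\xi > \eta_\alpha$ or $\xi = \eta_\alpha$ (matching the two bullets of Definition~\ref{active_defn} for the target pair $(\alpha,\xi)$), whereas the paper splits first on whether $\xi > \eta_\beta$ or $\xi = \eta_\beta$ and, in the former case, on whether $\alpha \in B^\beta_{<\xi}$; these case analyses are equivalent, since Clauses~\eqref{cb5}, \eqref{cb6}, and \eqref{cb25} of Lemma~\ref{matrix_lemma} show that $\alpha \in B^\beta_\xi \setminus B^\beta_{<\xi}$ (for $\xi > \eta_\beta$) forces $\xi = \eta_\alpha$, and your reduction from $\xi = \eta_\beta$ to a witnessing $\gamma' \in E^{\kappa^+}_\kappa$ is exactly the paper's third case.

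Two small points worth making explicit when you write this up: first, in your case $\xi > \eta_\alpha$, the coherence hypothesis~\eqref{cc3} gives $p^\alpha_\eta = p^\beta_\eta \rest (\alpha+1)$ only once $\alpha \in B^\beta_\eta$, so you should argue (as you hint) that $\xi > \eta_\alpha$ implies $\alpha \in B^\beta_{<\xi}$, and then use that $\langle p^\alpha_\eta \mid \eta < \xi\rangle$ is decreasing to cover the finitely many small $\eta$; second, in your case $\xi = \eta_\alpha$, when you set $\gamma := \min(B^\beta_\xi \setminus (\alpha+1))$ and want to feed $\gamma$ back into your first case, you need $\gamma \in B^\beta_{<\xi}$, which follows since $\cf(\gamma) = \kappa$ while $|B^\beta_{<\xi}| < \kappa$ and $B^\beta_\xi = \cl(B^\beta_{<\xi})$.
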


\begin{proof} Let $\alpha \in B^\beta_\xi$ be arbitrary.
  As $B^\beta_\xi \cap (\alpha + 1) = B^\alpha_\xi$, we have $\theta^\alpha_\xi < \theta^\beta_\xi \leq \vartheta_\xi^\xi=x_{s_\xi}$.

  $\br$ If $\xi > \eta_\beta$ and  $\alpha \in B^\beta_{<\xi}$, then  $\xi > \eta_\alpha$ and,
  for all sufficiently large $\eta < \xi$, we have $p^\alpha_\eta = p^\beta_\eta \rest (\alpha + 1)$.
  By Clause~\eqref{cb3} of Lemma~\ref{matrix_lemma}, then,
  $s_\xi \leq_{\bb{P}} \pi^\beta_\eta.p^\alpha_\eta = \pi^\alpha_\eta.p^\alpha_\eta$, so $(\alpha, \xi)$ is active.

  $\br$ If $\xi > \eta_\beta$ and $\alpha \in B^\beta_\xi \setminus B^\beta_{<\xi}$, then let
  $\gamma := \min(B^\beta_\xi \setminus (\alpha + 1))$. By Clause~\eqref{cb6} of Lemma~\ref{matrix_lemma},
  we know that $\cf(\gamma) = \kappa$ and $\alpha = C_\gamma(\omega\xi)$. It follows that $\alpha \in \Gamma$
  and hence, by Clause~\eqref{cb25} of Lemma~\ref{matrix_lemma}, we have $\xi = \eta_\alpha$.
  Since $|B^\beta_\xi|<\kappa$, and by Clause~\eqref{cb5} of Lemma~\ref{matrix_lemma}, we know that $\gamma \in B^\beta_{<\xi}$,
  so, by the previous paragraph, $(\gamma, \xi)$ is active. Hence, by Definition~\ref{active_defn},
  $(\alpha, \xi)$ is active as well.

  $\br$ If $\xi = \eta_\beta$ and $\gamma \in E^{\kappa^+}_\kappa$ is such that $\beta \in \acc(C_\gamma)$
  and $(\gamma, \xi)$ is active, then by Clauses~\eqref{cb3} and \eqref{cbinfty} of Lemma~\ref{matrix_lemma}, $B^\gamma_\xi \cap (\beta + 1) = B^\beta_\xi$, so $\alpha \in B^\gamma_\xi$.
  Moreover, $\xi > \eta_\gamma = 0$, so, by the previous cases, we again conclude that $(\alpha, \xi)$ is active.
\end{proof}

Our final recursion hypotheses concern non-active pairs $(\beta, \xi)$.

First, suppose that $(\beta, \xi)$ is not active and $\xi = \eta_\beta$. If $\xi \in \acc(\kappa)$ and there is $\gamma \in E^{\kappa^+}_\kappa$ such that
$\beta \in \acc(C_\gamma)$ and $\sup\{\eta < \xi \mid (\gamma, \eta) \text{ is active}\}= \xi$, then we will require that $p^\beta_\xi \in \bb{Q}$ and $x_{p^\beta_\xi} = B^\beta_\xi$.

Next, suppose that $(\beta, \xi)$ is not active and $\xi > \eta_\beta$. Let
\[
  \eta^* := \max\{\sup\{\eta < \xi \mid (\beta, \eta) \text{ is active}\}, \eta_\beta\}.
\]

$\br$ If $\eta^* = \xi$, then we will require that $p^\beta_\xi \in \bb{Q}$ and $x_{p^\beta_\xi} = B^\beta_\xi$.

$\br$ If $\eta^* < \xi$ and $\beta \in x_{p^\beta_{\eta^*}}$, then we will have $p^\beta_{\eta^*} \in \bb{Q}$ and will require that $p^\beta_\xi$ is the
$\leq_{\bb{P}}$-greatest condition $q$ such that $q \leq_{\bb{P}} p^\beta_{\eta^*}$ and, for all $\alpha \in B^\beta_\xi \cap \beta$,
$q \rest (\alpha + 1) = p^\alpha_\xi$.

\subsection{The construction}
We now turn to the actual construction.
Suppose that $\beta < \kappa^+$, $\xi < \kappa$, and we have already
constructed $\langle p^\alpha_\eta \mid \alpha < \kappa^+, ~ \eta < \xi \rangle$ and $\langle p^\alpha_\xi
\mid \alpha < \beta \rangle$. We now construct $p^\beta_\xi$. There are a number of cases to consider.
In all cases, unless explicitly verified, it will be trivial to check that the recursion hypotheses are maintained.
\begin{description}
  \item[Case 0] \textbf{$\xi < \eta_\beta$.} Let $p^\beta_\eta := \one$.
  \item[Case 1] \textbf{$\xi = \eta_\beta$.} There are now a few subcases to consider.
    \begin{description}
      \item[Subcase 1a] \textbf{$(\beta, \xi)$ is active.} In particular, $x_{s_\xi}=\vartheta_\xi^\xi\ge\theta^\beta_\xi$. Let $p^\beta_\xi$ be the unique condition
        $q$ such that $x_q = B^\beta_\xi$ and $\pi^\beta_\xi.q = s_\xi \rest \theta^\beta_\xi$,
        i.e., $p^\beta_\xi = (\pi^\beta_\xi)^{-1}.(s_\xi \rest \theta^\beta_\xi)$.
        Note that, for all $\alpha \in B^\beta_\xi$, Lemma \ref{active_lemma} implies that $(\alpha, \xi)$
        is active. We therefore have $\pi^\beta_\xi.p^\alpha_\xi = \pi^\alpha_\xi.p^\alpha_\xi =
        s_\xi \rest \theta^\beta_\xi$, so $p^\beta_\xi \rest (\alpha + 1) = p^\alpha_\xi$
        and requirement~\eqref{cc3} is satisfied.
      \item[Subcase 1b] \textbf{$(\beta, \xi)$ is not active and there is $\gamma \in E^{\kappa^+}_\kappa$
        such that $\beta \in \acc(C_\gamma)$ and $\sup\{\eta < \xi \mid (\gamma, \eta) \text{ is active}\} = \xi$.}
        Fix such a $\gamma$. Note that $B^\gamma_{<\xi} \cap \beta$ is unbounded in $B^\beta_\xi \cap \beta$ and, for all
        $\alpha \in B^\gamma_{<\xi} \cap \beta$, $\sup\{\eta < \xi \mid (\alpha, \eta) \text{ is active}\} = \xi$.
        Therefore, by our recursion hypotheses, for all $\alpha \in B^\gamma_{<\xi}$, we know that
        $p^\alpha_\xi \in \bb{Q}$ and $x_{p^\alpha_\xi} = B^\alpha_\xi$. By Clause~\eqref{c12} of Definition~\ref{class_def}, there is a unique condition $q \in \bb{Q}$ such that $x_q = B^\beta_\xi \cap \beta$
        and, for all $\alpha \in B^\beta_\xi$, we have $q \rest (\alpha + 1) = p^\alpha_\xi$.
        By Clause~\eqref{c10} of Definition~\ref{class_def}, there is a lower bound $p$ for
        $\langle p^\gamma_\eta \mid \eta < \xi \rangle$ such that:
        \begin{itemize}
          \item $p \in \bb{Q}$;
          \item $x_p = B^\gamma_\xi = B^\beta_\xi \cup \{\gamma\}$;
          \item $p \rest \beta = q$.
        \end{itemize}
        Fix such a lower bound $p$ with a $\lhd_\kappa$-minimal possible value for $\pi_\xi^\gamma.p$,
        and let $p^\beta_\xi := p \rest (\beta + 1)$. Note that, by requirement \eqref{cc5},
        the construction in this Subcase is independent of our
        choice of $\gamma$.
      \item[Subcase 1c] \textbf{Otherwise.} Let $p^\beta_\xi$ be the unique condition, given by Clause~\eqref{c12}
        of Definition~\ref{class_def}, such that $x_{p^\beta_\xi} = \bigcup\{x_{p^\alpha_\xi}\mid \alpha \in (B^\beta_\xi \cap \beta)\}$ and,
        for all $\alpha \in (B^\beta_\xi \cap \beta)$, we have $p^\beta_\xi \rest
        (\alpha + 1) = p^\alpha_\xi$.
    \end{description}
  \item[Case 2] \textbf{$\xi > \eta_\beta$.} There are again a few subcases to consider.
    \begin{description}
      \item[Subcase 2a] \textbf{$(\beta, \xi)$ is active.} Let $p^\beta_\xi$ be the unique condition $q$
        such that $x_q = B^\beta_\xi$ and $\pi^\beta_\xi.q = s_\xi \rest \theta^\beta_\xi$. By
        Definition~\ref{active_defn}, we have that $s_\xi \leq_{\bb{P}} \pi^\beta_\xi.p^\beta_\eta$ for all
        $\eta < \xi$, which implies that $p^\beta_\xi \leq_{\bb{P}} p^\beta_\eta$ for all $\eta < \xi$, so
        requirement~\eqref{cc1} holds.
      \item[Subcase 2b] \textbf{$(\beta, \xi)$ is not active, $\sup\{\eta < \xi \mid (\beta, \eta) \text{ is active}\} = \xi$,
        and $\beta \not\in E^{\kappa^+}_\kappa$.} In this Subcase, we have that $\xi \in \acc(\kappa)$ and
        $B^\beta_{<\xi} \cap \beta$ is unbounded in $B^\beta_\xi \cap \beta$. Since, for all
        $\alpha \in B^\beta_{<\xi}$, we know that $\sup\{\eta < \xi \mid (\alpha, \eta) \text{ is active}\} = \xi$,
        it follows as in Subcase 1b that there is a unique condition $q \in \bb{Q}$ such that
        $x_q = B^\beta_\xi \cap \beta$ and, for all $\alpha \in B^\beta_\xi$, we have $q \rest (\alpha + 1)
        = p^\alpha_\xi$. By Clause~\eqref{c10} of Definition~\ref{class_def}, there is $p \in \bb{Q}$ such that:
        \begin{itemize}
          \item $p$ is a lower bound for $\langle p^\beta_\eta \mid \eta < \xi \rangle$;
          \item $p \in \bb{Q}$;
          \item $x_p = B^\beta_\xi$;
          \item $p \rest \beta = q$.
        \end{itemize}
        Let $p^\beta_\xi$ be such a $p$.
      \item[Subcase 2c] \textbf{$(\beta, \xi)$ is not active, $\sup\{\eta < \xi \mid (\beta, \eta) \text{ is active}\} = \xi$,
        and $\beta \in E^{\kappa^+}_\kappa$.} Let $\alpha := C_\beta(\omega \xi)$, so that $B^\beta_\xi = B^\alpha_\xi
        \cup \{\beta\}$. When defining $p^\alpha_\xi$, we were in Subcase 1b. In that Subcase, we considered
        a $\gamma \in E^{\kappa^+}_\kappa$ such that $\alpha \in \acc(C_\gamma)$, produced a condition
        $p$ with $x_p = B^\gamma_\xi$, and let $p^\alpha_\xi := p \rest (\alpha + 1)$. Let $\pi:B^\gamma_\xi
        \rightarrow B^\beta_\xi$ be the unique order-preserving bijection, and let $p^\beta_\xi = \pi.p$.
        Since, by requirement \eqref{cc5}, we have $\langle \pi^\beta_\xi.p^\beta_\eta \mid \eta < \xi \rangle =
        \langle \pi^\gamma_\xi.p^\gamma_\eta \mid \eta < \xi \rangle$, and since $\pi \restriction B^\alpha_\xi$
        is the identity, the recursion hypotheses are all easily verified.
      \item[Subcase 2d] \textbf{$(\beta, \xi)$ is not active and there is no $\eta < \xi$
        such that $\beta \in x_{p^\beta_\eta}$.} Let $p^\beta_\xi$ be the unique condition $q$
        such that $x_q = \bigcup\{x_{p^\alpha_\xi}\mid \alpha \in (B^\beta_\xi \cap \beta)\}$ and,
        for all $\alpha \in (B^\beta_\xi \cap \beta)$, we have $q \rest (\alpha + 1) = p^\alpha_\xi$.
      \item[Subcase 2e] \textbf{Otherwise.} Let
        \[
          \eta^* := \max\{\sup\{\eta < \xi \mid (\beta, \eta) \text{ is active}\}, \eta_\beta\}.
        \]
        Since we are not in any of the
        previous Subcases, it must be the case that $\eta^* < \eta$, $p^\beta_{\eta^*} \in \bb{Q}$, and
        $x_{p^\beta_{\eta^*}} = B^\beta_{\eta^*}$. For all $\eta \in (\eta^*, \xi]$, let $q_\eta$
        be the unique condition, given by Clause~\eqref{c12} of Definition~\ref{class_def}, such that
        $x_{q_\eta} = \bigcup\{x_{p^\alpha_\eta}\mid \alpha \in (B^\beta_\eta \cap \beta)\}$ and,
        for all $\alpha \in (B^\beta_\eta \cap \beta)$, we have $q \rest (\alpha + 1) = p^\alpha_\eta$.
        By the recursion hypotheses, we know that, for all $\eta \in (\eta^*, \xi)$, $p^\beta_\eta$ is the
        $\leq_{\bb{P}}$-greatest lower bound of $p^\beta_{\eta^*}$ and $q_\eta$, as given by Clause~\eqref{c11}
        of Definition \ref{class_def}. Therefore, if we let $p^\beta_\xi$ be the $\leq_{\bb{P}}$-greatest lower
        bound of $p^\beta_{\eta^*}$ and $q_\xi$, which again exists by Clause~\eqref{c11} of Definition~\ref{class_def},
        it will follow that $p^\beta_\xi \leq_{\bb{P}} p^\beta_\eta$
        for all $\eta < \xi$, so requirement~\eqref{cc1} holds. The other requirements are easily verified.
    \end{description}
\end{description}

This completes the construction. We have maintained requirements \eqref{cc1}--\eqref{cc3} and \eqref{cc5}  throughout. We now verify requirement \eqref{cc4}.
To this end, fix $\beta < \kappa^+$, $i < \kappa$, and  $x \in {\beta + 1\choose \theta_{\mathcal{D}_i}}$. We will find
$\xi < \kappa$ and $q \in D_{i, x}$ such that $p^\beta_\xi \leq_\bb{P} q$.

Fix $j < \kappa$ such that $x \subseteq B^\beta_j$, and fix $\eta^* < \kappa$ such that $(i_{\eta^*}, j_{\eta^*}, z_{\eta^*}) =
(i,j,\pi^\beta_j``x)$. Find $\xi \in \acc(\kappa \setminus (\max\{j,\eta^*, \eta_\beta\} + 1))$ such that:
\begin{itemize}
  \item $\langle \vartheta^\xi_\eta \mid \eta \leq \xi \rangle = \langle \theta^\beta_\eta \mid \eta \leq \xi \rangle$;
  \item $\langle \varpi^\xi_{\eta, \eta'} \mid \eta < \eta' \leq \xi \rangle = \langle \pi^\beta_{\eta, \eta'} \mid \eta < \eta' \leq \xi \rangle$;
  \item $\langle q^\xi_\eta \mid \eta < \xi \rangle = \langle \pi^\beta_\eta.p^\beta_\eta \mid \eta < \xi \rangle$.
\end{itemize}
The following two claims now suffice for the verification of requirement \eqref{cc4}.

\begin{claim}
  $(\beta, \xi)$ is active.
\end{claim}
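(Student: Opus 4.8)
The plan is to unwind Definition~\ref{active_defn} and check its requirements one at a time, feeding in the defining properties of $\xi$, the requirements \eqref{cc1}--\eqref{cc3} that have been maintained throughout the construction, and Lemma~\ref{s_xi_proposition}. Since $\xi$ was chosen inside $\acc(\kappa\setminus(\max\{j,\eta^*,\eta_\beta\}+1))$, we have $\xi>\eta_\beta$, so only the first alternative of Definition~\ref{active_defn} can apply; thus it suffices to establish: (i) $\xi\in X$; (ii) $\theta^\beta_\xi\le\vartheta^\xi_\xi$; and (iii) $s_\xi\le_{\bb{P}}\pi^\beta_\xi.p^\beta_\eta$ for every $\eta<\xi$.

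I would first cash out the choice of $\xi$. By requirements \eqref{cc1} and \eqref{cc2}, $\langle p^\beta_\eta\mid\eta<\kappa\rangle$ is a $\le_{\bb{P}}$-decreasing member of $\prod_{\eta<\kappa}\bb{P}_{B^\beta_\eta}$ --- exactly the kind of sequence to which Lemma~\ref{lemma21} was applied when $\xi$ was produced --- so, as in the proof of that lemma, $\xi$ is \emph{good} in the sense of Definition~\ref{good_defn}, $\xi\in\acc(\kappa)$, and the three bullet points hold. The first bullet, together with Lemma~\ref{direct_limit_lemma}, gives $\vartheta^\xi_\xi=\theta^\beta_\xi$, which is~(ii). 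From the second and third bullets we get $\varpi^\xi_{\eta,\xi}=\pi^\beta_{\eta,\xi}$ and $q^\xi_\eta=\pi^\beta_\eta.p^\beta_\eta$ for $\eta<\xi$; then Clause~\eqref{c4f} of Definition~\ref{class_def} and the last item of its Clause~\eqref{c4}, applied with the observation that $\pi^\beta_{\eta,\xi}\circ\pi^\beta_\eta$ agrees with $\pi^\beta_\xi$ on $B^\beta_\eta\supseteq x_{p^\beta_\eta}$, yield the identity $\varpi^\xi_{\eta,\xi}.q^\xi_\eta=\pi^\beta_{\eta,\xi}.(\pi^\beta_\eta.p^\beta_\eta)=\pi^\beta_\xi.p^\beta_\eta$ for all $\eta<\xi$.

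For (i), the point is that no new construction is needed: requirement \eqref{cc2} already tells us that $p^\beta_\xi$ is a lower bound for $\langle p^\beta_\eta\mid\eta<\xi\rangle$, and \eqref{cc1} gives $x_{p^\beta_\xi}\s B^\beta_\xi$, so by Clause~\eqref{c4a} we have $\pi^\beta_\xi.p^\beta_\xi\in\bb{P}_{\theta^\beta_\xi}=\bb{P}_{\vartheta^\xi_\xi}$, and by Clause~\eqref{c4d} it is a lower bound for $\langle\varpi^\xi_{\eta,\xi}.q^\xi_\eta\mid\eta<\xi\rangle=\langle\pi^\beta_\xi.p^\beta_\eta\mid\eta<\xi\rangle$; together with $\xi\in\acc(\kappa)$ and the goodness of $\xi$, this is exactly $\xi\in X$. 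Once that is known, the condition $s_\xi$ furnished by Lemma~\ref{s_xi_proposition} is defined and is a lower bound for $\langle\varpi^\xi_{\eta,\xi}.q^\xi_\eta\mid\eta<\xi\rangle$, so the displayed identity gives (iii); Definition~\ref{active_defn} then yields that $(\beta,\xi)$ is active. There is no deep obstacle: the argument is an assembly of earlier pieces, the two points needing care being the bookkeeping for how \cois act (the identity $\varpi^\xi_{\eta,\xi}.q^\xi_\eta=\pi^\beta_\xi.p^\beta_\eta$, drawn from the composition and agreement clauses of Clause~\eqref{c4} of Definition~\ref{class_def}) and the fact that the $\diamondsuit^-(H_\kappa)$-guessed $\xi$ is genuinely good (inherited from the proof of Lemma~\ref{lemma21}); the one mildly clever observation is that requirement \eqref{cc2} already supplies, via $p^\beta_\xi$ itself, the lower bound witnessing $\xi\in X$.
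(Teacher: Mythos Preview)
Your proof is correct and follows essentially the same route as the paper's: identify $\varpi^\xi_{\eta,\xi}.q^\xi_\eta$ with $\pi^\beta_\xi.p^\beta_\eta$, observe that $p^\beta_\xi$ (already built) furnishes a lower bound witnessing $\xi\in X$, and then read off the remaining requirements of Definition~\ref{active_defn} from the properties of $s_\xi$. The appeal to Lemma~\ref{direct_limit_lemma} for $\vartheta^\xi_\xi=\theta^\beta_\xi$ is slightly redundant, since the first bullet in the choice of $\xi$ already ranges over $\eta\le\xi$ and gives this equality directly.
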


\begin{proof}
  We verify the requirements in Definition~\ref{active_defn}. We clearly have $\theta^\beta_\xi \leq \vartheta_\xi^\xi$ and $\xi > \eta_\beta$.
  Moreover, for all $\eta < \xi$, we have
  \[
    \varpi^\xi_{\eta, \xi}.q^\xi_\eta = \pi^\beta_{\eta, \xi}.\pi^\beta_\eta.p^\beta_\eta = \pi^\beta_\xi.p^\beta_\eta.
  \]
  Since $p^\beta_\xi \in \bb{P}_{B^\beta_\xi}$ is a lower bound for $\langle p^\beta_\eta \mid \eta < \xi \rangle$,
  it follows that $\pi^\beta_\xi.p^\beta_\xi \in \bb{P}_{\vartheta^\xi_\xi}$ is a lower bound for
  $\langle \varpi^\xi_{\eta, \xi}.q^\xi_\eta \mid \eta < \xi \rangle$. In particular, $\xi \in X$.
  It follows that $s_\xi$ is a lower bound for $\langle \varpi^\xi_{\eta, \xi}.q^\xi_\eta
  \mid \eta < \xi \rangle = \langle \pi^\beta_\xi.p^\beta_\eta \mid \eta<\xi\rangle$, which completes the verification.
\end{proof}

\begin{claim}
  There is $q \in D_{i,x}$ such that $p^\beta_\xi \leq_\bb{P} q$.
\end{claim}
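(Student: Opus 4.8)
The plan is to produce $q$ by transporting, along the inverse of the collapsing map $\pi^\beta_\xi$, a condition supplied by Lemma~\ref{s_xi_proposition}.

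First I would unwind the parameters. As $\xi$ was chosen through Lemma~\ref{lemma21} so that $\langle \vartheta^\xi_\eta \mid \eta \le \xi \rangle = \langle \theta^\beta_\eta \mid \eta \le \xi \rangle$ and $\langle \varpi^\xi_{\eta,\eta'} \mid \eta < \eta' \le \xi \rangle = \langle \pi^\beta_{\eta,\eta'} \mid \eta < \eta' \le \xi \rangle$, in particular $\vartheta^\xi_\xi = \theta^\beta_\xi$; hence $x_{s_\xi} = \vartheta^\xi_\xi = \theta^\beta_\xi$, so that $s_\xi \rest \theta^\beta_\xi = s_\xi$. Since $(\beta,\xi)$ is active and $\xi > \eta_\beta$, $p^\beta_\xi$ was defined in Subcase~2a; thus $\pi^\beta_\xi.p^\beta_\xi = s_\xi \rest \theta^\beta_\xi = s_\xi$, that is, $p^\beta_\xi = (\pi^\beta_\xi)^{-1}.s_\xi$. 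Now recall $(i_{\eta^*}, j_{\eta^*}, z_{\eta^*}) = (i, j, \pi^\beta_j``x)$, where $j < \xi$ and $x \subseteq B^\beta_j$, so $z_{\eta^*} = \pi^\beta_j``x \subseteq \theta^\beta_j = \vartheta^\xi_j = \vartheta^\xi_{j_{\eta^*}}$. Hence the second item in the conclusion of Lemma~\ref{s_xi_proposition}, applied at the coordinate $\eta^*$, yields $q' \in D_{i, w}$ with $s_\xi \le_{\bb{P}} q'$, where $w := \varpi^\xi_{j,\xi}``z_{\eta^*}$. Using $\varpi^\xi_{j,\xi} = \pi^\beta_{j,\xi} = \pi^\beta_\xi \circ (\pi^\beta_j)^{-1}$ together with $x \subseteq B^\beta_j = \dom(\pi^\beta_j)$, this simplifies to $w = \pi^\beta_\xi``x$.

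Next I would pull $q'$ back. Since $D_{i,w}$ is sharply dense with $x_{D_{i,w}} = w$, every condition in $D_{i,w}$ has realm containing $w$, so $w \subseteq x_{q'}$; and $s_\xi \le_{\bb{P}} q'$ forces $x_{q'} \subseteq x_{s_\xi} = \theta^\beta_\xi$. Let $\pi := (\pi^\beta_\xi)^{-1}$, which is a \coi from $\theta^\beta_\xi$ onto $B^\beta_\xi$ (the inverse of the collapse of the closed set $B^\beta_\xi$), and set $q := \pi.q'$. Since $w \subseteq x_{q'} \subseteq \theta^\beta_\xi = \dom(\pi)$, the second clause in the definition of a sharply dense system --- applied to $\mathcal D_i$, the condition $q'$, and the \coi $\pi$ --- gives $q \in D_{i, \pi``w}$; and $\pi``w = (\pi^\beta_\xi)^{-1}``(\pi^\beta_\xi``x) = x$, so $q \in D_{i,x}$.

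Finally I would check $p^\beta_\xi \le_{\bb{P}} q$. Both $s_\xi$ and $q'$ belong to $\bb{P}_{\theta^\beta_\xi} = \bb{P}_{\dom(\pi)}$, so $s_\xi \le_{\bb{P}} q'$ and Clause~\eqref{c4d} of Definition~\ref{class_def} give $\pi.s_\xi \le_{\bb{P}} \pi.q' = q$. By Clauses~\eqref{c4f} and~\eqref{c4g} of Definition~\ref{class_def}, $\pi.s_\xi = (\pi^\beta_\xi)^{-1}.(\pi^\beta_\xi.p^\beta_\xi) = ((\pi^\beta_\xi)^{-1} \circ \pi^\beta_\xi).p^\beta_\xi = p^\beta_\xi$, since $(\pi^\beta_\xi)^{-1} \circ \pi^\beta_\xi$ is the identity on $B^\beta_\xi = x_{p^\beta_\xi}$. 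Thus $p^\beta_\xi \le_{\bb{P}} q$ with $q \in D_{i,x}$, as wanted. I expect the only point requiring real care to be the realm bookkeeping --- above all the identity $\vartheta^\xi_\xi = \theta^\beta_\xi$, which ensures that $s_\xi$ already lives on the collapsed realm $\theta^\beta_\xi$ (so no auxiliary restriction of $s_\xi$ intervenes and $\pi.q'$ has realm $\subseteq B^\beta_\xi$); granting that, the remainder is a mechanical application of the axioms governing actions of \cois.
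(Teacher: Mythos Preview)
Your argument is correct and follows essentially the same route as the paper: you use $\vartheta^\xi_\xi=\theta^\beta_\xi$ to get $\pi^\beta_\xi.p^\beta_\xi=s_\xi$, then invoke Lemma~\ref{s_xi_proposition} at the index $\eta^*$ to obtain $q'\in D_{i,\pi^\beta_\xi``x}$ with $s_\xi\le_{\bb P}q'$, and pull back along $(\pi^\beta_\xi)^{-1}$. The paper simply declares the reduction ``it suffices to find $q'\in D_{i,\pi^\beta_\xi``x}$ with $s_\xi\le_{\bb P}q'$'' and leaves the pull-back implicit, whereas you spell it out via the \coi-invariance clause of the sharply dense system and Clause~\eqref{c4d}; your extra bookkeeping is accurate.
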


\begin{proof}
  Since $(\beta, \xi)$ is active and $\theta^\beta_\xi = \vartheta^\xi_\xi$, we have $\pi^\beta_\xi.p^\beta_\xi = s_\xi$.
  It thus suffices to find $q' \in D_{i,\pi^\beta_\xi``x}$ such that $s_\xi \leq_\bb{P} q'$.

  Note that $\eta^*, j < \xi$ and $\pi^\beta_j``x \subseteq \theta^\beta_j = \vartheta^\xi_j$. Therefore,
  since $(i,j,\pi^\beta_j``x) = (i_{\eta^*}, j_{\eta^*}, z_{\eta^*})$ and $\langle s_\xi \mid \xi \in X \rangle$
  satisfies the conclusion of Lemma~\ref{s_xi_proposition}, it follows that there is $q' \in D_{i, \varpi^\xi_{j,\xi}``
  \pi^\beta_j``x} =  D_{i, \pi^\beta_\xi``x}$ such that $s_\xi \leq_\bb{P} q'$, as desired.
\end{proof}

\section*{Acknowledgments}
In 2010, a few days after attending his talk at the \emph{11th International Workshop on Set Theory in Luminy}, Foreman wrote to the second author that one
can construct an $\aleph_2$-Souslin tree from the conjunction of $\square_{\aleph_1}$ and
$\diamondsuit(\aleph_1)$, using ideas from Shelah's ``models with second order properties'' papers and \cite{foreman_dense_ideal}.
This work was never published, and no details of the construction were provided,
but this hint turned out to be quite stimulating. We thank him for pointing us in this direction.

Portions of this work were presented by the first author at the \emph{Oberseminar mathematische Logik} at the University of
Bonn in May 2017 and at the \emph{6th European Set Theory Conference} in Budapest in July 2017.
We thank the organizers for their hospitality.

We also thank the referee for their thoughtful feedback.


\begin{thebibliography}{10}

\bibitem{paper29}
Ari~Meir Brodsky and Assaf Rinot.
\newblock Distributive {A}ronszajn trees.
\newblock {\em Fund. Math.}, 2019.
\newblock accepted March 2018. \verb"http://www.assafrinot.com/paper/29".

\bibitem{paper22}
Ari~Meir Brodsky and Assaf Rinot.
\newblock A microscopic approach to {S}ouslin-tree constructions. {P}art {I}.
\newblock {\em Ann. Pure Appl. Logic}, 168(11):1949--2007, 2017.

\bibitem{paper20}
Ari~Meir Brodsky and Assaf Rinot.
\newblock Reduced powers of {S}ouslin trees.
\newblock {\em Forum Math. Sigma}, 5(e2):1--82, 2017.

\bibitem{MR0376351}
Keith~J. Devlin.
\newblock {\em Aspects of constructibility}.
\newblock Lecture Notes in Mathematics, Vol. 354. Springer-Verlag, Berlin-New
  York, 1973.

\bibitem{devlin_book}
Keith~J. Devlin.
\newblock {\em Constructibility}.
\newblock Perspectives in Mathematical Logic. Springer-Verlag, Berlin, 1984.

\bibitem{fms_ii}
M.~Foreman, M.~Magidor, and S.~Shelah.
\newblock Martin's maximum, saturated ideals and nonregular ultrafilters. {II}.
\newblock {\em Ann. of Math. (2)}, 127(3):521--545, 1988.

\bibitem{foreman_dense_ideal}
Matthew Foreman.
\newblock An {$\aleph_1$}-dense ideal on {$\aleph_2$}.
\newblock {\em Israel J. Math.}, 108:253--290, 1998.

\bibitem{paper08}
Moti Gitik and Assaf Rinot.
\newblock The failure of diamond on a reflecting stationary set.
\newblock {\em Trans. Amer. Math. Soc.}, 364(4):1771--1795, 2012.

\bibitem{MR0215729}
Tom{\'a}{\v{s}} Jech.
\newblock Non-provability of {S}ouslin's hypothesis.
\newblock {\em Comment. Math. Univ. Carolinae}, 8:291--305, 1967.

\bibitem{jensen_fine_structure}
R.~Bj\"orn Jensen.
\newblock The fine structure of the constructible hierarchy.
\newblock {\em Ann. Math. Logic}, 4:229--308; erratum, ibid. 4 (1972), 443,
  1972.
\newblock With a section by Jack Silver.

\bibitem{jensen1968souslin}
Ronald~B Jensen.
\newblock Souslin's hypothesis is incompatible with {V}={L}.
\newblock {\em Notices Amer. Math. Soc}, 15(6), 1968.

\bibitem{kurepa1935ensembles}
{\DJ}uro Kurepa.
\newblock Ensembles ordonn{\'e}s et ramifi{\'e}s.
\newblock {\em Publications de l'Institut Math\'ematique Beograd}, 4:1--138,
  1935.

\bibitem{lh_trees}
Chris Lambie-Hanson.
\newblock Aronszajn trees, square principles, and stationary reflection.
\newblock {\em MLQ Math. Log. Q.}, 63(3-4):265--281, 2017.

\bibitem{laver_shelah}
Richard Laver and Saharon Shelah.
\newblock The {$\aleph _{2}$}-{S}ouslin hypothesis.
\newblock {\em Trans. Amer. Math. Soc.}, 264(2):411--417, 1981.

\bibitem{MR0313057}
William Mitchell.
\newblock Aronszajn trees and the independence of the transfer property.
\newblock {\em Ann. Math. Logic}, 5:21--46, 1972/73.

\bibitem{1602.07901}
Dilip Raghavan and Stevo Todorcevic.
\newblock Suslin trees, the bounding number, and partition relations.
\newblock to appear in {\em Israel J. Math.}, 2018.

\bibitem{paper24}
Assaf Rinot.
\newblock Higher {S}ouslin trees and the {GCH}, revisited.
\newblock {\em Adv. Math.}, 311(C):510--531, 2017.

\bibitem{shelah_stanley}
S.~Shelah and L.~Stanley.
\newblock {$S$}-forcing. {I}. {A} ``black-box''\ theorem for morasses, with
  applications to super-{S}ouslin trees.
\newblock {\em Israel J. Math.}, 43(3):185--224, 1982.

\bibitem{shelah_diamonds}
Saharon Shelah.
\newblock Diamonds.
\newblock {\em Proc. Amer. Math. Soc.}, 138(6):2151--2161, 2010.

\bibitem{shelah_laflamme_hart}
Saharon Shelah, Claude Laflamme, and Bradd Hart.
\newblock Models with second order properties. {V}. {A} general principle.
\newblock {\em Ann. Pure Appl. Logic}, 64(2):169--194, 1993.

\bibitem{ShSt:167}
Saharon Shelah and Lee Stanley.
\newblock {$S$-forcing. IIa. Adding diamonds and more applications: coding
  sets, Arhangelskii's problem and $\mathcal{L}[Q^{<\omega}_1,Q^1_2]$}.
\newblock {\em Israel Journal of Mathematics}, 56:1--65, 1986.

\bibitem{MR964870}
Saharon Shelah and Lee Stanley.
\newblock Weakly compact cardinals and nonspecial {A}ronszajn trees.
\newblock {\em Proc. Amer. Math. Soc.}, 104(3):887--897, 1988.

\bibitem{MR0294139}
R.~M. Solovay and S.~Tennenbaum.
\newblock Iterated {C}ohen extensions and {S}ouslin's problem.
\newblock {\em Ann. of Math. (2)}, 94:201--245, 1971.

\bibitem{Souslin}
Mikhail~Yakovlevich Souslin.
\newblock Probl\`eme 3.
\newblock {\em Fundamenta Mathematicae}, 1(1):223, 1920.

\bibitem{specker}
E.~Specker.
\newblock Sur un probl\`eme de {S}ikorski.
\newblock {\em Colloquium Math.}, 2:9--12, 1949.

\bibitem{MR0224456}
S.~Tennenbaum.
\newblock Souslin's problem.
\newblock {\em Proc. Nat. Acad. Sci. U.S.A.}, 59:60--63, 1968.

\bibitem{MR2355670}
Stevo Todorcevic.
\newblock {\em Walks on ordinals and their characteristics}, volume 263 of {\em
  Progress in Mathematics}.
\newblock Birkh\"auser Verlag, Basel, 2007.

\bibitem{MR2965421}
Stevo Todorcevic and Victor Torres~Perez.
\newblock Conjectures of {R}ado and {C}hang and special {A}ronszajn trees.
\newblock {\em MLQ Math. Log. Q.}, 58(4-5):342--347, 2012.

\bibitem{velleman_souslin}
Dan Velleman.
\newblock Souslin trees constructed from morasses.
\newblock In {\em Axiomatic set theory ({B}oulder, {C}olo., 1983)}, volume~31
  of {\em Contemp. Math.}, pages 219--241. Amer. Math. Soc., Providence, RI,
  1984.

\bibitem{velleman_forcing}
Daniel~J. Velleman.
\newblock Morasses, diamond, and forcing.
\newblock {\em Ann. Math. Logic}, 23(2-3):199--281 (1983), 1982.

\bibitem{MR763904}
William~S. Zwicker.
\newblock {$P_k\lambda $} combinatorics. {I}. {S}tationary coding sets
  rationalize the club filter.
\newblock In {\em Axiomatic set theory ({B}oulder, {C}olo., 1983)}, volume~31
  of {\em Contemp. Math.}, pages 243--259. Amer. Math. Soc., Providence, RI,
  1984.

\end{thebibliography}
\end{document}